\documentclass{amsart}
\usepackage{enumerate}
\usepackage[initials]{amsrefs}
\usepackage{mathrsfs}
\usepackage{comment,euscript}
\usepackage{amssymb}

\theoremstyle{plain}
\newtheorem{thm}{Theorem}[section]
\newtheorem{lem}[thm]{Lemma}
\newtheorem{cor}[thm]{Corollary}
\newtheorem{prop}[thm]{Proposition}
\newtheorem{ques}[thm]{Question}
\newtheorem{subthm}{Theorem}[subsection]
\newtheorem{sublem}[subthm]{Lemma}
\newtheorem{subcor}[subthm]{Corollary}
\newtheorem{subconj}[subthm]{Conjecture}

\theoremstyle{remark}
\newtheorem{rem}[thm]{Remark}
\newtheorem{subrem}[subthm]{Remark}

\theoremstyle{definition}
\newtheorem{defi}[thm]{Definition}
\newtheorem{hypoth}[thm]{Hypothesis}
\newtheorem{subdefi}[subthm]{Definition}
\newtheorem{subnota}[subthm]{Notation}

\usepackage{color}


\newcount\theTime
\newcount\theHour
\newcount\theMinute
\newcount\theMinuteTens
\newcount\theScratch
\theTime=\number\time
\theHour=\theTime
\divide\theHour by 60
\theScratch=\theHour
\multiply\theScratch by 60
\theMinute=\theTime
\advance\theMinute by -\theScratch
\theMinuteTens=\theMinute
\divide\theMinuteTens by 10
\theScratch=\theMinuteTens
\multiply\theScratch by 10
\advance\theMinute by -\theScratch

\def\today{{\number\day\space
 \ifcase\month\or
  January\or February\or March\or April\or May\or June\or
  July\or August\or September\or October\or November\or December\fi
 \space\number\year}}


\newcommand\Bc{{\mathcal{B}}}
\newcommand\Bfr{{\mathfrak B}}
\newcommand\Cc{{\mathcal{C}}}
\newcommand\Compact{{\operatorname{Compact}}}
\newcommand\Cpx{{\mathbb C}}
\newcommand\Dc{{\mathcal{D}}}

\newcommand\eps{\varepsilon}
\newcommand\Et{{\tilde E}}
\newcommand\Exp{{\operatorname{Exp}}}
\newcommand\FauxExp{{\operatorname{FauxExp}}}
\newcommand\Fsc{{\mathscr{F}}}
\newcommand\Gfr{{\mathfrak G}}
\newcommand\HEu{{\EuScript H}}                   

\newcommand\Kc{{\mathcal{K}}}
\newcommand\Lc{{\mathcal{L}}}
\newcommand\Mcal{{\mathcal{M}}}
\newcommand\Nats{{\mathbb N}}
\newcommand\Nc{{\mathcal{N}}}
\newcommand\phibar{{\overline\phi}}
\newcommand\pit{{\tilde\pi}}
\newcommand\Reals{{\mathbb R}}
\newcommand\restrict{{\upharpoonright}}
\newcommand\Sc{{\mathcal{S}}}
\newcommand\sigmat{{\tilde\sigma}}

\newcommand\Tt{{\tilde T}}

\newcommand\Vc{{\mathcal{V}}}
\newcommand\Xc{{\mathcal{X}}}
\newcommand\Yc{{\mathcal{Y}}}
\newcommand\Yt{{\tilde Y}}
\newcommand\Zt{{\tilde Z}}

\begin{document}

\title[A decomposition theorem for unbounded operators]{An upper triangular decomposition theorem for some unbounded operators affiliated to II$_1$-factors}

\author[Dykema]{K. Dykema$^*$}
\address{Ken Dykema, Department of Mathematics, Texas A\&M University, College Station, TX, USA.}
\email{ken.dykema@math.tamu.edu}
\thanks{\footnotesize ${}^{*}$ Research supported in part by NSF grant DMS--1202660.}
\author[Sukochev]{F. Sukochev$^{\S}$}
\address{Fedor Sukochev, School of Mathematics and Statistics, University of New South Wales, Kensington, NSW, Australia.}
\email{f.sukochev@math.unsw.edu.au}
\thanks{\footnotesize ${}^{\S}$ Research supported by ARC}
\author[Zanin]{D. Zanin$^{\S}$}
\address{Dmitriy Zanin, School of Mathematics and Statistics, University of New South Wales, Kensington, NSW, Australia.}
\email{d.zanin@math.unsw.edu.au}

\subjclass[2000]{47C99}


\begin{abstract}
Results of Haagerup and Schultz \cite{HS2} about existence of invariant subspaces that decompose the Brown measure
are extended to a large class of unbounded operators affiliated to a tracial von Neumann algebra.
These subspaces are used to decompose an arbitrary operator in this class
into the sum of a normal operator and a spectrally negligible operator.
This latter result is used to prove that, on a bimodule over a tracial von Neumann algebra that is closed
with respect to logarithmic submajorization, every trace is spectral, in the sense that the trace value on an operator
depends only on the Brown measure of the operator.
\end{abstract}

\date{September 10, 2015}

\maketitle

\begin{center}
Dedicated to the memory of Uffe Haagerup
\end{center}
\smallskip

\section{Introduction}

Consider a tracial von Neumann algebra, which is a pair $(\Mcal,\tau)$, where $\Mcal$ is a von Neumann algebra and $\tau$ is
a normal, faithful, tracial state on $\Mcal$.
For $T\in\Mcal$, there is a sort of spectral distribution measure, invented by L.\ Brown~\cite{Brown}
and called the Brown measure $\nu_T$.
(Note:  Brown also considered certain unbounded operators affiliated to semifinite von Neumann algebras.)
In a remarkable article~\cite{HS2}, Uffe Haagerup and Hanne Schultz proved
that, 
for arbitrary $T\in\Mcal$ and arbitrary Borel set $\Bc\in\Cpx$,
there is a unique $T$-invariant projection $p\in\Mcal$ so
that the restriction of $T$ to $p$ and the compression of $T$ to $1-p$
split the Brown measure of $T$ along $\Bc$ and its complement $\Bc^c$.
Namely, they show that $p$ is what we call a Haagerup-Schultz projection for $T$ and $\Bc$
(see Definition~\ref{def:hs proj} below.)
They also show that this projection $p$ is unique and that it is $T$-hyperinvariant, i.e., that it is invariant under every bounded
Hilbert space operator that commutes with $T$.

Consider $\Mcal$ embedded in the $*$-algebra $\Sc(\Mcal,\tau)$, of possibly unbounded operators affiliated with $\Mcal$.
The Brown measure and the Fuglede--Kadison determinant~\cite{FK}, on which it depends, are most naturally defined
on a larger algebra, $\Lc_{\log}(\Mcal,\tau)$, of all elements $T\in\Sc(\Mcal,\tau)$ satisfying $\tau(\log(1+|T|))<\infty$;
Haagerup and Schultz, in~\cite{HS1}, have developed these objects in this case.
Thus, it is natural to ask whether Haagerup and Schultz's results, about invariant subspaces that
split the Brown measure, can be extended to such $T$.

Our first main result is such an existence proof,
after passing to a larger von Neumann algebra.
(By this, we mean a tracial von Neumann algebra $(\Mcal_1,\tau_1)$
with $\Mcal$ included as a unital von Neumann subalgebra of $\Mcal_1$ so that the restriction of $\tau_1$ to $\Mcal$ is $\tau$.)

\begin{thm}\label{construction theorem}
Let $(\Mcal,\tau)$ be tracial von Neumann algebra.
Then there exists a larger tracial von Neumann algebra $(\Mcal_1,\tau_1)$
such that, for every $T\in \Lc_{\log}(\Mcal,\tau)$ and for every Borel set $\mathcal{B}$,
there exists
a projection $P(T,\mathcal{B})\in\Mcal_1$ such that
\begin{enumerate}[{\rm (a)}]
\item\label{csta} $TP(T,\mathcal{B})=P(T,\mathcal{B})TP(T,\mathcal{B}).$
\item\label{cstb} $\nu_T(\mathcal{B})=\tau_1(P(T,\mathcal{B})).$
\item\label{cstc} If $0<\nu_T(\Bc)<1$, then
the Brown measures of $TP(T,\mathcal{B})$ and $(1-P(T,\mathcal{B}))T$ are given by the formulas
$$\nu_{TP(T,\mathcal{B})}=\frac1{\nu_T(\mathcal{B})}\nu_T|_{\mathcal{B}},
\qquad \nu_{(1-P(T,\mathcal{B}))T}=\frac1{\nu(\mathbb{C}\backslash\mathcal{B})}\nu_T|_{\mathbb{C}\backslash\mathcal{B}}.$$
\item If $\mathcal{B}_1\subset\mathcal{B}_2$, then $P(T,\mathcal{B}_1)\leq P(T,\mathcal{B}_2)$.
\end{enumerate}
In particular, $(\Mcal_1,\tau_1)$ can be taken to be the ultrapower $((\Mcal*L(\mathbb{F}_4))_\omega,\tau_\omega)$
of the free product of $\Mcal$ with the free group factor $L(\mathbb{F}_4)$, for any free ultrafilter $\omega$ on $\Nats$.
\end{thm}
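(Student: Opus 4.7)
The plan is to reduce the unbounded problem to the bounded one, where the Haagerup--Schultz theorem of~\cite{HS2} produces invariant projections already inside $\Mcal$, and then to assemble the desired projection for $T\in\Lc_{\log}(\Mcal,\tau)$ as an ultrafilter limit of such bounded-case projections inside $(\Mcal*L(\mathbb{F}_4))_\omega$. The free product is used to enlarge $\Mcal$ to a II$_1$-factor containing elements $*$-free from $\Mcal$ (for instance two free circular elements built from the generators of $L(\mathbb{F}_4)$), which will be needed as small free perturbations to regularize $T$ on the way to the limit.

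For the construction, write the polar decomposition $T=U|T|$ and, for each $n\in\Nats$, set $e_n:=1_{[0,n]}(|T|)$ and $T_n:=Te_n=U|T|e_n\in\Mcal$. Each $T_n$ is bounded, and standard continuity of the Brown measure on $\Lc_{\log}$ (see \cite{HS1}) gives that $\nu_{T_n}$ converges weakly to $\nu_T$ as $n\to\infty$. Applying \cite{HS2} to $T_n$ and $\Bc$ yields a Haagerup--Schultz projection $p_n:=P(T_n,\Bc)\in\Mcal$ satisfying items (a)--(d) with $T_n$ in place of $T$. Fix a free ultrafilter $\omega$ on $\Nats$, embed $\Mcal\subset\Mcal*L(\mathbb{F}_4)=:\Nc$ with its free-product trace, and define
$$P(T,\Bc):=[(p_n)_n]\in\Nc_\omega=:\Mcal_1.$$
This is a projection in $\Mcal_1$ by construction, since each $p_n$ is a projection in $\Nc$.

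The four properties are then obtained by passing each bounded-case identity to the limit along $\omega$. For (a), the relation $T_n p_n=p_n T_n p_n$ combines with $Te_n=T_n$ and the fact that, for $n$ large enough relative to $p_n$, one has $e_n p_n=p_n$ (since $p_n$ is the invariant projection for $T_n$ and lives in the spectral part of $T_n$), to give $TP(T,\Bc)=P(T,\Bc)TP(T,\Bc)$ in the $\Lc_{\log}$ sense. Item (b) follows from $\tau(p_n)=\nu_{T_n}(\Bc)\to\nu_T(\Bc)$, after first reducing to sets $\Bc$ with $\nu_T(\partial\Bc)=0$ and then extending by monotone approximation using (d). Item (c) comes from the bounded-case identity $\nu_{T_np_n}=\nu_{T_n}(\Bc)^{-1}\nu_{T_n}|_\Bc$ together with the weak convergence of Brown measures of the truncated products. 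Item (d) descends directly from $P(T_n,\Bc_1)\leq P(T_n,\Bc_2)$.

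The main obstacle is controlling the Brown measures of the products $T_np_n$ and $(1-p_n)T_n$ as $n\to\infty$, and in particular showing that the restrictions $\nu_{T_n}|_\Bc$ converge weakly to $\nu_T|_\Bc$, which can fail when $\nu_T$ charges $\partial\Bc$. This forces a two-step strategy: first prove the theorem for a cofinal class of sets (open discs, or more generally sets whose boundary is $\nu_T$-null), using the free perturbations coming from $L(\mathbb{F}_4)$ to regularize $T_n$ and guarantee convergence of the Fuglede--Kadison determinants; then pass to arbitrary Borel $\Bc$ by a monotone limit, with coherence of representatives for the ultrafilter limit guaranteed by property (d). A secondary delicate point is justifying the invariance relation (a) for unbounded $T$ and a projection $P(T,\Bc)$ coming from the ultrapower: one must check that $P(T,\Bc)\Mcal_1$ lies in the affiliated domain of $T$ and that the product $TP(T,\Bc)$ equals the ultrafilter limit of $T_np_n$, which is where the $\Lc_{\log}$ framework and the ultrapower trace are used simultaneously.
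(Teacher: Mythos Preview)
Your plan differs substantially from the paper's, and it contains a genuine gap.

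The paper does \emph{not} truncate $T$ to a bounded operator.  Instead it replaces $\Mcal$ by $\Mcal*L(\mathbb{F}_4)$ in order to obtain an unbounded operator $Z$ that is $*$-free from $T$, and then works with the still-unbounded perturbations $T_k:=T+\tfrac{1}{k+1}Z$.  These $T_k$ are \emph{mild} (Definition~\ref{def:mild}), and for mild operators Haagerup--Schultz's integral construction of the idempotent $E(T_k,\Bc)$ produces Haagerup--Schultz projections $P(T_k,\Bc)$ directly for disks, without ever invoking the bounded-case theorem.  The ultrapower is then used, together with the carefully developed map $\tilde\pi:\Fsc(\Mcal,\tau)\to\Lc_{\log}(\Mcal_\omega,\tau_\omega)$ of Section~\ref{sec:Llogultrapower}, to identify $T$ with $\tilde T=\tilde\pi(\{T_k\})$ and to pass the projections to the limit.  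The crucial technical ingredient is Lemma~\ref{main hs lemma}, which controls the Fuglede--Kadison determinants in the cut-down algebras along the ultrafilter; this in turn relies on Proposition~4.5 of~\cite{HS2}, namely the identity $\Delta(T+\epsilon Z)=\big(\Delta(|T|^2+\epsilon^2)\big)^{1/2}$, which is specific to free perturbations by $Z$.

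Your truncation scheme $T_n=Te_n$ breaks down at the step you flag as routine.  Since $(1-e_n)\HEu\subset\ker T_n$, the projection $1-e_n$ is $T_n$-invariant with $\nu_{T_n(1-e_n)}=\delta_0$; by the maximality property of Haagerup--Schultz projections in the bounded case, $1-e_n\le p_n$ whenever $0\in\Bc$.  Hence $e_np_n=p_n-(1-e_n)\ne p_n$, and your argument for (a) collapses.  The same kernel forces $\nu_{T_n}$ to carry an artificial atom at $0$ of mass $\tau(1-e_n)$, so the splitting $\nu_{T_np_n}=\nu_{T_n}(\Bc)^{-1}\nu_{T_n}|_\Bc$ does not approximate the desired splitting of $\nu_T$ in any useful way for sets $\Bc$ with $0\in\overline\Bc$.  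More seriously, the determinant convergence $\Delta(T_n-\lambda)\to\Delta(T-\lambda)$ that would be needed to transfer the Brown-measure identities along $\omega$ simply fails for truncations (indeed $\Delta(T_n)=0$ for every $n$ once $T$ is genuinely unbounded).  Your proposal to ``regularize $T_n$'' by a free perturbation at this stage would amount to abandoning the truncation and reverting to the paper's approach with $T+\epsilon Z$, at which point the bounded Haagerup--Schultz theorem is no longer the tool being used.
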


Our next main result is an upper triangular decomposition result
for $T\in\Lc_{\log}(\Mcal,\tau)$,
using the Haagerup--Schultz projections constructed in the previous theorem.
It is an analogue of the Schur upper triangular form for a matrix, and depends on choice of a continuous 
spectral ordering
(arising from a continous mapping from $[0,\infty)$ onto $\Cpx$).

\begin{thm}\label{decomposition theorem}
Let $(\Mcal,\tau)$ be tracial von Neumann algebra
where $\Mcal$ has separable predual
and let $T\in \Lc_{\log}(\Mcal,\tau)$.
Then there exists a larger tracial von Neumann algebra $(\Mcal_2,\tau_2)$ and there exist $N,Q\in \Lc_{\log}(\Mcal_2,\tau_2)$ such that
\begin{enumerate}[{\rm (a)}]
\item $T=N+Q$
\item $N$ is normal and the Brown measures of $N$ and $T$ coincide.
\item The Brown measure of $Q$ is concentrated at $0.$
\end{enumerate}
\end{thm}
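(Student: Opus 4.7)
The plan is to construct a continuous nest of Haagerup--Schultz projections along a spectral ordering of $\nu_T$, let $N$ be the continuous ``diagonal'' with respect to this nest and $Q=T-N$ the upper-triangular remainder, and then exploit part~(c) of Theorem~\ref{construction theorem} together with multiplicativity of the Fuglede--Kadison determinant to show that the diagonal blocks of $Q$ shrink to have Brown measure concentrated at the origin, whence $\nu_Q=\delta_0$.

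Concretely, I would first fix a continuous surjection $\phi\colon[0,1]\to\Cpx$ such that the push-forward of Lebesgue measure under $\phi$ equals $\nu_T$; such a $\phi$ exists because $\Cpx$ is a Peano continuum and $\nu_T$ is a compactly supported Borel probability measure. Setting $\Bc_t=\phi([0,t])$ and $p_t=P(T,\Bc_t)$ in the enlargement $(\Mcal_1,\tau_1)$ of Theorem~\ref{construction theorem}, parts~(a), (b), (d) deliver an increasing continuous family of $T$-invariant projections with $\tau_1(p_t)=t$ and $Tp_t=p_tTp_t$. For each partition $\pi\colon 0=t_0<\cdots<t_n=1$ and choice $z_i\in\phi([t_{i-1},t_i])$, form the normal operator
\[
N_\pi=\sum_{i=1}^n z_i\,(p_{t_i}-p_{t_{i-1}}),
\]
whose Brown measure is $\sum_i(t_i-t_{i-1})\delta_{z_i}$. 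As $\|\pi\|\to 0$ these measures converge weakly to $\nu_T$ and, by uniform continuity of $\phi$, the $N_\pi$ form a Cauchy net in the measure topology on $\Lc_{\log}$; the separability of the predual of $\Mcal$ lets me pass to a countable cofinal subfamily and absorb the limit $N$ into a slightly larger tracial algebra $(\Mcal_2,\tau_2)$. The limit $N$ is normal with $\nu_N=\nu_T$, settling~(b).

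Set $Q=T-N$. Since $Tp_t=p_tTp_t$ and each $N_\pi$ commutes with the projections appearing in its definition, passage to the limit yields $Qp_t=p_tQp_t$ for every $t$: $Q$ is upper triangular along the nest. By part~(c) applied to successive differences, the ``diagonal block'' $(p_{t_i}-p_{t_{i-1}})T(p_{t_i}-p_{t_{i-1}})$ has Brown measure equal to the normalised restriction $\nu_T|_{\phi([t_{i-1},t_i])}/(t_i-t_{i-1})$; subtracting the scalar $z_i$ shifts this into a disc of radius $\operatorname{diam}\phi([t_{i-1},t_i])\to 0$. A multiplicativity argument for Fuglede--Kadison determinants along a continuous chain of invariant projections then yields an identity of the shape
\[
\int_\Cpx\log|\lambda-\mu|\,d\nu_Q(\lambda)=\lim_{\|\pi\|\to 0}\sum_{i=1}^n(t_i-t_{i-1})\int_\Cpx\log|\lambda-\mu|\,d\nu_{Q_{i,\pi}}(\lambda)
\]
for every $\mu\in\Cpx$, where $Q_{i,\pi}$ is the $i$-th diagonal block of $Q$; the shrinking supports of the $\nu_{Q_{i,\pi}}$ force $\nu_Q=\delta_0$. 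The main obstacle, and the step I expect to be the most technically demanding, is justifying this determinant multiplicativity and the attendant interchange of limits in the unbounded $\Lc_{\log}$-setting: the required integrability of $\log|\lambda-\mu|$ against each $\nu_{Q_{i,\pi}}$, and the Haagerup--Schultz-style identities for Brown measures of compressions along invariant projections, are not automatic for elements of $\Lc_{\log}(\Mcal,\tau)\setminus\Mcal$ and will have to be obtained via careful truncation and approximation.
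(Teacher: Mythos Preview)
Your overall shape---Peano ordering of the Brown measure, increasing nest of Haagerup--Schultz projections, diagonal $N$ and upper-triangular remainder $Q$---matches the paper's.  But there is a concrete error and a genuine gap.

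The error: you assert that $\nu_T$ is compactly supported.  For $T\in\Lc_{\log}(\Mcal,\tau)\setminus\Mcal$ this is false in general (only $\int\log^+|z|\,d\nu_T(z)<\infty$ is guaranteed).  Consequently there is no continuous $\phi$ from the compact interval $[0,1]$ onto the support of $\nu_T$, and even after replacing $[0,1]$ by $[0,\infty)$ (as the paper does, with $\rho:[0,\infty)\to\Cpx$), uniform continuity of $\phi$ is lost.  Your argument that the $N_\pi$ form a Cauchy net in measure topology rests entirely on that uniform continuity, so it collapses in the unbounded setting.  This is not a cosmetic point: it is exactly the obstruction that separates the unbounded theorem from the bounded one already proved in~\cite{DSZ}.

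The paper avoids taking a limit of Riemann sums altogether.  It builds $N$ in two steps that are both substantially different from yours.  First, rather than hoping the finite ``diagonal'' approximations $\Exp_{\Dc_m'}(T)=\sum_k p_k^{(m)}Tp_k^{(m)}$ converge in $\Lc_{\log}$, it packages them into a \emph{second} ultrapower via the map $\pit$ of Section~\ref{sec:Llogultrapower}, producing $\FauxExp_{\Dc'}(T)\in\Lc_{\log}((\Mcal_1)_\omega)$; the key Lemma~\ref{lem:ExpinF} shows the sequence lies in the algebra $\Fsc(\Mcal_1,\tau_1)$ (uniform log-integrability), which is what makes $\pit$ applicable and replaces your Cauchy argument.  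Second, $N$ itself is defined not as a limit but directly as the diagonal operator $\int^\oplus\rho(t)I_{\HEu(t)}\,d\varsigma(t)$ in a direct-integral decomposition over $\Dc$.  The Brown-measure computations you flag as the ``main obstacle'' are then handled fibrewise: Theorem~\ref{brown expectation thm} gives $\nu_{\FauxExp_{\Dc'}(T)}=\nu_T$ by a clean determinant argument (Lemma~\ref{det expectation lemma}), and the direct-integral Brown-measure formula from~\cite{DNSZ} identifies the fibre Brown measures of $\FauxExp_{\Dc'}(T)$ as $\delta_{\rho(t)}$, from which $\nu_{R-N}=\delta_0$ and then $\nu_{T-N}=\delta_0$ follow.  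In short, the paper replaces your limit-interchange problem by an ultrapower construction plus reduction theory; your proposal, as it stands, would reprove the bounded case but does not supply the mechanism needed for $\Lc_{\log}$.
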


A stronger version of this theorem (without passing to a larger von Neumann algebra) for bounded operators $T\in\Mcal$ was proved 
in~\cite{DSZ}.

Our final main result is an application of Theorem~\ref{decomposition theorem} to
the theory of singular traces
or more precisely, to the theory of traces on
sub-$\Mcal$-bimodules $\Bfr$ of $\Sc(\Mcal,\tau)$.
A {\em trace} on $\Bfr$ is a linear functional $\varphi$ on $\Bfr$ that satisfies $\varphi(XY)=\varphi(YX)$
for all $X\in\Mcal$ and $Y\in\Bfr$.
See also Section 2.7 in~\cite{LSZ}.
\begin{thm}\label{spectral trace thm}
Let $(\Mcal,\tau)$ be a II$_1$-factor; let
$\mathfrak{B}(\Mcal,\tau)\subset\Lc_{\log}(\Mcal,\tau)$
be an operator bimodule of $\Mcal$ that is
closed with respect to the logarithmic submajorization; let $\varphi$ be a trace on $\mathfrak{B}(\Mcal,\tau)$.
Then for every $T\in\mathfrak{B}(\Mcal,\tau),$ $\varphi(T)$ depends only on the Brown measure of $T$.
\end{thm}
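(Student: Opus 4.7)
The plan is to apply Theorem~\ref{decomposition theorem} to write $T = N + Q$ inside an enlarged tracial von Neumann algebra $(\Mcal_2, \tau_2)$, with $N$ normal satisfying $\nu_N = \nu_T$ and $\nu_Q$ concentrated at $0$. If I can arrange that (i) $\varphi$ extends to a trace $\widetilde\varphi$ on a bimodule $\widetilde\Bfr \subset \Lc_{\log}(\Mcal_2,\tau_2)$ containing $N, Q, T$ and agreeing with $\varphi$ on $\Bfr$, (ii) $\widetilde\varphi(Q) = 0$, and (iii) $\widetilde\varphi(N)$ depends only on the spectral distribution of $N$, then $\varphi(T) = \widetilde\varphi(N) + \widetilde\varphi(Q) = \widetilde\varphi(N)$ will depend only on $\nu_T$.

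For step~(i), the natural candidate is to let $\widetilde\Bfr$ consist of all $X \in \Lc_{\log}(\Mcal_2,\tau_2)$ whose generalized singular value function is logarithmically submajorized by that of some element of $\Bfr$. This class is closed under $\Mcal_2$-bimodule multiplication and is $\log$-submajorization closed by construction. The point is that both $N$ and $Q$ are controlled by $T$ in this sense: the singular values of $N$ equal the moduli of the spectral values arising from $\nu_T$, and those of $Q$ are bounded in the logarithmic sense via Weyl-type inequalities, so $N, Q \in \widetilde\Bfr$. The trace $\varphi$ is then pushed up to $\widetilde\varphi$, using the trace-preserving inclusion $\Mcal \hookrightarrow \Mcal_2$ and the bimodule structure; linearity together with invariance of $\log$-submajorization under $\Mcal_2$-multipliers is what keeps $\widetilde\varphi$ a trace.

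For step~(iii), note that any two normal operators in $\widetilde\Bfr$ with the same spectral distribution are conjugate by a unitary $u$ in (a possibly further enlargement of) $\Mcal_2$; this uses that $\Mcal$ is a II$_1$-factor and carries sufficiently many spectral projections to realize any probability measure on $\Cpx$ as a spectral distribution. The trace property of $\widetilde\varphi$ then gives
\[
\widetilde\varphi(uNu^*) = \widetilde\varphi(u^* u N) = \widetilde\varphi(N),
\]
so $\widetilde\varphi(N)$ depends only on $\nu_N = \nu_T$.

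The main obstacle is step~(ii): showing $\widetilde\varphi(Q) = 0$ whenever $\nu_Q = \delta_0$. The idea is to exploit the Haagerup--Schultz filtration $\{P(Q,\overline{B(0,\lambda)})\}_{\lambda > 0}$ from Theorem~\ref{construction theorem}, which presents $Q$ as an operator whose ``diagonal'' with respect to this continuous nested family of $Q$-invariant projections vanishes (because the Brown measure is wholly at $0$). The goal is then to realize $Q$ as a finite sum of commutators $Q = \sum_i [A_i, B_i]$ with $A_i \in \Mcal_2$ and $B_i \in \widetilde\Bfr$, using a filtration/telescoping argument against the Haagerup--Schultz chain; once this representation is in hand, the trace property of $\widetilde\varphi$ immediately gives $\widetilde\varphi(Q) = 0$. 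This commutator representation of $\log$-submajorization-controlled, Brown-measure-zero operators is the technical heart of the argument and the step I expect to require the most work.
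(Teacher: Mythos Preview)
Your overall architecture matches the paper's: decompose $T=N+Q$ via Theorem~\ref{decomposition theorem}, enlarge $\Mcal_2$ to a II$_1$-factor, extend $\varphi$ via Theorem~\ref{ks extension thm}, show $N\prec\prec_{\log}T$ so that $N,Q\in\Bfr(\Mcal_2,\tau_2)$, then establish $\varphi(Q)=0$ and that $\varphi(N)$ depends only on the distribution of $N$. For step~(iii) the paper takes a different, more elementary route than yours: it writes $N=f(N)+g(N)$ with $f$ integer-valued (so $f(N_1),f(N_2)$ have purely atomic spectrum and are genuinely unitarily conjugate in any factor) and $g$ bounded (so $\varphi(g(N))=c_\varphi\tau(g(N))$ by Fack--de~la~Harpe). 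Your unitary-conjugacy claim is false as stated in a fixed II$_1$-factor---the generators of a Cartan MASA and a singular MASA in $R$ can have identical distributions yet cannot be conjugate---so the parenthetical enlargement is not optional, and you would then need to justify both the conjugacy in the enlargement and a further extension of $\varphi$.

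The genuine gap is step~(ii). Your proposed filtration $\{P(Q,\overline{B(0,\lambda)})\}_{\lambda>0}$ is trivial: since $\nu_Q=\delta_0$, we have $\nu_Q(\overline{B(0,\lambda)})=1$ for every $\lambda>0$, and Theorem~\ref{construction theorem}\eqref{cstb} forces $\tau_\omega(P(Q,\overline{B(0,\lambda)}))=1$, i.e.\ every projection in the chain is the identity. There is nothing to telescope against, and no commutator representation of $Q$ can be extracted this way. The paper's argument for $\varphi(Q)=0$ uses no invariant subspaces of $Q$ whatsoever. Instead it proves, for $A=\Re(Q)$, the spectral estimate
\[
\bigl|\tau\bigl(AE_{|A|}[0,\mu(t,A)]\bigr)\bigr|\le 301\,t\,\mathbf{S}\bigl(\tfrac t2,2eQ\bigr),
\]
via Lemmas~\ref{dk hardest estimate} and~\ref{dk improved}, which exploit $\nu_Q=\delta_0$ through Kalton's subharmonic function and the averaging $\theta\mapsto Q+e^{i\theta}Q^*$. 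Lemma~\ref{sz similar} gives $\mathbf{S}(2eQ)\prec\prec_{\log}8e\sigma_2\mu(Q)$, so by log-submajorization closure the right-hand side lies in $\Bfr(\Mcal_2,\tau_2)$, and then the Dykema--Kalton criterion (Theorem~4.6 of~\cite{DK-fourier}) places $A$ in the commutator subspace $[\Bfr(\Mcal_2,\tau_2),\Mcal_2]$; likewise for $\Im(Q)$. This spectral-estimate machinery is the actual technical heart, and it is orthogonal to the Haagerup--Schultz technology.
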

The above result
is a far reaching generalization of the famous Lidskii trace formula~\cite{Li}, which established
that the value Tr(A) of the standard trace Tr on a trace class operator A is given by the sum of the eigenvalues of A (counting
multiplicities).
An analogue of Lidskii's result in the case of the standard trace on a type II$_1$ or II$_\infty$ factor
was obtained by L.\ Brown~\cite{Brown};
indeed, this was a major motivation for his introduction of Brown measure.
Whether a singular trace of an operator depends only on the operator's eigenvalues is a long standing and difficult problem
first suggested by Albrecht Pietsch~\cite{P90} (see also~\cite{P81}).
In the setting of ideals of compact operators, an answer was given in~\cite{DK98} and~\cite{SZ-AiM},
based on the fundamental papers~\cite{Kalton} and~\cite{DFWW}.
A positive answer was given for relevant operators in geometrically stable submodules of II$_\infty$ factors
in Corollary~6.10 of~\cite{DK-fourier}.
Theorem~\ref{spectral trace thm} answers the analogue of
Pietsch's question positively
in the setting of operator bimodules of $\Mcal$ that are closed with respect to logarithmic submajorization.

\medskip
Here is brief guide to the rest of the paper.
Section~\ref{sec:prelims} contains preliminary definitions and descriptions of results from other papers that we use.
Section~\ref{sec:strategy} contains a description of the strategy of the proofs of the main results.
The actual proofs are carried out in Sections \ref{sec:Llogultrapower}--\ref{sec:spectralityTr}
(see Section~\ref{sec:strategy} for more detail).
Finally, Appendix~\ref{app:hyperinv} contains some thoughts and questions about hyperinvariant subspaces for unbounded operators
affiliated to finite von Neumann algebras and their relevance to Haagerup--Schultz projections,
and Appendix~\ref{sec:NoCondExp} contains a result about lack of existence of conditional expectations for
unbounded operators.

\medskip
\noindent
{\bf Acknowledgement.}
The authors thank Yulia Kuznetsova,
Galina Levitina and Anna Tomskova for helpful comments.

\section{Preliminaries}
\label{sec:prelims}

\subsection{Tracial von Neumann algebras, II$_1$--factors and affiliated operators}
\label{subsec:II1}

We will study pairs $(\Mcal,\tau)$ where $\Mcal$ is a von Neumann algebra and $\tau$ is a normal, faithful, tracial state on $\Mcal$.
We will sometimes refer to the pair $(\Mcal,\tau)$ as a {\em tracial von Neumann algebra}.
(Note that a von Neumann algebra possesses a normal, faithful, tracial state
if and only if it is finite and countably decomposable.)
$\Mcal$ is called {\em diffuse} if it has no minimal (nonzero) projections and it is called a {\em factor} if its center is trivial.
The infinite dimensional finite von Neumann algebra factors are diffuse and are called II$_1$--factors,
and each of these has a unique tracial state $\tau,$ which is normal and faithful.
See, e.g., \cite{KR1} for details.

We will write $\Mcal\subseteq B(\HEu)$ for a Hilbert space $\HEu$.
For some purposes, it will be important that $\Mcal$ have separable predual,
which is equivalent to acting on a separable Hilbert space (see, for example, Lemma 1.8 of~\cite{Ya07})
It is well known and easy to show that every von Neumann algebra with separable predual is countably generated;
indeed, it has a countable weakly dense subset --- for example, this can be shown by (a) considering a countable,
norm-dense subset $\Phi$ of the predual, (b) for every finite subset $F\subseteq\Phi$, every mapping $\alpha:F\to\mathbb{Q}$
and every $n\in\Nats$, choosing $T(F,\alpha,n)\in\Mcal$ so that $|\phi(T)-\alpha(\phi)|<\frac1n$ for every $\phi\in F$,
whenever such exists, and (c) showing that the resulting collection of elements $T(F,\alpha,n)$ is weakly dense in $\Mcal$.
However, the converse statement does not hold;
an easy counter-example being $\ell^\infty(\Reals)$ where $\Reals$ is equipped with counting measure.
But for a tracial von Neumann algebra, countable generation does imply separable predual;
indeed the rational linear combinations of finite products of words in the generators are dense in $L^2(\Mcal,\tau)$,
which is the Hilbert space on which $\Mcal$ acts via the standard representation.

A closed, densely defined operator $A$ in $\HEu$ is said to be {\em affiliated to} $\Mcal$
if it commutes with every element in the commutant of $\Mcal$.
Whenever $\Mcal$ is finite, we have that every affiliated operator $A$ is $\tau$-measurable
(that is, $\tau(E_{|A|}(s,\infty))\to0$ as $s\to\infty$). The set of operators affiliated to $\Mcal$ is denoted by $\Sc(\Mcal,\tau)$.
It is a $*$-algebra and is finite, in the sense that all one-sided invertible elements are invertible.
Indeed, the latter assertion follows because, using the polar decomposition, one sees that
$T\in\Sc(\Mcal,\tau)$
is invertible if and only if its kernel is zero
and of course $T^*$ invertible if and only if $T$ is invertible.
$\Sc(\Mcal,\tau)$
also has the natural order structure defined by $A\ge0$ if and only if $A$ is self-adjoint and positive in the usual sense for unbounded operators.
See pp.\ 719--720 in \cite{DDP} for details.

The {\em measure topology} of E.\ Nelson~\cite{Ne74}
on $\Sc(\Mcal,\tau)$
is the translation invariant topology having neighborhood base at $0$ the set $\{N_{\eta,\delta}\mid \eta,\delta>0\}$,
where
\[
N_{\eta,\delta}=\{A\in\Sc(\Mcal,\tau)\mid \tau(1_{[\delta,\infty)}(|A|))<\eta\}.
\]
This topology is, thus, metrizable, for example by the metric
$$
d_\tau(A,B)=\sum_{n=1}^\infty 2^{-n}\tau(1_{[2^{-n},\infty)}(|A-B|).
$$
and this metric is complete (see~\cite{Ne74}).
It is clear that $\Mcal$ is dense in $\Sc(\Mcal,\tau)$ with respect to the measure topology.

If we have inclusion $(\mathcal{M}_1,\tau_1)\subset(\mathcal{M}_2,\tau_2)$ of tracial von Neumann algebras, meaning, an inclusion
$\Mcal_1\subset\Mcal_2$ so that the restriction of $\tau_2$ to $\Mcal_1$ is $\tau_1$, 
then the metric $d_{\tau_2}$ when restricted to $\Mcal_1$ agrees with $d_{\tau_1}$.
Hence, taking completions, the inclusion $\Mcal_1\subset\Mcal_2$ extends uniquely to a measure topology continuous
inclusion $\Sc(\Mcal_1,\tau_1)\subset\Sc(\Mcal_2,\tau_2)$.

\begin{subnota}\label{nota:relcomm}
For subsets $\Xc,\Yc\subseteq\Sc(\Mcal,\tau)$, the {\em relative commutant} of $\Yc$ in $\Xc$ is
$$\Xc\cap\Yc'=\{x\in\Xc\mid\forall y\in\Yc,\,xy=yx\}.$$
For $\Yc\subseteq\Sc(\Mcal,\tau)$, by {\em the von Neumann algebra generated by $\Yc$}
we mean the von Neumann algebra generated by the set consisting of all spectral projections of $|T|$
and the polar part in the polar decomposition of $T$, as $T$ ranges over all elements of $\Yc$.
By von Neumann's double commutant theorem and standard facts about spectral measure,
this is precisely $\Mcal\cap(\Mcal\cap\Yc')'$. 
\end{subnota}

\subsection{Singular value function}

For every $T$ affiliated to $\Mcal,$ the generalised singular value function $\mu(T),$ denoted $t\mapsto\mu(t,T)$ for $t\in(0,1),$
is defined by the formula (see, e.g., \cite{FackKosaki})
$$\mu(t,T)=\inf\{\|T(1-p)\|_{\infty}\mid p\in\operatorname{Proj}(\Mcal),\;\tau(p)\leq t\}.$$
It is continuous from the right in $t.$ Equivalently, $\mu(T)$ can be defined in terms of the distribution function $d_{|T|}$ of the operator $|T|.$ That is, setting
$$d_{|T|}(s)=\tau(E^{|T|}(s,\infty)),\quad s\geq0,$$
we obtain
$$\mu(t,T)=\inf\{s\geq0\mid d_{|T|}(s)\leq t\},\quad t>0.$$
Here, $E^{|T|}$ denotes the projection valued spectral measure of the operator $|T|.$
Thus we have $T\in\Lc_1(\Mcal,\tau)$ if and only if $\mu(T)$ is integrable, and then $\tau(|T|)=\int_0^1\mu(t,T)\,dt$.

\subsection{The topological $*$-algebra $\Lc_{\log}(\Mcal,\tau)$}

In what follows,
$$\Lc_{\log}(\Mcal,\tau)=\{A\in\Sc(\Mcal,\tau)\mid \log^+(|A|)\in\Lc_1(\Mcal,\tau)\},$$
where $\log^+(\lambda)=\max\{\log(\lambda),0\}$ for every $\lambda\geq0$
and we let
$$\|T\|_{\log}=\tau(\log(1+|T|)),\qquad(T\in\Lc_{\log}).$$
Thus, $T\in\Lc_{\log}(\Mcal,\tau)$ if and only if $\log^+(\mu(T))$ is integrable and then
$$\|T\|_{\log}=\int_0^1\log(1+\mu(t,T))\,dt=\int_{[0,\infty)}\log(1+t)\,d\nu_{|T|}(r),$$
where $\nu_{|T|}$ is the spectral distribution measure of $|T|$, i.e., $\tau$ composed with the spectral measure of $|T|$.

Note that if $(\Mcal_1,\tau_1)$ and $(\Mcal_2,\tau_2)$ are tracial von Neumann algebras and if $\sigma:\Mcal_1\to\Mcal_2$
is a trace-preserving normal $*$-homomorphism, (which must, then, be injective) then
the inclusion
$\Sc(\Mcal_1,\tau_1)\to\Sc(\Mcal_2,\tau_2)$ described in \S\ref{subsec:II1} restricts to an inclusion
\begin{equation}\label{eq:sigmat}
\sigmat:\Lc_{\log}(\Mcal_1,\tau_1)\to\Lc_{\log}(\Mcal_2,\tau_2),
\end{equation}
which is an extension of $\sigma$ and preserves $\|\cdot\|_{\log}$.

In Theorem 4.9 of \cite{DSZ:Llog}, the authors prove:
\begin{subthm}\label{thm:LlogTopalg} $\|\cdot\|_{\log}$ is an F-norm on $\Lc_{\log}(\Mcal,\tau)$
and, with respect to the metric $d_{\log}(S,T)=\|S-T\|_{\log}$, $\Lc_{\log}(\Mcal,\tau)$ is a complete topological $*$-algebra.
\end{subthm}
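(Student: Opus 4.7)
The plan is to verify the F-norm axioms, establish closure under the $*$-algebra operations together with joint continuity of multiplication, and prove completeness; the main technical inputs are Rotfel'd's subadditivity inequality, the Fack--Kosaki singular value inequalities, and Fatou's lemma in the measure topology. Definiteness is immediate from faithfulness of $\tau$ and positivity of $\log(1+|T|)$, and $\|{-T}\|_{\log}=\|T\|_{\log}$ follows from $|{-T}|=|T|$. For subadditivity I would invoke Rotfel'd's inequality: for any continuous concave $f\colon[0,\infty)\to[0,\infty)$ with $f(0)=0$ and any $S,T\in\Sc(\Mcal,\tau)$ one has $\tau(f(|S+T|))\leq\tau(f(|S|))+\tau(f(|T|))$, and specializing to $f(x)=\log(1+x)$ gives the triangle inequality. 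Continuity of scalar multiplication at zero then follows from the pointwise bound $\log(1+|\lambda|x)\leq(|\lambda|\vee 1)\log(1+x)$ by dominated convergence applied to the singular value function.

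For the topological $*$-algebra structure, involution is an isometry for $\|\cdot\|_{\log}$ since $\mu(\cdot,T)=\mu(\cdot,T^*)$. Closure under multiplication follows from the Fack--Kosaki multiplicative inequality $\mu(2t,ST)\leq\mu(t,S)\mu(t,T)$ combined with the elementary estimate $\log(1+ab)\leq\log(1+a)+\log(1+b)$, yielding $\|ST\|_{\log}\leq 2(\|S\|_{\log}+\|T\|_{\log})$. Joint continuity of multiplication then reduces, via the identity $S_nT_n-ST=(S_n-S)T_n+S(T_n-T)$, to continuity of the one-sided multiplication $R\mapsto SR$ at zero for fixed $S$ together with a uniform bound on $\|\cdot\|_{\log}$-convergent sequences.

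For completeness, the key observation is the Chebyshev-type estimate $\log(1+\delta)\,\tau(1_{[\delta,\infty)}(|T|))\leq\|T\|_{\log}$, which shows that convergence in $\|\cdot\|_{\log}$ implies convergence in Nelson's measure topology. Hence a $\|\cdot\|_{\log}$-Cauchy sequence $(T_n)$ converges to some $T\in\Sc(\Mcal,\tau)$ in the measure topology. Along a subsequence, continuity of $R\mapsto\mu(\cdot,R)$ from the measure topology to convergence in measure on $(0,1)$ yields $\mu(\cdot,T_m-T_n)\to\mu(\cdot,T-T_n)$ almost everywhere as $m\to\infty$, and Fatou's lemma applied to $\log(1+\mu(\cdot,T_m-T_n))$ then delivers $\|T-T_n\|_{\log}\leq\liminf_m\|T_m-T_n\|_{\log}$. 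This quantity is arbitrarily small for $n$ large by the Cauchy property, so $T\in\Lc_{\log}(\Mcal,\tau)$ and $T_n\to T$ in $\|\cdot\|_{\log}$.

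I expect the main obstacle to be the joint continuity of multiplication. The closure bound $\|ST\|_{\log}\leq 2(\|S\|_{\log}+\|T\|_{\log})$ is not multiplicative, and the pointwise Chebyshev estimate $\mu(t,R)\leq e^{\|R\|_{\log}/t}-1$, obtained from monotonicity of $\mu(\cdot,R)$, blows up at $t=0$, so there is no globally integrable dominating function for $\log(1+\mu(t/2,S)\mu(t/2,R))$ as $R$ varies near zero. The natural remedy is to split the integral defining $\|SR\|_{\log}$ at some small $t_0>0$: control the contribution on $[0,t_0]$ by the absolute continuity of $\int_0^{\cdot}\log(1+\mu(u,S))\,du$ together with $\|R\|_{\log}$, and control the contribution on $[t_0,1/2]$ by the uniform smallness of $\mu(u,R)\leq e^{\|R\|_{\log}/t_0}-1$ for $u\geq t_0$ as $\|R\|_{\log}\to 0$. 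Executing this argument uniformly in both variables is, I expect, the technical core of the proof.
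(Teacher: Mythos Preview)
The paper does not actually prove this statement; it is quoted verbatim as Theorem~4.9 of the companion paper \cite{DSZ:Llog}, so there is no in-paper argument to compare against. Your outline is a reasonable reconstruction of how such a proof would go, and the ingredients you name (Rotfel'd-type concave subadditivity for the triangle inequality, the Fack--Kosaki singular value calculus for multiplicativity, a Chebyshev bound plus Fatou for completeness) are the natural ones.

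One small remark: your multiplicative bound $\|ST\|_{\log}\leq 2(\|S\|_{\log}+\|T\|_{\log})$ is weaker than necessary. The paper records (Lemma~\ref{lem:elemineq}\eqref{it:elemMult}, again citing \cite{DSZ:Llog}) the sharp form $\|ST\|_{\log}\leq\|S\|_{\log}+\|T\|_{\log}$, which follows from the logarithmic submajorization $\mu(ST)\prec\prec_{\log}\mu(S)\mu(T)$, the fact that $x\mapsto\log(1+e^x)$ is convex and increasing, and the elementary inequality $\log(1+ab)\leq\log(1+a)+\log(1+b)$. The sharp form is not essential for your purposes, but it streamlines the joint continuity argument.

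There is a genuine, though easily repaired, gap in your treatment of joint continuity. From the two-term identity $S_nT_n-ST=(S_n-S)T_n+S(T_n-T)$ you would need continuity of $R\mapsto RT_n$ at zero \emph{uniformly in $n$}, and a mere $\|\cdot\|_{\log}$-bound on the $T_n$ does not deliver this: your $t_0$-splitting depends on the integrability profile $\int_0^{t_0}\log(1+\mu(s,T_n))\,ds$ of the fixed second factor, not just on its $\|\cdot\|_{\log}$-size. The fix is to use the three-term decomposition
\[
S_nT_n-ST=(S_n-S)(T_n-T)+(S_n-S)T+S(T_n-T).
\]
The first term tends to zero directly from the crude bound $\|AB\|_{\log}\leq\|A\|_{\log}+\|B\|_{\log}$ (or your factor-$2$ version), since both factors tend to zero; the remaining two terms tend to zero by the separate continuity you establish via the $t_0$-splitting, now with the second factor $T$ (respectively the first factor $S$) genuinely fixed. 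With this adjustment your strategy is complete.
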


Note that being an F-norm entails
$$\|S+T\|_{\log}\le\|S\|_{\log}+\|T\|_{\log}.$$

The following elementary inequalities (see Lemmas 4.1 and 4.3 of~\cite{DSZ:Llog} for proofs) will be useful.
\begin{sublem}\label{lem:elemineq} Let $S,T\in\Lc_{\log}(\Mcal,\tau).$ Then
\begin{enumerate}[{\rm (a)}]
\item\label{it:elemBddMult} if $S$ is bounded, then $\|ST\|_{\log}\le\max(\|S\|,1)\,\|T\|_{\log}$,
\item\label{it:elemto0} $\lim_{r\to0}\|rT\|_{\log}=0$,
\item\label{it:elemMult} $\|ST\|_{\log}\le\|S\|_{\log}+\|T\|_{\log}.$
\end{enumerate}
\end{sublem}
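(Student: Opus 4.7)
The plan is to use the integral formula $\|X\|_{\log}=\int_0^1\log(1+\mu(t,X))\,dt$ recorded above and reduce each of \eqref{it:elemBddMult}--\eqref{it:elemMult} to an inequality for singular value functions on $(0,1)$. Throughout, the scalar inequality $\log(1+ab)\le\log(1+a)+\log(1+b)$ (equivalently $1+ab\le(1+a)(1+b)$) for $a,b\ge0$ will do most of the work.

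For part \eqref{it:elemBddMult}, the operator inequality $|ST|^2=T^*S^*ST\le\|S\|^2|T|^2$ combined with operator monotonicity of the square root gives $|ST|\le\|S\|\,|T|$, hence $\mu(t,ST)\le\|S\|\,\mu(t,T)$. The estimate is then closed by the pointwise bound $\log(1+cx)\le\max(c,1)\log(1+x)$ for $c,x\ge0$, which is trivial when $c\le1$ and is an instance of Bernoulli's inequality $(1+x)^c\ge 1+cx$ when $c\ge1$. Part \eqref{it:elemto0} is dominated convergence applied to $\log(1+r\mu(t,T))$: this is dominated by $\log(1+\mu(t,T))\in L^1((0,1))$ for $r\in(0,1]$, and tends to $0$ pointwise in $t$ as $r\to0$.

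Part \eqref{it:elemMult} is the main obstacle. The plan is to invoke the logarithmic submajorization inequality $\int_0^s\log\mu(t,ST)\,dt\le\int_0^s(\log\mu(t,S)+\log\mu(t,T))\,dt$ for all $s\in(0,1]$, valid in $\Sc(\Mcal,\tau)$, and then to lift it to $\log(1+\cdot)$ via the II$_1$-analog of Rotfel'd's inequality: for any non-decreasing $f:[0,\infty)\to[0,\infty)$ with $f(0)=0$ such that $u\mapsto f(e^u)$ is convex, logarithmic submajorization of singular value functions yields $\int_0^1 f(\mu(t,ST))\,dt\le\int_0^1 f(\mu(t,S)\mu(t,T))\,dt$. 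The function $f(x)=\log(1+x)$ belongs to this class, since $\tfrac{d^2}{du^2}\log(1+e^u)=e^u/(1+e^u)^2>0$. Combining with the scalar inequality from the first paragraph then delivers \eqml{it:elemMult}. The genuine difficulty is the Rotfel'd step: one must check that $\log(1+\cdot)$ admits the passage from logarithmic submajorization to integral inequalities of this form, which is exactly what the convexity of $u\mapsto\log(1+e^u)$ secures. Once that lifting is in hand, all three parts reduce to short calculations.
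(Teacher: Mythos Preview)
Your argument is correct. The paper itself does not prove this lemma but simply cites Lemmas~4.1 and~4.3 of~\cite{DSZ:Llog}, so a line-by-line comparison is not possible from the present text. That said, your route is the standard one and would almost certainly match what is in~\cite{DSZ:Llog}: parts~\eqref{it:elemBddMult} and~\eqref{it:elemto0} are immediate from $\mu(t,ST)\le\|S\|\mu(t,T)$ together with the elementary scalar bounds you state, while part~\eqref{it:elemMult} follows from the Weyl-type logarithmic submajorization $\mu(ST)\prec\prec_{\log}\mu(S)\mu(T)$ (see e.g.\ Fack--Kosaki~\cite{FackKosaki}) and the fact that $u\mapsto\log(1+e^u)$ is increasing and convex, which permits the passage from $\prec\prec_{\log}$ to the integral inequality for $f(x)=\log(1+x)$. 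One small point worth making explicit: to avoid any circularity you should observe that the submajorization already gives $\int_0^s\log^+\mu(t,ST)\,dt\le\int_0^s\log^+\mu(t,S)\,dt+\int_0^s\log^+\mu(t,T)\,dt$ for $s$ up to the level set $\{\mu(ST)>1\}$, so $ST\in\Lc_{\log}$ a priori and all integrals are finite before you invoke the Rotfel'd step. Also, you have a typo: \verb|\eqml{it:elemMult}| should be \verb|\eqref{it:elemMult}|.
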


We will also need the following elementary estimate, which is in a similar vein.
\begin{sublem}\label{lem:elemlog+}
For every $X,Y\in\Lc_{\log}(\Mcal,\tau),$ we have
$$\tau(\log^+(|X+Y|))\leq \tau(\log^+(2|X|))+\tau(\log^+(2|Y|)).$$
\end{sublem}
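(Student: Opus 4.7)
The plan is to reduce the operator inequality to a distribution-function estimate via the layer-cake representation of $\log^+$ and then apply the Fack--Kosaki singular value inequality $\mu(s+t,X+Y)\leq \mu(s,X)+\mu(t,Y)$.

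First, I would write $\log^+(s)=\int_1^\infty \mathbf{1}_{(\lambda,\infty)}(s)\,d\lambda/\lambda$ for $s\geq 0$. Applying the Borel functional calculus to $|T|\geq 0$ and exchanging integrals via Fubini then yields
$$\tau(\log^+(|T|))=\int_1^\infty \frac{d_{|T|}(\lambda)}{\lambda}\,d\lambda,$$
which puts both sides of the claimed inequality on a common footing.

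The crux is the distribution-function bound
$$d_{|X+Y|}(\lambda)\leq d_{|X|}(\lambda/2)+d_{|Y|}(\lambda/2),\qquad\lambda>0.$$
To prove it, I would set $s=d_{|X|}(\lambda/2)$ and $t=d_{|Y|}(\lambda/2)$. Because $\mu(\cdot,T)$ is the generalized inverse of $d_{|T|}$, one has the equivalence $\mu(r,T)\leq \alpha\iff d_{|T|}(\alpha)\leq r$; using it in one direction gives $\mu(s,X)\leq \lambda/2$ and $\mu(t,Y)\leq \lambda/2$, so Fack--Kosaki produces $\mu(s+t,X+Y)\leq \lambda$, and using the equivalence in the other direction yields $d_{|X+Y|}(\lambda)\leq s+t$.

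To finish, I would integrate this bound against $d\lambda/\lambda$ over $(1,\infty)$. Since $E^{|X|}(\lambda/2,\infty)=E^{2|X|}(\lambda,\infty)$, the identity $d_{|X|}(\lambda/2)=d_{2|X|}(\lambda)$ is immediate, whence $\int_1^\infty d_{|X|}(\lambda/2)/\lambda\,d\lambda=\tau(\log^+(2|X|))$, and likewise for $Y$. The only non-routine step is the distribution-function bound itself, which Fack--Kosaki makes immediate; the factor of $2$ inside $\log^+$ on the right-hand side is precisely what this halving of $\lambda$ produces, not an artifact that can be removed by this method.
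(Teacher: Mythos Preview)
Your argument is correct. The layer-cake identity $\log^+(s)=\int_1^\infty \mathbf{1}_{(\lambda,\infty)}(s)\,\frac{d\lambda}{\lambda}$ together with Tonelli gives $\tau(\log^+(|T|))=\int_1^\infty d_{|T|}(\lambda)\,\frac{d\lambda}{\lambda}$, and the distribution-function bound $d_{|X+Y|}(\lambda)\le d_{|X|}(\lambda/2)+d_{|Y|}(\lambda/2)$ follows exactly as you say from the Fack--Kosaki inequality $\mu(s+t,X+Y)\le\mu(s,X)+\mu(t,Y)$ and the equivalence $\mu(r,T)\le\alpha\iff d_{|T|}(\alpha)\le r$ (the boundary cases $s=0$, $t=0$ or $s+t\ge1$ are harmless: the first two force boundedness of the corresponding operator, and the last makes the bound trivial since $d_{|X+Y|}\le1$).

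This is a genuinely different route from the paper's proof. The paper first reduces to $X,Y\ge0$ (via Fack--Kosaki's Lemma~4.3) and then argues multiplicatively with the Fuglede--Kadison determinant: setting $T=\max\{2X,1\}$, $S=\max\{2Y,1\}$, it factors $\frac{T+S}{2}=S\cdot\frac{S^{-1}+T^{-1}}{2}\cdot T$ and uses $\Delta(\frac{S^{-1}+T^{-1}}{2})\le\Delta(1)=1$ together with multiplicativity to get $\Delta(\frac{T+S}{2})\le\Delta(T)\Delta(S)$, which upon taking logarithms is the desired inequality. Your approach avoids both the reduction to positive operators and the determinant machinery, working instead at the level of distribution functions; it is more elementary and immediately adapts to any nonnegative increasing function admitting a layer-cake decomposition. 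The paper's argument, by contrast, exploits the specific multiplicative structure of $\log$ and is shorter once the determinant apparatus is in place.
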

\begin{proof} By Lemma 4.3 in \cite{FackKosaki}, it suffices to prove the assertion for $X,Y\geq 0$.
Put $T=\max\{2X,1\}$ and $S=\max\{2Y,1\}$.
We have
$$\Delta(S^{-1}\cdot\frac{T+S}{2}\cdot T^{-1})=\Delta(\frac{S^{-1}+T^{-1}}{2})\leq\Delta(1)=1.$$
Hence,
$$\Delta(\frac{T+S}{2})\leq\Delta(T)\Delta(S).$$
Taking the logarithm, we arrive at
\begin{align*}
\tau(\log_+(X+Y))\leq\tau(\log_+(\frac{T+S}{2}))=\log(\Delta(\frac{T+S}{2}))\leq\\
\leq\log(\Delta(S))+\log(\Delta(T))=\tau(\log_+(2X))+\tau(\log_+(2Y)).
\end{align*}
\end{proof}

\subsection{Fuglede-Kadison determinant and Brown measure}
\label{subsec:FK}

In \cite{HS1}, Haagerup and Schultz constructed a mapping $\Delta:\Lc_{\log}(\Mcal,\tau)\to\mathbb{R}_+$ which is a homomorphism with respect to the multiplication.
That is,
\begin{equation}\label{det homomorphism}
\Delta(ST)=\Delta(S)\Delta(T),\quad S,T\in\Lc_{\log}(\Mcal,\tau).
\end{equation}
This mapping is called the {\em Fuglede--Kadison determinant};
it was introduced in~\cite{FK}.
It is defined by
$$
\Delta(T)=\exp(\tau(\log(|T|))),\quad T\in\Lc_{\log}(\Mcal,\tau).
$$
Using $T\in\Lc_{\log}(\Mcal,\tau)$ and monotone convergence, we immediately see
\begin{equation}\label{eq:Deltaeps}
\Delta(T)^2=\Delta(|T|^2)=\lim_{\eps\to0}\Delta(|T|^2+\eps).
\end{equation}
We also have
$$
\Delta(S)\leq\Delta(T),\quad 0\leq S\leq T\in\Lc_{\log}(\Mcal,\tau).
$$
It follows from Lemma 2.4 in \cite{HS1} that $T\in\Lc_{\log}(\Mcal,\tau)$ implies $T^{-1}\in\Lc_{\log}(\Mcal,\tau)$ whenever $\Delta(T)\neq0.$

For every operator $T\in\Lc_{\log}(\Mcal,\tau),$ the function
\begin{equation}\label{subhar}
\lambda\to\tau(\log(|T-\lambda|)),\quad\lambda\in\mathbb{C},
\end{equation}
is shown to be subharmonic in \cite{Brown} (for the special case of bounded $T$) and in \cite{HS1} in general.
Moreover, this function is not identically $-\infty$ (see, e.g., the proof of Proposition 2.9 of~\cite{HS1}).
A probability measure $\nu_T$ was constructed in \cite{HS1},
based on earlier results of L.\ Brown~ \cite{Brown}.
It is called the {\em Brown measure} of $T$ and satisfies
\begin{equation}\label{br def}
\tau(\log(|T-\lambda|))=\int_{\mathbb{C}}\log(|z-\lambda|)d\nu_T(z),\qquad(\lambda\in\mathbb{C}).
\end{equation}
This $\nu_T$ can be viewed as the II$_1$--analogue of the spectral counting measure (according
to algebraic multiplicity) on matrices. It can be recovered by taking the Laplacian of the mapping in \eqref{subhar}.

Note that $\nu_{T^*}$ is the push-forward measure of $\nu_T$ under conjugation and (see Proposition 2.16 in \cite{HS1}) if $T^{-1}\in\Lc_{\log}(\Mcal, \tau)$, then $\nu_{T^{-1}}$ is the push-forward measure of $\nu_T$ under $z\mapsto z^{-1}.$

Recall for $T\in\Sc(\Mcal,\tau)$ that a projection $p\in\Mcal$ is said to be {\em $T$-invariant} if $Tp=pTp$, and that we can then write
$T=\left(\begin{smallmatrix}
A&B\\
0&C
\end{smallmatrix}\right),$
where $A=Tp,$ $B=pT(1-p),$ and $C=(1-p)T$.

The following is Proposition 2.24 of \cite{HS1} and a consequence of it.
For  Fuglede--Kadison determinants, we write
$\Delta_\Mcal$ for the one in $(\Mcal,\tau)$,
$\Delta_{p\Mcal p}$ for the one in $(p\Mcal p,\tau(p)^{-1}\tau|_{p\Mcal p})$
and 
$\Delta_{(1-p)\Mcal(1-p)}$ for the one in the corresponding cut-down by $1-p$.
\begin{subthm}\label{det nu matrix}
If $T\in\Lc_{\log}(\Mcal,\tau)$ and if $p\in\Mcal$ is a $T$-invariant projection, then
\begin{equation}\label{DeltaT}
\Delta_{\Mcal}(T)=\Delta_{p\Mcal p}(A)^{\tau(p)}\Delta_{(1-p)\Mcal(1-p)}(C)^{\tau(1-p)}
\end{equation}
and
\begin{equation}\label{nuT}
\nu_T=\tau(p)\nu_A+\tau(1-p)\nu_C,
\end{equation}
where $\nu_A$ means the Brown measure of $A$ in $(p\Mcal p,\tau(p)^{-1}\tau|_{p\Mcal p})$ and similarly for $\nu_C$ and the cut-down by $1-p$.
\end{subthm}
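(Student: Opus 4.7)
The plan is to establish (a) first---more generally, its analogue with $T-\lambda$ in place of $T$ for every $\lambda\in\Cpx$---and then to derive (b) from it by taking distributional Laplacians in $\lambda$ and invoking the defining relation \eqref{br def} of the Brown measure.

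For (a), the central identity is a block LU factorisation. Suppose first that $A$ is invertible in $p\Mcal p$; then, relative to $1=p+(1-p)$,
$$T = \begin{pmatrix} A & B \\ 0 & C \end{pmatrix} = \begin{pmatrix} A & 0 \\ 0 & C \end{pmatrix}\begin{pmatrix} p & A^{-1}B \\ 0 & 1-p \end{pmatrix} =: D\cdot U,$$
and multiplicativity \eqref{det homomorphism} yields $\Delta_\Mcal(T)=\Delta_\Mcal(D)\Delta_\Mcal(U)$. For the block-diagonal factor $D$, $|D|$ is itself block-diagonal with entries $|A|$ and $|C|$, so the definition $\Delta=\exp(\tau\log|\cdot|)$ together with the normalisation $\tau|_{p\Mcal p}=\tau(p)\,\tau_p$ immediately yields
$$\log\Delta_\Mcal(D) = \tau(p)\log\Delta_{p\Mcal p}(A)+\tau(1-p)\log\Delta_{(1-p)\Mcal(1-p)}(C).$$
For the unipotent factor $U=1+N$, the relation $N^2=0$ makes $(1-\lambda)^{-1}(1-(1-\lambda)^{-1}N)$ a two-sided inverse of $U-\lambda$ for every $\lambda\ne 1$; in the bounded setting this pins the spectrum of $U$ to $\{1\}$, so $\nu_U=\delta_1$ and $\Delta_\Mcal(U)=1$. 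A spectral truncation of the off-diagonal entry of $N$ (which keeps $N^2=0$) combined with the F-norm bounds of Lemma \ref{lem:elemineq} should promote this to the possibly unbounded situation at hand. This establishes (a) when $A$ is invertible.

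To remove the invertibility of $A$, I would apply the same identity to $T-\lambda$---still upper triangular for $p$, with corners $A-\lambda p$ and $C-\lambda(1-p)$---for those $\lambda$ at which $A-\lambda p$ is invertible in $p\Mcal p$. Since $\lambda\mapsto\log\Delta_{p\Mcal p}(A-\lambda p)$ is subharmonic and not identically $-\infty$ (cf.\ the remarks around \eqref{subhar}), and since the excerpt records that $\Delta\ne 0$ is equivalent to invertibility in $\Lc_{\log}$, the excluded set is Lebesgue-null. On its complement the identity above, together with \eqref{br def} applied in each of the three ambient algebras, says that the logarithmic potentials of $\nu_T$ and of $\tau(p)\nu_A+\tau(1-p)\nu_C$ coincide a.e.\ on $\Cpx$. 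Since a probability measure on $\Cpx$ is determined by its logarithmic potential via the distributional Laplacian, part (b) follows, and integrating $\log|z|$ against the resulting measure equality recovers (a) at $\lambda=0$.

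The main obstacle I anticipate is ensuring that $A^{-1}B$, $D$, and $U$ all lie in $\Lc_{\log}(\Mcal,\tau)$, so that \eqref{det homomorphism} legitimately applies. This will hinge on controlling $B=pT(1-p)$ in $\|\cdot\|_{\log}$ (via Lemma \ref{lem:elemineq}(a) and its right-multiplicative analogue) and on the two limiting arguments---spectral truncation of $N$, and avoidance of the exceptional $\lambda$-set---interacting correctly with the upper semicontinuity of $\Delta$ on $\Lc_{\log}$.
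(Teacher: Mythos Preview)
The paper does not prove this statement; it is quoted directly as Proposition~2.24 of \cite{HS1} (together with an immediate consequence). Your outline is therefore not being compared against a proof in this paper, but it is essentially the standard route and would work, with one caveat.

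The block factorisation $T=DU$, the computation of $\Delta_\Mcal(D)$, the passage to $T-\lambda$ to handle non-invertible $A$, and the recovery of \eqref{nuT} from the a.e.\ equality of logarithmic potentials via the distributional Laplacian are all sound. Two subharmonic functions that agree Lebesgue-a.e.\ agree everywhere, so once \eqref{nuT} is in hand you indeed recover \eqref{DeltaT} at $\lambda=0$ as well.

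The genuine soft spot is your justification of $\Delta_\Mcal(U)=1$ for $U=1+N$ with $N=pN(1-p)\in\Lc_{\log}$ and $N^2=0$. The spectral-truncation argument you sketch produces bounded $N_k$ with $N_k^2=0$ and $\Delta(1+N_k)=1$, but concluding $\Delta(1+N)=1$ from this requires a continuity property of $\Delta$ on $\Lc_{\log}$ that is not available in general; upper semicontinuity goes the wrong way here. A direct argument avoids the limit entirely: from $N^2=0$ one has $(1+N)(1-N)=1$ in $\Lc_{\log}$, so $\Delta(1+N)\Delta(1-N)=1$ by \eqref{det homomorphism}; and conjugating by the self-adjoint unitary $v=2p-1$ gives $v(1+N)v^{-1}=1-N$, whence $\Delta(1-N)=\Delta(1+N)$. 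Thus $\Delta(1+N)^2=1$ and $\Delta(U)=1$. With this replacement your proof goes through.
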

We will use the equation \eqref{DeltaT} also in the case of $p=0$ or $p=1,$ by making the convention $\Delta_{\{0\}}(0)^0=1.$

\begin{subcor}\label{nu matrix cor} If $T\in\Lc_{\log}(\Mcal,\tau)$ and if $P_1,P_2\in\Mcal$ are $T$-invariant projections such that $P_1\leq P_2,$ then
$$
{\rm supp}(\nu_{TP_1})\subset{\rm supp}(\nu_{TP_2}).
$$
\end{subcor}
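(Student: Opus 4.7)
The plan is to reduce to the corner $P_2\Mcal P_2$ and apply formula~\eqref{nuT} from Theorem~\ref{det nu matrix} there. So first I would introduce the cut-down tracial algebra $(P_2\Mcal P_2,\tau_{P_2})$ with renormalized trace $\tau_{P_2}=\tau(P_2)^{-1}\tau|_{P_2\Mcal P_2}$, and consider the element $T_2:=P_2TP_2 = TP_2\in\Lc_{\log}(P_2\Mcal P_2,\tau_{P_2})$. (One needs to know $T_2$ does indeed lie in $\Lc_{\log}$ of the corner; this is standard because cutting by a projection in $\Mcal$ only decreases the relevant spectral-type data.)

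The key verification is that $P_1$ is a $T_2$-invariant projection inside $P_2\Mcal P_2$. This is a two-line computation: since $P_1\le P_2$ we have $P_2P_1=P_1P_2=P_1$, and since $P_1$ is $T$-invariant we have $TP_1=P_1TP_1$; hence
\[
T_2P_1 \;=\; P_2TP_2P_1 \;=\; P_2TP_1 \;=\; P_2P_1TP_1 \;=\; P_1TP_1 \;=\; P_1T_2P_1.
\]

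Next I would apply formula~\eqref{nuT} of Theorem~\ref{det nu matrix} to $T_2$ in $(P_2\Mcal P_2,\tau_{P_2})$ with the $T_2$-invariant projection $P_1$. The upper-left corner of $T_2$ in the decomposition by $P_1$ is $P_1T_2P_1=P_1TP_1=TP_1\in P_1\Mcal P_1$, and the lower-right corner is $(P_2-P_1)T(P_2-P_1)\in (P_2-P_1)\Mcal(P_2-P_1)$. The formula thus gives
\[
\nu_{TP_2} \;=\; \frac{\tau(P_1)}{\tau(P_2)}\,\nu_{TP_1} \;+\; \frac{\tau(P_2-P_1)}{\tau(P_2)}\,\nu_{(P_2-P_1)T(P_2-P_1)}.
\]
Since the coefficient $\tau(P_1)/\tau(P_2)$ is strictly positive (assuming $P_1\neq 0$; the case $P_1=0$ is trivial under the convention $\Delta_{\{0\}}(0)^0=1$, and $P_1=P_2$ is also trivial), $\nu_{TP_1}$ appears as a positive summand of $\nu_{TP_2}$ and hence $\operatorname{supp}(\nu_{TP_1})\subset\operatorname{supp}(\nu_{TP_2})$.

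There is really no substantial obstacle here: the corollary is a direct consequence of Theorem~\ref{det nu matrix} once one has the correct setting. The only mildly subtle points are the bookkeeping about where each of the Brown measures is computed (i.e., in which corner algebra, with which normalized trace) and the handling of the degenerate cases $P_1=0$ and $P_1=P_2$, both of which are immediate.
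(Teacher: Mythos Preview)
Your proof is correct and follows essentially the same route as the paper's: reduce to the corner $P_2\Mcal P_2$, observe that $P_1$ is $(TP_2)$-invariant there, apply~\eqref{nuT} from Theorem~\ref{det nu matrix} to obtain $\nu_{TP_2}=\frac{\tau(P_1)}{\tau(P_2)}\nu_{TP_1}+\frac{\tau(P_2-P_1)}{\tau(P_2)}\nu_C$, and read off the support inclusion. Your write-up is somewhat more explicit than the paper's about the bookkeeping (the verification that $P_1$ is $T_2$-invariant and the degenerate cases), but the argument is the same.
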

\begin{proof} One may write
$TP_2=\left(\begin{smallmatrix}
A&B\\
0&C
\end{smallmatrix}\right),$
where $A=TP_1,$ $B=P_1T(P_2-P_1),$ and $C=(P_2-P_1)T.$ It follows from Theorem \ref{det nu matrix} that
$$\nu_{TP_2}=\frac{\tau(P_1)}{\tau(P_2)}\nu_{TP_1}+\frac{\tau(P_2-P_1)}{\tau(P_2)}\nu_C.$$
In particular, we have
$$\nu_{TP_2}\geq\frac{\tau(P_1)}{\tau(P_2)}\nu_{TP_1}.$$
This proves the assertion.
\end{proof}

We say that an operator $Q\in\Lc_{\log}(\Mcal,\tau)$ {\em $\tau$-quasinilpotent} if $\nu_Q=\delta_0$.
\begin{subconj}
$Q$ is $\tau$-quasinilpotent if and only if $|Q^n|^{1/n}\to0$ in measure.
\end{subconj}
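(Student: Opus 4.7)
The plan is to adapt the classical Haagerup--Schultz characterization of $\tau$-quasinilpotency for bounded operators in $\Mcal$ to operators in $\Lc_{\log}(\Mcal,\tau)$, replacing strong operator topology by convergence in measure and using the Haagerup--Schultz projections in $\Mcal_1$ from Theorem~\ref{construction theorem}. First note the reformulation: $|Q^n|^{1/n}\to 0$ in measure iff $\mu(s,Q^n)^{1/n}\to 0$ for every $s\in(0,1)$, via the identity $\mu(s,|Q^n|^{1/n})=\mu(s,Q^n)^{1/n}$. Thus the conjecture reduces to the equivalence of $\nu_Q=\delta_0$ with pointwise vanishing of these $n$-th roots.

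For the backward direction I would argue by contraposition. Suppose $\nu_Q\ne\delta_0$; choose $r>0$ with $\alpha:=\nu_Q(\Cpx\setminus\overline{D(0,r)})\in(0,1)$ and apply Theorem~\ref{construction theorem} with $\Bc=\Cpx\setminus\overline{D(0,r)}$ to get a projection $P\in\Mcal_1$ with $\tau_1(P)=\alpha$ and $QP=PQP$; by part~(c), the Brown measure of $QP$ in the corner $(P\Mcal_1 P,\alpha^{-1}\tau_1|_{P\Mcal_1 P})$ equals $\alpha^{-1}\nu_Q|_\Bc$ and is supported in $\overline\Bc\subseteq\{|z|\ge r\}$, whence $\Delta_{P\Mcal_1 P}(QP)\ge r$. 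Combining the homomorphism property~\eqref{det homomorphism}, the identity $(QP)^n=Q^n P$ (from $Q$-invariance of $\operatorname{ran}(P)$), the relation $\mu^{P\Mcal_1 P}(t,X)=\mu^{\Mcal_1}(\alpha t,X)$ for $X\in P\Mcal_1 P$, and the inequality $\mu(s,Q^n P)\le\mu(s,Q^n)$, one derives
$$
\frac{1}{n}\int_0^\alpha\log\mu(s,Q^n)\,ds\;\ge\;\alpha\log r\qquad\text{for every }n.
$$
If $\mu(s,Q^n)^{1/n}\to 0$ for every $s$, Egorov's theorem produces $E\subseteq(0,\alpha)$ of positive Lebesgue measure on which convergence is uniform, so $\frac{1}{n}\int_E\log\mu(s,Q^n)\,ds\to-\infty$; meanwhile Lemma~\ref{lem:elemineq}(\ref{it:elemMult}) bounds $\frac{1}{n}\int_{(0,\alpha)\setminus E}\log\mu(s,Q^n)\,ds\le\frac{1}{n}\|Q^n\|_{\log}\le\|Q\|_{\log}$. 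Adding the two yields divergence of the left-hand side of the display to $-\infty$, contradicting the lower bound.

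The forward direction is the main obstacle. When $\nu_Q=\delta_0$ the Haagerup--Schultz projections $P(Q,\overline{D(0,r)})$ all equal the identity of $\Mcal_1$, so afford no useful decomposition, and the upper triangular decomposition of Theorem~\ref{decomposition theorem} collapses to the trivial $Q=0+Q$. The natural plan is to prove a Yamamoto-type convergence $\mu(t,Q^n)^{1/n}\to\lambda(t,Q)$ in $\Lc_{\log}(\Mcal,\tau)$, where $\lambda(\cdot,Q)$ is the decreasing rearrangement of $|z|$ with respect to $\nu_Q$; the forward conclusion then follows from $\lambda(t,Q)\equiv 0$. Haagerup and Schultz proved this formula for bounded $T\in\Mcal$ using SOT-convergence inside $\Mcal$, and extending it to $\Lc_{\log}(\Mcal,\tau)$ runs into the notorious discontinuity of Brown measure under measure-topology approximation. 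One plausible route is to produce bounded approximants of $Q$ in $\Mcal$ via spectral cutoffs of $|Q|$ composed with the polar part of $Q$; the delicate technical point, and the principal obstacle, is to arrange these approximants so that they remain $\tau$-quasinilpotent (so that the bounded Haagerup--Schultz theorem applies) while their $n$-th-root iterates dominate $|Q^n|^{1/n}$ in measure.
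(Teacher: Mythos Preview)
The statement you are attempting to prove is listed in the paper as a \emph{conjecture}, not a theorem; the paper offers no proof. Immediately after stating it, the authors only remark that the bounded case follows from Theorem~8.1 of~\cite{HS2}. So there is no ``paper's own proof'' to compare against, and your honest assessment that the forward direction is ``the main obstacle'' is exactly the state of affairs the paper records.

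Your backward direction is essentially correct and is a genuine contribution beyond what the paper states. Two small comments. First, you write $\alpha=\nu_Q(\Cpx\setminus\overline{D(0,r)})\in(0,1)$, but if $\nu_Q$ happens to be supported entirely in $\{|z|\ge r\}$ this fails; in that case, however, $\Delta(Q)\ge r$ directly and you can run the same integral argument with $P=1$ and no appeal to Theorem~\ref{construction theorem}. Second, Egorov is unnecessary: since $s\mapsto\mu(s,Q^n)$ is nonincreasing, pointwise convergence $\mu(s,Q^n)^{1/n}\to 0$ at $s=\alpha/2$ already gives uniform convergence on $[\alpha/2,\alpha]$, which is all you need.

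For the forward direction you correctly identify that the Haagerup--Schultz projections of Theorem~\ref{construction theorem} and the decomposition of Theorem~\ref{decomposition theorem} give no leverage when $\nu_Q=\delta_0$, and that the natural approach---extending the Haagerup--Schultz Yamamoto-type convergence $\mu(t,T^n)^{1/n}\to\lambda(t,T)$ from $\Mcal$ to $\Lc_{\log}(\Mcal,\tau)$---runs into the instability of Brown measure under measure-topology limits. Your proposed workaround via bounded spectral cutoffs that remain $\tau$-quasinilpotent is not obviously feasible: truncating $|Q|$ destroys the algebraic structure governing $Q^n$, and there is no reason the truncated operator should have Brown measure $\delta_0$. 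This direction remains open, as the paper's labeling of the statement as a conjecture indicates.
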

The conjecture is known to be true when $Q$ is bounded.
Indeed, as a consequence of Theorem~8.1 of~\cite{HS2}, $\nu_Q=\delta_0$ is equivalent to 
$|Q^n|^{1/n}\to0$ in strong-operator-topology,
which, for a bounded sequence in a tracial von Neumann algebra is equivalent to convergence in measure to $0$.

\subsection{Mild operators and Haagerup--Schultz projections}

Recall that, for an operator $T$ affiliated to $\Mcal$, a projection $p\in\Mcal$ (acting on a Hilbert space $\HEu$)
is said to be $T$-invariant if $Tp=pTp$,
which is equivalent to the statement that $T\xi\in p\HEu$ whenever $\xi\in p\HEu\cap\operatorname{dom}(T)$.
Clearly, the set of $T$-invariant projections forms a lattice that is closed under taking arbitrary suprema.

Though the paper \cite{HS2} is mostly concerned with bounded operators, one of its central technical results (Theorem 6.6) is stated and proved for the operators in $\Lc_{\log}(\Mcal,\tau)$.
Here we state a slightly different form of this theorem, whose proof is only a minor perturbation of Haagerup and Schultz's proof of their result.
\begin{subthm}\label{hs integral}
Let $\mathcal{B}$ be a closed disk in $\mathbb{C}$.
Suppose $T\in\Lc_{\log}(\Mcal,\tau)$ has empty point spectrum and suppose that for some $p\in(\frac12,1)$
the map $\lambda\mapsto(T-\lambda)^{-1}$ is Lipschitz from the boundary of $\Bc$ to $\Lc_p(\Mcal,\tau)$.
Suppose $0<\nu_T(\Bc)<1$.
Then there is a $T$-invariant projection $q$ for $T$, such that
\begin{enumerate}[{\rm (i)}]
\item the Brown measure of $Tq$ with respect to the renormalized trace $\tau(q)^{-1}\tau|_{q\Mcal q}$
is supported in $\Bc$
\item the Brown measure of $(1-q)T$ with respect to  the renormalized trace $\tau(1-q)^{-1}\tau|_{(1-q)\Mcal(1-q)}$
is supported in the closure of $\Cpx\backslash\Bc$.
\end{enumerate}
\end{subthm}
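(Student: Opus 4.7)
The plan is to follow the Riesz-projection construction used by Haagerup and Schultz in the proof of Theorem~6.6 of \cite{HS2}, adapted to the present hypotheses. Using the Lipschitz condition on $\lambda\mapsto(T-\lambda)^{-1}$, I would first define
\[
E = \frac{1}{2\pi i}\oint_{\partial\Bc}(\lambda - T)^{-1}\,d\lambda
\]
as a Bochner integral with values in $\Lc_p(\Mcal,\tau)$, with $\partial\Bc$ positively oriented. The Lipschitz continuity of the integrand ensures that this integral converges in $\Lc_p$, and hence $E\in\Sc(\Mcal,\tau)$.

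Next I would verify the two algebraic identities $E^2 = E$ and $TE = ET$. The first is a standard Riesz-projection computation: writing $E\cdot E$ as an iterated Bochner integral over two slightly nested copies of $\partial\Bc$, applying the resolvent identity
\[
(\lambda - T)^{-1}(\mu - T)^{-1} = \frac{(\lambda - T)^{-1} - (\mu - T)^{-1}}{\mu - \lambda},
\]
and interchanging the order of integration by Fubini, the claim reduces to a residue calculation. The commutation $TE = ET$ follows from $T(\lambda - T)^{-1} = \lambda(\lambda - T)^{-1} - 1 = (\lambda - T)^{-1}T$ after integrating around $\partial\Bc$, where the constant term drops out. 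I would then take $q$ to be the range projection of $E$; it lies in $\Mcal$ because $E$ is affiliated to $\Mcal$, and the algebraic identities for $E$ force $q$ to be $T$-invariant.

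To establish the Brown measure conclusions, for part~(i) I would fix $\lambda$ in the complement of $\Bc$ and deform $\partial\Bc$ outward past $\lambda$ in the formula for $E$; the residue contribution produces an operator which, restricted to $q\Mcal q$, provides a resolvent for $Tq - \lambda$ in that corner. Consequently $\lambda$ lies outside $\operatorname{supp}\nu_{Tq}$, via the characterization of the Brown measure through the subharmonic function $\mu\mapsto\tau(\log(|Tq - \mu|))$ with respect to the renormalized trace $\tau(q)^{-1}\tau|_{q\Mcal q}$, together with the defining relation \eqref{br def}. Part~(ii) is handled symmetrically using $1 - E$ in place of $E$ and $\lambda$ in the interior of $\Bc$.

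The principal obstacle is this last step. Because $T$ is unbounded and $E$ lives only in $\Lc_p(\Mcal,\tau)$, one cannot invoke a holomorphic functional calculus in the usual Banach-algebra sense, so the extraction of resolvents for the compressions $Tq$ and $(1-q)T$ from the contour integral must be carried out through direct estimates. The empty point spectrum hypothesis on $T$ is essential, both for the well-definedness of $(T-\lambda)^{-1}$ on $\partial\Bc$ and to rule out pathologies in the residue analysis. Haagerup and Schultz's original proof of their Theorem~6.6 handles precisely these technicalities, and only minor perturbations of their argument should be required to accommodate the present formulation.
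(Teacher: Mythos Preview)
Your proposal is correct and follows essentially the same approach as the paper, which itself does not give a full proof but simply refers to Haagerup and Schultz's proof of their Theorem~6.6 in \cite{HS2}, describing the construction of the idempotent $E(T,\Bc)=\frac1{2\pi i}\int_{\partial\Bc}(z-T)^{-1}\,dz$ and taking $q$ to be its range projection. One small technical slip: since $\Lc_p(\Mcal,\tau)$ for $p<1$ is only a quasi-Banach space and not locally convex, the integral should be interpreted as a Riemann integral in the sense of Theorem~6.2 of \cite{HS2} (cf.\ \cite{TW68}) rather than a Bochner integral, and this is how the paper phrases it.
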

Briefly, the proof goes by construction of an (unbounded) idempotent
\begin{equation}\label{idempotent def}
E(T,\mathcal{B})=\frac1{2\pi i}\int_{\partial\mathcal{B}}\frac{dz}{z-T},
\end{equation}
where they show that the above integral converges in $\Lc_p(\Mcal,\tau)$ as a Riemann integral,
and then letting the projection
$q$ be the range projection for $E(T,\mathcal{B}).$

For future use,
we now prove a straightforward result about changes of variable in the Riemann integrals
of the sort used in~\eqref{idempotent def} above.
\begin{sublem}\label{lem:RIcov}
Suppose $p>\frac12$, $f:[0,1]\to\Lc_p([\Mcal,\tau)$ satisfies
\begin{equation}\label{eq:fLip}
\|f(s)-f(t)\|_p\le C|s-t|^\alpha
\end{equation}
for some $\alpha>1-\frac1p$, some $C>0$ and all $s,t\in[0,1]$.
If $\gamma:[0,1]\to[0,1]$ is a smooth bijection, then
$$\int_0^1f(s)ds=\int_0^1f(\gamma(s))\gamma'(s)ds.$$
\end{sublem}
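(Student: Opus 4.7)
The plan is to match a single Riemann sum against both integrals simultaneously via the mean value theorem.

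First I would verify that both integrands are Riemann-integrable into $\Lc_p(\Mcal,\tau)$. The hypothesis \eqref{eq:fLip} gives this for $f$ by the standard argument (using, when $p<1$, the $p$-subadditivity $\|\sum_ix_i\|_p^p\le\sum_i\|x_i\|_p^p$ in place of the ordinary triangle inequality). For the other side, setting $h(s):=f(\gamma(s))\gamma'(s)$ and using that $\gamma$ and $\gamma'$ are smooth, hence bounded and Lipschitz on $[0,1]$, a routine computation splits $\|h(s)-h(t)\|_p$ into a term controlled by $\|f(\gamma(s))-f(\gamma(t))\|_p$ and a term controlled by $|\gamma'(s)-\gamma'(t)|$, and yields a H\"older estimate $\|h(s)-h(t)\|_p\le C'|s-t|^\alpha$ with some new constant $C'$. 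The same principle then gives Riemann integrability of $h$.

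Next, assume without loss of generality that $\gamma$ is increasing; the case where $\gamma$ is decreasing is symmetric. For a partition $0=s_0<s_1<\cdots<s_n=1$ of mesh $\delta$, the mean value theorem yields points $\eta_i\in(s_{i-1},s_i)$ with $\gamma'(\eta_i)(s_i-s_{i-1})=\gamma(s_i)-\gamma(s_{i-1})$. Then
$$R_n:=\sum_{i=1}^nf(\gamma(\eta_i))\gamma'(\eta_i)(s_i-s_{i-1})=\sum_{i=1}^nf(\gamma(\eta_i))\bigl(\gamma(s_i)-\gamma(s_{i-1})\bigr)$$
serves simultaneously as a legitimate Riemann sum for $\int_0^1h(s)\,ds$, with sample points $\eta_i$, and as a Riemann sum for $\int_0^1f(t)\,dt$ corresponding to the partition $\{\gamma(s_i)\}_{i=0}^n$ of $[0,1]$ with sample points $\gamma(\eta_i)\in[\gamma(s_{i-1}),\gamma(s_i)]$. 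Since $\gamma$ is Lipschitz with constant $\|\gamma'\|_\infty$, the mesh of the image partition is at most $\|\gamma'\|_\infty\delta$ and hence tends to $0$ together with $\delta$. Letting $\delta\to0$ and invoking uniqueness of the limit yields the claimed identity.

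I expect the main technical hurdle to be confined to the Riemann integrability step in the quasi-Banach regime $p\in(\tfrac12,1)$, where the ordinary triangle inequality fails and one has to control sums via the $p$-subadditivity; this is the point where the precise threshold $\alpha>1-\tfrac1p$ enters. Once both integrals are known to exist, the mean value theorem collapses them against a single Riemann sum and the passage to the limit is immediate.
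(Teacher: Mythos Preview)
Your argument is correct and takes a genuinely different route from the paper's proof. The paper also starts by noting that both integrals exist by Theorem~6.2 of~\cite{HS2}, but then works with the specific Riemann sum
\[
I_n=\sum_{k=0}^{n-1}f\bigl(\gamma(\tfrac{k}{n})\bigr)\bigl(\gamma(\tfrac{k+1}{n})-\gamma(\tfrac{k}{n})\bigr),
\]
uses the second-order Taylor expansion $\gamma(\tfrac{k+1}{n})-\gamma(\tfrac{k}{n})=\tfrac1n\gamma'(\tfrac{k}{n})+O(n^{-2})$ to split $I_n=J_n+O_n$, and then has to bound the remainder $O_n$ explicitly via the $p$-subadditivity, which is where the threshold $p>\tfrac12$ reappears (giving $\|O_n\|_p^p=O(n^{1-2p})\to0$). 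Your mean-value-theorem trick collapses this step entirely: by choosing the tags $\eta_i$ so that $\gamma'(\eta_i)(s_i-s_{i-1})=\gamma(s_i)-\gamma(s_{i-1})$ exactly, the single sum $R_n$ is literally a tagged Riemann sum for each integral, and no error term ever appears. This is cleaner and in fact uses less regularity of $\gamma$ (your argument only needs $\gamma\in C^1$, whereas the paper's $O(n^{-2})$ bound uses a bounded second derivative). One small caveat: your remark that ``the decreasing case is symmetric'' is slightly off, since for a decreasing bijection the change-of-variables identity acquires a sign; but this is really an issue with how the lemma is stated rather than with your method, and in the paper's applications the reparametrizations are orientation-preserving.
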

\begin{proof} Both integrals do exist by Theorem 6.2 in \cite{HS2} (in fact, by the argument on p.67 immediately before Theorem 6.2). Consider the expression
$$I_n=\sum_{k=0}^{n-1}f(\gamma(\frac{k}{n}))(\gamma(\frac{k+1}{n})-\gamma(\frac{k}{n})).$$
By Theorem 6.2 in \cite{HS2}, we have that
$$\lim_{n\to\infty}I_n=\int_0^1f(s)ds$$
as $n\to\infty.$ On the other hand, we have
$$\gamma(\frac{k+1}{n})-\gamma(\frac{k}{n})=\frac1n\gamma'(\frac{k}{n})+\frac{O(1)}{n^2}.$$
Thus, $I_n=J_n+O_n,$ where
$$J_n=\frac1n\sum_{k=0}^{n-1}f(\gamma(\frac{k}{n}))\gamma'(\frac{k}{n}),\quad O_n=\sum_{k=0}^{n-1}f(\gamma(\frac{k}{n}))\frac{O(1)}{n^2}.$$
Clearly, $J_n$ is a Riemann sum for the function $s\to f(\gamma(s))\gamma'(s).$ By Theorem 6.2 in \cite{HS2}, we have that
$$\lim_{n\to\infty}J_n=\int_0^1f(\gamma(s))\gamma'(s)\,ds.$$
Finally, from~\eqref{eq:fLip}, the map $f$ is continuous from with respect to the $\Lc_p$ norm, and we have
\begin{multline*}
\|O_n\|_p^p=n^{-2p}O(1)\|\sum_{k=0}^{n-1}f(\gamma(\frac{k}{n}))\|_p^p\leq n^{-2p}O(1)\sum_{k=0}^{n-1}\|f(\gamma(\frac{k}{n}))\|_p^p \\
\leq\frac{O(1)}{n^{2p-1}}\max_{t\in[0,1]}\|f(t)\|_{\mathcal{L}_p}^p.
\end{multline*}
Thus, $O_n\to0$ as $n\to\infty$ and the assertion follows.
\end{proof}

Here is a condition that is slightly stronger than the ones employed by Haagerup and Schultz, but will be convenient for us.

\begin{subdefi}\label{def:mild} Let $T\in\Lc_{\log}(\Mcal,\tau)$ and let $\Dc\subseteq\Cpx$. 
We say $T$ is {\em mild with Lipschitz domain} $\Dc$ or, as abbreviation, just mild$(\Dc)$, if it satisfies the following conditions:
\begin{enumerate}[{\rm (a)}]
\item\label{it:milda} $\Delta(T-\lambda)\neq0$ for every $\lambda\in\mathbb{C}.$
\item\label{it:mildb} The Brown measure of $T$ is absolutely continuous with respect to the Lebesgue measure.
\item\label{it:mildc} The operator-valued function $z\mapsto (z-T)^{-1}$ is locally Lipschitz from $\Dc$ to $\Lc_p(\Mcal,\tau)$
for some $p\in(\frac12,1)$.
\end{enumerate}
Clearly, if $\Dc_1\subseteq\Dc_2$, and $T$ is mild$(\Dc_2)$, then $T$ is mild$(\Dc_1)$.
If we say that $T$ is simply {\em mild}, then we mean it is mild$(\Cpx)$.
\end{subdefi}

\begin{subdefi}\label{def:hs proj}
Let $T\in\Lc_{\log}(\Mcal,\tau)$ and let $\mathcal{B}\subset\mathbb{C}$ be a Borel set. A projection $p\in\Mcal$ is said to be a
{\em Haagerup--Schultz projection} for $T$ and the set $\mathcal{B}$ if
\begin{enumerate}[{\rm (a)}]
\item\label{hdefa} $p$ is $T$-invariant.
\item\label{hdefb} $\tau(p)=\nu_T(\mathcal{B}).$
\item\label{hdefc} provided $\nu_T(\mathcal{B})\ne0,1$, we have
$$\nu_{Tp}=\frac1{\nu_T(\mathcal{B})}\nu_T|_{\mathcal{B}},\quad \nu_{(1-p)T}=\frac1{\nu_T(\mathbb{C}\backslash\mathcal{B})}\nu_T|_{\mathbb{C}\backslash\mathcal{B}},$$
where the above Brown measures are taken with respect to the appropriately renormalized traces.
\end{enumerate}
\end{subdefi}

Note that \eqref{hdefb} actually follows from \eqref{hdefa} and \eqref{hdefc} by Theorem \ref{det nu matrix}.

\begin{subrem}\label{rem:mildHS}
For every disk $\Bc\subseteq\Cpx$, if $T\in\Lc_{\log}(\Mcal,\tau)$ is mild$(\partial\Bc)$, then
Haagerup and Schultz's Theorem~\ref{hs integral} above constructs a Haagerup-Schultz projection $P(T,\Bc)$
for $T$ and $\Bc$.
(The disk can be closed or open or anything in between, because condition~\eqref{it:mildb} of Definition~\ref{def:mild}
ensures that $\nu_T(\partial B)=0$.)
\end{subrem}

\subsection{Certain perturbations}
\label{subsec:Z}

A key technical tool in~\cite{HS2} was the use of an unbounded operator $Z=xy^{-1}$ affiliated to $\Mcal$, where $x,y\in\Mcal$ are 
circular operators so that the triple consisting of $T$, $x$ and $y$ is $*$--free.
Enlarging $\Mcal$ by taking the free product with $L(\mathcal{F}_4)$, if necessary, we may
without loss of generality assume there is such an operator $Z$.
It was shown in Theorem 5.2 from \cite{HS1} that $Z\in\Lc_p(\Mcal,\tau)$ for all $0<p<1$.
Also the following key result is a combination of Proposition 4.5, Corollary 4.6 and Theorem 5.1 in~\cite{HS2}.
Though the quoted theorems are for $T\in\Mcal,$ they trivially extend to $\Lc_{\log}(\Mcal,\tau)$.
In fact, we will repeat Theorem~5.1 of~\cite{HS2} in the full generality that its proof in~\cite{HS2} permits:
\begin{subthm}\label{thm:HS S(M)}
Let $Z$ be as described above.
For every $T\in\Sc(\Mcal,\tau)$ and every $p\in(0,1)$, $T+Z$ has an inverse $(T+Z)^{-1}\in\Lc_p(\Mcal,\tau)$.
Moreover, for each $p\in(0,\frac23)$, there is a constant $C_p$ such that
$$\forall S,T\in\Sc(\Mcal,\tau),\qquad\|(S+Z)^{-1}-(T+Z)^{-1}\|_p\le C_p\|S-T\|.$$
\end{subthm}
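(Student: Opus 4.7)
The plan is to exploit the factorization $T + Z = (Ty+x)y^{-1}$, which gives $(T+Z)^{-1} = y(Ty+x)^{-1}$ whenever $Ty+x$ is invertible. Since $y$ is a nonzero circular element of $\Mcal$, it is bounded and its inverse lies in $\Sc(\Mcal,\tau)$, so both the existence and the $\Lc_p$-control of $(T+Z)^{-1}$ reduce to the corresponding statements for $(Ty+x)^{-1}$. The first and main step is therefore to produce a constant $K_q$ such that $\|(Ty+x)^{-1}\|_q \le K_q$ for every $T \in \Sc(\Mcal,\tau)$ and every $q$ in an interval of the form $(0, q_0)$ with $q_0 > 1$.

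For bounded $T\in\Mcal$ this uniform bound is exactly the content of Proposition~4.5 and Corollary~4.6 of~\cite{HS2}, and my strategy is simply to inspect those proofs and verify that they never invoke boundedness of $T$. The arguments there are entirely distribution-based: they rest on the $R$-diagonality of the circular element $x$, on the freeness of $x$ from the pair $(T,y)$, and on the resulting free-convolution formula (via the $S$-transform) for the spectral distribution of $|Ty+x|^2$. Both hypotheses are unaffected by passing from $\Mcal$ to $\Sc(\Mcal,\tau)$: freeness of $x$ from $(T,y)$ is assumed, and $|Ty+x|^2$ is a well-defined positive $\tau$-measurable operator regardless of whether $T$ is bounded. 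Granting this, one deduces $(T+Z)^{-1}\in\Lc_p(\Mcal,\tau)$ for every $p\in(0,1)$ and every $T\in\Sc(\Mcal,\tau)$.

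For the Lipschitz estimate I would apply the resolvent identity
$$(S+Z)^{-1} - (T+Z)^{-1} = (S+Z)^{-1}(T-S)(T+Z)^{-1}$$
together with the noncommutative H\"older inequality with $\tfrac{1}{p} = \tfrac{1}{q_1} + \tfrac{1}{q_2}$, for $q_1,q_2$ in the range supplied by the first step. Absorbing the middle factor in operator norm yields
$$\|(S+Z)^{-1} - (T+Z)^{-1}\|_p \le \|(S+Z)^{-1}\|_{q_1}\,\|S-T\|\,\|(T+Z)^{-1}\|_{q_2} \le \|y\|^2 K_{q_1}K_{q_2}\,\|S-T\|,$$
producing the claimed constant $C_p$. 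The cutoff $p<\tfrac{2}{3}$ reflects the best uniform $\Lc_q$-bound on $(T+Z)^{-1}$ that the \cite{HS2} argument affords, namely for $q$ slightly below $\tfrac{4}{3}$, so that choosing $q_1=q_2$ close to this threshold gives $\tfrac{1}{p}>\tfrac{3}{2}$.

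The main obstacle is the bookkeeping in the first step: although the \cite{HS2} proof is distribution-based and therefore in principle insensitive to the boundedness of $T$, one must verify carefully that every resolvent, moment, and $S$-transform computation in those proofs can be reread with the spectral distribution of $|Ty|$ in place of $\|T\|$, and that no tacit appeal to boundedness intervenes. I expect this to be mechanical but slightly tedious; once it is done, both the existence of $(T+Z)^{-1}$ in $\Lc_p$ and the Lipschitz estimate follow by the elementary resolvent-plus-H\"older argument described above.
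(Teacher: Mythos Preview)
Your proposal is correct and matches the paper's treatment, which gives no independent proof but simply records that Theorem~5.1 of \cite{HS2} holds verbatim for $T\in\Sc(\Mcal,\tau)$ because its proof never uses boundedness of $T$; you have usefully unpacked that assertion by naming the factorization $(T+Z)^{-1}=y(Ty+x)^{-1}$, the free-probability distributional bound, and the resolvent-plus-H\"older step. One minor correction: the uniform $\Lc_q$-bound on $(Ty+x)^{-1}$ is not contained in Proposition~4.5 or Corollary~4.6 of \cite{HS2} (those concern the Fuglede--Kadison determinant and absolute continuity of the Brown measure, as this paper uses them elsewhere); it is developed in the lemmas supporting Theorem~5.1 itself.
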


Combining the above with Proposition 4.5 and Corollary 4.6 of~\cite{HS2}, we have:
\begin{subthm}\label{thm:TZmild}
Given $T\in\Lc_{\log}(\Mcal,\tau)$, letting $Z$ be as described above and taking $\eps>0$, the operator $T+\eps Z$ is mild.
\end{subthm}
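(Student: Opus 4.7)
My plan is to verify the three conditions of Definition~\ref{def:mild} for $T+\eps Z$ with $\Dc=\Cpx$, invoking Theorem~\ref{thm:HS S(M)} above together with Proposition~4.5 and Corollary~4.6 of~\cite{HS2}.

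First, for condition~\ref{it:mildc}, I would fix $p\in(\tfrac12,\tfrac23)$ and rewrite the resolvent as
$$(\lambda-(T+\eps Z))^{-1} = -\eps^{-1}\bigl(\eps^{-1}(T-\lambda)+Z\bigr)^{-1}.$$
Applying Theorem~\ref{thm:HS S(M)} with the operators $S_j=\eps^{-1}(T-\lambda_j)\in\Sc(\Mcal,\tau)$, for which $\|S_1-S_2\|=\eps^{-1}|\lambda_1-\lambda_2|$, would yield the global Lipschitz estimate
$$\|(\lambda_1-(T+\eps Z))^{-1}-(\lambda_2-(T+\eps Z))^{-1}\|_p \le C_p\,\eps^{-2}|\lambda_1-\lambda_2|,$$
which is stronger than what (c) requires.

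Next, for condition~\ref{it:milda}, the same theorem produces $(T+\eps Z-\lambda)^{-1}\in\Lc_p(\Mcal,\tau)\subset\Lc_{\log}(\Mcal,\tau)$ for every $\lambda\in\Cpx$, using the inclusion $\Lc_p\subset\Lc_{\log}$ valid in any finite tracial algebra. Since $T+\eps Z-\lambda$ itself lies in $\Lc_{\log}(\Mcal,\tau)$ (because $T\in\Lc_{\log}$, $Z\in\bigcap_{p\in(0,1)}\Lc_p\subset\Lc_{\log}$, and $\lambda\in\Mcal$), multiplicativity~\eqref{det homomorphism} of $\Delta$ then forces
$$\Delta(T+\eps Z-\lambda)\cdot\Delta\bigl((T+\eps Z-\lambda)^{-1}\bigr)=\Delta(1)=1,$$
so $\Delta(T+\eps Z-\lambda)\neq0$.

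Condition~\ref{it:mildb}---absolute continuity of $\nu_{T+\eps Z}$ with respect to planar Lebesgue measure---will be the most delicate step and constitutes the main obstacle. It is exactly the content of Proposition~4.5 and Corollary~4.6 of~\cite{HS2}, but only under the running assumption $T\in\Mcal$ there. My plan is to observe that those proofs use $T$ only through its $*$-freeness from the pair $(x,y)$ and through the resolvent and determinant estimates already supplied (in the full $\Sc(\Mcal,\tau)$ setting) by Theorem~\ref{thm:HS S(M)}, so no boundedness hypothesis actually intervenes. Hence the extension to $T\in\Lc_{\log}(\Mcal,\tau)$ should be essentially mechanical, yielding the absolute continuity claim and completing the verification.
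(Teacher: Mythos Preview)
Your proposal is correct and follows essentially the same route as the paper: invoke Theorem~\ref{thm:HS S(M)} for condition~\eqref{it:mildc}, and Proposition~4.5 and Corollary~4.6 of~\cite{HS2} (noting, as you do, that their proofs go through unchanged for $T\in\Lc_{\log}$) for conditions~\eqref{it:milda} and~\eqref{it:mildb}. Your derivation of~\eqref{it:milda} from invertibility plus multiplicativity of $\Delta$ is a minor but valid variation on the paper's direct appeal to Proposition~4.5, which yields the explicit formula $\Delta(T+\eps Z-\lambda)=\Delta(|T-\lambda|^2+\eps^2)^{1/2}>0$.
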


\subsection{Conditional expectation}
\label{subsec:condexp}

If $\mathcal{D}$ is a von Neumann subalgebra of the finite von Neumann algebra $\Mcal$ with normal, faithful, tracial state $\tau,$
then there exists a unique linear operator $\Exp_{\mathcal{D}}:\Mcal\to\mathcal{D}$ such that,  for all $A\in\Mcal$ and $B\in\mathcal{D}$,
\begin{enumerate}[{\rm (a)}]
\item $\Exp_{\mathcal{D}}(AB)=\Exp_{\mathcal{D}}(A)B$
\item $\Exp_{\mathcal{D}}(BA)=B\,\Exp_{\mathcal{D}}(A)$
\item $\tau(\Exp_{\mathcal{D}}(A))=\tau(A)$.
\end{enumerate}

Though conditional expectation is undefined for general unbounded operators (outside of $\Lc_1(\Mcal,\tau)$), we use the same notation $\Exp_{\mathcal{D}}$ for certain mappings we construct in the subsequent sections. On bounded operators, this operation coincides with the conditional expectation as defined above.

\subsection{Ultrapowers of von Neumann algebras}
\label{subsec:ultraP}

Let $(\Mcal,\tau)$ be a tracial von Neumann algebra. Consider the $*$-algebra
$$l_{\infty}(\Mcal)=\Big\{\{A(k)\}_{k\geq0}\in\prod_{k=0}^{\infty}\Mcal\;\Big|\; \sup_{k\geq0}\|A(k)\|_{\infty}<\infty\Big\}.$$
Fix a free ultrafilter $\omega$ on $\mathbb{Z}_+$ and consider the ideal
$$\mathscr{I}=\Big\{\{A(k)\}_{k\geq0}\in l_{\infty}(\Mcal)\;\Big|\; \lim_{k\to\omega}\tau(A(k)^*A(k))=0\Big\}.$$
The $*$-algebra quotient $\Mcal_\omega=l_{\infty}(\Mcal)/\mathscr{I}$ is known (see, for example, Theorem 4.6 of \cite{Tak3})
to be a finite von Neumann algebra equipped with a normal faithful tracial state $\tau_\omega$ defined by the formula
\begin{equation}\label{ultra trace def}
\tau_\omega(\pi(\{A(k)\}_{k\geq0}))=\lim_{k\to\omega}\tau(A(k)),\quad \{A(k)\}_{k\geq0}\in l_{\infty}(\Mcal),
\end{equation}
where $\pi:l_{\infty}(\Mcal)\to\Mcal_\omega$ is the canonical factorization. We say that $(\Mcal_\omega,\tau_\omega)$ is an ultrapower of $(\Mcal,\tau).$

We let $\delta_\Mcal:\Mcal\to l^\infty(\Mcal)$ be the diagonal embedding $x\mapsto(x,x,\ldots)$ and we see that $\pi\circ\delta_\Mcal$ is
an embedding of $(\Mcal,\tau)$ into $(\Mcal_\omega,\tau_\omega)$, namely, a $*$-homomorphism $\Mcal\to\Mcal_\omega$
that satisfies $\tau_\omega\circ\pi\circ\delta_\Mcal=\tau$.

\subsection{Operator bimodules and traces}

If $(\Mcal,\tau)$ is a tracial von Neumann algebra,
then a linear subspace $\mathfrak{B}(\Mcal,\tau)\subset\Sc(\Mcal,\tau)$ is said to be
an operator bimodule if $AB,BA\in\mathfrak{B}(\Mcal,\tau)$ for all $A\in\mathfrak{B}(\Mcal,\tau),$ $B\in\Mcal$.
If $\Mcal$ is a factor, then $A\in\mathfrak{B}(\Mcal,\tau)$ and $\mu(C)=\mu(A)$ imply that $C\in\mathfrak{B}(\Mcal,\tau).$

If $\Mcal$ is a unital subfactor of a type II$_1$-factor $\mathcal{N}$, (with trace also denoted $\tau)$, then we can define an operator bimodule
$$\mathfrak{B}(\mathcal{N},\tau)=\{A\in\Sc(\mathcal{N},\tau)\mid \mu(A)=\mu(C)\mbox{ for some }C\in\mathfrak{B}(\Mcal,\tau)\}.$$

A linear functional $\varphi:\mathfrak{B}(\Mcal,\tau)\to\mathbb{C}$ is called trace
if $\varphi(AB)=\varphi(BA)$ for all $A\in\mathfrak{B}(\Mcal,\tau),$ $B\in\Mcal$.
When $\Mcal$ is a factor,
it follows that $\varphi(A)=\varphi(B)$ whenever $0\leq A,B\in\mathfrak{B}(\Mcal,\tau)$ are such that $\mu(A)=\mu(B).$
(This assertion is folklore.
For convenience, we present a proof of slightly more general result in Lemma \ref{normal phi lemma} below.)

The following theorem is a restatement of Theorem 5.2 in \cite{KScanada}. Though the latter theorem is stated in a less general fashion, its proof is perfectly applicable.

\begin{subthm}\label{ks extension thm} Let $(\Mcal,\tau)$ be a II$_1$-factor; let $\mathfrak{B}(\Mcal,\tau)$ be an operator bimodule; let $\varphi:\mathfrak{B}(\Mcal,\tau)\to\mathbb{C}$ be a trace. If $\Mcal$ is a subfactor of $\mathcal{N}$ as above,
then $\varphi$ extends to a trace on $\mathfrak{B}(\mathcal{N},\tau)$ by setting
$$\varphi(A)=\varphi(C),\mbox{ whenever }\mu(A)=\mu(C),\quad 0\leq A\in\mathfrak{B}(\mathcal{N},\tau),\quad  0\le C\in\mathfrak{B}(\Mcal,\tau).$$
\end{subthm}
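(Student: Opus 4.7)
\medskip

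\noindent The strategy is to extend $\varphi$ in three steps: first define it on the positive cone of $\mathfrak{B}(\mathcal{N},\tau)$ via the displayed formula, then extend by linearity, and finally verify the trace identity via unitary invariance.

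\medskip

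\noindent For well-definedness on positives, given $A\in\mathfrak{B}(\mathcal{N},\tau)_+$ the very definition of $\mathfrak{B}(\mathcal{N},\tau)$ supplies some $C'\in\mathfrak{B}(\Mcal,\tau)$ with $\mu(A)=\mu(C')$; since $\mathfrak{B}(\Mcal,\tau)$ is stable under absolute values (via the polar decomposition $C'=V|C'|$ with $V\in\Mcal$ one has $|C'|=V^*C'\in\mathfrak{B}(\Mcal,\tau)$), one may take $C=|C'|\ge 0$ with $\mu(C)=\mu(A)$. If $C_1,C_2\in\mathfrak{B}(\Mcal,\tau)_+$ both realize $\mu(A)$, Lemma~\ref{normal phi lemma} (the folklore symmetry of traces, to be established below) gives $\varphi(C_1)=\varphi(C_2)$, so $\varphi(A)$ is unambiguous.

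\medskip

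\noindent The heart of the proof is additivity on positives: $\varphi(A+B)=\varphi(A)+\varphi(B)$ for $A,B\in\mathfrak{B}(\mathcal{N},\tau)_+$. The idea is to realize the pair $(A,B)$ inside $\Mcal$ with its joint spectral distribution intact. One produces a trace-preserving normal unital $*$-embedding $\iota:W^*(A,B)\hookrightarrow\Mcal$ of the tracial subalgebra of $\mathcal{N}$ generated by the bounded spectral projections of $A$ and $B$, and extends $\iota$ to the affiliated operators. Because $\Mcal$ is a factor and $\mathfrak{B}(\Mcal,\tau)$ is closed under equimeasurability, $\iota(A),\iota(B)\in\mathfrak{B}(\Mcal,\tau)_+$; since $\iota$ preserves the joint distribution, $\mu(\iota(A)+\iota(B))=\mu(A+B)$, so combining well-definedness with linearity of the original $\varphi$ on $\mathfrak{B}(\Mcal,\tau)$ gives
$$
\varphi(A+B)=\varphi(\iota(A)+\iota(B))=\varphi(\iota(A))+\varphi(\iota(B))=\varphi(A)+\varphi(B).
$$
Writing $T=T_1-T_2+i(T_3-T_4)$ with positive $T_j\in\mathfrak{B}(\mathcal{N},\tau)_+$ then yields a well-defined linear extension of $\varphi$ to $\mathfrak{B}(\mathcal{N},\tau)$. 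For the trace identity, since every element of $\mathcal{N}$ is a linear combination of four unitaries, it suffices to prove the unitary invariance $\varphi(U^*TU)=\varphi(T)$ for $U\in\mathcal{U}(\mathcal{N})$; on positives this is immediate from $\mu(U^*TU)=\mu(T)$, and linearity transports the identity to the whole bimodule.

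\medskip

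\noindent The principal obstacle is the existence of the joint trace-preserving embedding $\iota:W^*(A,B)\hookrightarrow\Mcal$. In full generality this brushes against Connes-type embedding issues when $W^*(A,B)$ fails to be hyperfinite, but in the contexts of interest --- where $\mathcal{N}$ is built from $\Mcal$ by the free-product and ultrapower constructions of Theorem~\ref{construction theorem} --- such a joint realization is available, and the proof proceeds along the lines sketched above, essentially verbatim from Theorem~5.2 of \cite{KScanada}.
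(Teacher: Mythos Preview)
The paper does not supply its own proof here; it simply cites Theorem~5.2 of \cite{KScanada} and asserts that the argument there carries over. So there is no in-paper proof to compare against, only the reference.

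That said, your sketch has a genuine gap at the additivity step. You assert the existence of a trace-preserving embedding $\iota:W^*(A,B)\hookrightarrow\Mcal$ for arbitrary positive $A,B$ affiliated to $\mathcal{N}$, but this is not available in general: $W^*(A,B)$ can be an essentially arbitrary countably generated finite von Neumann algebra, and a given II$_1$-factor $\Mcal$ need not contain a trace-preserving copy of it (take $\Mcal$ hyperfinite and $W^*(A,B)\cong L(\mathbb{F}_2)$, for instance). You concede as much (``brushes against Connes-type embedding issues''), but the fallback --- that in the specific ultrapower/free-product $\mathcal{N}$ of this paper ``such a joint realization is available'' --- is asserted without justification, and in any case the theorem is stated for \emph{arbitrary} II$_1$-factors $\mathcal{N}\supseteq\Mcal$, not only the one constructed later. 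Nor is this the route taken in \cite{KScanada}: that argument works through the Calkin-type correspondence, identifying the bimodule with a rearrangement-invariant space of functions on $(0,1)$ and the trace with a symmetric functional on that space; since $\mathfrak{B}(\Mcal,\tau)$ and $\mathfrak{B}(\mathcal{N},\tau)$ correspond to the \emph{same} function space by construction, the extension is essentially tautological at that level, and additivity is handled by function-space arguments rather than by jointly realizing operator pairs inside $\Mcal$.
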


The operator $B\in\Lc_{\log}(\Mcal,\tau)$ is said to be logarithmically submajorized by the operator $A\in\Lc_{\log}(\Mcal,\tau)$ (written $B\prec\prec_{\log}A$) if
$$\int_0^t\log(\mu(s,B))ds\leq\int_0^t\log(\mu(s,A))ds,\qquad 0<t<1,$$
with $-\infty$ allowed for values of the integrals.

\begin{subdefi} We say that bimodule $\mathfrak{B}(\mathcal{M},\tau)\subset \Lc_{\log}(\Mcal,\tau)$ is closed with respect to the logarithmic submajorization if for every $A\in\mathfrak{B}(\mathcal{M},\tau)$ and for every $B\in\Lc_{\log}(\Mcal,\tau)$
with $B\prec\prec_{\log} A$,
we also have $B\in\mathfrak{B}(\mathcal{M},\tau).$
\end{subdefi}

\section{Strategy of the proofs}
\label{sec:strategy}

The proof of our first main result, Theorem~\ref{construction theorem}, on existence of Haagerup--Schultz
projections,
was inspired by and builds upon the proof by Haagerup and Schultz~\cite{HS2} for bounded operators,
but many
further ideas and results are required to treat the unbounded operators $T\in\Lc_{\log}(\Mcal,\tau)$.
Here is the basic outline of the method.
\begin{enumerate}[{\rm (a)}]
\item Perturb $T$ to $T+\frac1{n+1}Z$, where $Z$ is the unbounded operator described in~\S\ref{subsec:Z}.
By results of Haagerup and Schultz, the resulting operator is mild.
This is Theorem~\ref{thm:TZmild}.
\item\label{it:HSmilddisk}
Use analogues of Dunford functional calculus to construct Haagerup-Schultz projections $P(\Tt,\Bc)$ for mild operators $\Tt$
and disks $\Bc$.
This is Haagerup and Schultz's Theorem~\ref{hs integral}.
\item\label{it:mildHS}
Manipulate and assemble the projections $P(\Tt,\Bc)$ obtained as described
in~\eqref{it:HSmilddisk} above to find Haagerup--Schultz
projectios $P(\Tt,\Bc)$ for mild operators $\Tt$ and closed sets $\Bc$ whose boundaries are Lebesgue null sets.
We do this in Section~\ref{mild section}, where we also prove some convenient properties of the $P(\Tt,\Bc)$.
\item\label{it:HSprojs}
Using sequences of Haagerup--Schultz projections $P(T+\frac1{n+1}Z,\Bc)$ constructed as described
in~\eqref{it:mildHS}, find Haagerup--Schultz projections $P(T,\Bc)$ for $T$.
These projections are constructed in a larger ultrapower von Neumann algebra, $\Mcal_\omega$.
These results are pushed further to obtain $P(T,\Bc)$ for arbitrary Borel subsets $\Bc$,
thus proving Theorem~\ref{construction theorem}.
This is done in Section~\ref{hs construct}.
The use of ultrapowers in the context of operators belonging to the class of $\Lc_{\log}$ requires special attention.
One cannot just take the ultrapower of $\Lc_{\log}(\Mcal,\tau)$.
This theory is developed in Section~\ref{sec:Llogultrapower}.
\end{enumerate}

The proof of our second main result, Theorem~\ref{decomposition theorem}, about upper triangular decompositions, 
was inspired by and builds upon our proof~\cite{DSZ} for bounded operators, but again many further ideas
and results are required to treat unbounded operators.
In brief, 
by using a Peano curve to ``order'' the spectrum of $T$, consider the abelian
von Neumann algebra generated by a corresponding increasing family of Haagerup--Schultz projections
constructed as described in~\eqref{it:HSprojs}.
This is used together with direct integral decomposition theory (for unbounded operators) from~\cite{DNSZ}
and some sort of conditional expectations to prove Theorem~\ref{decomposition theorem}.
We do this in Section~\ref{sec:pfMain}.
We cannot use genuine conditional expectations in the context of unbounded operators, (see Appendix~\ref{sec:NoCondExp})
but in the special circumstances we consider, there is a \lq\lq Faux Expectation\rq\rq (again using ultrapowers
and the results of Section~\ref{sec:Llogultrapower}) that is enough.
We develop the theory of Faux Expectations in Section~\ref{sec:FauxExp}.

The proof of our main application, Theorem~\ref{spectral trace thm} on spectrality of traces, is accomplished by using Theorem~\ref{decomposition theorem} to  write $T=N+Q$, where $N$ is a normal operator having the same Brown measure as $T$ and $Q$ has Brown measure $\delta_0.$ The key step in the proof is to show that the \lq\lq spectrally trivial\rq\rq part $Q$ must vanish under all traces. This is achieved via fundamental characterization of commutator subspaces $[\mathcal{M},\mathfrak{B}(\mathcal{M},\tau)]$ achieved in \cite{DK-fourier}. The appeal to that characterization is made possible thanks to our assumption that bimodule $\mathfrak{B}(\mathcal{M}, \tau)$ is closed with respect to logarithmic submajorization and spectral estimates proved in Section 9 using the technique earlier developed in \cite{Kalton,DK-fourier,LSZ}. The details of of the argument are presented in Section~\ref{sec:spectralityTr}.

\section{Some operators affiliated to an ultrapower of a tracial von Neumann algebra}
\label{sec:Llogultrapower}

This section is partly inspired by the theory of ultrapowers $\Lc_p(\Mcal,\tau)$.
It is generally known that the ultrapower of $\Lc_p(\Mcal,\tau)$ is strictly larger than the
$\Lc_p$-space of the ultrapower of $(\Mcal,\tau)$.
However, Lemma 2.13 in \cite{HRS} constructs a subspace in the ultrapower of $\Lc_p(\Mcal,\tau)$,
which is canonically isomorphic to $\Lc_p$ of the ultrapower of $(\Mcal,\tau)$.
The subspace is implicitly characterized in \cite{HRS} in terms of uniform integrability of the $p$-th power
(see Definition 2.4 in \cite{HRS}).

Let $l_{\infty}(\Lc_{\log}(\mathcal{M},\tau))$ denote the set of all sequences $A=\{A(k)\}_{k\geq0}$ in $\Lc_{\log}(\mathcal{M},\tau)$ that are bounded with respect to $\|\cdot\|_{\log}$
with termwise operations.
Using Lemma~\ref{lem:elemineq}, we see that it is a $*$-algebra.
The algebra $\Fsc(\mathcal{M},\tau)$ below can be described as the subalgebra of those elements in $l_{\infty}(\Lc_{\log}(\mathcal{M},\tau))$ whose logarithms are uniformly integrable.

Consider the set
$$\Fsc(\Mcal,\tau)=\Big\{A\in l_{\infty}(\Lc_{\log}(\mathcal{M},\tau))\;\Big|\;\lim_{n\to\infty}\sup_{k\geq0}\left\|\frac{A(k)}n\right\|_{\log}=0\Big\}.$$
Using Lemma~\ref{lem:elemineq}, we see that it is a $*$-algebra.
For example, if $A,B\in \Fsc(\Mcal,\tau)$, then by Lemma~\ref{lem:elemineq}\eqref{it:elemMult} we have 
$$\lim_{n\to\infty}\sup_{k\geq0}\left\|\frac{A(k)B(k)}n\right\|_{\log}\leq \lim_{n\to\infty}\sup_{k\geq0}\left\|\frac{A(k)}{n^{1/2}}\right\|_{\log}+\lim_{n\to\infty}\sup_{k\geq0}\left\|\frac{B(k)}{n^{1/2}}\right\|_{\log},$$
so $AB\in\Fsc(\Mcal,\tau)$.

The next lemma gives an alternative characterization of $\Fsc(\Mcal,\tau)$.

\begin{lem}\label{lem:Fchar}
Let $A=\{A(k)\}_{k\geq0}\in l_{\infty}(\Lc_{\log}(\mathcal{M},\tau))$.
Then the following are equivalent:
\begin{enumerate}[{\rm (a)}]
\item\label{it:AinF} $A\in\Fsc(\Mcal,\tau)$
\vskip1ex
\item\label{it:log+An} $\displaystyle\lim_{n\to\infty}\sup_{k\ge0}\tau(\log^+\left(\frac{|A(k)|}n\right))=0$,
\vskip1ex
\item\label{it:AE} $\displaystyle\lim_{n\to\infty}\sup_{k\ge0}\|A(k)E_{|A(k)|}(n,\infty)\|_{\log}=0.$
\end{enumerate}
\end{lem}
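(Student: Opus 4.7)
The plan is to prove the cycle (a) $\Rightarrow$ (b) $\Rightarrow$ (c) $\Rightarrow$ (a), using only elementary comparisons between $\log(1+x)$, $\log^+(x)$, and tail distributions of $|A(k)|$. Throughout, write $\nu_k$ for the spectral distribution of $|A(k)|$. The implication (a) $\Rightarrow$ (b) is immediate from the pointwise inequality $\log^+(x) \le \log(1+x)$ valid for all $x \ge 0$.

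For (b) $\Rightarrow$ (c), start from the identity $\|A(k)E_{|A(k)|}(n,\infty)\|_{\log} = \int_n^\infty \log(1+s)\,d\nu_k(s)$, and use that on $s>n$ one has $\log(1+s) \le \log(s/n) + \log n + \log 2$. This yields
$$\|A(k)E_{|A(k)|}(n,\infty)\|_{\log} \le \tau(\log^+(|A(k)|/n)) + (\log n + \log 2)\,\nu_k(n,\infty).$$
The first term vanishes uniformly in $k$ by (b); the correction is controlled by a Chebyshev-type estimate $\log(\sqrt n)\,\nu_k(n,\infty) \le \int_n^\infty \log(s/\sqrt n)\,d\nu_k(s) \le \tau(\log^+(|A(k)|/\sqrt n))$, whose right-hand side vanishes uniformly by applying (b) at scale $\sqrt n$.

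For (c) $\Rightarrow$ (a), decompose $A(k) = A(k)p_n + A(k)(1-p_n)$ with $p_n = E_{|A(k)|}(n,\infty)$. By the F-norm property of $\|\cdot\|_{\log}$ (Theorem \ref{thm:LlogTopalg}), it suffices to control each summand divided by $n$. Lemma \ref{lem:elemineq}\eqref{it:elemBddMult} gives $\|A(k)p_n/n\|_{\log} \le \|A(k)p_n\|_{\log}$ for $n \ge 1$, which vanishes uniformly by (c). For the bounded piece, use $\log(1+x) \le x$ on the spectral set where $|A(k)| \le n$ to obtain $\|A(k)(1-p_n)/n\|_{\log} \le n^{-1}\int_0^n s\,d\nu_k(s)$; splitting the integral at $\sqrt n$ produces a tail $\le 1/\sqrt n$ plus $\nu_k(\sqrt n, n]$, and the latter is bounded by $\|A(k)E_{|A(k)|}(\sqrt n,\infty)\|_{\log}/\log(1+\sqrt n)$, which vanishes uniformly by (c) applied at scale $\sqrt n$.

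The only real subtlety is bookkeeping: at each nontrivial step one must apply the uniform hypothesis at the adjusted scale $\sqrt n$ in order to neutralize a logarithmic factor (either $\log n$ or $1/\log(1+\sqrt n)$) arising at the original scale. No machinery beyond Lemma \ref{lem:elemineq} and the elementary inequalities above is needed.
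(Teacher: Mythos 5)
Your proof is correct and follows essentially the same route as the paper's: the cycle (a)$\Rightarrow$(b)$\Rightarrow$(c)$\Rightarrow$(a), with the same key device of invoking the uniform hypothesis at the rescaled level $\sqrt{n}$ to neutralize logarithmic factors. The only cosmetic difference is in (c)$\Rightarrow$(a), where the paper cuts the spectrum at $\sqrt{n}$ rather than at $n$, which makes the bounded piece trivially at most $n^{-1/2}$ and avoids your second splitting of the integral.
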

\begin{proof}
Since
$$\tau(\log^+\left(\frac{|A(k)|}n\right))\le\tau(\log\left(1+\frac{|A(k)|}n\right))=\left\|\frac{A(k)}n\right\|_{\log},$$
we see immediately that \eqref{it:AinF} implies \eqref{it:log+An}

To show \eqref{it:log+An} implies \eqref{it:AE}, suppose $\eps>0$ and $n\in\mathbb{N}$ are such that
$$\tau(\log^+(\frac{|A(k)|}{n^{\frac12}}))<\eps,\quad k\geq0.$$
We have
$$\log(1+t)\chi_{\{t>n\}}\leq 2\log(t)\chi_{\{t>n\}}\leq 4\log(\frac{t}{n^{\frac12}})\chi_{\{t>n\}}\leq 4\log^+(\frac{t}{n^{\frac12}}),\quad t>0.$$    
Since 
$$
\big|A(k)E_{|A(k)|}(n,\infty)\big|=|A(k)|E_{|A(k)|}(n,\infty),
$$
it follows that
\begin{multline*}
\|A(k)E_{|A(k)|}(n,\infty)\|_{\log}
=\int_{(n,\infty)}\log(1+t)\,d\nu_{|A(k)|}(t)\le 4\tau(\log^+(\frac{|A(k)|}{n^{\frac12}}))<4\eps
\end{multline*}
for all $k\geq0.$

To show \eqref{it:AE} implies \eqref{it:AinF}, suppose $\eps>0$ and $n\in\mathbb{N}$ are such that $n\geq\varepsilon^{-2}$ and such that
$$\|A(k)E_{|A(k)|}(n^{\frac12},\infty)\|_{\log}<\eps,\quad k\geq0.$$
Using the triangle inequality in $\Lc_{\log}(\mathcal{M},\tau)$, we infer that
\begin{multline*}
\left\|\frac{A(k)}n\right\|_{\log}\leq \|\frac{A(k)}{n}E_{|A(k)|}(n^{\frac12},\infty)\|_{\log}+\|\frac{A(k)}{n}E_{|A(k)|}[0,n^{\frac12}]\|_{\log}\\
\leq \|A(k)E_{|A(k)|}(n^{\frac12},\infty)\|_{\log}+\|n^{-\frac12}E_{|A(k)|}[0,n^{\frac12}]\|_{\log}\leq \varepsilon+\frac{1}{n^{\frac12}}\leq 2\varepsilon.
\end{multline*}
\end{proof}

We already observed that $\Fsc(\Mcal,\tau)$ is a $*$-algebra. We also endow it with an ordering determined by setting $0\le A$ if and only if $0\le A(k)$ for every $k\ge0$, where the latter inequality
is with respect to the order
on $\Sc(\Mcal,\tau)$. Similarly, in $\Fsc(\Mcal,\tau)$ we define the absolute value by $|A|(k)=|A(k)|$.

\begin{thm}\label{isomorphism thm}
The canonical quotient map $\pi: l_{\infty}(\Mcal)\to\Mcal_\omega$
(see~\S\ref{subsec:ultraP}) extends
to an order preserving $*$-homomorphism $\pit:\Fsc(\Mcal,\tau)\to\Lc_{\log}(\Mcal_\omega,\tau_\omega)$.
\end{thm}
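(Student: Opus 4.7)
The plan is to define $\pit(A)$ for $A=\{A(k)\}_{k\ge0}\in\Fsc(\Mcal,\tau)$ as a limit, in $\Lc_{\log}(\Mcal_\omega,\tau_\omega)$, of images under $\pi$ of a sequence of uniformly bounded truncations. For each $n\in\Nats$ I set $A^{(n)}(k):=A(k)E_{|A(k)|}[0,n]\in\Mcal$, which has norm at most $n$, so $A^{(n)}:=\{A^{(n)}(k)\}_k\in l_\infty(\Mcal)$ and $\pi(A^{(n)})\in\Mcal_\omega\subseteq\Lc_{\log}(\Mcal_\omega,\tau_\omega)$. Since $|A(k)-A^{(n)}(k)|=|A(k)|E_{|A(k)|}(n,\infty)$, Lemma~\ref{lem:Fchar}\eqref{it:AE} gives
$$\sup_{k\ge0}\|A(k)-A^{(n)}(k)\|_{\log}\xrightarrow{n\to\infty}0.$$
For any $X=\{X(k)\}_k\in l_\infty(\Mcal)$, functional calculus commutes with $\pi$ on bounded continuous functions, so $\|\pi(X)\|_{\log,\Mcal_\omega}=\lim_{k\to\omega}\|X(k)\|_{\log}$. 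Combined with the triangle inequality, this shows $\{\pi(A^{(n)})\}_n$ is Cauchy in $\Lc_{\log}(\Mcal_\omega,\tau_\omega)$, which is complete by Theorem~\ref{thm:LlogTopalg}, and I define $\pit(A):=\lim_n\pi(A^{(n)})$.

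A key auxiliary fact is a \emph{flexibility lemma}: if $\{X^{(n)}\}_n\subset l_\infty(\Mcal)$ satisfies $\sup_k\|A(k)-X^{(n)}(k)\|_{\log}\to 0$, then $\pi(X^{(n)})\to\pit(A)$ in $\Lc_{\log}(\Mcal_\omega,\tau_\omega)$, by a direct triangle-inequality comparison with $A^{(m)}$ followed by letting $m\to\infty$. Flexibility then delivers most of the theorem essentially for free: (i) $\pit$ extends $\pi$ on $l_\infty(\Mcal)$ (take $X^{(n)}=A$ for $n\ge\sup_k\|A(k)\|$); (ii) $\pit(A+B)=\pit(A)+\pit(B)$ (since $A^{(n)}+B^{(n)}$ is a uniformly convergent bounded truncation of $A+B$); (iii) $\pit(A^*)=\pit(A)^*$ (using $*$-invariance of $\|\cdot\|_{\log}$ and continuity of the involution); (iv) $\Cpx$-linearity (via Lemma~\ref{lem:elemineq}\eqref{it:elemBddMult} applied to scalars); and (v) order preservation ($A(k)\ge0$ implies $A^{(n)}(k)\ge0$, so $\pi(A^{(n)})\ge0$ in $\Mcal_\omega$, and the positive cone is closed in the measure topology).

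The hard part, I expect, will be multiplicativity $\pit(AB)=\pit(A)\pit(B)$. By joint continuity of multiplication in $\Lc_{\log}(\Mcal_\omega,\tau_\omega)$ (Theorem~\ref{thm:LlogTopalg}), $\pit(A)\pit(B)=\lim_n\pi(A^{(n)})\pi(B^{(n)})=\lim_n\pi(A^{(n)}B^{(n)})$, so by flexibility it suffices to show
$$\sup_{k\ge0}\|A(k)B(k)-A^{(n)}(k)B^{(n)}(k)\|_{\log}\xrightarrow{n\to\infty}0.$$
Decomposing
$$A(k)B(k)-A^{(n)}(k)B^{(n)}(k)=\bigl(A(k)-A^{(n)}(k)\bigr)B^{(n)}(k)+\bigl(A(k)-A^{(n)}(k)\bigr)\bigl(B(k)-B^{(n)}(k)\bigr)+A^{(n)}(k)\bigl(B(k)-B^{(n)}(k)\bigr),$$
the middle term is controlled by Lemma~\ref{lem:elemineq}\eqref{it:elemMult}. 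For the outer two, the naive bound $\|XY\|_{\log}\le\max(\|Y\|,1)\|X\|_{\log}$ produces a factor of $n$ that defeats the $\|\cdot\|_{\log}$ convergence, so I must exploit the spectral gap in the truncation remainders: the nonzero singular values of $A(k)-A^{(n)}(k)$ and of $B(k)-B^{(n)}(k)$ all exceed $n$. Thus $\mu(t,(A-A^{(n)})B^{(n)})\le n\,\mu(t,A-A^{(n)})$, and since $\mu(t,A-A^{(n)})$ is either $0$ or larger than $n$, the elementary inequalities $n\mu\le\mu^2$ (when $\mu>n$) and $\log(1+\mu^2)\le 2\log(1+\mu)$ combine to give $\|(A-A^{(n)})B^{(n)}\|_{\log}\le 2\|A-A^{(n)}\|_{\log}$, and symmetrically for the third term. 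Summing yields a bound by a constant multiple of $\sup_k\|A-A^{(n)}\|_{\log}+\sup_k\|B-B^{(n)}\|_{\log}$, which tends to $0$ by Lemma~\ref{lem:Fchar}. This spectral-gap trick, which bypasses the failure of $\Lc_{\log}$-multiplication to be uniformly continuous on $\|\cdot\|_{\log}$-bounded sets, is the crux of the argument.
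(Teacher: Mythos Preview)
Your proof is correct and follows the same overall architecture as the paper's: define $\pit(A)$ as the $\|\cdot\|_{\log}$-limit of $\pi$ applied to bounded truncations, establish a flexibility/well-definedness lemma (the paper's items (i)--(iii)), and then verify the $*$-homomorphism properties one by one. The treatment of extension, additivity, $*$-preservation, and positivity is essentially identical.

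The genuine difference is in the multiplicativity step. The paper decomposes
\[
A_nB_n-AB=(A_n-A)(B_n-B)+(A_n-A)B+A(B_n-B),
\]
keeping the \emph{full} operators $A,B$ in the cross terms, and then handles $(A_n-A)B$ by choosing auxiliary scalars $R_n\to\infty$ with $R_n\sup_k\|A(k)-A_n(k)\|_{\log}\to0$ and invoking $\|XY\|_{\log}\le\|X\|_{\log}+\|Y\|_{\log}$ after writing $(A_n-A)B=R_n(A_n-A)\cdot\frac{B}{R_n}$; the term $\|B/R_n\|_{\log}$ then goes to zero uniformly in $k$ precisely because $B\in\Fsc(\Mcal,\tau)$. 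Your decomposition instead keeps the \emph{truncated} operators $A^{(n)},B^{(n)}$ in the cross terms and exploits the specific spectral structure of the truncation: $\|B^{(n)}\|\le n$ while the nonzero singular values of $A-A^{(n)}$ exceed $n$, so $n\mu\le\mu^2$ and $\log(1+\mu^2)\le2\log(1+\mu)$ give the clean bound $\|(A-A^{(n)})B^{(n)}\|_{\log}\le2\|A-A^{(n)}\|_{\log}$. Your argument is more direct and avoids the auxiliary $R_n,S_n$; the paper's argument has the (here unneeded) advantage of applying to \emph{any} approximating sequences $A_n,B_n$, not just spectral cutoffs.
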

\begin{proof}
We will construct $\pit$ as follows:
given $A\in\Fsc(\Mcal,\tau)$, we find a sequence $\{A_n\}$, each $A_n=\{A_n(k)\}_{k\ge0}\in l_{\infty}(\Mcal)$, such that
\begin{equation}\label{primary one}
\lim_{n\to\infty}\sup_{k\geq0}\|A_n(k)-A(k)\|_{\log}=0.
\end{equation}
Then we set
$$
\tilde\pi(A)=\lim_{n\to\infty}\pi(A_n),
$$
where the limit is taken in $\Lc_{\log}(\Mcal_\omega,\tau_\omega)$ with respect to $\|\cdot\|_{\log}$.

To see the correctness of the definition above, we have to show that
\begin{enumerate}[{\rm (i)}]
\item\label{it:Aexists}
For every $A\in\Fsc(\Mcal,\tau),$ there exists a sequence $\{A_n\}_{n\geq0}\subset l_{\infty}(\Mcal)$ satisfying \eqref{primary one}.
\item\label{it:pitAExists}
If $A\in\Fsc(\Mcal,\tau)$ and if a sequence $\{A_n\}_{n\geq0}\subset l_{\infty}(\Mcal)$ satisfies \eqref{primary one},
then the sequence $\{\pi(A_n)\}_{n\ge0}$ converges with respect to $\|\cdot\|_{\log}$.
\item\label{it:pitAunique}
If $A\in\Fsc(\Mcal,\tau)$ and if sequences $\{A_n'\}_{n\geq0},\{A_n''\}_{n\geq0}\subset l_{\infty}(\Mcal)$ both satisfy \eqref{primary one},
then $\lim_{n\to\infty}\|\pi(A'_n)-\pi(A''_n)\|_{\log}=0$.
\end{enumerate}

We now proceed with the proof of correctness.
To show~\eqref{it:Aexists}, given $A\in\Fsc(\Mcal,\tau),$ set
\begin{equation}\label{eq:Ank}
A_n(k)=A(k)E_{|A(k)|}[0,n].
\end{equation}
Then $A(k)-A_n(k)=A(k)E_{|A(k)|}(n,\infty)$ and, by Lemma~\ref{lem:Fchar},
\eqref{primary one} holds.

To show~\eqref{it:pitAExists},
let $A\in\Fsc(\Mcal,\tau)$ and suppose that a sequence $\{A_n\}_{n\geq0}\subset l_{\infty}(\Mcal)$ satisfies \eqref{primary one}.
By Theorem~\ref{thm:LlogTopalg},
it will suffice to show that the sequence $\{\pi(A_n)\}_{n\ge0}$ is Cauchy with respect to $\|\cdot\|_{\log}$.
Using the triangle inequality in $\Lc_{\log}$, we have
\begin{multline}\label{eq:AnAmbound}
\|\pi(A_n)-\pi(A_m)\|_{\log}=\tau_\omega(\log(1+|\pi(A_n-A_m)|)) \\
=\lim_{k\to\omega}\tau(\log(1+|A_n(k)-A_m(k)|))
=\lim_{k\to\omega}\|A_n(k)-A_m(k)\|_{\log} \\
\le\lim_{k\to\omega}\|A_n(k)-A(k)\|_{\log}+\lim_{k\to\omega}\|A_m(k)-A(k)\|_{\log} \\
\le\sup_{k\ge0}\|A_n(k)-A(k)\|_{\log}+\sup_{k\ge0}\|A_m(k)-A(k)\|_{\log}
\end{multline}
and, by hypothesis, the right-most upper bound tends to $0$ as $n,m\to\infty$.

To show~\eqref{it:pitAunique},
let $A\in\Fsc(\Mcal,\tau)$ and suppose that sequences $\{A_n'\}_{n\geq0},\{A_n''\}_{n\geq0}\subset l_{\infty}(\Mcal)$
both satisfy \eqref{primary one}.
By an estimate like~\eqref{eq:AnAmbound}, we have
$$
\|\pi(A'_n)-\pi(A''_n)\|_{\log}
\le\sup_{k\ge0}\|A'_n(k)-A(k)\|_{\log}+\sup_{k\ge0}\|A''_n(k)-A(k)\|_{\log}
$$
and, by hypothesis, this upper bound tends to $0$ as $n\to\infty$.
We have finished the demonstration that the mapping $\tilde\pi:\Fsc(\Mcal,\tau)\to\Lc_{\log}(\Mcal_\omega,\tau_\omega)$ is well defined.

To conclude the proof of the theorem, we must establish the following, for arbitrary $A,B\in\Fsc(\Mcal,\tau)$.
\begin{enumerate}[{\rm (i)}]
\setcounter{enumi}{3}
\item\label{it:pitrestricts} $\tilde\pi|_{l_{\infty}(\Mcal)}=\pi$,
\item\label{it:pit+} $\tilde\pi(A+B)=\tilde\pi(A)+\tilde\pi(B)$ and $\tilde\pi(A^*)=\tilde\pi(A)^*$,
\item\label{it:pit*} $\tilde\pi(AB)=\tilde\pi(A)\tilde\pi(B)$,
\item\label{it:pitpos} $\pit(A)\ge0$ if $A\in\Fsc(\Mcal,\tau)$ and $A\ge0$.
\end{enumerate}

To show~\eqref{it:pitrestricts}, if $A\in l_{\infty}(\Mcal)$,
then we take the sequence $\{A_n\}_{n\geq0}\subset l_{\infty}(\Mcal)$ with $A_n=A$ for every $n\geq0$.
Clearly, this sequence satisfies \eqref{primary one} and yields $\pit(A)=\pi(A)$.

To show~\eqref{it:pit+} Let $A,B\in\Fsc(\Mcal,\tau)$ and suppose that sequences $\{A_n\}_{n\geq0},\{B_n\}_{n\geq0}\subset l_{\infty}(\Mcal)$ satisfy \eqref{primary one} for $A$ and for $B,$ respectively.
It follows from the triangle inequality in $\Lc_{\log}$
that the sequence $\{A_n+B_n\}_{n\geq0}\subset l_{\infty}(\Mcal)$ satisfies \eqref{primary one} for $A+B$.
Hence, using Theorem~\ref{thm:LlogTopalg}, we have
$$\tilde\pi(A+B)=\lim_{n\to\infty}\pi(A_n+B_n)=\lim_{n\to\infty}\pi(A_n)+\pi(B_n)=\tilde\pi(A)+\tilde\pi(B).$$
The equality $\tilde\pi(A^*)=\tilde\pi(A)$ is immediate.

To show~\eqref{it:pit*}, let $A,B\in\Fsc(\Mcal,\tau)$ and let sequences $\{A_n\}_{n\geq0},\{B_n\}_{n\geq0}\subset l_{\infty}(\Mcal)$
satisfy \eqref{primary one} for $A$ and for $B$, respectively.
We will show that the sequence $\{A_nB_n\}_{n\ge0}$ satisfies \eqref{primary one} for $AB$,
and then Theorem~\ref{thm:LlogTopalg} will imply
$$\tilde\pi(AB)=\lim_{n\to\infty}\pi(A_nB_n)=\lim_{n\to\infty}\pi(A_n)\pi(B_n)=\tilde\pi(A)\tilde\pi(B).$$

Choose numbers $R_n,S_n>1$ such that $\lim_{n\to\infty}R_n=\lim_{n\to\infty}S_n=+\infty$ and
\begin{equation}\label{eq:RnA}
\lim_{n\to\infty}R_n\sup_{k\ge0}\|A(k)-A_n(k)\|_{\log}=\lim_{n\to\infty}S_n\sup_{k\ge0}\|B(k)-B_n(k)\|_{\log}=0.
\end{equation}
Using 
\begin{multline*}
A_n(k)B_n(k)-A(k)B(k)=(A_n(k)-A(k))(B_n(k)-B(k)) \\
+(A_n(k)-A(k))B(k)+A(k)(B_n(k)-B(k))
\end{multline*}
and Lemma~\ref{lem:elemineq}\eqref{it:elemMult}, as well as multiplying and dividing by the numbers $R_n$ and $S_n$, we have
\begin{align*}
\|A_n(k)&B_n(k)-A(k)B(k)\|_{\log} \\
&\le \|(A_n(k)-A(k))(B_n(k)-B(k))\|_{\log}+\|(A_n(k)-A(k))B(k)\|_{\log} \\
&\quad+\|A(k)(B_n(k)-B(k))\|_{\log} \\
&\le \|A_n(k)-A(k)\|_{\log}+\|B_n(k)-B(k)\|_{\log}+R_n\|A_n(k)-A(k)\|_{\log} \\
&\quad+\left\|\frac{B(k)}{R_n}\right\|_{\log}+\left\|\frac{A(k)}{S_n}\right\|_{\log}+S_n\|B_n(k)-B(k)\|_{\log}
\end{align*}
Since $A,B\in\Fsc(\Mcal,\tau)$ and $R_n,S_n\to\infty$, we have
$$
\lim_{n\to\infty}\sup_{k\ge0}\left\|\frac{B(k)}{R_n}\right\|_{\log}=\lim_{n\to\infty}\sup_{k\ge0}\left\|\frac{A(k)}{S_n}\right\|_{\log}=0.
$$
Since $R_n$ and $S_n$ were chosen so that~\eqref{eq:RnA} holds,
we obtain
$$
\lim_{n\to\infty}\sup_{k\ge0}\|A_n(k)B_n(k)-A(k)B(k)\|_{\log}=0,
$$
as required.

Finally, for~\eqref{it:pitpos}, the meaning of $A\ge0$ is $A(k)\ge0$ for all $k$.
Now the construction~\eqref{eq:Ank} of $A_n$ in the proof of~\eqref{it:Aexists}, above,
yields $A_n\ge0$, so we have $\pi(A_n)\ge0$ for all $n$.
Since the set of positive elements is closed in $\Lc_{\log}(\Mcal,\tau)$
(see of Remark~4.8 of~\cite{DSZ:Llog})
taking the limit as $n\to\infty$ yields $\pit(A)\ge0$.
\end{proof}

The map $\sigmat=\pit\circ\delta_\Fsc$ described below will be called
the {\em diagonal embedding} of $\Lc_{\log}(\Mcal,\tau)$ into $\Lc_{\log}(\Mcal_\omega,\tau_\omega)$.
\begin{lem}\label{lem:diagemb}
The map
$\delta_\Fsc:\Lc_{\log}(\Mcal,\tau)\to\Fsc(\Mcal,\tau)$
given by $\delta_\Fsc(T)=(T,T,\ldots)$
is a $*$-algebra morphism.
Consider the usual diagonal embedding $\sigma:=\pi\circ\delta_\Mcal:\Mcal\to\Mcal_\omega$ (see \S\ref{subsec:ultraP})
and its extension
$$
\sigmat:\Lc_{\log}(\Mcal,\tau)\to\Lc_{\log}(\Mcal_\omega,\tau_\omega)
$$
(see~\eqref{eq:sigmat} and the discussion near it).
Then $\sigmat=\pit\circ\delta_\Fsc$. 
\end{lem}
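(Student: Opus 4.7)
\bigskip

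\noindent
\textbf{Proof proposal.} The plan is to handle the two assertions in sequence, with the first being essentially a bookkeeping check and the second requiring one to match up the two approximation procedures that define $\sigmat$ and $\pit$.

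First I would check that $\delta_\Fsc$ actually lands in $\Fsc(\Mcal,\tau)$. For $T\in\Lc_{\log}(\Mcal,\tau)$ the constant sequence $A=(T,T,\ldots)$ satisfies
$$\sup_{k\ge0}\left\|\frac{A(k)}{n}\right\|_{\log}=\left\|\frac{T}{n}\right\|_{\log},$$
and by Lemma~\ref{lem:elemineq}\eqref{it:elemto0} this tends to $0$ as $n\to\infty$, so $\delta_\Fsc(T)\in\Fsc(\Mcal,\tau)$. The algebraic identities $\delta_\Fsc(S+T)=\delta_\Fsc(S)+\delta_\Fsc(T)$, $\delta_\Fsc(ST)=\delta_\Fsc(S)\delta_\Fsc(T)$, and $\delta_\Fsc(T^*)=\delta_\Fsc(T)^*$ are then immediate from the componentwise definitions of the operations on $\Fsc(\Mcal,\tau)$.

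For the second assertion, fix $T\in\Lc_{\log}(\Mcal,\tau)$ and set $T_n=TE_{|T|}[0,n]\in\Mcal$ for each $n\geq 0$. Since $T\in\Lc_{\log}$, dominated convergence (applied to the spectral distribution of $|T|$) yields $\|T-T_n\|_{\log}=\|TE_{|T|}(n,\infty)\|_{\log}\to 0$. On the one hand, $\sigmat$ is a $\|\cdot\|_{\log}$-continuous extension of $\sigma$, so
$$\sigmat(T)=\lim_{n\to\infty}\sigma(T_n)=\lim_{n\to\infty}\pi(\delta_\Mcal(T_n)),$$
the limit being taken in $\Lc_{\log}(\Mcal_\omega,\tau_\omega)$. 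On the other hand, applying the construction of $\pit$ in the proof of Theorem~\ref{isomorphism thm} to $A=\delta_\Fsc(T)$ and using the specific bounded approximants $A_n(k)=A(k)E_{|A(k)|}[0,n]=T_n$ prescribed there (see~\eqref{eq:Ank}), we get
$$\pit(\delta_\Fsc(T))=\lim_{n\to\infty}\pi(A_n)=\lim_{n\to\infty}\pi(\delta_\Mcal(T_n)).$$
Comparing the two displays yields $\sigmat=\pit\circ\delta_\Fsc$.

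The only potential subtlety — and the step I would double-check carefully — is that the two notions of limit really coincide: in both constructions the convergence is with respect to $\|\cdot\|_{\log}$ in $\Lc_{\log}(\Mcal_\omega,\tau_\omega)$, and in both cases the approximating sequence is the same sequence $\pi(\delta_\Mcal(T_n))$. So once the bounded truncations $T_n$ are shown to $\|\cdot\|_{\log}$-approximate $T$ in $\Lc_{\log}(\Mcal,\tau)$, the identification is immediate. The hardest part, such as it is, is just to be careful that the specific approximating sequence used inside the proof of Theorem~\ref{isomorphism thm} is the diagonal of $T_n$, which is what makes both computations produce the same elements of $\Mcal_\omega$.
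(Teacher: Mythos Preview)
Your proof is correct and follows essentially the same approach as the paper's. The only cosmetic differences are that (i) for showing $\delta_\Fsc(T)\in\Fsc(\Mcal,\tau)$ you invoke Lemma~\ref{lem:elemineq}\eqref{it:elemto0} directly, whereas the paper routes through the characterization in Lemma~\ref{lem:Fchar} and dominated convergence, and (ii) for the second part you use that $\sigmat$ is a $\|\cdot\|_{\log}$-isometry, while the paper instead notes that $\|\cdot\|_{\log}$-convergence implies measure convergence and uses measure-continuity of $\sigmat$; both routes are valid and lead to the same identification via the truncations $T_n=TE_{|T|}[0,n]$.
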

\begin{proof}
Using Lemma~\ref{lem:Fchar} and the Dominated Convergence Theorem, it is easy to see that $\delta_\Fsc$ maps
$\Lc_{\log}(\Mcal,\tau)$ into $\Fsc(\Mcal,\tau)$.
It is clearly a $*$-algebra homomorphism.

To show $\sigmat=\pit\circ\delta_\Fsc$, note that  both sides are tautologically equal to $\sigma$ when restricted
to $\Mcal$.
For $T\in\Lc_{\log}(\Mcal,\tau)$, by the construction in Theorem~\ref{isomorphism thm}, we have
$$
\pit\circ\delta_{\Fsc}(T)=\lim_{n\to\infty}\pi\circ\delta_\Mcal(TE_{|T|}[0,n])=\lim_{n\to\infty}\sigma(TE_{|T|}[0,n]),
$$
where the convergence is in $\|\cdot\|_{\log}$, hence, also in measure topology. 
However, $\pi\circ\delta_\Mcal(TE_{|T|}[0,n])=\sigma(TE_{|T|}[0,n])$ and $\lim_{n\to\infty}TE_{|T|}[0,n]=T$,
with convergence in measure.
Since $\sigmat$ is continuous with respect to convergence in measure, we have
$\lim_{n\to\infty}\sigma(TE_{|T|}[0,n])=\sigmat(T)$.
This finishes the proof.
\end{proof}

The next lemma belongs in this section but will first be used in Section~\ref{hs construct}.

\begin{lem}\label{lem:Yt}
Let $Y\in\Lc_{\log}(\Mcal,\tau)$, let $r_k>0$, suppose $\lim_{k\to\infty}r_k=0$ and let $\Yt=\{r_kY\}_{k\ge0}$.
Then $\Yt\in\Fsc(\Mcal,\tau)$ and $\pit(\Yt)=0$.
\end{lem}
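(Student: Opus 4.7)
The plan is to verify both claims directly by unpacking the construction of $\pit$ from the proof of Theorem~\ref{isomorphism thm} and applying the elementary estimates of Lemma~\ref{lem:elemineq}. Since $\Yt$ is obtained from a fixed operator $Y\in\Lc_{\log}(\Mcal,\tau)$ by scaling by a null sequence of scalars, both conclusions should reduce essentially to the continuity property $\|rY\|_{\log}\to0$ as $r\to0$ recorded in Lemma~\ref{lem:elemineq}\eqref{it:elemto0}.

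To show $\Yt\in\Fsc(\Mcal,\tau)$, I would set $R=\sup_k r_k$, which is finite because $r_k\to0$. The monotonicity of $r\mapsto\log(1+rs)$ in $r\ge 0$ (for each fixed $s\ge 0$) gives both $\sup_k\|r_kY\|_{\log}\le\|RY\|_{\log}<\infty$, showing that $\Yt$ lies in $l_\infty(\Lc_{\log}(\Mcal,\tau))$, and also $\sup_k\|r_kY/n\|_{\log}\le\|(R/n)Y\|_{\log}$, whose right-hand side tends to $0$ as $n\to\infty$ by Lemma~\ref{lem:elemineq}\eqref{it:elemto0}. This verifies the defining condition of $\Fsc(\Mcal,\tau)$.

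For the identity $\pit(\Yt)=0$ I would use the canonical truncation from the proof of Theorem~\ref{isomorphism thm}, namely $A_n(k)=r_kY\,E_{|Y|}[0,n/r_k]$. Since $|A_n(k)|=r_k|Y|E_{|Y|}[0,n/r_k]$ has operator norm at most $n$, we have $A_n\in l_\infty(\Mcal)$, and the equivalence in Lemma~\ref{lem:Fchar}\eqref{it:AE} applied to $\Yt$ verifies the sup-convergence~\eqref{primary one}. By the construction of $\pit$ one then has $\pit(\Yt)=\lim_n\pi(A_n)$, so it suffices to show that $\pi(A_n)=0$ in $\Mcal_\omega$ for each fixed $n$. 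This is the step where the hypothesis $r_k\to 0$ really enters: since $E_{|Y|}[0,n/r_k]$ commutes with $|Y|$, a direct functional calculus computation yields the identity $\log(1+|A_n(k)|)=\log(1+r_k|Y|)\,E_{|Y|}[0,n/r_k]$, and hence $\tau(\log(1+|A_n(k)|))\le\|r_kY\|_{\log}\to 0$ as $k\to\infty$ by Lemma~\ref{lem:elemineq}\eqref{it:elemto0}. The ultrapower trace formula~\eqref{ultra trace def}, available because each $A_n(k)$ is bounded, then gives $\tau_\omega(\log(1+|\pi(A_n)|))=0$, and faithfulness of $\tau_\omega$ forces $\pi(A_n)=0$. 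I do not anticipate any real obstacle; the only delicate point is keeping the estimates uniform in $k$ and ensuring the truncations remain bounded, so that the ultrapower trace can be computed verbatim from~\eqref{ultra trace def}.
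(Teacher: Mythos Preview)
Your argument is correct, but it takes a different route from the paper's proof. The paper observes that $\Yt$ factors in $\Fsc(\Mcal,\tau)$ as $\Yt=r\cdot\delta_\Fsc(Y)$, where $r=\{r_kI\}_{k\ge0}\in l_\infty(\Mcal)\subset\Fsc(\Mcal,\tau)$ and $\delta_\Fsc(Y)\in\Fsc(\Mcal,\tau)$ by Lemma~\ref{lem:diagemb}; then, since $\Fsc(\Mcal,\tau)$ is an algebra and $\pit$ is a homomorphism (Theorem~\ref{isomorphism thm}), one gets $\pit(\Yt)=\pi(r)\,\pit(\delta_\Fsc(Y))=0$ because $r_k\to0$ forces $\pi(r)=0$ in $\Mcal_\omega$. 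Your approach instead unwinds the construction of $\pit$: you verify the $\Fsc$ condition directly via monotonicity and Lemma~\ref{lem:elemineq}\eqref{it:elemto0}, then show that the canonical truncations $A_n$ already satisfy $\pi(A_n)=0$ for each $n$ by tracking $\|A_n(k)\|_{\log}\le\|r_kY\|_{\log}\to0$. The paper's proof is shorter and exploits the structural machinery already built (the algebra structure of $\Fsc$, the multiplicativity of $\pit$, and Lemma~\ref{lem:diagemb}); your proof is more self-contained and makes no appeal to the diagonal embedding lemma or to multiplicativity, at the cost of re-deriving a special case of what those results already encode.
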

\begin{proof}
Let $r=(r_1I,r_2I,\ldots)\in l_\infty(\Mcal)$, where $I$ is the identity in $\Mcal$.
Then $\Yt=r\delta_\Fsc(Y)\in\Fsc(\Mcal,\tau)$ (see Lemma \ref{lem:diagemb}).
Since $\pi(r)=0$, we have $\pit(\Yt)=\pi(r)\pit(\delta_\Fsc(Y))=0$.
\end{proof}

\begin{thm}\label{det converge} If $A=\{A(k)\}_{k\geq0}\in\Fsc(\Mcal,\tau)$ is such that $A\geq 1,$ then
$$\Delta(\pit(A))=\lim_{k\to\omega}\Delta(A(k)).$$
\end{thm}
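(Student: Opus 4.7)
The plan is to exploit the identity
\[
\log\Delta(B) = \|B\|_{\log} - \|B^{-1}\|_{\log}, \qquad B \in \Lc_{\log}(\Mcal,\tau),\ B \ge 1,
\]
which follows because $B$ and $B^{-1}$ commute, the relation $1+B^{-1} = (1+B)B^{-1}$ yields the functional-calculus identity $\log(1+B) - \log(1+B^{-1}) = \log B$, and traces may then be taken. Since $A \ge 1$ and $\pit$ preserves order and sends $1$ to $1$ (Theorem~\ref{isomorphism thm}), we have $\pit(A)\ge 1$. Moreover the sequence $A^{-1} := \{A(k)^{-1}\}_{k\ge0}$ lies in $l_\infty(\Mcal)$ with norm at most $1$, so by multiplicativity of $\pit$ and finiteness of $\Mcal_\omega$, $\pit(A)^{-1} = \pit(A^{-1}) = \pi(A^{-1})$. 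Consequently the theorem reduces to the two equalities
\[
\|\pit(A)\|_{\log} = \lim_{k\to\omega}\|A(k)\|_{\log} \quad\text{and}\quad \|\pit(A)^{-1}\|_{\log} = \lim_{k\to\omega}\|A(k)^{-1}\|_{\log}.
\]
The second is immediate: $\log(1+A^{-1}) \in l_\infty(\Mcal)$, its $\pi$-image in $\Mcal_\omega$ is $\log(1+\pit(A)^{-1})$, and the definition of $\tau_\omega$ yields $\|\pit(A)^{-1}\|_{\log} = \lim_{k\to\omega}\tau(\log(1+A(k)^{-1})) = \lim_{k\to\omega}\|A(k)^{-1}\|_{\log}$.

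For the first equality I would use the approximation from the proof of Theorem~\ref{isomorphism thm}: set $A_n(k) := A(k)E_{A(k)}[0,n]$, so $A_n \in l_\infty(\Mcal)$, $\pit(A_n) = \pi(A_n)$, and $\pit(A) = \lim_{n\to\infty}\pi(A_n)$ in $\|\cdot\|_{\log}$. Continuity of the F-norm together with the defining property of $\tau_\omega$ applied to $\log(1+A_n(k)) \in l_\infty(\Mcal)$ give
\[
\|\pit(A)\|_{\log} = \lim_{n\to\infty}\|\pi(A_n)\|_{\log} = \lim_{n\to\infty}\lim_{k\to\omega}\|A_n(k)\|_{\log}.
\]
A direct spectral computation based on $1+A_n(k) = (1+A(k))E_{A(k)}[0,n] + E_{A(k)}(n,\infty)$ yields
\[
\|A(k)\|_{\log} - \|A_n(k)\|_{\log} = \tau\bigl(\log(1+A(k))E_{A(k)}(n,\infty)\bigr) = \|A(k)E_{A(k)}(n,\infty)\|_{\log},
\]
and by Lemma~\ref{lem:Fchar}\eqref{it:AE} this tends to $0$ as $n\to\infty$ uniformly in $k\ge 0$.

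The principal difficulty is the interchange of the limits $\lim_n$ and $\lim_{k\to\omega}$; the uniform convergence just displayed -- which is precisely the uniform integrability built into the definition of $\Fsc(\Mcal,\tau)$ -- is exactly the tool that handles it, giving $\lim_n\lim_{k\to\omega}\|A_n(k)\|_{\log} = \lim_{k\to\omega}\|A(k)\|_{\log}$. Combining the two $\|\cdot\|_{\log}$-convergences via the reduction identity and exponentiating yields $\Delta(\pit(A)) = \lim_{k\to\omega}\Delta(A(k))$.
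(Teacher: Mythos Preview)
Your proof is correct and takes a genuinely different route from the paper's.

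The paper works directly with determinants via the multiplicative decomposition $A(k)=B_n(k)C_n(k)$ with $B_n(k)=\min\{A(k),n\}$ and $C_n(k)=\max\{A(k)/n,1\}$; it then uses Fack--Kosaki's monotone convergence theorem (Theorem~3.5(ii) of~\cite{FackKosaki}) to show $\Delta(\pi(B_n))\to\Delta(\pit(A))$, and the $\Fsc$-condition to force $\sup_k\Delta(C_n(k))\to1$, combining the two via multiplicativity of $\Delta$. Your argument instead reduces the determinant to the F-norm through the identity $\log\Delta(B)=\|B\|_{\log}-\|B^{-1}\|_{\log}$ for $B\ge1$. This is clever: the $B^{-1}$ contribution becomes a purely bounded computation (so $\pi$ and functional calculus handle it immediately), and the unbounded part is governed by exactly the uniform-integrability statement of Lemma~\ref{lem:Fchar}\eqref{it:AE}, which you invoke to interchange $\lim_n$ and $\lim_{k\to\omega}$. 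Your approach is arguably more elementary --- it avoids the monotone convergence theorem for $\Delta$ and uses only continuity of the F-norm in its own topology --- while the paper's decomposition stays closer to the multiplicative structure of $\Delta$ and makes the role of the ``large'' and ``small'' parts of $A(k)$ more transparent.
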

\begin{proof}
Suppose first that $A\in l_{\infty}(\Mcal)$. It follows that $\{\log(|A(k)|)\}_{k\geq0}\subset\Mcal$.
Hence,
\begin{multline*}
\log(\Delta(\pit(A)))=\log(\Delta(\pi(A)))=\tau_\omega(\log(\pi(A))) \\
=\lim_{k\to\omega}\tau(\log(A(k)))=\lim_{k\to\omega}\log(\Delta(A(k))).
\end{multline*}
This concludes the proof for $A\in l_{\infty}(\Mcal)$.

Consider now the general case.
Fix $n\in\mathbb{N}$ and let $\{B_n\}_{n\ge1}$ and $\{C_n\}_{n\ge1}$ in $\Fsc(\Mcal,\tau)$ be given by
\begin{align*}
B_n(k)&=\min\{A(k),n\}:=A(k)E_{A(k)}[1,n]+nE_{A(k)}(n,\infty) \\
C_n(k)&=\max\{\frac1nA(k),1\}:=E_{A(k)}[1,n]+\frac1nA(k)E_{A(k)}(n,\infty),
\end{align*}
where we note that the hypothesis $A\ge1$ implies $A(k)\ge1$ for each $k$.
For each $n$, we have
\begin{equation}\label{eq:ABC}
A(k)=B_n(k)C_n(k).
\end{equation}
Since $\|B_n(k)\|\le n$ remains bounded as $k\to\infty$ and $1\le B_n(k)\le A(k)$, it follows that
\begin{equation}\label{eq:DeltaB}
\lim_{k\to\omega}\Delta(A(k))\geq\lim_{k\to\omega}\Delta(B_n(k))=\Delta(\pi(B_n)).
\end{equation}
In fact, we have
$1\le B_n(k)\le A(k)E_{A(k)}[0,n]$, so we get
$$
\|A(k)-B_n(k)\|_{\log}\le\|A(k)E_{A(k)}(n,\infty)\|_{\log}.
$$
Invoking Lemma~\ref{lem:Fchar}, we have
$$
\lim_{n\to\infty}\sup_{k\ge0}\|A(k)-B_n(k)\|_{\log}=0
$$
and by the definition of $\pit$ in the proof of Theorem~\ref{isomorphism thm}, we have
$\pit(A)=\lim_{n\to\infty}\pi(B_n)$, where convergence is with respect to $\|\cdot\|_{\log}$.
By Remark 4.8 of~\cite{DSZ:Llog}, we have convergence in measure of $\pi(B_n)$ to $\pit(A)$.
Since $B_n\le B_{n+1}$, also $\pit(B_n)\le\pit(B_{n+1})$.
Since $B_n\geq 1$ for all $n$, it follows from Fack and Kosaki's version of the the Monotone Convergence Theorem,
namely,  Theorem~3.5(ii) of~\cite{FackKosaki},
that
\begin{equation}\label{eq:limDelB}
\lim_{n\to\infty}\Delta(\pi(B_n))=\Delta(\pit(A)).
\end{equation}
From this and~\eqref{eq:DeltaB} we infer
$$\lim_{k\to\omega}\Delta(A(k))\geq\Delta(\pit(A)).$$

We now prove the reverse inequality.
Evidently,
$$\log(C_n(k))=\log^+\left(\frac1nA(k)\right)\le\log\left(1+\frac{A(k)}n\right),\quad(n,k\geq0),$$
so
$$
\log\Delta(C_n(k))=\tau(\log(C_n(k))\le\left\|\frac{A(k)}n\right\|_{\log}.
$$
Since $A\in\Fsc(\Mcal,\tau)$, we conclude
$$
\limsup_{n\to\infty}\left(\sup_{k\ge0}\Delta(C_n(k))\right)\le1.
$$
Using~\eqref{eq:ABC} and~\eqref{eq:limDelB}, we have
\begin{multline*}
\lim_{k\to\omega}\Delta(A(k))=\lim_{k\to\omega}\Delta(B_n(k))\cdot\lim_{k\to\omega}\Delta(C_n(k))\leq\Delta(\pi(B_n))\cdot\sup_{k\geq0}\Delta(C_n(k)) \\
\leq\Delta(\pit(A))\cdot\sup_{k\geq0}\Delta(C_n(k)).
\end{multline*}
Taking $n\to\infty$ yields
$\lim_{k\to\omega}\Delta(A(k))\le \Delta(\pit(A))$, as required.
\end{proof}

The following result states that the Fuglede-Kadison determinant is preserved by the diagonal embedding
$\pit\circ\delta_\Fsc$ from Lemma~\ref{lem:diagemb}.
It is, in fact, easy to prove directly, too.

\begin{cor}\label{cor:Yt}
Let $Y\in\Lc_{\log}(\Mcal,\tau)$.
Then $\Delta(\pit\circ\delta_\Fsc(Y))=\Delta(Y)$.
\end{cor}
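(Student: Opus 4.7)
The plan is to reduce the statement to Theorem \ref{det converge} via a multiplicative decomposition. By Lemma \ref{lem:diagemb}, $\pit\circ\delta_\Fsc=\sigmat$, so it suffices to prove $\Delta(\sigmat(Y))=\Delta(Y)$. Since $\sigmat$ is a $*$-algebra homomorphism preserving positivity (Theorem \ref{isomorphism thm}), it respects absolute values: $|\sigmat(Y)|=\sigmat(|Y|)$. By definition $\Delta(Y)=\Delta(|Y|)$, so we may assume $Y\geq 0$.

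Next, via the Borel functional calculus, write $Y=Y_1Y_2$ where $Y_1:=\max\{Y,1\}\in\Lc_{\log}(\Mcal,\tau)$ satisfies $Y_1\geq 1$, and $Y_2:=\min\{Y,1\}\in\Mcal$ satisfies $0\leq Y_2\leq 1$. These factors commute (both being functions of $Y$) and their product equals $Y$, so the multiplicativity of $\Delta$ in \eqref{det homomorphism} yields
$$\Delta(Y)=\Delta(Y_1)\Delta(Y_2),\qquad \Delta(\sigmat(Y))=\Delta(\sigmat(Y_1))\Delta(\sigmat(Y_2)).$$
Hence it suffices to verify that $\sigmat$ preserves $\Delta$ on each factor separately.

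For $Y_1$, observe that $\delta_\Fsc(Y_1)=(Y_1,Y_1,\ldots)$ is an element of $\Fsc(\Mcal,\tau)$ bounded termwise below by $1$. Theorem \ref{det converge} applies directly and gives $\Delta(\sigmat(Y_1))=\Delta(\pit(\delta_\Fsc(Y_1)))=\lim_{k\to\omega}\Delta(Y_1)=\Delta(Y_1)$. For $Y_2$, since $Y_2\in\Mcal$ is bounded we have $\sigmat(Y_2)=\sigma(Y_2)$, and the normal trace-preserving $*$-embedding $\sigma:\Mcal\hookrightarrow\Mcal_\omega$ sends spectral projections of $Y_2$ to those of $\sigma(Y_2)$ with the same trace, so $\sigma(Y_2)$ and $Y_2$ have the same spectral distribution on $[0,1]$. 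The identity $\log\Delta(A)=\int_{[0,1]}\log(s)\,d\nu_A(s)$, convergent in $[-\infty,0]$ for $0\leq A\leq 1$, then yields $\Delta(\sigma(Y_2))=\Delta(Y_2)$. Combining the two cases completes the proof.

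The main obstacle is precisely that Theorem \ref{det converge} requires the uniform lower bound $A\geq 1$, whereas a general $Y\in\Lc_{\log}(\Mcal,\tau)$ may satisfy $\Delta(Y)=0$ or have arbitrarily small spectrum near the origin contributing a logarithmic singularity; one cannot feed $\delta_\Fsc(Y)$ directly into that theorem. The multiplicative split $Y=\max(Y,1)\cdot\min(Y,1)$ quarantines this difficulty into the bounded factor $Y_2$, where preservation of $\Delta$ is elementary because $\sigma$ is a genuine trace-preserving embedding at the level of $\Mcal$ itself.
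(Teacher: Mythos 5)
Your proof is correct, and it reaches the conclusion by a different reduction than the paper's. The paper's proof is a three-line computation using the regularization identity~\eqref{eq:Deltaeps}: it writes $\Delta(\pit\circ\delta_\Fsc(Y))^2=\lim_{\eps\to0}\Delta(\pit\circ\delta_\Fsc(|Y|^2)+\eps)$, applies Theorem~\ref{det converge} to the constant sequence $\{|Y|^2+\eps\}_{k\ge0}$ (which is bounded below, so the theorem applies after an obvious rescaling), and lets $\eps\to0$. You instead quarantine the small spectrum by the multiplicative splitting $Y=\max\{|Y|,1\}\cdot\min\{|Y|,1\}$ and use~\eqref{det homomorphism}, handling the factor $\ge 1$ by Theorem~\ref{det converge} and the bounded contraction by the elementary fact that a trace-preserving embedding preserves spectral distributions (and hence $\Delta$, even when it vanishes). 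Both arguments have the same engine — Theorem~\ref{det converge}, which only covers operators bounded below — and differ only in how they reduce to it: additive $\eps$-perturbation of $|Y|^2$ versus multiplicative factorization. Yours is slightly longer but arguably more transparent about where a possible $\Delta(Y)=0$ lives; the paper's is more economical and avoids the separate discussion of the bounded factor. Your steps all check out: the identity $|\sigmat(Y)|=\sigmat(|Y|)$ follows from $\sigmat$ being a positivity-preserving $*$-homomorphism together with uniqueness of positive square roots, $\max\{Y,1\}\in\Lc_{\log}(\Mcal,\tau)$ since it is dominated by $1+Y$, and the constant sequence $\delta_\Fsc(Y_1)$ does lie in $\Fsc(\Mcal,\tau)$ by Lemma~\ref{lem:diagemb}.
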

\begin{proof}
Using~\eqref{eq:Deltaeps} and Theorem~\ref{det converge}, we have
\begin{multline*}
\Delta(\pit\circ\delta_\Fsc(Y))^2=\lim_{\eps\to0}\Delta(|\pit\circ\delta_\Fsc(Y)|^2+\eps)
=\lim_{\eps\to0}\Delta(\pit\circ\delta_\Fsc(|Y|^2)+\eps) \\
=\lim_{\eps\to0}\Delta(|Y|^2+\eps)=\Delta(Y)^2.
\end{multline*}
\end{proof}

\section{Properties of Haagerup--Schultz projections for mild operators}
\label{mild section}

For mild operators with appropriate Lipschitz domains, Haagerup--Schultz projections corresponding to disks are given by Theorem \ref{hs integral}.
In this section, we construct Haagerup--Schultz projections for mild operators and certain closed sets
(namely, those closed sets $\mathcal{B}$ with $m(\partial\mathcal{B})=0$, where $m$ is Lebesgue measure).
Note that for a mild operator $T$, since $\nu_T$ is absolutely continuous with respect to Lebesgue measure, a Haagerup-Schultz projection for a set $\Bc$
remains a Haagerup-Schultz projection for any set that differs from $\Bc$ by a Lebesgue-null set.

We recall that mild operators $T$ satisfy $\Delta(T)>0$ and, in particular, must have zero kernel and, thus, be invertible in $\Sc(\Mcal,\tau)$.
In fact, by Lemma 2.4 of~\cite{HS1}, the inverse lies in $\Lc_{\log}(\Mcal,\tau)$.

\begin{lem}\label{lem:Moebius mild}
Suppose $T\in\Lc_{\log}(\Mcal,\tau)$ is mild$(\Dc)$ for some $\Dc\subseteq\Cpx$
and let $\gamma$ be a fractional linear transformation.
Then $\gamma(T)$ is mild$(\gamma(\Dc)\cap\Cpx)$.
\end{lem}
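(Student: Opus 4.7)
My plan is first to reduce to checking the three defining conditions. Writing $\gamma(z) = (az+b)/(cz+d)$ with $ad - bc \neq 0$, define $\gamma(T) := (aT+b)(cT+d)^{-1}$; the order of factors is unambiguous because $aT+b$ and $cT+d$ commute. Condition~\eqref{it:milda} for $T$ applied at $\lambda = -d/c$ (or trivially if $c = 0$), combined with multiplicativity of $\Delta$, yields $\Delta(cT+d) \neq 0$, so Lemma~2.4 of~\cite{HS1} gives $(cT+d)^{-1} \in \Lc_{\log}(\Mcal,\tau)$ and hence $\gamma(T) \in \Lc_{\log}(\Mcal,\tau)$. It then remains to verify the three conditions of Definition~\ref{def:mild} for $\gamma(T)$ on the domain $\gamma(\Dc) \cap \Cpx$.

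Conditions~\eqref{it:milda} and~\eqref{it:mildb} both rest on the factorization
$$
\gamma(T) - z = (a - cz)\bigl(T - \gamma^{-1}(z)\bigr)(cT+d)^{-1}, \qquad z \in \Cpx \setminus \{a/c\},
$$
together with the identity $\gamma(T) - a/c = (bc - ad)c^{-1}(cT+d)^{-1}$ at the exceptional point. For~\eqref{it:milda}, multiplicativity of $\Delta$ and condition~\eqref{it:milda} for $T$ applied to $\lambda = \gamma^{-1}(z) \in \Cpx$ yield $\Delta(\gamma(T) - z) \neq 0$ for every $z \in \Cpx$. For~\eqref{it:mildb}, I would establish $\nu_{\gamma(T)} = \gamma_*(\nu_T)$ by chaining the push-forward formulas for affine maps and for inversion (Proposition~2.16 of~\cite{HS1}) through the decomposition $\gamma(T) = a/c + (bc - ad)c^{-1}(cT+d)^{-1}$. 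Since $\gamma$ restricts to a diffeomorphism $\Cpx \setminus \{-d/c\} \to \Cpx \setminus \{a/c\}$ and $\nu_T(\{-d/c\}) = 0$ by absolute continuity, the pushforward $\gamma_*\nu_T$ inherits absolute continuity with respect to Lebesgue measure.

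The substantive step, and what I expect to be the main obstacle, is condition~\eqref{it:mildc}; the plan is to derive an explicit resolvent formula. Inverting the factorization above, then using the identity $(cT+d)(w-T)^{-1} = (cw+d)(w-T)^{-1} - c\,I$ at $w = \gamma^{-1}(z)$ along with the scalar evaluation $c\gamma^{-1}(z) + d = (ad - bc)/(a - cz)$, one obtains
$$
(z - \gamma(T))^{-1} = \frac{ad - bc}{(a - cz)^2}\bigl(\gamma^{-1}(z) - T\bigr)^{-1} - \frac{c}{a - cz}\,I, \qquad z \in \Cpx \setminus \{a/c\}.
$$
For $z \in \gamma(\Dc) \cap \Cpx$ one automatically has $z \neq a/c$, since $\gamma^{-1}(a/c) = \infty \notin \Dc$, and $w := \gamma^{-1}(z)$ ranges in $\Dc$. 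On any compact subset $K$ of $\gamma(\Dc) \cap \Cpx$ the scalar coefficients and the map $\gamma^{-1}$ are smooth, hence Lipschitz and bounded, while $w \mapsto (w - T)^{-1}$ is Lipschitz and bounded on the compact set $\gamma^{-1}(K) \subseteq \Dc$ into $\Lc_p(\Mcal,\tau)$ by hypothesis~\eqref{it:mildc} for $T$; moreover $I \in \Lc_p(\Mcal,\tau)$ since $\tau$ is finite. A routine product-telescoping estimate then yields the required local Lipschitz property for $z \mapsto (z - \gamma(T))^{-1}$ into $\Lc_p(\Mcal,\tau)$. The potentially delicate point is carrying out the inversion in the non-commutative setting, but this is in fact unproblematic because every $z$-dependent coefficient above is scalar and therefore commutes with $T$, so the algebraic manipulation is effectively scalar.
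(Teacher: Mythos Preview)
Your argument is correct. The paper's proof reduces to the two generators---affine maps $\gamma(z)=az+b$ and inversion $\gamma(z)=z^{-1}$---and checks the three conditions in each case separately, using in particular the identity $(z-T^{-1})^{-1}=z^{-1}-z^{-2}(z^{-1}-T)^{-1}$ for condition~\eqref{it:mildc}. Your approach instead treats a general $\gamma$ directly via the resolvent formula
\[
(z-\gamma(T))^{-1}=\frac{ad-bc}{(a-cz)^2}\bigl(\gamma^{-1}(z)-T\bigr)^{-1}-\frac{c}{a-cz}\,I,
\]
of which the paper's inversion identity is the special case $a=d=0$, $b=c=1$. For condition~\eqref{it:mildb} you still fall back on the affine--inversion decomposition (as you must, since that is how Proposition~2.16 of~\cite{HS1} is stated), so in the end the two proofs are organized differently but use the same ingredients. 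Your direct derivation of the general resolvent formula is a mild gain in that it avoids iterating the composition, at the cost of carrying the four parameters through the algebra; neither route presents any real difficulty.
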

\begin{proof}
It will suffice to prove it in the two cases (i) $\gamma(z)=az+b$, $a\ne0$ and (ii) $\gamma(z)=z^{-1}$,
since each fractional linear transformation is a composition of some of these forms.
In case~(i), the conclusions follow immediately.
In case~(ii),
$\Delta(T^{-1})=\Delta(T)^{-1}\ne0$ and, for $\lambda\ne0$,
$$
\Delta(T^{-1}-\lambda)=\Delta(T)^{-1}|\lambda|\Delta(T-\lambda^{-1})\ne0,
$$
so part~\eqref{it:milda} of Definition~\ref{def:mild} holds for $T^{-1}$.
Since $T^{-1}\in\Lc_{\log}(\Mcal,\tau)$, we have
(see Proposition 2.16 in \cite{HS1}) that $\nu_{T^{-1}}$ is the push-forward measure of $\nu_T$ under $z\mapsto z^{-1}$.
Thus, $\nu_{T^{-1}}$ is Lebesgue absolutely continuous and part~\eqref{it:mildb} of Definition~\ref{def:mild} holds.
Let $p\in(\frac12,1)$ be such that the map $z\mapsto(z-T)^{-1}$ is locally Lipschitz from $\Dc$ into $\Lc_p(\Mcal,\tau)$.
Now from the identity
$$
(z-T^{-1})^{-1}=z^{-1}-z^{-2}(z^{-1}-T)^{-1},
$$
we see that the map $z\mapsto(z-T^{-1})^{-1}$ is locally Lipschitz from $\{z^{-1}\mid z\in\Dc\backslash\{0\}\}=\gamma(\Dc)\cap\Cpx$ to $\Lc_p(\Mcal,\tau)$.
This finishes the proof that $T^{-1}$ is mild$(\gamma(\Dc)\cap\Cpx)$.
\end{proof}

We will need the following elementary lemma.
For a projection $p\in\Mcal$, we make the obvious identification
$$ p\Sc(\Mcal,\tau)p=\Sc(p\Mcal p,\tau(p)^{-1}\tau\restrict_\Mcal).$$

\begin{lem}\label{lem:UTinvert}
Let $R\in\Sc(\Mcal,\tau)$, suppose $p\in\Mcal$ is an $R$-invariant projection
and suppose $R$ is invertible.
Then $p$ is $R^{-1}$-invariant and $R^{-1}p=(Rp)^{-1}$, where the latter inverse is in
$p\Sc(\Mcal,\tau)p$.
Similarly, $(1-p)R^{-1}=((1-p)R(1-p))^{-1}$.
\end{lem}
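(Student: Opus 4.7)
The plan is to write $R$ in upper triangular $2{\times}2$ block form with respect to the decomposition $1=p+(1-p)$, to introduce the corresponding four blocks of $R^{-1}$, and then to read off the required identities from the equations $RR^{-1}=1=R^{-1}R$, using finiteness of $\Sc(\Mcal,\tau)$ at the crucial moment.

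In detail: the invariance condition $Rp=pRp$ is equivalent to $(1-p)Rp=0$, so
$$R=A+B+C,\qquad A:=pRp,\ B:=pR(1-p),\ C:=(1-p)R(1-p),$$
viewed as elements of $\Sc(\Mcal,\tau)$ (multiplication by the bounded operators $p$ and $1-p$ on either side is well defined). Write likewise
$$R^{-1}=X+Y+Z+W,\qquad X:=pR^{-1}p,\ Y:=pR^{-1}(1-p),\ Z:=(1-p)R^{-1}p,\ W:=(1-p)R^{-1}(1-p).$$
Multiplying $R^{-1}R=1$ on the left by $p$ and on the right by $p$, and separately on the left by $1-p$ and on the right by $p$, gives $XA=p$ and $ZA=0$. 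Multiplying $RR^{-1}=1$ on the left by $1-p$ and on the right by $1-p$ gives $CW=1-p$.

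Now $p\Sc(\Mcal,\tau)p$ is naturally identified with $\Sc(p\Mcal p,\tau(p)^{-1}\tau|_{p\Mcal p})$, which is a finite $*$-algebra, so the left invertibility $XA=p$ forces $A$ to be two-sided invertible there with $X=A^{-1}=(Rp)^{-1}$. Then $ZA=0$ combined with invertibility of $A$ forces $Z=0$, which gives simultaneously
$$R^{-1}p=X+Z=X=(Rp)^{-1}\in p\Sc(\Mcal,\tau)p,$$
proving both that $p$ is $R^{-1}$-invariant and the first stated identity. The same argument applied to $(1-p)\Sc(\Mcal,\tau)(1-p)$ shows that $CW=1-p$ makes $C$ invertible with $W=C^{-1}$, so together with $Z=0$ we obtain $(1-p)R^{-1}=Z+W=((1-p)R(1-p))^{-1}$.

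The only real subtlety is the passage from one-sided to two-sided invertibility of $A$ and $C$; everything else is formal matrix manipulation. This is handled cleanly by the finiteness remark in \S\ref{subsec:II1}, applied to the compressed tracial von Neumann algebras $p\Mcal p$ and $(1-p)\Mcal(1-p)$. Unboundedness of $R$ causes no trouble because all manipulations take place inside the $*$-algebra $\Sc(\Mcal,\tau)$.
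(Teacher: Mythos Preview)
Your proof is correct and follows essentially the same approach as the paper: write $R$ and $R^{-1}$ in $2\times 2$ block form with respect to $p$ and $1-p$, compare blocks of $R^{-1}R=1$ (and in your case also $RR^{-1}=1$), and invoke finiteness of the compressed algebras to upgrade one-sided invertibility to two-sided invertibility. The only cosmetic difference is that the paper extracts all three conclusions ($x=a^{-1}$, $w=0$, $z=c^{-1}$) from $R^{-1}R=1$ alone, via the bottom-right block equation $wb+zc=1-p$, whereas you use $RR^{-1}=1$ to handle $C$; both are equally valid.
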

\begin{proof}
We write elements $T$ of $\Sc(\Mcal,\tau)$ as $2\times2$ matrices
$$T=\left(\begin{matrix} pTp& (1-p)Tp \\ pT(1-p) & (1-p)T(1-p)\end{matrix}\right).$$
Since $p$ is $R$-invariant, we have
$$R^{-1}=\left(\begin{matrix}x&y\\w&z\end{matrix}\right),\qquad R=\left(\begin{matrix}a&b\\0&c\end{matrix}\right).$$
From $R^{-1}R=\left(\begin{smallmatrix}p&0\\0&1-p\end{smallmatrix}\right)$, and using that left-invertible elements
are invertible, we find $x=a^{-1}$, $w=0$ and $z=c^{-1}$.
\end{proof}

\begin{lem}\label{inverse lemma}
Let $\mathcal{B}$ be the complement of the unit disk centered at $0$.
Let $T\in\Lc_{\log}(\Mcal,\tau)$ be a mild$(\partial B)$ operator.
Suppose $p$ is a Haagerup--Schultz projection for $T^{-1}$ corresponding to the set $\Cpx\backslash\Bc$.
Then $p$ is a Haagerup--Schultz projection for $T$ corresponding to the set $\Bc$.
\end{lem}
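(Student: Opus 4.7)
The plan is to reduce the statement to the hypothesis by conjugating everything by $z\mapsto z^{-1}$, using that this M\"obius transformation maps $\partial\Bc$ (the unit circle) to itself, and using Lemma~\ref{lem:UTinvert} to transport invariance and corner inverses between $T$ and $T^{-1}$.

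First I would note that, since $T$ is mild$(\partial\Bc)$, we have $\Delta(T)\neq 0$, so $T$ is invertible in $\Sc(\Mcal,\tau)$ and $T^{-1}\in\Lc_{\log}(\Mcal,\tau)$; moreover, Lemma~\ref{lem:Moebius mild} applied with $\gamma(z)=z^{-1}$ shows that $T^{-1}$ is mild$(\partial\Bc)$, so a Haagerup--Schultz projection $p$ for $T^{-1}$ and $\Cpx\setminus\Bc$ does make sense. To check condition~\eqref{hdefa} of Definition~\ref{def:hs proj} for $T$ and $\Bc$, apply Lemma~\ref{lem:UTinvert} to $R=T^{-1}$: the $T^{-1}$-invariance of $p$ (given by hypothesis) is equivalent to $T$-invariance of $p$. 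Condition~\eqref{hdefb} will follow automatically from \eqref{hdefa} and~\eqref{hdefc} via Theorem~\ref{det nu matrix}, so it suffices to establish the Brown measure splitting in~\eqref{hdefc}.

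For~\eqref{hdefc}, assume $0<\nu_T(\Bc)<1$ (the cases $\nu_T(\Bc)\in\{0,1\}$ force $p\in\{0,1\}$ and are trivial). Lemma~\ref{lem:UTinvert} applied in the corner $p\Sc(\Mcal,\tau)p$ gives $Tp=(T^{-1}p)^{-1}$, with both operators in $\Lc_{\log}$ of that corner by mildness and the observation at the start. By Proposition~2.16 of~\cite{HS1}, $\nu_{Tp}$ is the push-forward of $\nu_{T^{-1}p}$ under $z\mapsto z^{-1}$. The hypothesis tells us $\nu_{T^{-1}p}=\nu_{T^{-1}}(\Cpx\setminus\Bc)^{-1}\nu_{T^{-1}}|_{\Cpx\setminus\Bc}$, and since $\nu_{T^{-1}}$ is itself the push-forward of $\nu_T$ under $z\mapsto z^{-1}$ and is Lebesgue absolutely continuous (so the unit circle and $\{0\}$ are null), pushing forward once more yields
\[
\nu_{Tp}=\frac{1}{\nu_T(\Bc)}\,\nu_T|_{\Bc}.
\]
The same argument in the corner $(1-p)\Sc(\Mcal,\tau)(1-p)$, using $(1-p)T=\bigl((1-p)T^{-1}\bigr)^{-1}$ from Lemma~\ref{lem:UTinvert}, produces $\nu_{(1-p)T}=\nu_T(\Cpx\setminus\Bc)^{-1}\nu_T|_{\Cpx\setminus\Bc}$.

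The only delicate points are bookkeeping: verifying that both $T^{-1}p$ and $(1-p)T^{-1}$ lie in $\Lc_{\log}$ of their respective corners with invertible image there (so that Proposition~2.16 of~\cite{HS1} applies), and that the discrepancy between $\Cpx\setminus\Bc$ and $\{z:0<|z|<1\}$ is $\nu_{T^{-1}}$-null thanks to condition~\eqref{it:mildb} of Definition~\ref{def:mild}. These are routine given mildness of $T$ and $T^{-1}$, so I do not anticipate a genuine obstacle.
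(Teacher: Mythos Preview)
Your argument is correct, and it is a genuinely different route from the paper's own proof of this lemma.

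The paper establishes the corner Brown measure $\nu_{Tp}$ by a direct Fuglede--Kadison determinant computation: it writes
\[
\Delta_{p\Mcal p}(Tp-\lambda p)=\Delta_{p\Mcal p}(\lambda Tp)\,\Delta_{p\Mcal p}(T^{-1}p-\lambda^{-1}p),
\]
substitutes the known expression for $\nu_{T^{-1}p}$ into the integral formula for $\log\Delta_{p\Mcal p}(T^{-1}p-\lambda^{-1}p)$, performs the change of variables $z=w^{-1}$, and observes that what remains differs from the desired subharmonic function by a constant independent of $\lambda$; taking the Laplacian then yields $\nu_{Tp}=\nu_T(\Bc)^{-1}\nu_T|_{\Bc}$. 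The second half $\nu_{(1-p)T}$ is obtained from the first via Theorem~\ref{det nu matrix}.

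You instead invoke Proposition~2.16 of~\cite{HS1} \emph{inside the corner algebras}: since $Tp=(T^{-1}p)^{-1}$ and $(1-p)T=\bigl((1-p)T^{-1}\bigr)^{-1}$ in their respective corners (Lemma~\ref{lem:UTinvert}), and both factors lie in $\Lc_{\log}$ of those corners, the corner Brown measures are push-forwards under $z\mapsto z^{-1}$. Composing with the hypothesis and the global push-forward $\nu_{T^{-1}}=(z\mapsto z^{-1})_*\nu_T$ gives the result directly. This bypasses the explicit determinant computation and treats the two corners symmetrically, at the cost of checking that $Tp$, $T^{-1}p$, $(1-p)T$, $(1-p)T^{-1}$ all lie in the relevant $\Lc_{\log}$ spaces and are invertible there---which you correctly identify as routine (it is implicit in the hypotheses of Theorem~\ref{det nu matrix}, i.e., Proposition~2.24 of~\cite{HS1}). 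Your remark that the unit circle and $\{0\}$ are $\nu_{T^{-1}}$-null (by Lebesgue absolute continuity from mildness, and by $\Delta(T^{-1})\neq 0$ respectively) handles the only measure-zero ambiguity in the push-forward. Your argument is shorter and more conceptual; the paper's computation has the minor advantage of being explicit about the determinant behaviour, but it too relies on the global push-forward identity, so it does not avoid Proposition~2.16 of~\cite{HS1} entirely.
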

\begin{proof}
Since $T^{-1}p=pT^{-1}p$,
by Lemma~\ref{lem:UTinvert}
we have $Tp=pTp$ and that
the inverse of $Tp$ in the algebra $p\Mcal p$ equals $T^{-1}p$.
For every $\lambda\neq0,$ it follows from \eqref{det homomorphism} that
$$\Delta_{p\Mcal p}(Tp-\lambda p)=\Delta_{p\Mcal p}(\lambda Tp)\Delta_{p\Mcal p}(T^{-1}p-\lambda^{-1}p).$$
Therefore,
$$\log(\Delta_{p\Mcal p}(Tp-\lambda p))=\log(\Delta_{p\Mcal p}(\lambda Tp))+\int_{\mathbb{C}}\log(|\lambda^{-1}-z|)d\nu_{T^{-1}p}(z).$$
Since $p$ is a Haagerup--Schultz projection for $T^{-1}$ corresponding to $\Cpx\backslash\Bc$, we have
$$\nu_{T^{-1}p}=\frac1{\nu_{T^{-1}}(\Cpx\backslash\Bc)}\nu_{T^{-1}}|_{\Cpx\backslash\Bc}.$$
In addition to that, we have $d\nu_{T^{-1}}(z)=d\nu_T(z^{-1})$ and, hence, $\nu_{T^{-1}}(\Cpx\backslash\Bc)=\nu_T(\Bc).$
It follows that
\begin{align*}
\log(&\Delta_{p\Mcal p}(Tp-\lambda p))=\log(\Delta_{p\Mcal p}(\lambda Tp))+\frac1{\nu_T(\Bc)}\int_{\Cpx\backslash\Bc}\log(|\lambda^{-1}-z|)d\nu_T(z^{-1}) \\
&=\log(\Delta_{p\Mcal p}(\lambda Tp))+\frac1{\nu_T(\Bc)}\int_{\Bc}\log(|\lambda^{-1}-w^{-1}|)d\nu_T(w) \\
&=\log(|\lambda|)+\log(\Delta_{p\Mcal p}(Tp))+\frac1{\nu_T(\Bc)}\int_{\Bc}\log(|\lambda-w|)d\nu_T(w) \\
&\quad-\frac1{\nu_T(\Bc)}\int_{\Bc}\log(|\lambda|)d\nu_T(w)-\frac1{\nu_T(\Bc)}\int_{\Bc}\log(|w|)d\nu_T(w) \\
&=\frac1{\nu_T(\Bc)}\int_{\Bc}\log(|\lambda-w|)d\nu_T(w) \\
&\quad+\log(\Delta_{p\Mcal p}(Tp))-\frac1{\nu_T(\Bc)}\int_{\Bc}\log(|w|)d\nu_T(w)
\end{align*}
and we infer that
$$
\log(\Delta_{p\Mcal p}(Tp-\lambda p))-\frac1{\nu_T(\Bc)}\int_{\Bc}\log(|\lambda-w|)d\nu_T(w)
$$
is constant as $\lambda$ varies.
Taking the Laplacian, the constant disappears and we get
$$\nu_{Tp}=\frac1{\nu_T(\Bc)}\nu_T|_{\Bc}.$$
Now using Theorem~\ref{det nu matrix}, we get
$$\nu_{(1-p)T}=\frac1{\nu_T(\Cpx\backslash\Bc)}\nu_T|_{\Cpx\backslash\mathcal{B}}.$$
This completes the proof.
\end{proof}

We are now ready to define some specific Haagerup--Schultz projections $P(T,\Bc)$.
In the next remark, definition and in the proof of Lemma~\ref{conformal} below, by an {\em affine transformation} of $\Cpx$ we will mean
a transformation of the form $\gamma(z)=\alpha z+z_0$, for some $\alpha>0$ and $z_0\in\Cpx$.

\begin{rem}
Suppose $T\in\Lc_{\log}(\Mcal,\tau)$ and $p$ is a Haagerup-Schultz projection for $T$ and a Borel subset $\Bc\subseteq\Cpx$.
Let $\gamma$ be an affine transformation.
Then, as is apparent from the transformation rules for Brown measure under affine transformations,
$p$ is a Haagerup-Schultz projection for $\gamma(T)$ and $\gamma(\Bc)$.
\end{rem}

\begin{defi}\label{def:someHSproj}
Let $\Bc\subseteq\Cpx$ be either a disk or the complement of a disk.
Let $T\in\Lc_{\log}(\Mcal,\tau)$ be a mild$(\partial\Bc)$ operator.
We let $P(T,\Bc)$ be the Haagerup--Schultz projection for $T$ and $\Bc$ defined as follows:
\begin{enumerate}[{\rm (a)}]
\item\label{it:someHSproj-a} if $\Bc$ is a disk in $\Cpx$, then $P(T,\Bc)$ is the projection constructed by Haagerup and Schultz  in the proof of Theorem~\ref{hs integral};
\item\label{it:someHSproj-b} if $\Bc$ is the complement of the unit disk in $\Cpx$,
$P(T,\Bc)=P(T^{-1},\Cpx\backslash\Bc)$ (see Lemmas~\ref{lem:Moebius mild} and~\ref{inverse lemma});
\item\label{it:someHSproj-c} if $\Bc$ is the complement of any disk in $\Cpx$,
then we let $P(T,\Bc)=P(\gamma(T),\gamma(\Bc))$, where $\gamma$ is the affine transformation that maps $\Bc$ onto the complement of the unit disk in $\Cpx$.
\end{enumerate}
\end{defi}

\begin{lem}\label{conformal}
Let $\Bc$ be a disk in $\Cpx$ and let $T\in\Lc_{\log}(\Mcal,\tau)$ be a mild$(\partial\Bc)$ operator.
Then for every fractional linear transform $\gamma$ with $\infty\notin\gamma(\partial\mathcal{B}),$ we have
\begin{equation}\label{eq:Pgamma}
P(T,\mathcal{B})=P(\gamma(T),\gamma(\mathcal{B})).
\end{equation}
\end{lem}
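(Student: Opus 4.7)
The plan is to decompose an arbitrary fractional linear $\gamma$ into elementary building blocks. Writing $\gamma(z)=(az+b)/(cz+d)$, if $c=0$ then $\gamma$ is affine; otherwise $\gamma=\beta_1\circ\gamma_0\circ\beta_2$ for suitable affine $\beta_1,\beta_2$ and the inversion $\gamma_0(z)=1/z$. Hence it suffices to prove the identity separately when $\gamma$ is affine and when $\gamma=\gamma_0$; the hypothesis $\infty\notin\gamma(\partial\Bc)$ propagates through the decomposition because affine maps fix $\infty$ and because $\infty\in\gamma_0(\beta_2(\partial\Bc))$ would require $0\in\beta_2(\partial\Bc)$, which is excluded.

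For affine $\gamma$, $\gamma(\Bc)$ is still a disk and both $P(T,\Bc)$ and $P(\gamma(T),\gamma(\Bc))$ are given by Theorem~\ref{hs integral}. I will perform the substitution $w=\gamma(z)$ in the Haagerup--Schultz idempotent integral; the change of variables in the $\Lc_p$-Riemann integral is justified by Lemma~\ref{lem:RIcov}, and the resulting identity $E(\gamma(T),\gamma(\Bc))=E(T,\Bc)$ yields equal range projections.

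For $\gamma=\gamma_0$ with $0\notin\overline{\Bc}$, $\gamma_0(\Bc)$ is again a disk, so both sides are once more defined by Theorem~\ref{hs integral}. I will substitute $w=1/z$ and use the partial-fraction identity $\frac{T}{z(z-T)}=\frac{1}{z-T}-\frac{1}{z}$ together with $\oint_{\partial\Bc}dz/z=0$ (valid since $0\notin\Bc$) to obtain $E(\gamma_0(T),\gamma_0(\Bc))=E(T,\Bc)$.

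The main obstacle will be $\gamma=\gamma_0$ with $0\in\Bc$, where $\gamma_0(\Bc)$ is the complement of a closed disk $\overline{D(c',r')}$ with $|c'|<r'$ (strict, since the hypothesis $0\notin\partial\Bc$ rules out $|c'|=r'$). Here $P(\gamma_0(T),\gamma_0(\Bc))$ unwinds through Definition~\ref{def:someHSproj}(b,c): with $\beta(z)=(z-c')/r'$ sending $\gamma_0(\Bc)$ onto the complement of the unit disk, one obtains $P(\gamma_0(T),\gamma_0(\Bc))=P((\beta\circ\gamma_0(T))^{-1},\overline{D(0,1)})$, and a direct computation gives $(\beta\circ\gamma_0(T))^{-1}=r'T(1-c'T)^{-1}$. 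I plan to finish by applying the partial-fraction decomposition
$$(z-r'T(1-c'T)^{-1})^{-1}=\frac{c'}{c'z+r'}-\frac{r'}{(c'z+r')^2}(T-\xi(z))^{-1}$$
with $\xi(z)=z/(c'z+r')$, and then changing variable to $\xi$. Since the Möbius map $\eta(z)=r'z/(1-c'z)$ takes $\Bc$ onto $D(0,1)$ up to a Lebesgue-null point, its inverse $\xi=\eta^{-1}$ carries the unit circle orientation-preservingly onto $\partial\Bc$; the first term above integrates to $0$ because its pole $z=-r'/c'$ has $|r'/c'|>1$, and the second reduces to $E(T,\Bc)$. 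The delicate point will be bookkeeping the orientation of the Möbius substitution; everything else is a clean calculation.
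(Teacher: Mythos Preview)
Your approach is correct in substance and takes a genuinely different route from the paper. The paper decomposes $\gamma$ as (positive-scaling affine)\,$\circ$\,(disk automorphism)\,$\circ$\,(positive-scaling affine) when $\gamma(\Bc)$ is a disk, and inserts a single inversion at the unit circle when $\gamma(\Bc)$ is a complement; its one key computation is the identity $E(\gamma_1(T),\Bc_0)=E(T,\Bc_0)$ for a Blaschke automorphism $\gamma_1(z)=e^{i\theta}(z-a)/(1-\bar a z)$ of the unit disk $\Bc_0$, obtained by a direct algebraic manipulation of the resolvent integrand. You instead use the standard decomposition (affine)\,$\circ$\,$(z\mapsto 1/z)$\,$\circ$\,(affine), treat the inversion in two sub-cases according to whether $0$ lies in the disk, and resolve the harder sub-case by unwinding Definition~\ref{def:someHSproj}(b,c) and performing a partial-fraction plus M\"obius substitution. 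Both approaches ultimately rest on explicit integral identities justified via Lemma~\ref{lem:RIcov}; the paper's is slightly more economical (one integral computation rather than two), while yours is arguably more natural from the viewpoint of the standard generators of the M\"obius group and avoids the Blaschke-factor algebra entirely.

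One small omission: your composition argument needs the affine case not only for $\Bc$ a disk but also for $\Bc$ the complement of a disk, in order to handle the final $\beta_1$-step when $0\in\beta_2(\Bc)$. This follows easily: unwinding Definition~\ref{def:someHSproj}(c), the two normalizing positive-scaling affine maps sending the respective complements to $\Cpx\setminus\overline{D(0,1)}$ differ from $\beta_1$ by a rotation, and after applying Definition~\ref{def:someHSproj}(b) this reduces to your already-established affine-disk case (rotation of the unit disk). You should state this explicitly. Also, your remark that $\eta$ takes $\Bc$ onto $D(0,1)$ ``up to a Lebesgue-null point'' is unnecessary: the pole $1/c'$ of $\eta$ lies outside $\overline{\Bc}$, so $\eta$ is a genuine biholomorphism $\Bc\to D(0,1)$ and carries $\partial\Bc$ onto $\partial D(0,1)$ orientation-preservingly.
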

\begin{proof}
If $\gamma$ is an affine transformation,
then it is clear from Haagerup and Schultz's
construction in Theorem~\ref{hs integral} that~\eqref{eq:Pgamma} holds.

Let $\mathcal{B}_0$ be the unit disk centered at $0$.
If $\gamma(\mathcal{B})$ is a disk, then $\gamma=\gamma_0\circ\gamma_1\circ\gamma_2$,
where $\gamma_2:\mathcal{B}\to\mathcal{B}_0$ and $\gamma_0:\mathcal{B}_0\to\gamma(\mathcal{B})$ are affine transformations
and $\gamma_1:\mathcal{B}_0\to\mathcal{B}_0$ is an affine transformation of the disk $\mathcal{B}_0$ onto itself.
We use Lemma~\ref{lem:Moebius mild} to see that $\gamma_1\circ\gamma_2(T)$ is mild$(\partial\Bc_0)$, {\em etc}.
Below, we will show that the lemma holds when $\gamma=\gamma_1$ and $\Bc=\Bc_0$.
Since $\gamma_2$ is affine, it will then follow that the lemma holds when $\Bc$ is a disk.

If $\gamma(\mathcal{B})$ is a complement of a disk, then $\gamma=\gamma'_0\circ\gamma'\circ\gamma_1\circ\gamma_2,$ where
$\gamma_1$ and $\gamma_2$ are as above, where $\gamma':z\mapsto z^{-1}$ and where $\gamma_0'$ is an affine transformation
mapping $\Cpx\backslash\Bc_0$ onto $\gamma(\Bc)$.
By Definition~\ref{def:someHSproj}\eqref{it:someHSproj-b}, the equality~\eqref{eq:Pgamma} holds if $\gamma=\gamma_1$ and $\Bc=\Bc_0$.
By Definition~\ref{def:someHSproj}\eqref{it:someHSproj-c}, the equality~\eqref{eq:Pgamma} holds if $\gamma$ is an affine transformation
and $\Bc$ is the complement of a disk.
Combined with the facts about $\gamma_2$ and $\gamma_1$ mentioned above, this will prove the lemma also in the case that $\gamma(\Bc)$
is the complement of a disk.

It remains to show that~\eqref{eq:Pgamma} holds when $\gamma=\gamma_1$ is of the form
$\gamma(z)=e^{i\theta}\frac{z-a}{1-\bar{a}z}$ with $|a|<1$, $\theta\in\Reals$, and when $\Bc=\Bc_0$.
By Theorem 6.2 in \cite{HS2}
and the change of variables formula (Lemma~\ref{lem:RIcov}),
we have
\begin{align*}
\int_{|z|=1}\frac{dz}{z-\gamma(T)}&=\int_{|w|=1}\frac{\gamma'(w)dw}{\gamma(w)-\gamma(T)} \\
&=\int_{|w|=1}\frac{(1-|a|^2)dw}{(1-\bar{a}w)^2}\cdot \frac{(1-\bar{a}T)(1-\bar{a}w)}{(w-a)(1-\bar{a}T)-(T-a)(1-\bar{a}w)} \\
&=\int_{|w|=1}\frac{dw}{1-\bar{a}w}\cdot\frac{1-\bar{a}T}{w-T} \\
&=\int_{|w|=1}\frac{dw}{1-\bar{a}w}\cdot\frac{(1-\bar{a}w)+\bar{a}(w-T)}{w-T} \\
&=\int_{|w|=1}\frac{dw}{w-T}+\bar{a}\int_{|w|=1}\frac{dw}{1-\bar{a}w}=\int_{|w|=1}\frac{dw}{w-T}.
\end{align*}
This implies that the idempotents $E(T,\Bc)$ and $E(\gamma(T),\Bc)$ used by Haagerup and Schultz (see Equation~\eqref{idempotent def})
agree, and, therefore, the projections $P(T,\Bc)$ and $P(\gamma(T),\Bc)$ onto their images are the same.
\end{proof}

\begin{lem}
\label{first order lemma}
Let $\mathcal{B}_1$ and $\mathcal{B}_2$ be disks in $\Cpx$.
Let $T\in\Lc_{\log}(\Mcal,\tau)$ be a mild$(\partial \Bc_1\cup\partial\Bc_2)$ operator.
If $\overline{\mathcal{B}_1}\subset{\rm int}(\mathcal{B}_2)$, then $P(T,\mathcal{B}_1)\leq P(T,\mathcal{B}_2)$.
\end{lem}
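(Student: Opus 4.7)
The plan is to use the explicit Dunford-style representation of Haagerup--Schultz projections from equation~\eqref{idempotent def}. Write $E_i := E(T,\mathcal{B}_i) = \frac{1}{2\pi i}\int_{\partial\mathcal{B}_i}(z-T)^{-1}\, dz$ for $i=1,2$, both of which converge as Riemann integrals in $\Lc_p(\Mcal,\tau)$ for some $p \in (\frac12,1)$ by Definition~\ref{def:mild}\eqref{it:mildc} and Haagerup--Schultz's Theorem~6.2 of~\cite{HS2}. The projection $P(T,\mathcal{B}_i)$ is the range projection of $E_i$. The strategy is to prove the identity
\[
E_2 E_1 = E_1,
\]
from which it follows immediately that $\operatorname{range}(E_1) \subseteq \operatorname{range}(E_2)$, and hence that the corresponding range projections satisfy $P(T,\mathcal{B}_1) \leq P(T,\mathcal{B}_2)$.

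To prove $E_2 E_1 = E_1$, the key algebraic ingredient is the resolvent identity
\[
(z-T)^{-1}(w-T)^{-1} = \frac{(w-T)^{-1} - (z-T)^{-1}}{z-w}, \qquad z \in \partial\mathcal{B}_2,\ w \in \partial\mathcal{B}_1,
\]
which is valid because $\overline{\mathcal{B}_1} \subset \operatorname{int}(\mathcal{B}_2)$ ensures $z \ne w$ throughout. Multiplying the two contour integrals and using Fubini to interchange the order of integration would give
\[
E_2 E_1 = \frac{1}{(2\pi i)^2}\int_{\partial\mathcal{B}_1}(w-T)^{-1}\Bigl(\int_{\partial\mathcal{B}_2}\frac{dz}{z-w}\Bigr)dw
- \frac{1}{(2\pi i)^2}\int_{\partial\mathcal{B}_2}(z-T)^{-1}\Bigl(\int_{\partial\mathcal{B}_1}\frac{dw}{z-w}\Bigr)dz.
\]
Since $w$ ranges in $\operatorname{int}(\mathcal{B}_2)$ for the first term, the inner integral equals $2\pi i$; since $z$ lies outside $\overline{\mathcal{B}_1}$ for the second term, the inner integral vanishes by Cauchy's theorem. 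What remains is precisely $\frac{1}{2\pi i}\int_{\partial\mathcal{B}_1}(w-T)^{-1}dw = E_1$.

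The main obstacle I expect is making the Fubini-style interchange rigorous at the level of $\Lc_p$-valued Riemann integrals, since the product of two $\Lc_p$-elements lies only in $\Lc_{p/2}$ and one must justify convergence in a common topology. I would avoid abstract Fubini by working directly with the Riemann sums. Approximate $E_1$ and $E_2$ by their defining Riemann sums $E_i^{(n)}$ along uniform partitions of $\partial\mathcal{B}_i$; the sum $E_i^{(n)}$ is a finite $\Cpx$-linear combination of resolvents, hence lies in $\Lc_p(\Mcal,\tau)$ and can be multiplied. For the product of finite sums, the algebraic manipulation via the resolvent identity is elementary and the partial-fraction decomposition over the partition nodes of $\partial\mathcal{B}_2$ and $\partial\mathcal{B}_1$ makes the Cauchy cancellation visible term by term (with errors controlled by the Lipschitz hypothesis). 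Passing $n\to\infty$ with convergence in measure topology (and in $\Lc_p$ for the factors) finishes the argument. At this point, $E_2 E_1 = E_1$ yields the range inclusion, which gives the desired projection inequality.
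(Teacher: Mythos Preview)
Your outline is essentially the paper's own strategy: prove $E(T,\Bc_2)E(T,\Bc_1)=E(T,\Bc_1)$ via the resolvent identity at the level of Riemann sums and pass to the limit in $\Lc_{p/2}$. The only substantive difference is that the paper first invokes Lemma~\ref{conformal} (and Lemma~\ref{lem:Moebius mild}) to reduce to concentric circles $|z|=r<1$ and $|z|=1$, and then takes the Riemann sums at the $n$th roots of unity. In that special configuration the inner scalar sums are geometric series that evaluate \emph{exactly}, giving the closed-form identity
\[
E_nF_n=\frac{1}{1-r^n}E_n-\frac{r^n}{1-r^n}F_n=F_nE_n,
\]
so the limit is immediate.

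Your version skips the conformal reduction; then the inner scalar Riemann sums only approximate $1$ and $0$, and you must show the residual vanishes in the limit. Be careful here: for $p<1$ one has only $\|\sum_j a_j\|_p^p\le\sum_j\|a_j\|_p^p$, so an $O(1/n)$ error at each of $n$ nodes leaves an $O(n^{1-p})$ obstruction. What actually saves you is not the Lipschitz hypothesis on $z\mapsto(z-T)^{-1}$ (that controls the convergence $E_i^{(n)}\to E_i$, not the scalar cancellation), but the rapid---indeed exponential---convergence of the trapezoidal rule for periodic analytic scalar integrands such as $\theta\mapsto(z(\theta)-w)^{-1}$. With that observation your argument closes; the paper's conformal reduction simply buys an exact finite-sum identity and bypasses this error analysis entirely.
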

\begin{proof} Applying a fractional-linear transform and using Lemmas~\ref{lem:Moebius mild} and~\ref{conformal},
we may assume without loss of generality that, for some $r<1$,
$$
\mathcal{B}_1=\{z\mid |z|\leq r\},\qquad\mathcal{B}_2=\{z\mid |z|\leq 1\}.
$$

Set
$$E_n=\frac1n\sum_{k=1}^n\frac{re^{2\pi ik/n}}{re^{2\pi ik/n}-T},\quad F_n=\frac1n\sum_{l=1}^n\frac{e^{2\pi il/n}}{e^{2\pi il/n}-T}$$
to be the Riemann sums for the idempotents $E(T,\mathcal{B}_1)$ and $E(T,\mathcal{B}_2),$ respectively.
By Theorem 6.2 in \cite{HS2} (see also~\cite{TW68}),
we have that $\|E_n-E(T,\mathcal{B}_1)\|_p\to0$ and $\|F_n-E(T,\mathcal{B}_2)\|_p\to0$ as $n\to\infty$.

We claim that
\begin{equation}\label{ef main equality}
E_nF_n=\frac{1}{1-r^n}E_n-\frac{r^n}{1-r^n}F_n=F_nE_n.
\end{equation}

Indeed, we have
$$E_nF_n=\frac1{n^2}\sum_{k,l=1}^n\frac{re^{2\pi i(k+l)/n}}{(re^{2\pi ik/n}-T)(e^{2\pi il/n}-T)}=F_nE_n.$$
It is clear that
$$\frac1{(re^{2\pi ik/n}-T)(e^{2\pi il/n}-T)}=\frac1{e^{2\pi il/n}-re^{2\pi ik/n}}\cdot\Big(\frac1{re^{2\pi ik/n}-T}-\frac1{e^{2\pi il/n}-T}\Big).$$
Therefore,
\begin{align*}
E_nF_n&=\frac1{n^2}\sum_{k,l=1}^n\frac{re^{2\pi i(k+l)/n}}{e^{2\pi il/n}-re^{2\pi ik/n}}\cdot \frac1{re^{2\pi ik/n}-T} \\
&\quad-\frac1{n^2}\sum_{k,l=1}^n\frac{re^{2\pi i(k+l)/n}}{e^{2\pi il/n}-re^{2\pi ik/n}}\cdot \frac1{e^{2\pi il/n}-T} \displaybreak[2]\\
&=\frac1{n^2}\sum_{k=1}^n\frac1{re^{2\pi ik/n}-T}\Big(\sum_{l=1}^n\frac{re^{2\pi i(k+l)/n}}{e^{2\pi il/n}-re^{2\pi ik/n}}\Big) \\
&\quad-\frac1{n^2}\sum_{l=1}^n\frac1{e^{2\pi il/n}-T}\Big(\sum_{k=1}^n\frac{re^{2\pi i(k+l)/n}}{e^{2\pi il/n}-re^{2\pi ik/n}}\Big).
\end{align*}
It is clear that
\begin{align*}
\sum_{l=1}^n\frac{re^{2\pi i(k+l)/n}}{e^{2\pi il/n}-re^{2\pi ik/n}}&=re^{2\pi ik/n}\sum_{m=1}^n\frac{e^{2\pi im/n}}{e^{2\pi im/n}-r}, \\
\sum_{k=1}^n\frac{re^{2\pi i(k+l)/n}}{e^{2\pi il/n}-re^{2\pi ik/n}}&=e^{2\pi il/n}\sum_{m=1}^n\frac{re^{2\pi im/n}}{1-re^{2\pi im/n}}.
\end{align*}
It follows that
$$E_nF_n=\alpha_nE_n-\beta_nF_n.$$
where
$$\alpha_n=\frac1n\sum_{m=1}^n\frac{e^{2\pi im/n}}{e^{2\pi im/n}-r},\quad \beta_n=\frac1n\sum_{m=1}^n\frac{re^{2\pi im/n}}{1-re^{2\pi im/n}}.$$
We have
\begin{align*}
\alpha_n&=\frac1n\sum_{m=1}^n\frac{e^{2\pi im/n}}{e^{2\pi im/n}-r}=\frac1n\sum_{m=1}^n\frac1{1-re^{-2\pi im/n}} \\
&=\frac1n\sum_{m=1}^n\sum_{k=0}^{\infty}r^ke^{-2\pi ikm/n}=\frac1n\sum_{k=0}^{\infty}r^k\left(\sum_{m=1}^ne^{-2\pi ikm/n}\right).
\end{align*}
The internal sum is $n$ when $\frac{k}{n}$ is integer and is $0$ otherwise. Thus,
$$\alpha_n=\frac1{1-r^n}\mbox{ and, similarly, }\beta_n=\frac{r^n}{1-r^n}.$$
This proves the claim.

We have
\begin{align*}
\|E_nF_n&-E(T,\mathcal{B}_1)E(T,\mathcal{B}_2)\|_{p/2} \\
&\leq 2^{(2/p)-1}\Big(\|(E_n-E(T,\mathcal{B}_1))F_n\|_{p/2}+\|E(T,\mathcal{B}_1)(F_n-E(T,\mathcal{B}_2))\|_{p/2}\Big) \\
&\leq2^{(2/p)-1}\Big(\|E_n-E(T,\mathcal{B}_1)\|_p\|F_n\|_p+\|E(T,\mathcal{B}_1)\|_p\|F_n-E(T,\mathcal{B}_2)\|_p\Big),
\end{align*}
and this last upper bound tends to $0$ as $n\to\infty$. 
Similarly, we have
$$
\lim_{n\to\infty}\|F_nE_n-E(T,\mathcal{B}_2)E(T,\mathcal{B}_1)\|_{p/2}=0.
$$
On the other hand, it follows from \eqref{ef main equality} that
\begin{multline*}
\|E_nF_n-E(T,\mathcal{B}_1)\|_{p/2}\leq\|\frac1{1-r^n}E_n-\frac{r^n}{1-r^n}F_n-E(T,\mathcal{B}_1)\|_p \\
\leq2^{(1/p)-1}\Big(\frac1{1-r^n}\|E_n-E(T,\mathcal{B}_1)\|_p+\frac{r^n}{1-r^n}\|F_n-E(T,\mathcal{B}_1)\|_p\Big),
\end{multline*}
and this last upper bound tends to $0$ as $n\to\infty$.
Hence,
$$E(T,\mathcal{B}_1)E(T,\mathcal{B}_2)=E(T,\mathcal{B}_1)=E(T,\mathcal{B}_2)E(T,\mathcal{B}_1).$$

If $E_1$ and $E_2$ are two commuting idempotents such that $E_1E_2=E_1,$ then ${\rm range}(E_1)\leq{\rm range}(E_2).$ Applying this to the idempotents $E(T,\mathcal{B}_1)$ and $E(T,\mathcal{B}_2),$ we conclude $P(T,\mathcal{B}_1)\leq P(T,\mathcal{B}_2).$
\end{proof}

\begin{lem}\label{second order lemma}
Let $\mathcal{B}_1$ and $\mathcal{B}_2$ be disks in $\Cpx$.
Let $U$ be an open neighborhood of $\partial\Bc_1\cup\partial\Bc_2$ and
suppose $T\in\Lc_{\log}(\Mcal,\tau)$ is a mild$(U)$ operator.
\begin{enumerate}[{\rm (a)}]
\item If $\mathcal{B}_1\subset\mathcal{B}_2$, then $P(T,\mathcal{B}_1)\leq P(T,\mathcal{B}_2)$.
\item If $\mathcal{B}_1\subset\mathbb{C}\backslash\mathcal{B}_2$, then $P(T,\mathcal{B}_1)\leq P(T,\mathbb{C}\backslash\mathcal{B}_2)$.
\item If $\mathbb{C}\backslash\mathcal{B}_1\subset\mathbb{C}\backslash\mathcal{B}_2$,
then $P(T,\mathbb{C}\backslash\mathcal{B}_1)\leq P(T,\mathbb{C}\backslash\mathcal{B}_2)$.
\end{enumerate}
\end{lem}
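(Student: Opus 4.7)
The plan is to prove part (a) first, by an approximation argument built on Lemma \ref{first order lemma}, and then to deduce parts (b) and (c) from part (a) via fractional linear transformations. A preliminary observation is that Lemma \ref{conformal} extends to complements of disks: that is, if $\Bc$ is the complement of a disk, one still has $P(T,\Bc)=P(\gamma(T),\gamma(\Bc))$ for every fractional linear $\gamma$ with $\infty\notin\gamma(\partial\Bc)$. This follows by unwinding Definition \ref{def:someHSproj}(b),(c), which already expresses $P(T,\Bc)$ via an affine transformation composed with the inversion $z\mapsto 1/z$, and then applying the original Lemma \ref{conformal} to the intermediate disk.

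For part (a), $\Bc_1$ may be internally tangent to $\Bc_2$, so Lemma \ref{first order lemma} does not apply directly; I would shrink $\Bc_1$ to its concentric subdisk $\Bc_1^\epsilon$ with radius reduced by $\epsilon>0$. For all sufficiently small $\epsilon$ one has $\overline{\Bc_1^\epsilon}\subset\operatorname{int}(\Bc_1)\cap\operatorname{int}(\Bc_2)$ and $\partial\Bc_1^\epsilon\subset U$, so $T$ is mild$(\partial\Bc_1^\epsilon)$, and Lemma \ref{first order lemma} yields both $P(T,\Bc_1^\epsilon)\le P(T,\Bc_1)$ and $P(T,\Bc_1^\epsilon)\le P(T,\Bc_2)$. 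The same lemma, applied to $\Bc_1^{\epsilon_2}\subset\operatorname{int}(\Bc_1^{\epsilon_1})$ for $\epsilon_1<\epsilon_2$, shows $\{P(T,\Bc_1^\epsilon)\}$ is increasing as $\epsilon\to 0$; let $P\in\Mcal$ be its supremum. Then $P\le P(T,\Bc_1)$, and normality of $\tau$ together with the absolute continuity of $\nu_T$ with respect to Lebesgue measure (which forces $\nu_T(\Bc_1\setminus\Bc_1^\epsilon)\to 0$) yields $\tau(P)=\nu_T(\Bc_1)=\tau(P(T,\Bc_1))$. Faithfulness of $\tau$ then forces $P=P(T,\Bc_1)$, so $P(T,\Bc_1)\le P(T,\Bc_2)$. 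The main technical obstacle lies exactly here, since the projection $P(T,\Bc_1)$ is defined via a specific idempotent construction rather than as a supremum; the argument works because trace matching and faithfulness supply enough rigidity, obviating the need for a finer continuity analysis of $\epsilon\mapsto P(T,\Bc_1^\epsilon)$.

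For part (b), with $\Bc_1\cap\Bc_2=\emptyset$, I would choose a Möbius transformation $\gamma$ with $\gamma(\Bc_2)=\Cpx\setminus D(0,1)$; since $\gamma^{-1}(\infty)\in\Bc_2$ is not in $\Bc_1$, $\gamma(\Bc_1)$ is a disk in $\Cpx$ contained in $\gamma(\Cpx\setminus\Bc_2)=D(0,1)$. By Lemma \ref{lem:Moebius mild}, $\gamma(T)$ is mild on an open neighborhood of the relevant boundaries, so part (a) applied to $\gamma(T)$ gives $P(\gamma(T),\gamma(\Bc_1))\le P(\gamma(T),D(0,1))$; Lemma \ref{conformal} together with its extension to complements of disks transports this back to $P(T,\Bc_1)\le P(T,\Cpx\setminus\Bc_2)$. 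For part (c), with $\Bc_2\subset\Bc_1$, the same choice of $\gamma$ makes $\gamma(\Bc_1)=\Cpx\setminus D'$ for some disk $D'\subset D(0,1)$, so that $\gamma(\Cpx\setminus\Bc_1)=D'\subset D(0,1)=\gamma(\Cpx\setminus\Bc_2)$; applying part (a) to $\gamma(T)$ with these two disks and transporting back through the extended Lemma \ref{conformal} yields the desired inequality. The remaining bookkeeping is routine but requires careful tracking of the inversion $z\mapsto 1/z$ encoded in Definition \ref{def:someHSproj}(b).
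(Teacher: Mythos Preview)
Your proof is correct and follows essentially the same approach as the paper's. For part (a) you shrink $\Bc_1$ concentrically, apply Lemma \ref{first order lemma}, and recover $P(T,\Bc_1)$ as the supremum via a trace-and-faithfulness argument, exactly as the paper does; for parts (b) and (c) the paper simply says ``follow by combining Lemma \ref{first order lemma} and Lemma \ref{conformal},'' and your M\"obius-transformation reduction to part (a) is precisely the intended unpacking of that sentence.
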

\begin{proof} We show the first assertion.
Let $\mathcal{B}_1=B(\lambda,r)$ be the disk of radius $r$ centered at $\lambda$.
Let $k_0\in\Nats$ be such that $\partial B(\lambda,\frac{kr}{k+1})\subseteq U$ for all $k\ge k_0$.
By Lemma \ref{first order lemma} for $k\ge k_0$, we have $P(T,B(\lambda,\frac{kr}{k+1}))\leq P(T,\mathcal{B}_2)$.
Therefore,
$$\bigvee_{k\geq k_0}P(T,B(\lambda,\frac{kr}{k+1}))\leq P(T,\mathcal{B}_2).$$
By Lemma \ref{first order lemma}, the sequence of projections $\{P(T,B(\lambda,\frac{kr}{k+1}))\}_{k\geq k_0}$
is increasing and is bounded from above by $P(T,\mathcal{B}_1)$.
It follows (see Definition~\ref{def:hs proj}\eqref{hdefb}) that
$$\lim_{k\to\infty}\tau(P(T,B(\lambda,\frac{kr}{k+1})))=\lim_{k\to\infty}\nu_T(B(\lambda,\frac{kr}{k+1}))=\nu_T(\mathcal{B}_1)=\tau(P(T,\mathcal{B}_1)).$$
Therefore,
$$P(T,\mathcal{B}_1)=\bigvee_{k\geq0}P(T,B(\lambda,\frac{kr}{k+1}))\leq P(T,\mathcal{B}_2).$$
This proves the first assertion.

The second and third assertions follow by combining Lemma \ref{first order lemma} and Lemma \ref{conformal}.
\end{proof}

It is clear from Definition~\ref{def:mild} and the properties of the Fuglede-Kadison determinant and Brown measure, that if $T$ is a mild$(\Bc)$ operator
for a subset $\Bc$ of $\Cpx$, then $T^*$ is a mild$(\Cc)$ operator, where $\Cc=\{z\mid \bar{z}\in\mathcal{B}\}$.

\begin{lem}\label{haag adjoint}
Let $\Bc$ be a disk in $\Cpx$ and suppose
$T\in\Lc_{\log}(\Mcal,\tau)$ is a mild$(\partial\Bc)$ operator.
Let $\mathcal{C}=\{z\mid \bar{z}\in\mathcal{B}\}$.
Then $P(T,\mathbb{C}\backslash\mathcal{B})=1-P(T^*,\mathcal{C})$.
\end{lem}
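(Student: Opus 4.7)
The plan is to show that both $P(T,\mathbb{C}\backslash\mathcal{B})$ and $1-P(T^*,\mathcal{C})$ coincide with the projection onto $\ker E(T,\mathcal{B})$, where $E(T,\mathcal{B})=\frac{1}{2\pi i}\int_{\partial\mathcal{B}}(z-T)^{-1}\,dz$ is the Haagerup--Schultz idempotent from Theorem~\ref{hs integral}. Recall from Definition~\ref{def:someHSproj}\eqref{it:someHSproj-a} that $P(T,\mathcal{B})$ is the range projection of $E(T,\mathcal{B})$, which is a closed, densely defined idempotent lying in $\Lc_p(\Mcal,\tau)$ for some $p\in(\tfrac12,1)$. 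Thus $\operatorname{range}(I-E(T,\mathcal{B}))=\ker E(T,\mathcal{B})$ and $\overline{\operatorname{range}(E(T,\mathcal{B})^*)}^{\perp}=\ker E(T,\mathcal{B})$, and these kernels are closed.

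First I would verify the adjoint identity $E(T,\mathcal{B})^*=E(T^*,\mathcal{C})$. Writing $\mathcal{B}=B(\lambda,r)$, $\mathcal{C}=B(\bar\lambda,r)$, and parametrizing $\partial\mathcal{B}$ by $z(t)=\lambda+re^{2\pi it}$, one obtains $E(T,\mathcal{B})^*=\int_0^1 re^{-2\pi it}(\overline{z(t)}-T^*)^{-1}\,dt$. The reparametrization $s=1-t$, justified in $\Lc_p$-norm by Lemma~\ref{lem:RIcov}, rewrites this as the analogous integral representation of $E(T^*,\mathcal{C})$ along $\partial\mathcal{C}$ (oriented counterclockwise). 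Consequently, $P(T^*,\mathcal{C})$ is the range projection of $E(T,\mathcal{B})^*$, and $1-P(T^*,\mathcal{C})$ equals the projection onto $\ker E(T,\mathcal{B})$.

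Next I would verify $E(\gamma_a(T)^{-1},\mathcal{B}_0)=I-E(T,\mathcal{B})$, where $\gamma_a(z)=(z-\lambda)/r$ sends $\mathbb{C}\backslash\mathcal{B}$ onto the complement of the unit disk $\mathcal{B}_0$ (so $\gamma_a(T)^{-1}=r(T-\lambda)^{-1}$). The algebraic identity
$$
(w-r(T-\lambda)^{-1})^{-1}=\frac{1}{w}I+\frac{r}{w^2}\bigl(T-(\lambda+r/w)\bigr)^{-1},\qquad|w|=1,
$$
splits the integral: the first summand contributes $I$ via the residue at $w=0$, while the substitution $\zeta=\lambda+r/w$ in the second summand (for which $(r/w^2)\,dw=-d\zeta$, with orientation reversed), again justified via Lemma~\ref{lem:RIcov}, yields $-E(T,\mathcal{B})$. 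Then parts~\eqref{it:someHSproj-c} and~\eqref{it:someHSproj-b} of Definition~\ref{def:someHSproj} give
$$
P(T,\mathbb{C}\backslash\mathcal{B})=P(\gamma_a(T),\mathbb{C}\backslash\mathcal{B}_0)=P(r(T-\lambda)^{-1},\mathcal{B}_0),
$$
which is the range projection of $I-E(T,\mathcal{B})$, i.e.\ the projection onto $\ker E(T,\mathcal{B})$. Combining the two computations gives the claimed equality.

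The main obstacle is ensuring that these two changes of variable are valid inside the Haagerup--Schultz $\Lc_p$-valued Riemann integrals: one must verify the H\"older/Lipschitz hypothesis of Lemma~\ref{lem:RIcov} from the mildness assumption on $T$ (for the second step) and the resulting mildness of $T^*$ on $\partial\mathcal{C}$ recorded just before the lemma (for the first step), while carefully tracking the orientation reversals produced by the substitutions $s=1-t$ and $\zeta=\lambda+r/w$. Once these integral manipulations are safely in hand, the passage from idempotents to range projections is standard for closed densely defined idempotents in $\Sc(\Mcal,\tau)$.
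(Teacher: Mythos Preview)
Your proposal is correct and follows essentially the same approach as the paper. The paper first invokes Lemma~\ref{conformal} to reduce to the case where $\mathcal{B}$ is the unit disk (so $\lambda=0$, $r=1$, and $\mathcal{C}=\mathcal{B}$), which slightly simplifies the two integral computations (the substitution becomes $z=w^{-1}$ rather than $\zeta=\lambda+r/w$), whereas you carry out the same two identities $E(T,\mathcal{B})^*=E(T^*,\mathcal{C})$ and $E(\gamma_a(T)^{-1},\mathcal{B}_0)=I-E(T,\mathcal{B})$ directly for a general disk; the concluding step identifying both projections with the projection onto $\ker E(T,\mathcal{B})$ is the same in both arguments.
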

\begin{proof} Applying Lemma \ref{conformal}, we may assume without loss of generality that $\mathcal{B}$
(and, thus, also $\mathcal{C}$) is the unit disk centered at $0$.
Recall: $P(T,\Cpx\backslash\Bc)=P(T^{-1},\Bc)$.

By Theorem 6.2 in \cite{HS2}
and the change of variables formula (Lemma~\ref{lem:RIcov}),
using the substitution $z=w^{-1},$ we infer that
\begin{align*}
E(T^{-1},\mathcal{B})&=\frac1{2\pi i}\int_{|z|=1}\frac{dz}{z-T^{-1}}=\frac1{2\pi i}\int_{|w|=1}\frac{dw}{w^2(w^{-1}-T^{-1})} \\
&=\frac1{2\pi i}\int_{|w|=1}\Big(\frac1w-\frac1{w-T}\Big)dw=1-E(T,\mathcal{B}).
\end{align*}
Also,
$$E(T,\mathcal{B})^*=-\frac1{2\pi i}\int_{|z|=1}\frac{d\bar{z}}{\bar{z}-T^*}=\frac1{2\pi i}\int_{|w|=1}\frac{dw}{w-T^*}=E(T^*,\mathcal{B}).$$

If $E_1$ and $E_2$ are idempotents such that $E_1=1-E_2^*,$ then ${\rm range}(E_1)={\rm range}(E_2)^{\perp}.$ Applying this to idempotents $E_1=E(T^{-1},\mathcal{B})$ and $E_2=E(T^*,\mathcal{B}),$ we conclude the proof.
\end{proof}

\begin{defi}\label{haag mild def}
Let $T\in\Lc_{\log}(\Mcal,\tau)$ be a mild operator.
Let $\mathcal{B}$ be a closed set.
If $\{\lambda_k\}_{k\geq0}$ is a dense subset in ${\rm int}(\mathcal{B})$
(respectively, $\{\lambda_k'\}_{k\geq0}$ is a dense subset in $\mathbb{C}\backslash\mathcal{B}$), then set
\begin{align*}
P_{(\downarrow)}(T,\mathcal{B})&=\bigvee_{k\geq0}P(T,B(\lambda_k,{\rm dist}(\lambda_k,\partial\mathcal{B}))), \\
P^{(\uparrow)}(T,\mathcal{B})&=\bigwedge_{k\geq0}P(T,\mathbb{C}\backslash B(\lambda'_k,{\rm dist}(\lambda_k',\partial\mathcal{B}))),
\end{align*}
where this means $P_{(\downarrow)}(T,\mathcal{B})=0$ if ${\rm int}(\mathcal{B})$ is empty, and
$P^{(\uparrow)}(T,\mathcal{B})=1$ if $\Bc=\Cpx$.
Note that $P_{(\downarrow)}(T,\mathcal{B})$ and $P^{(\uparrow)}(T,\mathcal{B})$ are $T$-invariant.
\end{defi}

\begin{lem}
Let $T\in\Lc_{\log}(\Mcal,\tau)$ be a mild operator.
The projections $P_{(\downarrow)}(T,\mathcal{B})$ and $P^{(\uparrow)}(T,\mathcal{B})$ are well defined (i.e., are independent of the particular choice of dense subsets $\{\lambda_k\}_{k\geq0}$ and $\{\lambda_k'\}_{k\geq0}$).
\end{lem}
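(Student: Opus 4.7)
The plan is to show both projections are independent of the choice of dense subset, with $P^{(\uparrow)}(T,\mathcal{B})$ handled dually to $P_{(\downarrow)}(T,\mathcal{B})$; I focus on the latter. Given two dense subsets $\{\lambda_k\}_{k\ge 0}$ and $\{\mu_j\}_{j\ge 0}$ of $\operatorname{int}(\mathcal{B})$, write $s_j=\operatorname{dist}(\mu_j,\partial\mathcal{B})$ and $r_k=\operatorname{dist}(\lambda_k,\partial\mathcal{B})$. By symmetry, it suffices to prove, for each fixed $j$, that
$$P(T,B(\mu_j,s_j))\le\bigvee_k P(T,B(\lambda_k,r_k)).$$
Supping over $j$ then gives one containment of suprema, and the roles of the two sequences can be exchanged.

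The geometric heart of the argument is the observation that if $\delta:=|\lambda_k-\mu_j|<s_j/3$, then Lipschitz continuity of the distance-to-boundary function yields $r_k\ge s_j-\delta$, and a triangle inequality computation shows $\overline{B(\mu_j,s_j-3\delta)}\subset\operatorname{int}(B(\lambda_k,r_k))$. Since $T$ is mild, Lemma~\ref{first order lemma} then gives
$$P(T,B(\mu_j,s_j-3\delta))\le P(T,B(\lambda_k,r_k))\le\bigvee_k P(T,B(\lambda_k,r_k)).$$
Using density, choose a sequence $\lambda_{k_n}\to\mu_j$ with $\delta_n=|\lambda_{k_n}-\mu_j|\to 0$. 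The projections $P(T,B(\mu_j,s_j-3\delta_n))$ form an increasing family (by Lemma~\ref{second order lemma}(a) applied to the nested disks) bounded above by $P(T,B(\mu_j,s_j))$. By Definition~\ref{def:hs proj}(b) together with the absolute continuity of $\nu_T$ (which is built into mildness), their traces converge to $\tau(P(T,B(\mu_j,s_j)))$, so the supremum of this increasing family must equal $P(T,B(\mu_j,s_j))$, and the desired inequality drops out.

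The argument for $P^{(\uparrow)}(T,\mathcal{B})$ runs through the same geometric picture applied in $\Cpx\backslash\mathcal{B}$: for the analogous $\delta<s_j'/3$ one has $\overline{B(\mu_j',s_j'-3\delta)}\subset\operatorname{int}(B(\lambda_k',r_k'))$, so Lemma~\ref{second order lemma}(c) (where a disk inclusion reverses to a complement inclusion on projections) gives $P(T,\Cpx\backslash B(\lambda_k',r_k'))\le P(T,\Cpx\backslash B(\mu_j',s_j'-3\delta))$; then a parallel monotone-decreasing trace-convergence argument identifies the infimum with $P(T,\Cpx\backslash B(\mu_j',s_j'))$. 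The only obstacle worth mentioning is the strict-containment bookkeeping required to invoke Lemma~\ref{first order lemma}, which the $-3\delta$ shrinkage in the radius handles without trouble.
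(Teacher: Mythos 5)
Your proposal is correct and follows essentially the same route as the paper's proof: shrink one of the disks slightly so that its closure lands in the interior of a disk centred at a nearby point of the other dense set, apply Lemma~\ref{first order lemma}, and then recover the unshrunk projection by comparing traces (using the Lebesgue absolute continuity of $\nu_T$ and faithfulness of $\tau$). The only difference is cosmetic bookkeeping — your $s_j-3\delta$ shrinkage versus the paper's arbitrary $q<r_k$ with $|\lambda_k-\mu_l|<\tfrac12(r_k-q)$ — and the roles of the two dense sets are swapped, which is immaterial by the symmetry both arguments invoke.
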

\begin{proof} We prove the assertion for $P_{(\downarrow)}(T,\mathcal{B})$ (the proof for $P^{(\uparrow)}(T,\mathcal{B})$ is identical). Suppose $\{\lambda_k\}_{k\geq0}$ and $\{\mu_l\}_{l\geq0}$ are dense subsets in ${\rm int}(\mathcal{B})$.
Consider the corresponding projections
\begin{align*}
P_1&=\bigvee_{k\geq0}P(T,B(\lambda_k,{\rm dist}(\lambda_k,\partial\mathcal{B}))), \\
P_2&=\bigvee_{k\geq0}P(T,B(\mu_k,{\rm dist}(\mu_k,\partial\mathcal{B}))).
\end{align*}
Fix $k$ and $0<q<r_k:={\rm dist}(\lambda_k,\partial\mathcal{B}).$ Since $\{\mu_l\}_{l\geq0}$ is a dense subset of ${\rm int}(\mathcal{B}),$ it follows that there exists $l$ such that $|\lambda_k-\mu_l|<\frac12(r_k-q).$ It follows that
$$\overline{B(\lambda_k,q)}\subset B(\mu_l,\frac{r_k+q}{2})\subset B(\mu_l,{\rm dist}(\mu_l,\partial\mathcal{B})).$$
It follows from Lemma \ref{first order lemma} that
$$P(T,B(\lambda_k,q))\leq P(T,B(\mu_l,{\rm dist}(\mu_l,\partial\mathcal{B})))\leq P_2.$$
Since $q<r_k={\rm dist}(\lambda_k,\partial\mathcal{B})$ is arbitrary, 
and arguing
using convergence of traces
as in the proof of Lemma \ref{second order lemma}, we conclude that
$$
P(T,B(\lambda_k,{\rm dist}(\lambda_k,\partial\mathcal{B})))=
\bigvee_{\substack{q\in\mathbb{Q}\\ 0<q<{\rm dist}(\lambda_k,\partial\mathcal{B})}} P(T,B(\lambda_k,q))\leq P_2.$$
Taking the supremum over $k,$ we infer that $P_1\leq P_2$.
By symmetry, also $P_2\leq P_1$.
Thus, $P_2=P_1$.
\end{proof}

\begin{lem}\label{low less up} Let $T\in\Lc_{\log}(\Mcal,\tau)$ be a mild operator.
For every closed set $\mathcal{B},$ we have $P_{(\downarrow)}(T,\mathcal{B})\leq P^{(\uparrow)}(T,\mathcal{B})$.
\end{lem}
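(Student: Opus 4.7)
The plan is to reduce the inequality to a statement about individual Haagerup--Schultz projections corresponding to pairs of open disks, one inside $\mathcal{B}$ and one outside, and then invoke Lemma~\ref{second order lemma}(b).

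First I would dispose of the degenerate cases. If $\operatorname{int}(\mathcal{B}) = \emptyset$ then $P_{(\downarrow)}(T,\mathcal{B}) = 0$ by definition, and the conclusion is automatic; likewise if $\mathcal{B} = \mathbb{C}$ then $P^{(\uparrow)}(T,\mathcal{B}) = 1$ and we are done. So assume $\{\lambda_k\}_{k\ge 0}$ is a dense subset of $\operatorname{int}(\mathcal{B})$ and $\{\lambda_l'\}_{l\ge 0}$ is a dense subset of $\mathbb{C}\backslash\mathcal{B}$, with $r_k = \operatorname{dist}(\lambda_k,\partial\mathcal{B})$ and $r_l' = \operatorname{dist}(\lambda_l',\partial\mathcal{B})$.

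Since $P_{(\downarrow)}(T,\mathcal{B})$ is the supremum of the projections $P(T,B(\lambda_k,r_k))$ and $P^{(\uparrow)}(T,\mathcal{B})$ is the infimum of the projections $P(T,\mathbb{C}\backslash B(\lambda_l',r_l'))$, the desired inequality reduces to showing
\[
P(T,B(\lambda_k,r_k)) \le P(T,\mathbb{C}\backslash B(\lambda_l',r_l'))
\]
for every pair $k,l$. Here the geometric observation is that the open disk $B(\lambda_k,r_k)$ is contained in $\operatorname{int}(\mathcal{B})$ (because $r_k$ equals the distance from $\lambda_k$ to $\mathbb{C}\backslash\mathcal{B}$), while the open disk $B(\lambda_l',r_l')$ is contained in $\mathbb{C}\backslash\mathcal{B}$. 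Therefore these two open disks are disjoint, i.e., $B(\lambda_k,r_k) \subseteq \mathbb{C}\backslash B(\lambda_l',r_l')$.

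Since $T$ is mild, it is in particular mild on any open neighborhood of $\partial B(\lambda_k,r_k) \cup \partial B(\lambda_l',r_l')$, so Lemma~\ref{second order lemma}(b) applies and yields precisely the inequality above. Taking the supremum over $k$ and the infimum over $l$ completes the proof. The only mild subtlety is the open-versus-closed disk issue in the definition of $P(T,\cdot)$, but this is harmless for mild operators because $\nu_T$ vanishes on the boundary of any disk (cf.\ Remark~\ref{rem:mildHS}), so the Haagerup--Schultz projection is the same whether we interpret the disk as open or closed. There is no substantial obstacle: the argument is essentially a bookkeeping reduction to the previously established Lemma~\ref{second order lemma}(b).
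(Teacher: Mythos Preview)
Your proof is correct and follows essentially the same approach as the paper: both reduce the inequality to showing $P(T,B(\lambda_k,r_k)) \le P(T,\mathbb{C}\backslash B(\lambda_l',r_l'))$ for each pair $k,l$ via the disjointness of the two disks, invoke Lemma~\ref{second order lemma}(b), and then take the supremum over $k$ and infimum over $l$. Your handling of the degenerate cases and the open/closed disk issue is a bit more explicit than the paper's, but the argument is the same.
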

\begin{proof} Suppose that $\{\lambda_k\}_{k\geq0}$ is a dense subset of ${\rm int}(\mathcal{B})$ and $\{\lambda_l'\}_{l\geq0}$ is a dense subset of $\mathbb{C}\backslash\mathcal{B}.$ We have
$$B(\lambda_k,{\rm dist}(\lambda_k,\partial\mathcal{B}))\subset\mathcal{B}\subset\mathbb{C}\backslash B(\lambda'_l,{\rm dist}(\lambda_l',\partial\mathcal{B})).$$
It follows from Lemma \ref{second order lemma} that
$$P(T,B(\lambda_k,{\rm dist}(\lambda_k,\partial\mathcal{B})))\leq P(T,\mathbb{C}\backslash B(\lambda_l',{\rm dist}(\lambda_l',\partial\mathcal{B}))).$$
Taking the infimum in the right hand side, we infer that
$$P(T,B(\lambda_k,{\rm dist}(\lambda_k,\partial\mathcal{B})))\leq P^{(\uparrow)}(T,\mathcal{B}).$$
Taking the supremum in the left hand side, we conclude the proof.
\end{proof}

\begin{thm}\label{mild const} Let $T\in\Lc_{\log}(\Mcal,\tau)$ be a mild operator.
If $\mathcal{B}$ is a closed set such that $m(\partial\mathcal{B})=0,$ then
\begin{enumerate}[{\rm (a)}]
\item\label{mida} $P_{(\downarrow)}(T,\mathcal{B})=P^{(\uparrow)}(T,\mathcal{B}).$
\item\label{midb} $P^{(\uparrow)}(T,\mathcal{B})=1-P_{(\downarrow)}(T^*,\overline{\mathbb{C}\backslash\mathcal{C}}),$ where $\mathcal{C}=\{z\mid \bar{z}\in\mathcal{B}\}$
and $\overline{\mathbb{C}\backslash\mathcal{C}}$ denotes the closure of $\mathbb{C}\backslash\mathcal{C}.$
\item\label{midc} $\tau(P^{(\uparrow)}(T,\mathcal{B}))=\nu_T(\mathcal{B}).$
\item\label{midd} $\nu_{TP^{(\uparrow)}(T,\mathcal{B})}=\frac1{\nu_T(\mathcal{B})}\nu_T|_{\mathcal{B}}$ and $\nu_{(1-P^{(\uparrow)}(T,\mathcal{B}))T}=\frac1{\nu_T(\mathbb{C}\backslash\mathcal{B})}\nu_T|_{\mathbb{C}\backslash\mathcal{B}}.$
\end{enumerate}
\end{thm}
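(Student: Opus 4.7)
The plan is to prove parts~(c) and~(a) together via a trace sandwich, deduce~(d) from the underlying support estimates, and finally derive~(b) from Lemma~\ref{haag adjoint} combined with~(c) and faithfulness of the trace. Two support estimates drive everything: first, ${\rm supp}(\nu_{TP^{(\uparrow)}(T,\mathcal{B})}) \subseteq \mathcal{B}$, obtained by applying Corollary~\ref{nu matrix cor} to the $T$-invariant pair $P^{(\uparrow)}(T,\mathcal{B}) \le P(T, \mathbb{C}\backslash B(\lambda'_k, r'_k))$, intersecting over $k$, and noting that $\bigcup_k B(\lambda'_k, r'_k) = \mathbb{C}\backslash\mathcal{B}$ by density of $\{\lambda'_k\}$ and the choice of radii; second, ${\rm supp}(\nu_{(1-P_{(\downarrow)}(T,\mathcal{B}))T}) \subseteq \mathbb{C}\backslash{\rm int}(\mathcal{B})$, obtained dually by taking orthogonal complements in $P(T, B_k) \le P_{(\downarrow)}(T,\mathcal{B})$, using Lemma~\ref{haag adjoint} (applied to $T^*$ and the disk $B_k^*$) to identify $1-P(T,B_k) = P(T^*, \mathbb{C}\backslash B_k^*)$, applying Corollary~\ref{nu matrix cor} to $T^*$, and then pushing the resulting support estimate forward under complex conjugation via the identity $(T^*(1-q))^* = (1-q)T$.

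To prove~(c), I combine the first support estimate with Theorem~\ref{det nu matrix} applied to $\mathcal{B}$ to get $\tau(P^{(\uparrow)}(T,\mathcal{B})) \le \nu_T(\mathcal{B})$, and combine the second with the same theorem applied to ${\rm int}(\mathcal{B})$ to get $\tau(P_{(\downarrow)}(T,\mathcal{B})) \ge \nu_T({\rm int}(\mathcal{B})) = \nu_T(\mathcal{B})$, where the last equality uses absolute continuity of $\nu_T$ (from mildness) together with $m(\partial\mathcal{B})=0$. Combined with Lemma~\ref{low less up}, this pinches all three traces to $\nu_T(\mathcal{B})$, giving~(c); faithfulness of $\tau$ then forces $P_{(\downarrow)}(T,\mathcal{B}) = P^{(\uparrow)}(T,\mathcal{B})$, which is~(a). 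Part~(d) will follow from the same support data: the decomposition $\nu_T = \tau(p)\nu_{Tp} + \tau(1-p)\nu_{(1-p)T}$ of Theorem~\ref{det nu matrix}, evaluated on $\partial\mathcal{B}$ together with $\nu_T(\partial\mathcal{B})=0$, forces $\nu_{(1-p)T}(\partial\mathcal{B}) = 0$; restricting the decomposition to $\mathcal{B}$ and $\mathbb{C}\backslash\mathcal{B}$ separately then yields the claimed Brown measure formulas.

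For~(b), I would apply Lemma~\ref{haag adjoint} termwise to obtain $P^{(\uparrow)}(T,\mathcal{B}) = 1 - R$, where $R := \bigvee_k P(T^*, B(\bar\lambda'_k, r'_k))$, and then identify $R$ with $P_{(\downarrow)}(T^*, \overline{\mathbb{C}\backslash\mathcal{C}})$ by a trace squeeze. The containment $R \le P_{(\downarrow)}(T^*, \overline{\mathbb{C}\backslash\mathcal{C}})$ follows from $\mathbb{C}\backslash\mathcal{C} \subseteq {\rm int}(\overline{\mathbb{C}\backslash\mathcal{C}})$ and $\partial\overline{\mathbb{C}\backslash\mathcal{C}} \subseteq \partial\mathcal{C}$, so that each ball $B(\bar\lambda'_k, r'_k)$ defining $R$ sits inside a same-centered ball defining $P_{(\downarrow)}(T^*, \overline{\mathbb{C}\backslash\mathcal{C}})$; Lemma~\ref{first order lemma} (applied to concentric shrinkings as in the proof of Lemma~\ref{second order lemma}) then gives the projection inequality. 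The reverse containment comes from a trace comparison: part~(c) applied to both $T$--$\mathcal{B}$ and $T^*$--$\overline{\mathbb{C}\backslash\mathcal{C}}$, using that $\nu_{T^*}$ is the conjugation pushforward of $\nu_T$ and absolute continuity to ignore boundary contributions, yields $\tau(R) = 1 - \nu_T(\mathcal{B}) = \tau(P_{(\downarrow)}(T^*, \overline{\mathbb{C}\backslash\mathcal{C}}))$, and faithfulness of $\tau$ concludes equality.

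The main obstacle I anticipate is the set-theoretic bookkeeping in~(b): the sets $\mathbb{C}\backslash\mathcal{C}$ and ${\rm int}(\overline{\mathbb{C}\backslash\mathcal{C}})$ can differ strictly (for instance when $\mathcal{C}$ contains isolated points or lower-dimensional tentacles attached to a bulk interior), so the two suprema defining $R$ and $P_{(\downarrow)}(T^*, \overline{\mathbb{C}\backslash\mathcal{C}})$ are a priori taken over genuinely different families of balls. Their equality is not immediate from the definitions but must be extracted through the trace identity from~(c), which itself rests essentially on $m(\partial\mathcal{B})=0$ combined with absolute continuity of the Brown measures $\nu_T$ and $\nu_{T^*}$.
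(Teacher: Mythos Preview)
Your argument is correct, and its logical ordering differs from the paper's. The paper proves~(b) \emph{first}, in one line, by applying Lemma~\ref{haag adjoint} termwise and then asserting that $\bigvee_k P(T^*,B(\bar\lambda'_k,{\rm dist}(\lambda'_k,\partial\mathcal{B})))$ equals $P_{(\downarrow)}(T^*,\overline{\mathbb{C}\backslash\mathcal{C}})$ directly from Definition~\ref{haag mild def}; it then uses~(b) to obtain the lower bound in~(c), via $\tau(P^{(\uparrow)}(T,\mathcal{B}))=1-\tau(P_{(\downarrow)}(T^*,\overline{\mathbb{C}\backslash\mathcal{C}}))\ge 1-\tau(P^{(\uparrow)}(T^*,\overline{\mathbb{C}\backslash\mathcal{C}}))\ge 1-\nu_{T^*}(\overline{\mathbb{C}\backslash\mathcal{C}})$, applying Lemma~\ref{low less up} and the already-proved upper half of~(c) to $T^*$. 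You invert this: you establish both bounds in~(c) independently of~(b), via two dual support estimates (the second, for $(1-P_{(\downarrow)})T$, obtained by pushing the first through adjoints and conjugation), and prove~(b) last with a trace squeeze that takes~(c) as input.

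The payoff of your ordering is exactly the subtlety you flag at the end. The paper's one-line justification of the final equality in~(b) implicitly identifies the supremum over balls of radii ${\rm dist}(\bar\lambda'_k,\partial\mathcal{C})$ with the supremum over balls of radii ${\rm dist}(\bar\lambda'_k,\partial(\overline{\mathbb{C}\backslash\mathcal{C}}))$; since $\partial(\overline{\mathbb{C}\backslash\mathcal{C}})$ can be strictly contained in $\partial\mathcal{C}$ (e.g.\ when $\mathcal{C}$ has lower-dimensional protrusions), the radii genuinely differ and the well-definedness lemma does not directly cover this. Your trace argument, using~(c) for both $T$ and $T^*$ together with $m(\partial\mathcal{B})=0$, closes that gap cleanly. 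The paper's conclusion in~(b) is of course correct, but your route supplies the step its proof leaves unstated.
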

\begin{proof} We start by proving \eqref{midb}.
Letting $\{\lambda_k'\}_{k\ge0}$ be a dense subset of $\Cpx\backslash\Bc$,
it follows from Definition \ref{haag mild def} and Lemma \ref{haag adjoint} that
\begin{align*}
P^{(\uparrow)}(T,\mathcal{B})&=\bigwedge_{k\geq0}\Big(1-P(T^*,B(\overline{\lambda'_k},{\rm dist}(\lambda_k',\partial\mathcal{B})))\Big) \\
&=1-\bigvee_{k\geq0}P(T^*,B(\overline{\lambda'_k},{\rm dist}(\lambda_k',\partial\mathcal{B})))
=1-P_{(\downarrow)}(T^*,\overline{\mathbb{C}\backslash\mathcal{C}}).
\end{align*}
This proves \eqref{midb}.

We now show that the Brown measure of $TP^{(\uparrow)}(T,\mathcal{B})$ is supported in $\mathcal{B}$.
Suppose that $\{\lambda_k'\}_{k\geq0}$ is a dense subset of $\mathbb{C}\backslash\mathcal{B}$.
By Definition \ref{haag mild def}, we have
$$P^{(\uparrow)}(T,\mathcal{B})\leq P(T,\mathbb{C}\backslash B(\lambda'_k,{\rm dist}(\lambda_k',\partial\mathcal{B}))).$$
Since $P(T,\mathbb{C}\backslash B(\lambda'_k,{\rm dist}(\lambda_k',\partial\mathcal{B})))$ is a Haagerup--Schultz projection
for $T$ and the set $\mathbb{C}\backslash B(\lambda'_k,{\rm dist}(\lambda_k',\partial\mathcal{B}))$, we have
$${\rm supp}(\nu_{TP^{(\uparrow)}(T,\mathcal{B})})\subset \mathbb{C}\backslash B(\lambda'_k,{\rm dist}(\lambda_k',\partial\mathcal{B})).$$
Taking the intersection over $k\geq0,$ we conclude that
\begin{equation}\label{eq:supp in B}
{\rm supp}(\nu_{TP^{(\uparrow)}(T,\mathcal{B})})\subset\mathcal{B}.
\end{equation}

We now prove \eqref{midc}. It follows from Theorem \ref{det nu matrix} that
$$\nu_T(\mathcal{B})=\tau(P^{(\uparrow)}(T,\mathcal{B}))\nu_{TP^{(\uparrow)}(T,\mathcal{B})}(\mathcal{B})+\tau(1-P^{(\uparrow)}(T,\mathcal{B}))\nu_{(1-P^{(\uparrow)}(T,\mathcal{B}))T}(\mathcal{B}).$$
Since $\nu_{TP^{(\uparrow)}(T,\mathcal{B})}$ is concentrated in $\mathcal{B}$,
it follows that $\nu_{TP^{(\uparrow)}(T,\mathcal{B})}(\mathcal{B})=1$.
This proves $\tau(P^{(\uparrow)}(T,\mathcal{B}))\leq\nu_T(\mathcal{B})$.

Since $T^*$ is also mild, we may apply the conclusion  of the previous paragraph to $T^*$ and
the set $\overline{\mathbb{C}\backslash\mathcal{C}},$ to obtain
\begin{align*}
\tau(P^{(\uparrow)}(T,\mathcal{B}))\stackrel{\eqref{midb}}{=}&1-\tau(P_{(\downarrow)}(T^*,\overline{\mathbb{C}\backslash\mathcal{C}}))
\stackrel{\ref{low less up}}{\geq}1-\tau(P^{(\uparrow)}(T^*,\overline{\mathbb{C}\backslash\mathcal{C}})) \\
&\geq1-\nu_{T^*}(\overline{\mathbb{C}\backslash\mathcal{C}})
=1-\nu_T(\overline{\mathbb{C}\backslash\mathcal{B}})=\nu_T(\mathcal{B})-\nu_T(\partial\mathcal{B}).
\end{align*}
Since $\nu_T(\partial\mathcal{B})=0,$ it follows that $\tau(P^{(\uparrow)}(T,\mathcal{B}))\geq\nu_T(\mathcal{B}).$ This proves \eqref{midc}.

We now prove \eqref{mida}. Applying \eqref{midb} and
applying \eqref{midc} to $T^*$ and the set $\overline{\mathbb{C}\backslash\mathcal{C}}$,
we infer that
\begin{align*}
\tau(P_{(\downarrow)}(T,\mathcal{B}))\stackrel{\eqref{midb}}{=}&1-\tau(P^{(\uparrow)}(T^*,\overline{\mathbb{C}\backslash\mathcal{C}}))\stackrel{\eqref{midc}}{=}
1-\nu_{T^*}(\overline{\mathbb{C}\backslash\mathcal{C}}) \\
=&\nu_T(\mathcal{B})-\nu_T(\partial\mathcal{B})=\nu_T(\mathcal{B})
\stackrel{\eqref{midc}}{=}\tau(P^{(\uparrow)}(T,\mathcal{B})).
\end{align*}
Since $\tau$ is faithful, the assertion of \eqref{mida} follows now from Lemma \ref{low less up}.

Since the Brown measure of $TP^{(\uparrow)}(T,\mathcal{B})$ is supported in $\mathcal{B}$,
the assertion of \eqref{midd} follows by using the $T$-invariance of $P^{(\uparrow)}(T,\mathcal{B})$, equation~\eqref{eq:supp in B}, \eqref{midc} and Theorem \ref{det nu matrix}.
\end{proof}

\begin{cor}
With $T$ and $\Bc$ as in Theorem~\ref{mild const}, $P^{(\uparrow)}(T,\Bc)$ is a Haagerup--Schultz projection
for the operator $T$ and the set $\Bc$.
\end{cor}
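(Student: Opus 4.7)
The plan is to read off the three defining conditions of Definition~\ref{def:hs proj} directly from the results just established, so the corollary is essentially a bookkeeping consequence of Theorem~\ref{mild const} together with Definition~\ref{haag mild def}.

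First I would verify condition~\eqref{hdefa}: the $T$-invariance of $P^{(\uparrow)}(T,\Bc)$ was already noted in Definition~\ref{haag mild def}, since each factor $P(T,\Cpx\setminus B(\lambda_k',{\rm dist}(\lambda_k',\partial\Bc)))$ is a Haagerup--Schultz projection (Definition~\ref{def:someHSproj}), hence $T$-invariant, and $T$-invariant projections are closed under taking infima (as remarked at the start of the subsection on mild operators: the set of $T$-invariant projections forms a lattice closed under arbitrary suprema, and the same is true of infima via passing to complements and the adjoint).

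Next I would point to Theorem~\ref{mild const}\eqref{midc}, which directly gives condition~\eqref{hdefb}, namely $\tau(P^{(\uparrow)}(T,\Bc))=\nu_T(\Bc)$. Finally, for condition~\eqref{hdefc}, the Brown measure formulas
\[
\nu_{TP^{(\uparrow)}(T,\Bc)}=\frac{1}{\nu_T(\Bc)}\nu_T|_{\Bc},\qquad \nu_{(1-P^{(\uparrow)}(T,\Bc))T}=\frac{1}{\nu_T(\Cpx\setminus\Bc)}\nu_T|_{\Cpx\setminus\Bc}
\]
are precisely Theorem~\ref{mild const}\eqref{midd} (which already assumes $0<\nu_T(\Bc)<1$ implicitly through the division).

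There is no real obstacle here; all the content has been packaged into Theorem~\ref{mild const}, and the only small matter is to observe that $T$-invariance of the infimum $P^{(\uparrow)}(T,\Bc)$ is inherited from the $T$-invariance of its constituents. So the proof amounts to citing Definition~\ref{haag mild def} for~\eqref{hdefa}, Theorem~\ref{mild const}\eqref{midc} for~\eqref{hdefb}, and Theorem~\ref{mild const}\eqref{midd} for~\eqref{hdefc}.
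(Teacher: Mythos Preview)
Your proposal is correct and matches the paper's intended reasoning: the corollary is stated without proof immediately after Theorem~\ref{mild const}, precisely because the three defining conditions of Definition~\ref{def:hs proj} are read off from Definition~\ref{haag mild def} (for $T$-invariance) and parts~\eqref{midc} and~\eqref{midd} of Theorem~\ref{mild const}. Your parenthetical justification for closure of $T$-invariant projections under infima (via the fact that the complement of a $T$-invariant projection is $T^*$-invariant, so $\bigwedge_k p_k = 1 - \bigvee_k (1-p_k)$ is $T$-invariant when each $p_k$ is) is exactly the right way to make explicit what the paper leaves as a ``Note'' in Definition~\ref{haag mild def}.
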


\section{Construction of Haagerup--Schultz projections}
\label{hs construct}

In this section, we let $T\in\Lc_{\log}(\Mcal,\tau)$ be arbitrary.
For an arbitrary Borel set $\mathcal{B}\subseteq\mathbb{C}$,
we will construct a Haagerup--Schultz projection $P(T,\mathcal{B})$ in the larger ultrapower von Neumann algebra $\Mcal_\omega$.
In order to distinguish the projections constructed at various stages,
we will initially reserve the notation $P(T,\mathcal{B})$ for the projections
constructed in Definition~\ref{def:someHSproj} for mild operators.

The following is clear from Lemma~\ref{lem:Yt} and Corollary~\ref{cor:Yt}, but we state it here for emphasis.
\begin{lem}\label{lem:Tident}
Let $T_k=T+\frac1{k+1}Z$, where $Z$ is the operator discussed in~\S\ref{subsec:Z},
Then $\{T_k\}_{k\ge0}\in\Fsc(\Mcal,\tau)$.
We have the tracial von Neumann algebra $(\Mcal_\omega,\tau_\omega)$ constructed as an ultrapower, and we regard $\Lc_{\log}(\Mcal,\tau)$
as being embedded in $\Lc_{\log}(\Mcal_\omega,\tau_\omega)$ via the diagonal embedding of Lemma~\ref{lem:diagemb}.
Let
$\tilde T=\pit(\{T_k\}_{k\ge0})\in\Lc_{\log}(\Mcal_\omega,\tau_\omega)$,
where $\pit$ is from Theorem \ref{isomorphism thm}.
Then $\tilde T$ is identified with $T$.
\end{lem}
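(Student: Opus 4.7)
The plan is to observe that this lemma is essentially a direct unpacking of Lemmas~\ref{lem:diagemb} and~\ref{lem:Yt}, combined with the fact that $\Fsc(\Mcal,\tau)$ is a $*$-algebra and $\pit$ is a $*$-homomorphism. There are really two claims to verify: first that the sequence $\{T_k\}_{k\ge 0}$ lies in $\Fsc(\Mcal,\tau)$, and second that its image under $\pit$ coincides with the diagonal embedding $\sigmat(T)$ of $T$.

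For the first claim, I would write $\{T_k\}_{k\ge 0} = \delta_\Fsc(T) + \Yt$, where $\Yt=\{\tfrac{1}{k+1}Z\}_{k\ge 0}$. By Lemma~\ref{lem:diagemb}, $\delta_\Fsc(T)\in\Fsc(\Mcal,\tau)$ (note that $T\in\Lc_{\log}(\Mcal,\tau)$ by hypothesis). By Lemma~\ref{lem:Yt} applied with $Y=Z\in\Lc_{\log}(\Mcal,\tau)$ (using that $Z\in\Lc_p(\Mcal,\tau)$ for all $p\in(0,1)$, cf.~\S\ref{subsec:Z}) and $r_k=\tfrac{1}{k+1}\to 0$, we have $\Yt\in\Fsc(\Mcal,\tau)$. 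Since $\Fsc(\Mcal,\tau)$ is closed under addition (it is a $*$-algebra, as noted in Section~\ref{sec:Llogultrapower}), the sum lies in $\Fsc(\Mcal,\tau)$.

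For the second claim, since $\pit$ is additive by Theorem~\ref{isomorphism thm}\eqref{it:pit+}, we compute
$$\pit(\{T_k\}_{k\ge 0}) = \pit(\delta_\Fsc(T)) + \pit(\Yt).$$
By Lemma~\ref{lem:diagemb}, $\pit\circ\delta_\Fsc=\sigmat$, so the first term equals $\sigmat(T)$, which is precisely how $\Lc_{\log}(\Mcal,\tau)$ sits inside $\Lc_{\log}(\Mcal_\omega,\tau_\omega)$. By Lemma~\ref{lem:Yt}, $\pit(\Yt)=0$. Therefore $\Tt = \sigmat(T)$, which is the desired identification.

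There is no real obstacle here; the lemma is essentially a bookkeeping statement recording how the perturbations $T+\tfrac{1}{k+1}Z$ assemble into an element of $\Fsc(\Mcal,\tau)$ whose image under the quotient map is the original $T$. All the technical work has already been absorbed into the preceding lemmas and into the construction of $\Fsc(\Mcal,\tau)$.
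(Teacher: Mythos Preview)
Your proof is correct and follows essentially the same approach as the paper, which simply states that the lemma is clear from Lemma~\ref{lem:Yt} and Corollary~\ref{cor:Yt} without giving further details. Your writeup is in fact more explicit: you correctly decompose $\{T_k\}_{k\ge0}=\delta_\Fsc(T)+\Yt$, invoke Lemma~\ref{lem:diagemb} and Lemma~\ref{lem:Yt} for membership in $\Fsc(\Mcal,\tau)$, and then use additivity of $\pit$ together with $\pit(\Yt)=0$ to obtain $\Tt=\sigmat(T)$; the paper's reference to Corollary~\ref{cor:Yt} is not actually needed for the identification as stated.
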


\begin{lem}\label{main hs lemma}
Assume $\Delta(T)\neq0$.
Take $T_k$ and $\tilde T$ as in Lemma~\ref{lem:Tident}.
Suppose for every $k\geq0$, $P_k$ is a projection in $\Mcal$ satisfying $T_kP_k=P_kT_kP_k$.
Let $P=\pit(\{P_k\}_{k\geq0})$
and assume $P\neq0$ and $P\neq1$.
Then
\begin{gather}
\lim_{k\to\omega}\Delta_{P_k\Mcal P_k}(T_kP_k)=\Delta_{P\Mcal_\omega P}(\tilde TP), \label{det tp1} \\
\lim_{k\to\omega}\Delta_{(1-P_k)\Mcal(1-P_k)}((1-P_k)T_k)=\Delta_{(1-P)\Mcal_\omega(1-P)}((1-P)\tilde T). \label{det tp2}
\end{gather}
\end{lem}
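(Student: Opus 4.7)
The plan is to reduce, for each fixed $\eps>0$, to a bounded-below sequence in $\Fsc(\Mcal,\tau)$ to which Theorem~\ref{det converge} applies, then to send $\eps\to 0^+$, and finally to couple the two desired identities via the factorization~\eqref{DeltaT}.

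First, I would check that $\{T_kP_k\}$, $\{(1-P_k)T_k\}$, and $\{|T_kP_k|^2\}$ all lie in $\Fsc(\Mcal,\tau)$ and have $\pit$-images $\tilde TP$, $(1-P)\tilde T$, and $|\tilde TP|^2$, respectively; this follows from $\{T_k\}\in\Fsc$ (Lemma~\ref{lem:Tident}), the singular-value bound $\mu(t,XP)\le\mu(t,X)$, Lemma~\ref{lem:elemineq}, and the $*$-homomorphism property of $\pit$ (Theorem~\ref{isomorphism thm}). Applying $\pit$ to the invariance relation $T_kP_k=P_kT_kP_k$ also shows that $P$ is $\tilde T$-invariant, so the factorization formulas apply on both sides.

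Next, for each $\eps\in(0,1)$, I would set $B_k^\eps:=|T_kP_k|^2+\eps P_k+(1-P_k)\in\Mcal$. The sequence $\{B_k^\eps\}$ lies in $\Fsc(\Mcal,\tau)$, its $\pit$-image is $|\tilde TP|^2+\eps P+(1-P)$, and $B_k^\eps\ge\eps$. Scaling by $\eps^{-1}$ to meet the $\ge 1$ hypothesis of Theorem~\ref{det converge} yields
\[
\lim_{k\to\omega}\Delta_{\Mcal}(B_k^\eps)=\Delta_{\Mcal_\omega}(|\tilde TP|^2+\eps P+(1-P)).
\]
Since both operators are block-diagonal with respect to the corresponding invariant projection, Theorem~\ref{det nu matrix} factors each determinant (the $(1-P_k)$-block contributing~$1$). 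Using $\tau(P_k)\to\tau_\omega(P)\in(0,1)$, taking logarithms and dividing by the weights gives, for every $\eps>0$,
\[
\lim_{k\to\omega}\Delta_{P_k\Mcal P_k}(|T_kP_k|^2+\eps P_k)=\Delta_{P\Mcal_\omega P}(|\tilde TP|^2+\eps P).
\]
By \eqref{eq:Deltaeps} applied in each reduced algebra, both sides decrease monotonically as $\eps\to 0^+$ to $\Delta_{P_k\Mcal P_k}(T_kP_k)^2$ and $\Delta_{P\Mcal_\omega P}(\tilde TP)^2$, respectively, so passing to the limit gives the ``$\le$'' direction of \eqref{det tp1}; the analogous argument gives ``$\le$'' in \eqref{det tp2}.

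To upgrade the two ``$\le$''-inequalities to equalities, I would couple them through the factorization~\eqref{DeltaT} applied to $(T_k,P_k)$ in $\Mcal$ and to $(\tilde T,P)$ in $\Mcal_\omega$: taking logs and $\tau$-weighted sums shows that $\lim_{k\to\omega}\log\Delta(T_k)$ is bounded above by $\log\Delta(T)$, and the main obstacle is to prove the matching identity $\lim_{k\to\omega}\Delta(T_k)=\Delta(T)$. Once this is in hand, the $\tau_\omega(P),\tau_\omega(1-P)$-weighted sum of the two ``$\le$''-inequalities cannot be strict in either summand, so both are equalities. To establish this key identity, I would exploit the hypothesis $\Delta(T)\ne 0$: by Lemma~2.4 of~\cite{HS1} this gives $T^{-1}\in\Lc_{\log}(\Mcal,\tau)$, so $T_k=T\bigl(1+(k+1)^{-1}T^{-1}Z\bigr)$ and multiplicativity of $\Delta$ reduce the problem to showing $\Delta\bigl(1+\alpha T^{-1}Z\bigr)\to 1$ as $\alpha\to 0$; this in turn would be deduced from the Haagerup--Schultz smoothness estimates of Theorem~\ref{thm:HS S(M)} together with the weak convergence $\nu_{T+\alpha Z}\to\nu_T$ and a uniform integrability argument for $\log|\cdot|$ against the Lebesgue-absolutely-continuous Brown measures $\nu_{T+\alpha Z}$.
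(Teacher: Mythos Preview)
Your overall architecture matches the paper's proof: obtain the two ``$\le$'' inequalities via an $\eps$-regularization together with Theorem~\ref{det converge}, then force equality by coupling through the factorization~\eqref{DeltaT} and the identity $\lim_{k\to\omega}\Delta(T_k)=\Delta(T)$. Your $\eps$-step (with $B_k^\eps=|T_kP_k|^2+\eps P_k+(1-P_k)$) is a harmless repackaging of what the paper does in its Step~2.

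The one weak point is your route to $\lim_{k\to\omega}\Delta(T_k)=\Delta(T)$. The paper dispatches this in one line by invoking Proposition~4.5 of~\cite{HS2}, which gives the exact identity
\[
\Delta\bigl(T+\tfrac1{k+1}Z\bigr)^2=\Delta\bigl(|T|^2+\tfrac1{(k+1)^2}\bigr),
\]
a free-probability computation specific to the construction of $Z=xy^{-1}$ with $(T,x,y)$ $*$-free; then~\eqref{eq:Deltaeps} finishes. Your proposed detour through $T_k=T(1+\alpha T^{-1}Z)$ and multiplicativity simply reduces to the same statement $\Delta(T+\alpha Z)\to\Delta(T)$, so it does not avoid the issue. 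The suggested ``uniform integrability of $\log|\cdot|$ against $\nu_{T+\alpha Z}$'' is not clearly justified: weak convergence $\nu_{T+\alpha Z}\to\nu_T$ (Corollary~4.8 of~\cite{HS2}) does not control the integral of $\log|z|$ near $z=0$, and neither Theorem~\ref{thm:HS S(M)} nor the Lebesgue absolute continuity of $\nu_{T+\alpha Z}$ gives a uniform-in-$\alpha$ lower bound on mass near the origin. You should replace this paragraph by a direct citation of Proposition~4.5 (or its immediate consequence, Corollary~4.7) of~\cite{HS2}.
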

\begin{proof}
Denote the left hand sides of \eqref{det tp1} and \eqref{det tp2} by $a$ and $b$, respectively.
Since $P\neq 0,1$, we have
$$\lim_{k\to\omega}\tau(P_k)=\tau_\omega(P)\in(0,1),\quad \lim_{k\to\omega}\tau(1-P_k)=\tau_\omega(1-P)\in(0,1).$$

\noindent{\em Step 1.
Show
\begin{equation}\label{det equ}
a^{\tau_\omega(P)}b^{\tau_\omega(1-P)}=\Big(\Delta_{P\Mcal_\omega P}(\tilde TP)\Big)^{\tau_\omega(P)}\Big(\Delta_{(1-P)\Mcal_\omega(1-P)}((1-P)\tilde T)\Big)^{\tau_\omega(1-P)}
\end{equation}
and $a\ne0$, $b\neq0$.}

\smallskip\noindent
It is clear that
$$a^{\tau_\omega(P)}b^{\tau_\omega(1-P)}=\lim_{k\to\omega}\Big(\Delta_{P_k\Mcal P_k}(T_kP_k)\Big)^{\tau(P_k)}\Big(\Delta_{(1-P_k)\Mcal(1-P_k)}((1-P_k)T_k)\Big)^{\tau(1-P_k)}.$$
By Theorem \ref{det nu matrix}, we have that
$$\Big(\Delta_{P_k\Mcal P_k}(T_kP_k)\Big)^{\tau(P_k)}\Big(\Delta_{(1-P_k)\Mcal(1-P_k)}((1-P_k)T_k)\Big)^{\tau(1-P_k)}=\Delta(T_k).$$
Using Proposition 4.5 in \cite{HS2} (see the remarks found in~\S\ref{subsec:Z}), we have
$$a^{\tau_\omega(P)}b^{\tau_\omega(1-P)}=\lim_{k\to\omega}\Delta(T_k)=\left(\lim_{k\to\omega}\Delta\left(|T|^2+\frac1{(k+1)^2}\right)\right)^{1/2}=\Delta(T),$$
where to get the last equality, we invoked~\eqref{eq:Deltaeps}.
Thus, $a\neq0$ and $b\neq0$.
Let $\Zt=\{\frac1{k+1}Z\}_{k\ge0}$.
By Lemma~\ref{lem:Yt}, $\Zt\in\Fsc(\Mcal,\tau)$ and $\pit(\Zt)=0$.
Thus, $\pit(\Tt)=\pit(\{T\}_{k\ge0})$ and, by Corollary~\ref{cor:Yt}, $\Delta(\Tt)=\Delta(T)$.
By Theorem \ref{det nu matrix}, $\Delta(\Tt)$ equals the right-hand-side of~\eqref{det equ}.
This proves that~\eqref{det equ} holds.

\smallskip
\noindent{\em Step 2.
Show
\begin{align}
a&\leq\Delta_{P\Mcal_\omega P}(\tilde TP). \label{det inequ1} \\
b&\leq\Delta_{(1-P)\Mcal_\omega(1-P)}((1-P)\tilde T). \label{det inequ2}
\end{align}
}

\smallskip\noindent
It is clear that
$$\Delta_{P_k\Mcal P_k}(T_kP_k)^2=\Delta_{P_k\Mcal P_k}(|T_kP_k|^2)\leq\Delta_{P_k\Mcal P_k}(|T_kP_k|^2+\eps^2P_k).$$
Hence,
$$a^2\leq\lim_{k\to\omega}\Delta_{P_k\Mcal P_k}(|T_kP_k|^2+\varepsilon^2P_k).$$
By Theorem~\ref{det nu matrix}, we have that
$$\Big(\Delta_{P_k\Mcal P_k}(|T_kP_k|^2+\varepsilon^2P_k)\Big)^{\tau(P_k)}\varepsilon^{2\tau(1-P_k)}=\Delta(|T_kP_k|^2+\varepsilon^2).$$
Hence,
\begin{align*}
a^2&\leq \lim_{k\to\omega}\varepsilon^{-2\frac{\tau(1-P_k)}{\tau(P_k)}}\Big(\Delta(|T_kP_k|^2+\varepsilon^2)\Big)^{\frac1{\tau(P_k)}} \\
&=\varepsilon^{-2\frac{\tau_\omega(1-P)}{\tau_\omega(P)}}\Big(\lim_{k\to\omega}\Delta(|T_kP_k|^2+\varepsilon^2)\Big)^{\frac1{\tau_\omega(P)}}.
\end{align*}
The sequences $\{T\}_{k\geq0}$ and $\{\frac1{k+1}Z\}_{k\geq0}$ are both in the algebra $\Fsc(\Mcal,\tau).$ Hence, so is the sequence $\{|T_kP_k|^2+\varepsilon^2\}_{k\geq0}.$ It follows from Theorem \ref{det converge} that
$$a^2\leq\varepsilon^{-2\frac{\tau_\omega(1-P)}{\tau_\omega(P)}}\Delta\Big(\pit\Big(\Big\{|T_kP_k|^2+\varepsilon^2\Big\}_{k\geq0}\Big)\Big)^{\frac1{\tau_\omega(P)}}.$$
Since $\pit$ is a $*$-homomorphism, it follows that
$$\pit\Big(\Big\{|T_kP_k|^2+\varepsilon^2\Big\}_{k\geq0}\Big)=|\Tt P|^2+\varepsilon^2.$$
Hence, we have
$$a^2\leq\varepsilon^{-2\frac{\tau_\omega(1-P)}{\tau_\omega(P)}}\Delta(|\Tt P|^2+\varepsilon^2)^{\frac1{\tau_\omega(P)}}=\Delta_{P\Mcal_\omega P}(|\Tt P|^2+\varepsilon^2P),$$
where for the equality we have used Theorem~\ref{det nu matrix}.
Since, again using~\eqref{eq:Deltaeps}, we have
$$\lim_{\eps\to 0}\Delta_{P\Mcal_\omega P}(|\Tt P|^2+\varepsilon^2P)
=\Delta_{P\Mcal_\omega P}(|\Tt P|^2)=\big(\Delta_{P\Mcal_\omega P}(\Tt P)\big)^2,$$
\eqref{det inequ1} follows.
Repeating this argument {\it mutatis mutandis}, we obtain~\eqref{det inequ2}.
The result now follows by combining \eqref{det equ}, \eqref{det inequ1} and \eqref{det inequ2}, since $a\ne0$ and $b\ne0.$
\end{proof}

\begin{lem}\label{713} Let $T\in\Lc_{{\rm log}}(\Mcal,\tau)$ be arbitrary. If $\mathcal{B}$ is a disk or the complement of a disk in $\Cpx$
such that $\nu_T(\partial\mathcal{B})=0$, then
\begin{equation}\label{eq:limintlog}
\lim_{n\to\infty}\int_{\mathcal{B}}\log(|z-\lambda|)d\nu_{T+\frac1{n+1}Z}(z)=\int_{\mathcal{B}}\log(|z-\lambda|)d\nu_T(z)
\end{equation}
for every $\lambda\notin\partial\mathcal{B}$ such that $\Delta(T-\lambda)\neq0.$
\end{lem}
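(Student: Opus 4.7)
My plan is to prove Lemma~\ref{713} using three ingredients: pointwise almost-everywhere convergence of the log-potentials, weak convergence of the associated Brown measures, and a uniform tail estimate supplied by Lemma~\ref{lem:elemlog+}. Throughout, set $T_n=T+\tfrac{1}{n+1}Z$, $U_n(\lambda):=\log\Delta(T_n-\lambda)$, and $U(\lambda):=\log\Delta(T-\lambda)$.

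For the pointwise convergence, the identity $\Delta(T_n-\lambda)^2=\Delta(|T-\lambda|^2+(n+1)^{-2})$ (used already in the proof of Lemma~\ref{main hs lemma}, via Proposition~4.5 of \cite{HS2}), combined with \eqref{eq:Deltaeps}, gives $\Delta(T_n-\lambda)\to\Delta(T-\lambda)$, and hence $U_n(\lambda)\to U(\lambda)$ whenever $\Delta(T-\lambda)\ne0$, which holds almost everywhere. By Lemma~\ref{lem:elemlog+},
\[
U_n(\lambda)\le\tau(\log^+|T_n-\lambda|)\le\tau(\log^+(2|T-\lambda|))+\tau(\log^+(2|Z|)),
\]
so the subharmonic functions $U_n$ (cf.\ Proposition~2.9 of \cite{HS1}) are locally uniformly bounded above. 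A standard result from potential theory then gives $U_n\to U$ in $L^1_{\mathrm{loc}}(\Cpx)$; taking distributional Laplacians yields $\nu_{T_n}\to\nu_T$ as distributions. Tightness of $\{\nu_{T_n}\}$ follows from the uniform upper bound on $U_n$ via circular averaging of the log potential (which produces a uniform bound on $\int\log(1+|z|)\,d\nu_{T_n}$ and hence $\nu_{T_n}(\{|z|>R\})=O(1/\log R)$ uniformly in $n$); combined with distributional convergence to the probability measure $\nu_T$, this upgrades to weak convergence.

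With weak convergence in hand, I proceed as follows. Since the full-space integrals converge by the pointwise statement, convergence of $\int_\Bc\log|z-\lambda|\,d\nu_{T_n}$ is equivalent to convergence of $\int_{\Bc^c}\log|z-\lambda|\,d\nu_{T_n}$; so, replacing $\Bc$ by $\Bc^c$ in the case $\lambda\in\mathrm{int}(\Bc)$, I may assume $\lambda\notin\overline{\Bc}$, whence $\log|z-\lambda|$ is bounded below on $\Bc$. If $\Bc$ is a bounded disk, $\log|z-\lambda|$ is bounded and continuous on $\Bc$, and since $\nu_T(\partial\Bc)=0$, Portmanteau's theorem applied to the weak convergence yields the claim. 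If $\Bc$ is the complement of a disk, I split via the truncation $\psi_R(z):=\min(\log|z-\lambda|,R)$:
\[
\int_\Bc\log|z-\lambda|\,d\nu_{T_n}=\int_\Bc\psi_R\,d\nu_{T_n}+\int_\Bc(\log|z-\lambda|-R)^+\,d\nu_{T_n}.
\]
The first term converges, for each fixed $R$, to $\int_\Bc\psi_R\,d\nu_T$ by Portmanteau (using $\psi_R\in C_b(\Cpx)$ and $\nu_T(\partial\Bc)=0$), and the tail is bounded by
\[
\tau(\log^+(e^{-R}|T_n-\lambda|))\le\tau(\log^+(2e^{-R}|T-\lambda|))+\tau(\log^+(2e^{-R}|Z|))
\]
(by Lemma~\ref{lem:elemlog+}), which tends to $0$ as $R\to\infty$ uniformly in $n$ by dominated convergence, the dominators $\log^+(2|T-\lambda|)$ and $\log^+(2|Z|)$ being $\tau$-integrable since $T-\lambda,Z\in\Lc_{\log}(\Mcal,\tau)$. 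Sending $n\to\infty$ first and then $R\to\infty$ completes the argument.

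The main obstacle is establishing the weak convergence $\nu_{T_n}\to\nu_T$: upgrading a.e.\ pointwise convergence of the log-potentials to weak convergence of the Brown measures requires invoking standard but non-elementary potential-theoretic results (convergence in $L^1_{\mathrm{loc}}$ of subharmonic functions that are locally uniformly bounded above), plus a tightness argument for the family $\{\nu_{T_n}\}$; neither is developed inside the paper itself. Once weak convergence is secured, the reduction, case split, and tail control are routine.
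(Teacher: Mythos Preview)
Your proof is correct and follows the same overall strategy as the paper: reduce to the case $\lambda\notin\overline{\Bc}$ via convergence of the full-space log-potential (Proposition~4.5 of \cite{HS2}), then combine weak convergence of $\nu_{T_n}$ to $\nu_T$ with a uniform tail estimate driven by Lemma~\ref{lem:elemlog+} (and Lemma~2.20 of \cite{HS1}). The one substantive difference is that you re-derive the weak convergence $\nu_{T_n}\to\nu_T$ from scratch via potential theory (local upper bounds on the subharmonic potentials, $L^1_{\mathrm{loc}}$-compactness, tightness), whereas the paper simply invokes Corollary~4.8 of \cite{HS2}, noting parenthetically that it extends to $T\in\Lc_{\log}(\Mcal,\tau)$; thus what you identify as the ``main obstacle'' is already available in the cited literature, and the paper's route is correspondingly shorter. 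There is also a cosmetic difference in truncation: the paper cuts off spatially with a compactly supported bump $\phi_m$, while you cap the integrand at height $R$; both serve the same purpose and are controlled by the same tail bound.
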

\begin{proof} {\em Step 1. Prove the assertion assuming $\lambda\in\Cpx\backslash\overline{\mathcal{B}}$.}

\noindent
Let $m\ge\min(2,1+|\lambda|)$ and let $\phi_m$ be a compactly supported continuous function such that $0\le \phi_m\le 1$ and
$\phi_m(z)=1$ for every $z$ with $|z|\leq m.$ The mapping
$$z\mapsto\phi_m(z)\log(|z-\lambda|),\quad z\in\mathcal{B}$$
is a compactly supported continuous function.
Note that, from Corollary 4.6 in~\cite{HS2}, $\nu_{T+\frac1{n+1}Z}$ is Lebesgue absolutely continuous.
Since $\nu_T(\partial\mathcal{B})=0$, it follows from
Corollary 4.8
in \cite{HS2} that
\begin{equation}\label{eq:limintphim}
\lim_{n\to\infty}\int_{\mathcal{B}}\phi_m(z)\log(|z-\lambda|)\,d\nu_{T+\frac1{n+1}Z}(z)=\int_{\mathcal{B}}\phi_m(z)\log(|z-\lambda|)d\nu_T(z).
\end{equation}
(Note that Corollaries 4.6 and 4.8 of~\cite{HS2} remain valid for $T\in\Lc_{\log}(\Mcal,\tau)$
--- see the remarks found in~\S\ref{subsec:Z}.)
On the other hand, we have
\begin{multline*}
\Big|\int_{\mathcal{B}}\phi_m(z)\log(|z-\lambda|)d\nu_{T+\frac1{n+1}Z}(z)
 -\int_{\mathcal{B}}\log(|z-\lambda|)d\nu_{T+\frac1{n+1}Z}(z)\Big| \\
\leq\int_{|z|\geq m}|\log(|z-\lambda|)|d\nu_{T+\frac1{n+1}Z}(z)\leq 2\int_{|z|\geq m}\log(|z|)d\nu_{T+\frac1{n+1}Z}(z) \\
\leq 4\int_{\mathbb{C}}\log^+\left(\frac{|z|}{m^{1/2}}\right)d\nu_{T+\frac1{n+1}Z}(z).
\end{multline*}
Applying Lemma 2.20 in \cite{HS1} and Lemma~\ref{lem:elemlog+}, we infer
\begin{multline}\label{eq:intphidiff}
\Big|\int_{\mathcal{B}}\phi_m(z)\log(|z-\lambda|)d\nu_{T+\frac1{n+1}Z}(z)
-\int_{\mathcal{B}}\log(|z-\lambda|)d\nu_{T+\frac1{n+1}Z}(z)\Big| \\
\leq4\tau(\log^+\left(\frac{|T+\frac1{n+1}Z|}{m^{1/2}}\right))
\le 4\tau(\log^+\left(\frac{2|T|}{m^{1/2}}\right))+4\tau(\log^+\left(\frac{2|Z|}{m^{1/2}(n+1)}\right))
\end{multline}
Similarly, we have
$$\Big|\int_{\mathcal{B}}\phi_m(z)\log(|z-\lambda|)d\nu_{T}(z)-\int_{\mathcal{B}}\log(|z-\lambda|)d\nu_{T}(z)\Big|
\leq 8\tau(\log^+\left(\frac{\mu(T)}{m^{1/2}}\right)).$$
Since the upper bound~\eqref{eq:intphidiff} is uniform in $n$, letting $m\to\infty$ in~\eqref{eq:limintphim} proves~\eqref{eq:limintlog}.
This finishes Step 1.

\medskip
\noindent
{\em Step 2. Prove the assertion assuming $\lambda\in\text{int}(\mathcal{B})$.}

\smallskip\noindent
Applying Step~1 in the case when $\Bc$ is replaced by $\Cpx\backslash\Bc$, we obtain
\begin{multline*}
\lim_{n\to\infty}\int_{\mathcal{B}}\log(|z-\lambda|)d\nu_{T+\frac1{n+1}Z}(z) \\
=\lim_{n\to\infty}\int_{\mathbb{C}}\log(|z-\lambda|)d\nu_{T+\frac1{n+1}Z}(z)
-\lim_{n\to\infty}\int_{\mathbb{C}\backslash\mathcal{B}}\log(|z-\lambda|)d\nu_{T+\frac1{n+1}Z}(z) \\
=\lim_{n\to\infty}\log(\Delta\left(T+\frac1{n+1}Z-\lambda\right))-\int_{\mathbb{C}\backslash\mathcal{B}}\log(|z-\lambda|)d\nu_T(z).
\end{multline*}
From Proposition 4.5 in \cite{HS2},
$$\lim_{n\to\infty}\Delta(T+\frac1{n+1}Z-\lambda)=\Delta(T-\lambda).$$
Hence,
$$\lim_{n\to\infty}\int_{\mathcal{B}}\log(|z-\lambda|)d\nu_{T+\frac1{n+1}Z}(z)
=\int_{\mathbb{C}}\log(|z-\lambda|)d\nu_T(z)-\int_{\mathbb{C}\backslash\mathcal{B}}\log(|z-\lambda|)d\nu_T(z).$$
This finishes the proof of Step 2, and of the lemma.
\end{proof}

Recall (see Theorem~\ref{thm:TZmild}) that $T+\frac1{k+1}Z$ is mild for every $k\ge0$.
In the next result $\pi$ is the quotient map as described in~\S\ref{subsec:ultraP}
and $P(\cdot,\cdot)$ is as given in Definition~\ref{def:someHSproj}.

\begin{lem}\label{general disk lemma} Let $\mathcal{B}$ be a disk or the complement of a disk
such that $\nu_T(\mathcal{B})\notin\{0,1\}$ and $\nu_T(\partial\mathcal{B})=0$.
Let $\tilde T$ be as in Lemma \ref{lem:Tident}.
Consider the projection
$$P_1(T,\mathcal{B}):=\pi(\{P(T+\frac1{k+1}Z,\mathcal{B})\}_{k\geq0})\in\Mcal_\omega.$$
Then:
\begin{enumerate}[{\rm (a)}]
\item\label{gbla} $P_1(T,\mathcal{B})$ is $\tilde T$-invariant.
\item\label{gblb} $\tau_\omega(P_1(T,\mathcal{B}))=\nu_T(\mathcal{B}).$
\item\label{gblc} $\nu_{\tilde TP_1(T,\mathcal{B})}=\frac1{\nu_T(\mathcal{B})}\nu_T|_{\mathcal{B}}$ and
$\nu_{(1-P_1(T,\mathcal{B}))\tilde T}=\frac1{\nu_T(\mathbb{C}\backslash\mathcal{B})}\nu_T|_{\mathbb{C}\backslash\mathcal{B}}$.
\end{enumerate}
\end{lem}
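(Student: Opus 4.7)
Abbreviate $T_k := T + \tfrac{1}{k+1} Z$ (each mild by Theorem~\ref{thm:TZmild}), $P_k := P(T_k, \Bc) \in \Mcal$ the Haagerup--Schultz projection for $T_k$ and $\Bc$ (Definition~\ref{def:someHSproj}), and $P_1 = \pi(\{P_k\}) \in \Mcal_\omega$. Since $\|P_k\| \le 1$, the sequence $\{P_k\}$ lies in $l_\infty(\Mcal) \subseteq \Fsc(\Mcal, \tau)$, while $\{T_k\} \in \Fsc(\Mcal, \tau)$ by Lemma~\ref{lem:Tident}; multiplicativity of $\pit$ (Theorem~\ref{isomorphism thm}) gives $\pit(\{T_k P_k\}) = \tilde T P_1$ and $\pit(\{P_k T_k P_k\}) = P_1 \tilde T P_1$. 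Applying $\pit$ to the componentwise identity $T_k P_k = P_k T_k P_k$ therefore yields $\tilde T P_1 = P_1 \tilde T P_1$, proving (a).

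For (b), Definition~\ref{def:hs proj}\eqref{hdefb} gives $\tau(P_k) = \nu_{T_k}(\Bc)$, so $\tau_\omega(P_1) = \lim_{k \to \omega} \nu_{T_k}(\Bc)$. To evaluate this limit, note that $\log \Delta(T_k - \lambda) \to \log \Delta(T - \lambda)$ pointwise in $\lambda$ (Proposition~4.5 of~\cite{HS2}); the corresponding distributional Laplacians are the probability measures $\nu_{T_k}$ and $\nu_T$, yielding weak convergence $\nu_{T_k} \to \nu_T$ on $\Cpx$ once tightness of $\{\nu_{T_k}\}$ is secured via the uniform $\|\cdot\|_{\log}$-bound $\|T_k\|_{\log} \leq \|T\|_{\log} + \|Z\|_{\log}$ (Lemma~\ref{lem:elemineq}\eqref{it:elemMult}) which controls the logarithmic tail of each $\nu_{T_k}$. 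The hypothesis $\nu_T(\partial \Bc) = 0$ and Portmanteau's theorem then give $\nu_{T_k}(\Bc) \to \nu_T(\Bc)$.

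For (c), part (b) gives $\tau_\omega(P_1) = \nu_T(\Bc) \in (0, 1)$, so $P_1 \ne 0, 1$ and Lemma~\ref{main hs lemma} applies. Each $P_k$ is automatically $(T_k - \lambda)$-invariant, so applying Lemma~\ref{main hs lemma} with $T$ replaced by $T - \lambda$---legitimate for every $\lambda$ with $\Delta(T - \lambda) \neq 0$, hence for Lebesgue-a.e.\ $\lambda$---yields
$$\log \Delta_{P_1 \Mcal_\omega P_1}(\tilde T P_1 - \lambda P_1) = \lim_{k \to \omega} \log \Delta_{P_k \Mcal P_k}(T_k P_k - \lambda P_k).$$
Since $P_k$ is Haagerup--Schultz for $T_k$ and $\Bc$, the right-hand term equals
$$\frac{1}{\nu_{T_k}(\Bc)} \int_\Bc \log|z - \lambda| \, d\nu_{T_k}(z),$$
which by Lemma~\ref{713} together with (b) tends to $\nu_T(\Bc)^{-1} \int_\Bc \log|z - \lambda| \, d\nu_T(z)$. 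Taking the distributional Laplacian in $\lambda$ identifies $\nu_{\tilde T P_1}$ with $\nu_T(\Bc)^{-1} \nu_T|_\Bc$; the corresponding assertion for $(1 - P_1)\tilde T$ follows by the symmetric argument on the complementary cut-down, or alternatively by invoking Theorem~\ref{det nu matrix} for $\tilde T$ and its invariant projection $P_1$ together with what we have already proved.

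The chief obstacle is the passage to the limit in (b): converting the pointwise convergence of Fuglede--Kadison determinants into convergence of the scalar masses $\nu_{T_k}(\Bc)$ requires genuine weak convergence of the $\nu_{T_k}$ as probability measures on $\Cpx$, for which tightness and the continuity-set hypothesis $\nu_T(\partial\Bc) = 0$ are essential. Once (b) is in hand, Lemma~\ref{main hs lemma} and Lemma~\ref{713} bundle together almost mechanically to transport the splitting of the Brown measure from the mild perturbations $T_k$ up to the ultrapower operator $\tilde T$.
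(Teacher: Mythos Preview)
Your argument is essentially the paper's own proof, with two minor differences worth noting.

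For (b), the paper simply cites Corollary~4.8 of~\cite{HS2} (valid for $T\in\Lc_{\log}$, as remarked in~\S\ref{subsec:Z}) to obtain $\nu_{T_k}(\Bc)\to\nu_T(\Bc)$ directly. Your reconstruction of this from Proposition~4.5 of~\cite{HS2} has a small gap: pointwise convergence of the subharmonic potentials $\log\Delta(T_k-\lambda)$ does not by itself force convergence of the distributional Laplacians; one needs $L^1_{\mathrm{loc}}$ convergence, which is precisely what Corollary~4.8 supplies. (Also, the bound $\|T_k\|_{\log}\le\|T\|_{\log}+\|Z\|_{\log}$ comes from the F-norm triangle inequality of Theorem~\ref{thm:LlogTopalg}, not from Lemma~\ref{lem:elemineq}\eqref{it:elemMult}, which is multiplicative.)

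For (c), your route is in fact slightly cleaner than the paper's. The paper establishes~\eqref{eq:subharmon} for $\lambda$ with $\Delta(T-\lambda)\ne0$ and $\lambda\notin\partial\Bc$, then spends a paragraph on a subharmonic continuity argument to extend it to all $\lambda\in\Cpx$ before taking the Laplacian. You observe that the identity holds for Lebesgue-a.e.\ $\lambda$ and pass directly to the distributional Laplacian; since both sides are subharmonic (hence $L^1_{\mathrm{loc}}$), a.e.\ equality already determines the Laplacian, so the extension step is unnecessary.
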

\begin{proof}
Part~\eqref{gbla} is clear.

From $\nu_T(\partial\Bc)=0$ and Corollary 4.8 of~\cite{HS2}, we have
\begin{equation}\label{eq:nuTB}
\lim_{k\to\infty}\nu_{T+\frac1{k+1}Z}(\Bc)=\nu_T(\Bc).
\end{equation}
Now part~\eqref{gblb} follows from $\tau(P(T+\frac1{k+1}Z,\mathcal{B}))=\nu_{T+\frac1{k+1}Z}(\Bc)$.

We will now show \eqref{gblc}.
To ease the notation, we write $P$ and $P_k$ instead of $P_1(T,\mathcal{B})$ and $P(T+\frac1{k+1}Z,\mathcal{B})$, respectively.
Applying Lemma \ref{main hs lemma} to the operator $\tilde T-\lambda,$ we infer that if $\Delta(T-\lambda)\ne0$, then
$$
\lim_{k\to\omega}\Delta_{P_k\Mcal P_k}((T+\frac1{k+1}Z-\lambda)P_k)=\Delta_{P\Mcal_\omega P}((\Tt-\lambda)P).
$$
On the other hand, since $P_k$ is a Haagerup--Schultz projection, we have
$$
\nu_{(T+\frac1{k+1}Z)P_k}=\frac1{\nu_{T+\frac1{k+1}Z}(\Bc)}\nu_{T+\frac1{k+1}Z}|_\Bc
$$
and then from~\eqref{br def} we get
$$
\log(\Delta_{P_k\Mcal P_k}((T+\frac1{k+1}Z-\lambda)P_k))=
\frac1{\nu_{T+\frac1{k+1}Z}(\Bc)}\int_\Bc\log(|z-\lambda|)\,d\nu_{T+\frac1{k+1}Z}(z).
$$
Thus, using~\eqref{eq:nuTB} and Lemma~\ref{713}, we get that if $\Delta(T-\lambda)\ne0$ and $\lambda\notin\partial\Bc$, then
\begin{equation}\label{eq:subharmon}
\log(\Delta_{P\Mcal_\omega P}((T-\lambda)P))=\frac1{\nu_T(\mathcal{B})}\int_{\mathcal{B}}\log(|z-\lambda|)d\nu_T(z).
\end{equation}
We will show that this equality holds at all values of $\lambda\in\Cpx$, by a sort of continuity argument.
First, recall that the subharmonic function $\log\Delta(T-\lambda)$ is not identically $-\infty$ (see~\S\ref{subsec:FK}).
Thus, the equality~\eqref{eq:subharmon} holds at almost every $\lambda\in\Cpx$ (see, e.g., Corollary 2.5.3 of~\cite{Ra95}).
Both sides of the equality~\eqref{eq:subharmon} are subharmonic functions of $\lambda$ in $\Cpx$.
If $f$ is a subharmonic function on $\Cpx$ and if $N\subseteq\Cpx$ is a set of Lebesgue measure zero,
then for every $w\in N$ there is a sequence $\{w_n\}_{n\ge0}$ in $\Cpx\backslash N$ such that $\lim_{n\to\infty}w_n=w$
and $\lim_{n\to\infty}f(w_n)=f(w)$.
In the case $f(w)=-\infty$, this follows by upper semicontinuity of $f$, while in the case $f(w)>-\infty$, it follows from
Corollary 5.4.4 of~\cite{Ra95}.
Thus, from the fact that the identity~\eqref{eq:subharmon} holds almost everywhere, we get that it holds for all $\lambda\in\Cpx$.

Now taking the Laplacian of both sides of~\eqref{eq:subharmon} yields the first equality in~\eqref{gblc}.
The second equality in \eqref{gblc} follows from the first one and Equation~\eqref{nuT} in Theorem \ref{det nu matrix}.
\end{proof}

\begin{lem}\label{general closed lemma}
Let $\tilde T$ be as in Lemma \ref{lem:Tident}.
Let $\mathcal{B}$ be a closed set such that $\nu_T(\mathcal{B})\notin\{0,1\}$, $\nu_T(\partial\mathcal{B})=0$ and $m(\partial\mathcal{B})=0$.
Then the projection
$$P_2(T,\mathcal{B}):=\pi(\{P^{(\uparrow)}(T+\frac1{k+1}Z,\mathcal{B})\}_{k\geq0})\in\Mcal_\omega$$
satisfies the conditions
\begin{enumerate}[{\rm (a)}]
\item\label{gclos0} $P_2(T,\mathcal{B})$ is $\tilde T$-invariant.
\item\label{gclosa} $\tau_\omega(P_2(T,\mathcal{B}))=\nu_T(\mathcal{B}).$
\item\label{gclosb} $\nu_{\tilde TP_2(T,\mathcal{B})}=\frac1{\nu_T(\mathcal{B})}\nu_T|_{\mathcal{B}}$
and $\nu_{(1-P_2(T,\mathcal{B}))\tilde T}=\frac1{\nu_T(\mathbb{C}\backslash\mathcal{B})}\nu_T|_{\mathbb{C}\backslash\mathcal{B}}.$
\end{enumerate}
\end{lem}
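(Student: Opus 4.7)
The plan is to follow the template of Lemma~\ref{general disk lemma} exactly, with $P_k:=P^{(\uparrow)}(T+\frac{1}{k+1}Z,\mathcal{B})$ taking the place of the disk-projections $P(T+\frac{1}{k+1}Z,\mathcal{B})$ used there. The crucial preliminary is that each $P_k$ is a genuine Haagerup--Schultz projection for $T+\frac{1}{k+1}Z$ and the closed set $\mathcal{B}$, granted by Theorem~\ref{mild const}; its hypotheses are met because $m(\partial\mathcal{B})=0$ and $T+\frac{1}{k+1}Z$ is mild by Theorem~\ref{thm:TZmild}. Set $P:=P_2(T,\mathcal{B})$.

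For part~(a) I would apply the $*$-homomorphism $\pit$ of Theorem~\ref{isomorphism thm} to the identity $(T+\frac{1}{k+1}Z)P_k=P_k(T+\frac{1}{k+1}Z)P_k$, using $\pit(\{T+\frac{1}{k+1}Z\}_{k\geq 0})=\tilde T$ from Lemma~\ref{lem:Tident} and $\pit(\{P_k\}_{k\geq 0})=P$ (the latter is valid since $\{P_k\}_{k\ge 0}\in l_\infty(\Mcal)\subseteq\Fsc(\Mcal,\tau)$). For part~(b) I would compute $\tau_\omega(P)=\lim_{k\to\omega}\tau(P_k)=\lim_{k\to\omega}\nu_{T+\frac{1}{k+1}Z}(\mathcal{B})=\nu_T(\mathcal{B})$, invoking Theorem~\ref{mild const}(c) and then Corollary~4.8 of~\cite{HS2} combined with $\nu_T(\partial\mathcal{B})=0$ (Portmanteau). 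This also gives $\tau_\omega(P)\in(0,1)$, which is needed before Lemma~\ref{main hs lemma} can be applied.

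For part~(c), I would mimic the argument of Lemma~\ref{general disk lemma}: fix $\lambda\in\Cpx$ with $\Delta(T-\lambda)\ne 0$ and $\lambda\notin\partial\mathcal{B}$, apply Lemma~\ref{main hs lemma} to $\tilde T-\lambda$ and the projections $P_k$, and use the Haagerup--Schultz formula for $P_k$ given by Theorem~\ref{mild const}(d) (combined with~\eqref{br def}) to obtain
\[
\log\Delta_{P\Mcal_\omega P}\bigl((\tilde T-\lambda)P\bigr)=\lim_{k\to\omega}\frac{1}{\nu_{T+\frac{1}{k+1}Z}(\mathcal{B})}\int_{\mathcal{B}}\log|z-\lambda|\,d\nu_{T+\frac{1}{k+1}Z}(z).
\]
The hard step will be passing to the limit in the right-hand integral: this requires the analogue of Lemma~\ref{713} with $\mathcal{B}$ a general closed set of null $\nu_T$-boundary. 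I expect this to follow by the very same cut-off argument as in Lemma~\ref{713} --- multiply by a compactly supported $\phi_m$, invoke Corollary~4.8 of~\cite{HS2} for the bounded function $\phi_m(z)\log|z-\lambda|\,\mathbf{1}_{\mathcal{B}}(z)$ (whose set of discontinuities lies in $\partial\mathcal{B}$ when $\lambda\notin\overline{\mathcal{B}}$), control the tail uniformly in $k$ via Lemma~\ref{lem:elemlog+} exactly as in~\eqref{eq:intphidiff}, and for $\lambda\in{\rm int}(\mathcal{B})$ reduce to the previous case via $\int_{\mathcal{B}}=\int_\Cpx-\int_{\Cpx\setminus\mathcal{B}}$ together with Proposition~4.5 of~\cite{HS2}. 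Verifying this carefully is the main technical obstacle.

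Once that limit is taken and combined with~(b), the resulting identity
\[
\log\Delta_{P\Mcal_\omega P}\bigl((\tilde T-\lambda)P\bigr)=\frac{1}{\nu_T(\mathcal{B})}\int_{\mathcal{B}}\log|z-\lambda|\,d\nu_T(z)
\]
holds for all $\lambda$ in a co-null set of $\Cpx$. I would then extend it to every $\lambda\in\Cpx$ by the subharmonic continuity argument used at the end of the proof of Lemma~\ref{general disk lemma} (based on Corollary~5.4.4 of~\cite{Ra95}), and take the distributional Laplacian of both sides to obtain the first formula in~(c). The second formula for $(1-P)\tilde T$ is then immediate from Theorem~\ref{det nu matrix} applied to the $\tilde T$-invariant projection $P$. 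Apart from the closed-set version of Lemma~\ref{713}, nothing essentially new beyond Lemma~\ref{general disk lemma} is needed.
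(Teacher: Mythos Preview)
Your proofs of parts~(a) and~(b) are essentially the paper's. For part~(c), however, you take a more laborious route than necessary. Rather than extending Lemma~\ref{713} to general closed sets and rerunning the determinant/subharmonic-continuity argument of Lemma~\ref{general disk lemma}, the paper exploits the very definition of $P^{(\uparrow)}$ as an infimum of disk-complement projections. For a dense set $\{\lambda_n'\}\subset\Cpx\backslash\Bc$ and radii $r<\mathrm{dist}(\lambda_n',\partial\Bc)$ with $\nu_T(\partial B(\lambda_n',r))=0$, Definition~\ref{haag mild def} gives $P^{(\uparrow)}(T+\tfrac1{k+1}Z,\Bc)\le P(T+\tfrac1{k+1}Z,\Cpx\backslash B(\lambda_n',r))$ for every $k$; passing to the ultrapower yields $P_2(T,\Bc)\le P_1(T,\Cpx\backslash B(\lambda_n',r))$. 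Lemma~\ref{general disk lemma}, already proved for disk complements, identifies the Brown measure of $\tilde T P_1(T,\Cpx\backslash B(\lambda_n',r))$ as supported in $\Cpx\backslash B(\lambda_n',r)$, so Corollary~\ref{nu matrix cor} forces $\mathrm{supp}(\nu_{\tilde T P_2(T,\Bc)})\subset\Cpx\backslash B(\lambda_n',r)$. Intersecting over all $n$ and $r$ gives $\mathrm{supp}(\nu_{\tilde T P_2(T,\Bc)})\subset\Bc$, and then part~(b) together with Theorem~\ref{det nu matrix} yields~(c) directly. Your approach is not wrong --- the closed-set analogue of Lemma~\ref{713} does go through via the Portmanteau argument you sketch --- but the paper's argument sidesteps that work entirely by reducing to the disk case already in hand, at the modest cost of needing Corollary~\ref{nu matrix cor} and a careful choice of radii.
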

\begin{proof} Part \eqref{gclos0} holds because $P^{(\uparrow)}(T+\frac1{k+1}Z,\mathcal{B})$ is $(T+\frac1{k+1}Z)$-invariant.

We now prove \eqref{gclosa}. We have
$$\tau_\omega(P_2(\tilde T,\mathcal{B}))=\lim_{k\to\omega}\tau(P^{(\uparrow)}(T+\frac1{k+1}Z,\mathcal{B})).$$
Since $m(\partial\mathcal{B})=0$ and since the Brown measure of $T+\frac1{k+1}Z$ is absolutely continuous with respect to the Lebesgue measure, it follows from Theorem \ref{mild const} \eqref{midc} that
$$\tau(P^{(\uparrow)}(T+\frac1{k+1}Z,\mathcal{B}))=\nu_{T+\frac1{k+1}Z}(\mathcal{B}).$$
Since $\nu_T(\partial\mathcal{B})=0,$ it follows from \eqref{ultra trace def} and Corollary 4.8 in \cite{HS2} that
$$\tau_\omega(P_2(T,\mathcal{B}))=\lim_{k\to\omega}\nu_{T+\frac1{k+1}Z}(\mathcal{B})=\nu_T(\mathcal{B}).$$
This proves \eqref{gclosa}.

We now prove \eqref{gclosb}.
Suppose that $\{\lambda_n'\}_{n\geq0}$ is a dense subset of $\mathbb{C}\backslash\mathcal{B}$.
Fix $n\geq0$ and $r<{\rm dist}(\lambda_n',\partial\mathcal{B})$ such that $\nu_T(\partial B(\lambda_n',r))=0$.
By Definition \ref{haag mild def}, we have
$$P^{(\uparrow)}(T+\frac1{k+1}Z,\mathcal{B})\leq P(T+\frac1{k+1}Z,\mathbb{C}\backslash B(\lambda'_n,r)).$$
It follows from the definition of $P_2(T,\mathcal{B})$ in Lemma \ref{general closed lemma} and the definition of $P_1(T,\mathbb{C}\backslash B(\lambda'_n,r))$  as in Lemma \ref{general disk lemma} that
$$P_2(T,\mathcal{B})\leq\pi(\{P(T+\frac1{k+1}Z,\mathbb{C}\backslash B(\lambda'_n,r))\}_{k\geq0})=P_1(T,\mathbb{C}\backslash B(\lambda'_n,r)).$$
It follows from Corollary \ref{nu matrix cor} that $\nu_{\tilde TP_2(T,\mathcal{B})}$ is supported in $\mathbb{C}\backslash B(\lambda'_n,r).$ Since $r$ can be taken arbitrarily close to ${\rm dist}(\lambda_n',\partial\mathcal{B}),$ it follows that
$${\rm supp}(\nu_{\tilde TP_2(T,\mathcal{B})})\subset \mathbb{C}\backslash B(\lambda'_n,{\rm dist}(\lambda_n',\partial\mathcal{B})).$$
Taking the intersection over $n\geq0,$ we conclude that
$${\rm supp}(\nu_{TP(T,\mathcal{B})})\subset\mathcal{B}.$$
Combining the proved inequality with \eqref{gclosa}, we conclude the proof of \eqref{gclosb}.
\end{proof}

\begin{lem}\label{general monotone lemma} Let $\mathcal{B}_1,\mathcal{B}_2\subset\mathbb{C}$ be closed sets
such that $\nu_T(\partial\mathcal{B}_1)=\nu_T(\partial\mathcal{B}_2)=0$ and $m(\partial\mathcal{B}_1)=m(\partial\mathcal{B}_2)=0$
and $\mathcal{B}_1\subset\mathcal{B}_2$.
Then $P_2(T,\mathcal{B}_1)\leq P_2(T,\mathcal{B}_2).$
\end{lem}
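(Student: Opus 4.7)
The plan is to reduce the statement to the corresponding inequality for the approximating mild operators $T_k := T + \frac{1}{k+1}Z$ and then push the inequality through the quotient map $\pi$. By Theorem~\ref{thm:TZmild}, each $T_k$ is mild (on all of $\Cpx$), so Lemma~\ref{second order lemma} is available for every pair of disks whose boundaries lie in $\Cpx$. The key reduction is to show, for each $k \ge 0$, that
\[
P^{(\uparrow)}(T_k,\Bc_1) \le P^{(\uparrow)}(T_k,\Bc_2)
\]
in $\Mcal$; once this is established, applying the $*$-homomorphism $\pi$ (which preserves order) to the sequence $\{P^{(\uparrow)}(T_k,\Bc_i)\}_{k\ge 0} \in l_\infty(\Mcal)$ gives $P_2(T,\Bc_1) \le P_2(T,\Bc_2)$ immediately.

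To prove the per-$k$ inequality, fix any dense subset $\{\lambda'_l\}_{l \ge 0}$ of $\Cpx\setminus\Bc_2$ and any dense subset $\{\mu_m\}_{m \ge 0}$ of $\Cpx\setminus\Bc_1$. Since $\Bc_1 \subseteq \Bc_2$, each $\lambda'_l$ also lies in $\Cpx\setminus\Bc_1$, and the combined family $\{\lambda'_l\}_{l} \cup \{\mu_m\}_{m}$ is still dense in $\Cpx\setminus\Bc_1$. By the well-definedness lemma preceding Lemma~\ref{low less up}, the projection $P^{(\uparrow)}(T_k,\Bc_1)$ can be computed using this combined family, and in particular
\[
P^{(\uparrow)}(T_k,\Bc_1) \;\le\; \bigwedge_{l \ge 0} P\bigl(T_k,\Cpx\setminus B(\lambda'_l, r_{1,l})\bigr),
\]
where $r_{i,l} := \operatorname{dist}(\lambda'_l,\partial\Bc_i)$. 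Because $\Bc_1$ is closed and contained in the closed set $\Bc_2$, and $\lambda'_l \notin \Bc_2$, an elementary point-set argument gives $r_{2,l} \le r_{1,l}$, hence $B(\lambda'_l, r_{2,l}) \subseteq B(\lambda'_l, r_{1,l})$. Applying Lemma~\ref{second order lemma}(c) to this pair of disks yields
\[
P\bigl(T_k,\Cpx\setminus B(\lambda'_l, r_{1,l})\bigr) \;\le\; P\bigl(T_k,\Cpx\setminus B(\lambda'_l, r_{2,l})\bigr).
\]
Taking the meet over $l$ produces exactly $P^{(\uparrow)}(T_k,\Bc_2)$ (defined via the dense set $\{\lambda'_l\}$ in $\Cpx\setminus\Bc_2$), which completes the per-$k$ step.

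The main obstacle is bookkeeping: verifying that the different dense subsets used to define $P^{(\uparrow)}(T_k,\Bc_1)$ and $P^{(\uparrow)}(T_k,\Bc_2)$ can be chosen compatibly so that Lemma~\ref{second order lemma}(c) applies termwise, and confirming the distance inequality $\operatorname{dist}(\lambda'_l,\partial\Bc_2)\le\operatorname{dist}(\lambda'_l,\partial\Bc_1)$ for points outside $\Bc_2$. Neither hypotheses $\nu_T(\partial\Bc_i)=0$ nor $m(\partial\Bc_i)=0$ are needed for the order step itself; they enter only in ensuring that $P_2(T,\Bc_i)$ are the intended Haagerup--Schultz projections (via Lemma~\ref{general closed lemma}), but the ordering of the projections $P_2(T,\Bc_i)$ follows purely from the ordering of the representatives $P^{(\uparrow)}(T_k,\Bc_i)$ combined with order-preservation of $\pi$.
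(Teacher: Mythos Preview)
Your proof is correct and follows essentially the same route as the paper: establish $P^{(\uparrow)}(T_k,\Bc_1)\le P^{(\uparrow)}(T_k,\Bc_2)$ for each $k$ from the defining infima and then push the inequality through $\pi$. Your version is in fact more explicit than the paper's, which writes an undifferentiated ``$\partial\mathcal{B}$'' in both displays and leaves the distance comparison $\operatorname{dist}(\lambda'_n,\partial\Bc_2)\le\operatorname{dist}(\lambda'_n,\partial\Bc_1)$ and the appeal to Lemma~\ref{second order lemma}(c) implicit.
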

\begin{proof}
We may without loss of generality assume $\Bc_2\ne\Cpx$.
If $\{\lambda_n'\}_{n\geq0}$ is a dense subset in $\mathbb{C}\backslash\mathcal{B}_1,$ then it follows from the definition of $P^{(\uparrow)}$ that
$$P^{(\uparrow)}(T+\frac1{k+1}Z,\mathcal{B}_1)=\bigwedge_{n\geq0}P(T+\frac1{k+1}Z,\mathbb{C}\backslash B(\lambda'_n,{\rm dist}(\lambda_n',\partial\mathcal{B}))).$$
Clearly, $\{\lambda_n'\}_{\substack{n\geq0\\\lambda_n'\notin\mathcal{B}_2}}$ is dense in $\mathbb{C}\backslash\mathcal{B}_2.$ It follows from the definition of $P^{(\uparrow)}$ that
$$P^{(\uparrow)}(T+\frac1{k+1}Z,\mathcal{B}_2)=\bigwedge_{\substack{n\geq0\\\lambda_n'\notin\mathcal{B}_2}}P(T+\frac1{k+1}Z,\mathbb{C}\backslash B(\lambda'_n,{\rm dist}(\lambda_n',\partial\mathcal{B}))).$$
It follows that
$$P^{(\uparrow)}(T+\frac1{k+1}Z,\mathcal{B}_1)\leq P^{(\uparrow)}(T+\frac1{k+1}Z,\mathcal{B}_2),\quad k\geq0.$$
The assertion follows from the definition of $P_2(T,\cdot)$ in Lemma \ref{general closed lemma}.
\end{proof}

We are now ready to prove Theorem \ref{construction theorem}.

\begin{proof}[Proof of Theorem \ref{construction theorem}]
We are identifying $T$ with $\tilde T$, as in Lemma~\ref{lem:Tident}.

\smallskip\noindent
{\em Step 1:  The case of closed subsets.}

\smallskip\noindent
Suppose that $\mathcal{B}$ is closed. Consider the larger closed sets
$$\mathcal{B}_t=\Big\{z\in\mathbb{C}\;\Big|\; {\rm dist}(z,\mathcal{B})\leq t\Big\},\quad t\in(0,1).$$
We claim that the projection
$$P(T,\mathcal{B})=\bigwedge_{\substack{t>0\\m(\partial\mathcal{B}_t)=0\\\nu_T(\partial\mathcal{B}_t)=0}}P_2(T,\mathcal{B}_t)$$
satisfies~\eqref{csta}, \eqref{cstb} and~\eqref{cstc}.
The functions $t\mapsto m(\mathcal{B}_t)$ and $t\mapsto\nu_T(\mathcal{B}_t)$ are monotone.
Hence, they have at most countably many discontinuity points
and $m(\partial\mathcal{B}_t)=0$ and $\nu_T(\partial\mathcal{B}_t)=0$ for all but countably many values of $t>0$.
Thus, in Lemma \ref{general closed lemma}, the projections $P_2(T,\mathcal{B}_t)$ have been defined
for all but countably many values of $t>0$.
By Lemma~\ref{general monotone lemma}, these projections are increasing in $t$.
Using Lemma~\ref{general closed lemma}, it follows that
\begin{equation}\label{proof of cstb}
\tau_\omega(P(T,\mathcal{B}))=\inf_{t>0}\tau_\omega(P_2(T,\mathcal{B}_t))=\inf_{t>0}\nu_T(\mathcal{B}_t)=\nu_T(\mathcal{B}).
\end{equation}
This proves \eqref{cstb}.

By Lemma \ref{general closed lemma}, every projection $P_2(T,\mathcal{B}_t)$ is $T$-invariant and, therefore,
$P(T,\mathcal{B})$ is also $T$-invariant.
This proves \eqref{csta}.

We now prove \eqref{cstc}. For every $t>0,$ it follows from Lemma \ref{general closed lemma} and Corollary \ref{nu matrix cor} that
$${\rm supp}(\nu_{TP(T,\mathcal{B})})\subset{\rm supp}(\nu_{TP(T,\mathcal{B}_t)})\subset\mathcal{B}_t.$$
Taking the intersection over $t>0,$ we infer
$${\rm supp}(\nu_{TP(T,\mathcal{B})})\subset\mathcal{B}.$$
It follows from \eqref{proof of cstb} and Equation~\eqref{nuT} of Theorem \ref{det nu matrix} that
$$\nu_T(\mathcal{B})=\nu_T(\mathcal{B})\nu_{TP(T,\mathcal{B})}(\mathcal{B})+(1-\nu_T(\mathcal{B}))\nu_{(1-P(T,\mathcal{B}))T}(\mathcal{B}).$$
Since the probability measure $\nu_{TP(T,\mathcal{B})}$ is supported in $\mathcal{B}$,
we must have $\nu_{TP(T,\mathcal{B})}(\mathcal{B})=1$ and, therefore, $\nu_{(1-P(T,\mathcal{B}))T}(\mathcal{B})=0$.
Hence, $\nu_{(1-P(T,\mathcal{B}))T}$ is concentrated in $\Cpx\backslash\Bc$.
Now \eqref{cstc} follows from Equation~\eqref{nuT} of Theorem \ref{det nu matrix}.

If $\mathcal{B}_1$ and $\mathcal{B}_2$ are closed sets such that $\mathcal{B}_1\subset\mathcal{B}_2,$ then $\mathcal{B}_{1t}\subset\mathcal{B}_{2t}$ for every $t>0.$ It follows from Lemma \ref{general monotone lemma} that $P(T,\mathcal{B}_1)\leq P(T,\mathcal{B}_2).$ This concludes the proof for closed sets.

\smallskip\noindent
{\em Step 2: Arbitrary Borel sets.}

\smallskip\noindent
Let $\mathcal{B}$ be an arbitrary Borel set.
Let $\Compact(\Bc)$ be the set of all compact subsets of $\Bc$ and let
\begin{equation}\label{eq:PTB}
P(T,\mathcal{B})=\bigvee_{\Kc\in\Compact(\Bc)}P(T,\mathcal{K}).
\end{equation}
By Step 1, every projection $P(T,\mathcal{K})$ is $T$-invariant and, therefore, $P(T,\mathcal{B})$ is also $T$-invariant. This proves \eqref{csta}.

$\Compact(\mathcal{B})$ is directed by inclusion.
By Step~1, the projections $\{P(T,\mathcal{K})\}_{\mathcal{K}\subset\mathcal{B}}$ form an increasing net.
Since the trace $\tau_\omega$ is normal, we get
$$\tau_\omega(P(T,\mathcal{B}))=\sup_{\Kc\in\Compact(\Bc)}\tau_\omega(P(T,\mathcal{K}))=\sup_{\Kc\in\Compact(\Bc)}\nu_T(\mathcal{K})=\nu_T(\mathcal{B}).$$
This proves \eqref{cstb}.

We now prove \eqref{cstc};
to this end, we assume $\nu_T(\mathcal{B})>0$.
Fix a compact set $\mathcal{K}\subset\mathcal{B}$.
By Step 1, we have $\nu_{TP(T,\mathcal{K})}(\mathcal{K})=1$ and $\nu_{(1-P(T,\mathcal{K}))T}(\mathcal{K})=0$.
It follows now from Corollary \ref{nu matrix cor} that
\begin{equation}\label{eq:nuK0}
\nu_{(1-P(T,\mathcal{K}))TP(T,\mathcal{B})}(\mathcal{K})=0,
\end{equation}
where this is the Brown measure taken with respect to the renormalized trace on the algebra $\Mcal$ compressed by the projection
$P(T,\mathcal{B})-P(T,\mathcal{K})$.
Again applying Equation~\eqref{nuT} of Theorem \ref{det nu matrix} to the algebra $P(T,\mathcal{B})\Mcal_\omega P(T,\mathcal{B})$,
we infer that
$$\nu_{TP(T,\mathcal{B})}=\frac{\tau_\omega(P(T,\mathcal{K}))}{\tau_\omega(P(T,\mathcal{B}))}\nu_{TP(T,\mathcal{K})}+\frac{\tau_\omega(P(T,\mathcal{B})-P(T,\mathcal{K}))}{\tau_\omega(P(T,\mathcal{B}))}\nu_{(1-P(T,\mathcal{K}))TP(T,\mathcal{B})}.$$
Applying the latter equality to the set $\mathcal{K}$ and using~\eqref{eq:nuK0}, we obtain
\begin{align*}
\nu_{TP(T,\mathcal{B})}(\mathcal{K})&=\frac{\tau_\omega(P(T,\mathcal{K}))}{\tau_\omega(P(T,\mathcal{B}))}\nu_{TP(T,\mathcal{K})}(\mathcal{K}) \\
&\qquad\qquad\qquad+\frac{\tau_\omega(P(T,\mathcal{B})-P(T,\mathcal{K}))}{\tau_\omega(P(T,\mathcal{B}))}\nu_{(1-P(T,\mathcal{K}))TP(T,\mathcal{B})}(\mathcal{K}) \\
&=\frac{\nu_T(\mathcal{K})}{\tau_\omega(P(T,\mathcal{B}))}.
\end{align*}
Letting $\mathcal{K}$ grow in $\Compact(\mathcal{B})$,
we have that $\nu_{TP(T,\mathcal{B})}(\mathcal{K})$ increases to $\nu_{TP(T,\mathcal{B})}(\mathcal{B})$ and $\nu_T(\mathcal{K})$
increases to $\nu_T(\mathcal{B})$.
Hence, $\nu_{TP(T,\mathcal{B})}(\mathcal{B})=1$.
Now arguing as in Step 1 proves \eqref{cstc} for general Borel sets.

The final assertion, that for Borel sets $\Bc_1\subset\Bc_2$ we have $P(T,\Bc_1)\le P(T,\Bc_2)$, follows from the corresponding
statement for compact sets and the definition~\eqref{eq:PTB}.
\end{proof}

\section{Faux Expectations}
\label{sec:FauxExp}

Let $\mathcal{D}$ be an abelian von Neumann subalgebra  of $\Mcal$ and assume that $\Dc$ has a separable predual.
We have the conditional expectation map $\Exp_{\Dc'}:\Mcal\to\Mcal\cap\Dc'$ (see~\S\ref{subsec:condexp})
and we would like to have an extension of $\Exp_{\Dc'}$
to a map defined from all of $\Lc_{\log}(\Mcal,\tau)$ onto $\Lc_{\log}(\Mcal,\tau)\cap\Dc'$,
but this will not, in general exist (see Appendix~\ref{sec:NoCondExp}).
However we will manage to construct some weaker sort of conditional expectation of special sorts of operators in $\Lc_{\log}(\Mcal,\tau)$.

Let $S\in\Lc_{\log}(\Mcal,\tau)$.
Let $\Cpx=\Dc_0\subset\Dc_1\subset\cdots$ be an increasing sequence of finite dimensional $*$-subalgebras of $\Dc$
whose union is weakly dense in $\Dc$.
Let $p_1^{(m)},\ldots,p_{n(m)}^{(m)}$ be the minimal projections of $\Dc_m$.
We use the notation
\begin{equation}\label{eq:ExpDm'}
\Exp_{\Dc_m'}(S)=\sum_{k=1}^{n(m)}p_k^{(m)}Sp_k^{(m)}.
\end{equation}
Note that $\Exp_{\Dc_m'}(S)\in\Lc_{\log}(\Mcal,\tau)\cap\Dc_m'$.

Here is a standing hypothesis that applies in this section:
\begin{hypoth}\label{hyp:Sinv}
Suppose $S\in\Lc_{\log}(\Mcal,\tau)$ and that for every $m$ and every $k\in\{1,\ldots,n(m)\}$
the projection $p_1^{(m)}+\cdots+p_k^{(m)}$ is $S$-invariant.
\end{hypoth}

\begin{lem}\label{lem:ExpinF}
We have
$$\{\Exp_{\mathcal{D}_m'}(S)\}_{m\geq0}\in\Fsc(\Mcal,\tau).$$
\end{lem}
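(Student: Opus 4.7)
The plan is to verify condition~\eqref{it:AinF} of Lemma~\ref{lem:Fchar}, i.e., that $\lim_{n\to\infty}\sup_{m\ge0}\|\Exp_{\Dc_m'}(S)/n\|_{\log}=0$. Writing $D_m=\Exp_{\Dc_m'}(S)$ and $P_k^{(m)}=q_1^{(m)}+\cdots+q_k^{(m)}$, I would proceed through three steps.

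First, use Hypothesis~\ref{hyp:Sinv}: each $P_k^{(m)}$ is $S$-invariant, which is equivalent to $q_j^{(m)}Sq_k^{(m)}=0$ whenever $j>k$, so $S$ is block upper triangular with block diagonal $D_m$. Iterating Theorem~\ref{det nu matrix} then gives, for every $\lambda\in\Cpx$,
\[
\Delta_\Mcal(S-\lambda)=\prod_{k=1}^{n(m)}\Delta_{q_k^{(m)}\Mcal q_k^{(m)}}\bigl(q_k^{(m)}Sq_k^{(m)}-\lambda q_k^{(m)}\bigr)^{\tau(q_k^{(m)})}=\Delta_\Mcal(D_m-\lambda),
\]
so the Brown measures coincide: $\nu_{D_m}=\nu_S$ by~\eqref{br def}.

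Second, promote the equality $\nu_{D_m}=\nu_S$ to the log-submajorization $D_m\prec\prec_{\log}S$. In the bounded setting this is the Weyl-type comparison for block upper triangular operators carried out in~\cite{DSZ}; the unbounded version uses the same argument together with the Haagerup--Schultz extension of Weyl's inequality that log-submajorizes the decreasing rearrangement of $|\cdot|$ under $\nu_T$ by $\mu(T)$ for every $T\in\Lc_{\log}(\Mcal,\tau)$. The upper-triangular hypothesis is essential here: pinching a general positive operator can \emph{increase} its Fuglede--Kadison determinant, so $\Exp_{\Dc_m'}(X)\prec\prec_{\log}X$ fails for arbitrary positive $X$, and it is precisely the equality $\Delta_\Mcal(S-\lambda)=\Delta_\Mcal(D_m-\lambda)$ furnished by Theorem~\ref{det nu matrix} that powers the Weyl-type comparison.

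Third, since the function $u\mapsto\log(1+e^u)$ is convex and increasing on $\Reals$, a standard consequence of $D_m\prec\prec_{\log}S$ is $\|D_m\|_{\log}\le\|S\|_{\log}$. Applying this with $S$ replaced by $S/n$, which is still block upper triangular with diagonal $D_m/n$, gives $\|D_m/n\|_{\log}\le\|S/n\|_{\log}$ uniformly in $m$. By Lemma~\ref{lem:elemineq}\eqref{it:elemto0}, $\|S/n\|_{\log}\to 0$ as $n\to\infty$, so $\sup_{m\ge0}\|D_m/n\|_{\log}\to 0$, and Lemma~\ref{lem:Fchar}\eqref{it:AinF} yields $\{D_m\}_{m\ge0}\in\Fsc(\Mcal,\tau)$. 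The main obstacle is the second step: carefully transplanting the Weyl-type log-submajorization for block upper triangular operators from the bounded case~\cite{DSZ} into $\Lc_{\log}(\Mcal,\tau)$, which requires the Haagerup--Schultz framework for Brown measure and singular values developed in~\cite{HS1,HS2}.
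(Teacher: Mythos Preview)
Your proposal is correct and follows essentially the same route as the paper: obtain the log-submajorization $\Exp_{\Dc_m'}(S)\prec\prec_{\log}S$ from the block upper-triangular structure (the paper cites Lemma~18 of~\cite{DSZ}, noting that its proof carries over verbatim to $\Lc_{\log}$), and then pass to a uniform $\Lc_{\log}$-type bound and invoke Lemma~\ref{lem:Fchar}.

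Two minor remarks. First, your Step~1 is true but is not actually an \emph{input} to Step~2: the equality $\nu_{D_m}=\nu_S$ does not by itself yield $\mu(D_m)\prec\prec_{\log}\mu(S)$, since Brown measure governs ``eigenvalues'' while $\mu(\cdot)$ records singular values, and $D_m$ is not normal. The log-submajorization in Step~2 is proved directly from the upper-triangular form (as in Lemma~18 of~\cite{DSZ}); the Brown-measure equality is a parallel consequence of the same structure, not a premise. Second, in the final step the paper takes a slightly different path: instead of your convexity argument with $u\mapsto\log(1+e^u)$ and criterion~\eqref{it:AinF} of Lemma~\ref{lem:Fchar}, it cites Lemma~25 of~\cite{DSZ} to obtain $\tau(\log^+(|\Exp_{\Dc_m'}(S)|/n))\le\tau(\log^+(|S|/n))$ directly and then applies criterion~\eqref{it:log+An}. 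Your route via convexity and $\|S/n\|_{\log}\to0$ is equally valid and arguably more self-contained.
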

\begin{proof}
The paper~\cite{DSZ} was about bounded operators, but the proof of Lemma 18 in \cite{DSZ}
applies equally well to elements of $\Lc_{\log}$, and from this we obtain
$$\Exp_{\mathcal{D}_m'}(S)\prec\prec_{\log}S,\qquad(m\in\Nats).$$
Similarly, the proof of Lemma~25 in \cite{DSZ} applies to elements of $\Lc_{\log}$ and we have
$$\tau(\log^+(\frac1n|\Exp_{\mathcal{D}_m'}(S)|))\leq\tau(\log^+(\frac1n|S|)),\qquad(m,n\in\Nats).$$
Since $\lim_{n\to\infty}\tau(\log^+(\frac1n|S|))=0$, the assertion follows from Lemma~\ref{lem:Fchar}.
\end{proof}

Theorem \ref{isomorphism thm} permits now the following definition.

\begin{defi}\label{def:FauxExp}
We let
$$\FauxExp_{\mathcal{D}'}(S)=\pit(\{\Exp_{\mathcal{D}_m'}(S)\}_{m\geq0})\in\Lc_{\log}(\Mcal_\omega,\tau_\omega). $$
\end{defi}
Though not evident from the notation, the above operator depends on the choice of projections $p^{(m)}_k$ satisfying Hypothesis~\ref{hyp:Sinv}.

Below is an easy and general observation about commutants.
Recall that a set is said to be self-adjoint if it is close under taking adjoints.
\begin{lem}\label{lem:commutant}
If $\Xc$ is a self-adjoint subset of $\Sc(\Mcal,\tau)$, then the relative commutant $\Mcal\cap\Xc'$ is a von Neumann subalgebra
of $\Mcal$.
\end{lem}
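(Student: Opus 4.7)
The plan is to replace $\Xc$ by an equivalent self-adjoint subset of $\Mcal$ and then invoke the standard fact that the relative commutant in $\Mcal$ of a self-adjoint subset of $\Mcal$ is a von Neumann subalgebra. Specifically, for each $y\in\Xc$ I will use the polar decomposition $y=u_y|y|$, with $u_y\in\Mcal$ the partial isometry satisfying $\ker u_y=\ker|y|$, and let $E^{|y|}$ denote the spectral measure of $|y|$. I then consider
$$
\tilde\Xc\;=\;\{u_y:y\in\Xc\}\cup\{E^{|y|}(B):y\in\Xc,\ B\subseteq[0,\infty)\ \text{Borel}\}\ \subseteq\ \Mcal.
$$
Since $\Xc$ is self-adjoint, the standard identity $u_{y^*}=u_y^*$ (which follows from $|y^*|=u_y|y|u_y^*$, together with $\ker u_y^*=\ker y^*$) places $u_y^*\in\tilde\Xc$; combined with the self-adjointness of spectral projections, this makes $\tilde\Xc$ a self-adjoint subset of $\Mcal$, so $\Mcal\cap\tilde\Xc'$ is a von Neumann subalgebra of $\Mcal$ in the usual sense.

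The remaining task is to prove $\Mcal\cap\Xc'=\Mcal\cap\tilde\Xc'$. For the inclusion $\Mcal\cap\Xc'\subseteq\Mcal\cap\tilde\Xc'$, I will take $x\in\Mcal$ commuting with each element of $\Xc$ in $\Sc(\Mcal,\tau)$ and verify commutation with each $u_y$ and each $E^{|y|}(B)$. Using $y^*\in\Xc$, I will derive $x|y|^2=xy^*y=y^*yx=|y|^2x$, and then appeal to the standard fact that for $x\in\Mcal$ and a positive self-adjoint $A\in\Sc(\Mcal,\tau)$ the identity $xA=Ax$ in $\Sc(\Mcal,\tau)$ forces $x$ to commute with every spectral projection of $A$; this will yield commutation of $x$ with every $E^{|y|}(B)$ and with $|y|$ itself. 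Commutation with $u_y$ will then follow by rewriting $xy=yx$ as $(xu_y-u_y x)|y|=0$, which shows the bounded operator $xu_y-u_y x$ annihilates $\mathrm{range}(|y|)$, while on $\ker|y|$ both $xu_y$ and $u_y x$ vanish (using that $x$ preserves $\ker y=\ker|y|$, a direct consequence of $xy=yx$); by density, $xu_y=u_y x$. The reverse inclusion is immediate: if $x$ commutes with $u_y$ and with every $E^{|y|}(B)$, then $x|y|=|y|x$ in $\Sc(\Mcal,\tau)$, and so $xy=xu_y|y|=u_y x|y|=u_y|y|x=yx$.

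The main obstacle will be justifying the passage, in the forward direction, from $x|y|^2=|y|^2x$ in $\Sc(\Mcal,\tau)$ to commutation of $x$ with every spectral projection of $|y|$. This is standard but must be argued inside the $\Sc(\Mcal,\tau)$-framework, most cleanly by reducing to bounded truncations $|y|\,E^{|y|}([0,n])\in\Mcal$ and invoking the ordinary Borel functional calculus inside $\Mcal$ together with the continuity of the inclusion $\Mcal\hookrightarrow\Sc(\Mcal,\tau)$ in the measure topology. Once this is in place, the equality $\Mcal\cap\Xc'=\Mcal\cap\tilde\Xc'$ is established and the lemma follows at once.
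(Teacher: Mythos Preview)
Your proposal is correct and takes essentially the same approach as the paper: replace $\Xc$ by the self-adjoint set $\tilde\Xc\subset\Mcal$ of polar parts and spectral projections, verify $\Mcal\cap\Xc'=\Mcal\cap\tilde\Xc'$, and conclude by the standard fact for bounded self-adjoint sets. The paper's own proof asserts the key equivalence (commutation with $S$ and $S^*$ iff commutation with $U$, $U^*$, and all spectral projections of $|S|$) without justification, whereas you supply the details; your reduction of $x|y|^2=|y|^2x$ to commutation with spectral projections can be shortened by noting that $(1+|y|^2)^{-1}\in\Mcal$ commutes with $x$ and has the same spectral projections as $|y|$.
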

\begin{proof}
For $S\in\Xc$ and $a\in\Mcal$, we have $aS=Sa$ and $aS^*=S^*a$ if and only if, letting $S=U|S|$ be the polar decomposition
of $S$, the element $a$ commutes with $U$, with $U^*$ and with all spectral projections of $|S|$.
Thus, $\Mcal\cap\Xc'=\Mcal\cap\Yc'$ for a self-adjoint set $\Yc\subset\Mcal$.
But $\Mcal\cap\Yc'$ is a von Neumann subalgebra of $\Mcal$.
\end{proof}

\begin{lem}\label{lem:FauxExp commutes}
If we identify $\Dc\subset\Mcal$ with its image in $\Mcal_\omega$ under the diagonal map (see~\S\ref{subsec:ultraP}),
then $\FauxExp_{\Dc'}(S)$ lies
in the relative commutant $\Lc_{\log}(\Mcal_\omega,\tau_\omega)\cap\Dc'$.
\end{lem}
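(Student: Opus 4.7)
The claim is that for every $d\in\Dc$ (identified with $\sigma(d)=\pi\circ\delta_\Mcal(d)\in\Mcal_\omega$), $\FauxExp_{\Dc'}(S)$ commutes with $d$ in $\Lc_{\log}(\Mcal_\omega,\tau_\omega)$. Because $\pit$ is a $*$-algebra homomorphism (Theorem~\ref{isomorphism thm}) and $\sigmat=\pit\circ\delta_\Fsc$ identifies $d$ with $\sigma(d)$ (Lemma~\ref{lem:diagemb}), the commutator equals
$$\FauxExp_{\Dc'}(S)\,d-d\,\FauxExp_{\Dc'}(S)=\pit\bigl(\{[\Exp_{\Dc_m'}(S),d]\}_{m\ge 0}\bigr),$$
so my plan reduces to showing this element of $\Lc_{\log}(\Mcal_\omega,\tau_\omega)$ is zero.

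The first step is an auxiliary observation that is essentially encoded in the definition of $\pit$: if $A\in\Fsc(\Mcal,\tau)$ and $\lim_{m\to\omega}\|A(m)\|_{\log}=0$, then $\pit(A)=0$. Indeed, the truncations $A_n(k):=A(k)E_{|A(k)|}[0,n]\in l_\infty(\Mcal)$ used in the proof of Theorem~\ref{isomorphism thm} satisfy $\|A_n(k)\|_{\log}\le\|A(k)\|_{\log}$, so each bounded representative $\pi(A_n)\in\Mcal_\omega$ has $\|\pi(A_n)\|_{\log}=\lim_{k\to\omega}\|A_n(k)\|_{\log}=0$, whence $\pit(A)=\lim_n\pi(A_n)=0$ in $\|\cdot\|_{\log}$.

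The core work is to show $\lim_{m\to\infty}\|[\Exp_{\Dc_m'}(S),d]\|_{\log}=0$. I would approximate $d$ by $d_m:=\Exp_{\Dc_m}(d)\in\Dc_m$, which converges to $d$ in $\|\cdot\|_2$ by the abelian martingale convergence theorem, with $\|d_m\|\le\|d\|$. Since $\Exp_{\Dc_m'}(S)\in\Dc_m'$ commutes with $d_m$, and since $d,d_m\in\Dc$ commute with each spectral projection $p_k^{(m)}$, a direct expansion yields the pinching identity
$$[\Exp_{\Dc_m'}(S),d]=[\Exp_{\Dc_m'}(S),d-d_m]=\Exp_{\Dc_m'}\bigl([S,d-d_m]\bigr).$$
By the logarithmic submajorization of pinching (the same tool used in the proof of Lemma~\ref{lem:ExpinF}), $\|\Exp_{\Dc_m'}(X)\|_{\log}\le\|X\|_{\log}$, so it suffices to prove $\lim_m\|[S,d-d_m]\|_{\log}=0$. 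Given $\eps>0$, I would truncate $S^{(N)}:=SE_{|S|}[0,N]\in\Mcal$ with $\|S-S^{(N)}\|_{\log}<\eps$ (possible by dominated convergence), and split $[S,d-d_m]=[S-S^{(N)},d-d_m]+[S^{(N)},d-d_m]$. Two applications of Lemma~\ref{lem:elemineq}\eqref{it:elemBddMult} (one after taking adjoints, using $\|X^*\|_{\log}=\|X\|_{\log}$) bound the first piece by a uniform constant multiple of $\eps$, while the same lemma applied to the bounded operator $S^{(N)}$ controls the second piece by a constant multiple of $\|d-d_m\|_{\log}$, which tends to $0$ because $\log(1+t)\le t$ gives $\|d-d_m\|_{\log}\le\|d-d_m\|_1\le\|d-d_m\|_2\to 0$.

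The main obstacle is recognising the pinching identity $[\Exp_{\Dc_m'}(S),d-d_m]=\Exp_{\Dc_m'}([S,d-d_m])$; it is the device that trades the unbounded operator $\Exp_{\Dc_m'}(S)$ inside the commutator for the original $S$, so that a bounded spectral truncation of $S$ together with Lemma~\ref{lem:elemineq} becomes applicable. Once this is in place, log-submajorization of pinching and the auxiliary fact about $\pit$ yield the required vanishing.
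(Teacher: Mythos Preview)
Your argument is correct, but it is considerably more laborious than the paper's. The paper's proof rests on a much simpler observation: for $d\in\Dc_{m(0)}$ and any $m\ge m(0)$ one has $\Dc_{m(0)}\subseteq\Dc_m$, so $\Exp_{\Dc_m'}(S)\in\Dc_m'$ commutes \emph{exactly} with $d$; thus the commutator sequence $\{[\Exp_{\Dc_m'}(S),d]\}_{m\ge0}$ is eventually zero and $\pit$ of it vanishes. This shows $\Dc_{m(0)}\subset\Mcal_\omega\cap\{\FauxExp_{\Dc'}(S),\FauxExp_{\Dc'}(S)^*\}'$ for every $m(0)$, and then one invokes Lemma~\ref{lem:commutant} (the relative commutant of a self-adjoint set is a von Neumann algebra, hence weakly closed) to pass from $\bigcup_{m(0)}\Dc_{m(0)}$ to all of $\Dc$. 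No approximation $d_m\to d$, no pinching identity, no truncation of $S$ is needed.

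Your route --- approximate a general $d\in\Dc$ by $d_m=\Exp_{\Dc_m}(d)$, use the pinching identity to rewrite the commutator, bound via logarithmic submajorization, and finish with a spectral truncation of $S$ --- does work, and it has the mild virtue of being self-contained (it does not appeal to Lemma~\ref{lem:commutant}). One small remark: the inequality $\|\Exp_{\Dc_m'}(X)\|_{\log}\le\|X\|_{\log}$ is not literally ``the same tool'' as in Lemma~\ref{lem:ExpinF}; that lemma yields $\Exp_{\Dc_m'}(X)\prec\prec_{\log}X$ and the $\tau(\log^+)$ estimate, and one then needs the (standard) observation that $t\mapsto\log(1+e^t)$ is convex increasing to convert logarithmic submajorization into an inequality of $\|\cdot\|_{\log}$-norms. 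This is routine, but worth stating. The paper's two-line argument avoids this detour entirely by reducing first to $d\in\Dc_{m(0)}$ and then using weak closedness of the commutant.
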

\begin{proof}
Given $m(0)$, for every $m\ge m(0)$ the operator $\Exp_{\Dc_m'}(S)$ commutes with every element of $\Dc_{m(0)}$.
From this, we have
$$
\Dc_{m(0)}\subset\Mcal_\omega\cap\{\FauxExp_{\Dc'}(S),\FauxExp_{\Dc'}(S)^*\}'.
$$
Using Lemma~\ref{lem:commutant}, we get $\Dc\subset\Mcal_\omega\cap\{\FauxExp_{\Dc'}(S),\FauxExp_{\Dc'}(S)^*\}'$.
\end{proof}

\begin{lem}\label{det expectation lemma}
We have
$$\Delta(\FauxExp_{\mathcal{D}'}(S))=\Delta(S).$$
\end{lem}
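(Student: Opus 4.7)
The plan proceeds in two main stages: first establish that $\Delta(\Exp_{\Dc_m'}(S)) = \Delta(S)$ for every $m$ via the upper-triangular structure, and then transfer this equality through $\pit$ to $\FauxExp_{\Dc'}(S)$.

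For the first stage, Hypothesis~\ref{hyp:Sinv} makes $S$ upper triangular along the flag of $S$-invariant projections $q_k^{(m)} := p_1^{(m)} + \cdots + p_k^{(m)}$. Iterating the matrix-block product formula of Theorem~\ref{det nu matrix} along this flag gives
$$\Delta(S) = \prod_{k=1}^{n(m)} \Delta_{p_k^{(m)}\Mcal p_k^{(m)}}(p_k^{(m)} S p_k^{(m)})^{\tau(p_k^{(m)})}.$$
Since $\Exp_{\Dc_m'}(S) = \sum_k p_k^{(m)} S p_k^{(m)}$ is block-diagonal with respect to $\{p_k^{(m)}\}$, each $p_k^{(m)}$ is itself $\Exp_{\Dc_m'}(S)$-invariant, and iterating Theorem~\ref{det nu matrix} again yields the identical product. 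Hence $\Delta(\Exp_{\Dc_m'}(S)) = \Delta(S)$ for every $m$.

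For the second stage, I would adapt the $\varepsilon$-regularization used in the proof of Corollary~\ref{cor:Yt}. Fix $\varepsilon>0$; by Lemma~\ref{lem:ExpinF} and the $*$-algebra structure of $\Fsc(\Mcal,\tau)$, the sequence $\{(|\Exp_{\Dc_m'}(S)|^2 + \varepsilon)/\varepsilon\}_m$ lies in $\Fsc(\Mcal,\tau)$ and is bounded below by $1$. Applying Theorem~\ref{det converge} and rescaling by $\varepsilon^{\tau_\omega(1)}$ yields
$$\Delta(|\FauxExp_{\Dc'}(S)|^2 + \varepsilon) = \lim_{m\to\omega}\Delta(|\Exp_{\Dc_m'}(S)|^2 + \varepsilon),$$
and sending $\varepsilon\to 0$ the left side tends to $\Delta(\FauxExp_{\Dc'}(S))^2$ by~\eqref{eq:Deltaeps}. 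Moreover, $|\Exp_{\Dc_m'}(S)|^2 + \varepsilon$ remains block-diagonal in $\{p_k^{(m)}\}$, so for each $m$ a further application of Theorem~\ref{det nu matrix} and the monotonicity $\Delta(|T|^2+\varepsilon) \ge \Delta(|T|^2)$ give the uniform bound $\Delta(|\Exp_{\Dc_m'}(S)|^2+\varepsilon)\ge \Delta(S)^2$, from which $\Delta(\FauxExp_{\Dc'}(S)) \ge \Delta(S)$ follows immediately.

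The main obstacle is the reverse inequality, which amounts to interchanging $\lim_{\varepsilon\to 0}$ with $\lim_{m\to\omega}$. The route I would take is via log-submajorization: the proof of Lemma~\ref{lem:ExpinF} gives $\Exp_{\Dc_m'}(S) \prec\prec_{\log} S$ for every $m$, and the key observation is that log-submajorization is controlled by distribution functions of $|T|$, which behave continuously under the construction of $\pit$ in Theorem~\ref{isomorphism thm} (the truncations $A(k)E_{|A(k)|}[0,n]$ used there preserve the log-integrable tails uniformly in $k$). Once one shows $\FauxExp_{\Dc'}(S) \prec\prec_{\log} S$ in $(\Mcal_\omega,\tau_\omega)$, evaluating at $t=1$ yields
$$\log\Delta(\FauxExp_{\Dc'}(S)) = \int_0^1 \log\mu(s,\FauxExp_{\Dc'}(S))\,ds \le \int_0^1\log\mu(s,S)\,ds = \log\Delta(S),$$
completing the argument. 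The delicate point, and what I expect to be the real hurdle, is verifying that $\prec\prec_{\log}$ passes through $\pit$ without additional hypotheses; a direct uniform-integrability argument on $\log|\cdot|$ with respect to the spectral measures $\nu_{|\Exp_{\Dc_m'}(S)|^2}$, built from the bound $\sup_m\||\Exp_{\Dc_m'}(S)|^2\|_{\log}<\infty$ and Fack--Kosaki-type estimates, should suffice.
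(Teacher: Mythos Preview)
Your first stage and the $\ge$ direction of your second stage are correct and essentially match the paper's argument. The divergence, and the incomplete part, is your handling of the $\le$ direction.

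You propose establishing $\FauxExp_{\Dc'}(S)\prec\prec_{\log}S$ in $(\Mcal_\omega,\tau_\omega)$ by pushing log-submajorization through $\pit$, and you concede this is the ``real hurdle.'' That route is plausible but you have not actually carried it out, and the sketch you give (uniform integrability of $\log|\cdot|$ with respect to the spectral measures) does not obviously yield the submajorization inequality at every level $t\in(0,1)$; it is not clear how the distribution functions of $|\Exp_{\Dc_m'}(S)|$ converge to that of $|\FauxExp_{\Dc'}(S)|$ pointwise under $\pit$ without more work.

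The paper avoids this entirely by a much simpler observation you nearly have in hand. Set $a_{n,m}=\Delta(|\Exp_{\Dc_m'}(S)|^2+\tfrac1n)$. You already know (and cite) that $\Exp_{\Dc_m'}(S)\prec\prec_{\log}S$; applying the same Lemma~18 of \cite{DSZ} to the refinement from $\Dc_m$ to $\Dc_{m+1}$ gives $\Exp_{\Dc_{m+1}'}(S)\prec\prec_{\log}\Exp_{\Dc_m'}(S)$, and hence $a_{n,m}$ is \emph{decreasing in $m$}. It is trivially decreasing in $n$. For any positive bi-monotone double sequence one has $\lim_n\lim_m a_{n,m}=\lim_m\lim_n a_{n,m}$. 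Since $\lim_n a_{n,m}=\Delta(\Exp_{\Dc_m'}(S))^2=\Delta(S)^2$ by your first stage, and $\lim_{m\to\omega}a_{n,m}=\Delta(|\FauxExp_{\Dc'}(S)|^2+\tfrac1n)$ by Theorem~\ref{det converge} (monotonicity in $m$ makes the ultralimit an ordinary limit), the interchange immediately gives $\Delta(\FauxExp_{\Dc'}(S))^2=\Delta(S)^2$.

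Alternatively, without bi-monotonicity: from $\Exp_{\Dc_m'}(S)\prec\prec_{\log}S$ and the convexity of $x\mapsto\log(e^{2x}+\varepsilon)$ one gets $\Delta(|\Exp_{\Dc_m'}(S)|^2+\varepsilon)\le\Delta(|S|^2+\varepsilon)$ uniformly in $m$, so your displayed identity already yields $\Delta(|\FauxExp_{\Dc'}(S)|^2+\varepsilon)\le\Delta(|S|^2+\varepsilon)$; let $\varepsilon\to0$. Either way, the missing step is a one-line monotonicity, not a passage of $\prec\prec_{\log}$ through $\pit$.
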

\begin{proof} Consider the double sequence
$$a_{n,m}=\Delta(|\Exp_{\mathcal{D}_m'}(S)|^2+\frac1n),\quad n,m\in\mathbb{N}.$$
It follows from Lemma 18 in \cite{DSZ} that $a_{n,m}$ decreases in $m$.
Clearly, it also decreases in $n$.
For every such positive bi-monotone sequence, we have
$$\lim_{n\to\infty}\lim_{m\to\infty}a_{n,m}=\lim_{m\to\infty}\lim_{n\to\infty}a_{n,m}.$$

It is clear that
$$\lim_{n\to\infty}a_{n,m}=\Delta(|\Exp_{\mathcal{D}_m'}(S)|^2)=\Delta(\Exp_{\mathcal{D}_m'}(S))^2.$$
Since each projection $p_1^{(m)}+\cdots+p_k^{(m)},$ $k\in\{1,\ldots,n(m)\}$, is $S$-invariant,
it follows from Theorem \ref{det nu matrix} that
$\Delta(\Exp_{\mathcal{D}_m'}(S))=\Delta(S).$

On the other hand, it follows from Theorem \ref{det converge} that
$$\lim_{m\to\omega}a_{n,m}=\Delta\Big(\pit\Big(\Big\{|\Exp_{\mathcal{D}_m'}(S)|^2+\frac1n\Big\}_{m\geq0}\Big)\Big)=\Delta(|\FauxExp_{\mathcal{D}'}(S)|^2+\frac1n).$$
Hence,
$$\lim_{n\to\infty}\lim_{m\to\omega}a_{n,m}=\Delta(\FauxExp_{\mathcal{D}'}(S))^2.$$
Combining these equalities yields the assertion.
\end{proof}

\begin{thm}\label{brown expectation thm}
The Brown measures of $\FauxExp_{\mathcal{D}'}(S)$ and $S$ are the same.
\end{thm}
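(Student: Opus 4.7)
The plan is to reduce the equality of Brown measures to the equality of Fuglede--Kadison determinants of translates, and then invoke Lemma~\ref{det expectation lemma}. Recall that the Brown measure of any $T\in\Lc_{\log}$ is determined by the subharmonic function $\lambda\mapsto\log\Delta(T-\lambda)$ through the formula~\eqref{br def}; by uniqueness of logarithmic potentials, two measures coincide provided this function is the same for both operators. Hence it suffices to establish
\begin{equation}\label{eq:keyeq}
\Delta(\FauxExp_{\mathcal{D}'}(S)-\lambda)=\Delta(S-\lambda),\qquad(\lambda\in\Cpx).
\end{equation}

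First I would observe that Hypothesis~\ref{hyp:Sinv} is translation invariant: if $p$ is $S$-invariant, then $p$ is $(S-\lambda)$-invariant for every $\lambda\in\Cpx$, since $\lambda\cdot 1$ commutes with $p$. Therefore the operator $S-\lambda$ also satisfies Hypothesis~\ref{hyp:Sinv} with respect to the same sequence of subalgebras $\{\Dc_m\}_{m\ge0}$, so that $\FauxExp_{\Dc'}(S-\lambda)$ is defined via Lemma~\ref{lem:ExpinF} and Definition~\ref{def:FauxExp}.

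Second, the formula~\eqref{eq:ExpDm'} is linear in its argument and the minimal projections of $\Dc_m$ sum to $1$, so
\[
\Exp_{\Dc_m'}(S-\lambda)=\Exp_{\Dc_m'}(S)-\lambda,\qquad(m\ge0).
\]
Since the diagonal embedding $\Lc_{\log}(\Mcal,\tau)\to\Fsc(\Mcal,\tau)$ is a $*$-algebra morphism (Lemma~\ref{lem:diagemb}), and $\pit$ is a $*$-algebra morphism (Theorem~\ref{isomorphism thm}), we conclude
\[
\FauxExp_{\Dc'}(S-\lambda)=\FauxExp_{\Dc'}(S)-\lambda.
\]

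Applying Lemma~\ref{det expectation lemma} to the operator $S-\lambda$ in place of $S$ then yields~\eqref{eq:keyeq}:
\[
\Delta(\FauxExp_{\Dc'}(S)-\lambda)=\Delta(\FauxExp_{\Dc'}(S-\lambda))=\Delta(S-\lambda).
\]
Taking logarithms of both sides and then (distributional) Laplacians, and dividing by $2\pi$ as in the definition of Brown measure via~\eqref{br def}, we obtain $\nu_{\FauxExp_{\Dc'}(S)}=\nu_S$. No step here should be an obstacle, as all the technical content has already been established: the translation invariance of Hypothesis~\ref{hyp:Sinv} is trivial, the linearity step is straightforward, and the determinant equality is exactly Lemma~\ref{det expectation lemma} applied to the translated operator.
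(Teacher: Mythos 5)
Your proposal is correct and follows essentially the same route as the paper: observe that $S-\lambda$ satisfies Hypothesis~\ref{hyp:Sinv} with the same projections, that $\FauxExp_{\Dc'}(S-\lambda)=\FauxExp_{\Dc'}(S)-\lambda$, apply Lemma~\ref{det expectation lemma} to $S-\lambda$, and take the Laplacian. Your write-up merely spells out the intermediate steps in slightly more detail.
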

\begin{proof}
Of course, for all $\lambda\in\Cpx$, also $S-\lambda$ satisfies Hypothesis~\ref{hyp:Sinv} (for the same projections)
and
$$
\FauxExp_{\mathcal{D}'}(S-\lambda)=\FauxExp_{\mathcal{D}'}(S)-\lambda.
$$
So applying Lemma~\ref{det expectation lemma} to $S-\lambda$ yields
$$
\Delta(\FauxExp_{\mathcal{D}'}(S)-\lambda)=\Delta(S-\lambda).
$$
Now
taking the Laplacian completes the proof.
\end{proof}

The next result belongs here but will be used in Section~\ref{sec:pfMain}.
\begin{lem}\label{lem:FESq}
Suppose a projection $q\in\Mcal$ commutes with every element of $\Dc$ and is a Haagerup-Schultz projection for $S$
and some Borel set $\Bc\subseteq\Cpx$.
Then, identifying $\Mcal$ with the image of the diagonal subalgebra in $\Mcal_\omega$ (see~\S\ref{subsec:ultraP}),
$q$ is a Haagerup-Schultz projection for $\FauxExp_{\Dc'}(S)$ and $\Bc$.
\end{lem}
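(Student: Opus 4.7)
The plan is to verify the three defining conditions \eqref{hdefa}--\eqref{hdefc} of Definition~\ref{def:hs proj} for the pair $\FauxExp_{\Dc'}(S)$ and $q$. Condition \eqref{hdefb} is immediate from Theorem~\ref{brown expectation thm}: since $\nu_{\FauxExp_{\Dc'}(S)}=\nu_S$, we get $\tau_\omega(q)=\tau(q)=\nu_S(\Bc)=\nu_{\FauxExp_{\Dc'}(S)}(\Bc)$.

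For condition \eqref{hdefa}, I would first show that $q$ is $\Exp_{\Dc_m'}(S)$-invariant for each $m$. Since $q$ commutes with $\Dc\supseteq\Dc_m$, it commutes with every minimal projection $p_k^{(m)}$, and combined with $Sq=qSq$ this gives
$$
\Exp_{\Dc_m'}(S)\,q=\sum_k p_k^{(m)} S p_k^{(m)} q=\sum_k p_k^{(m)} S q p_k^{(m)}=\sum_k p_k^{(m)} q S q\,p_k^{(m)}=q\,\Exp_{\Dc_m'}(S)\,q.
$$
Identifying $q$ with $\pit(\{q\}_{m\ge 0})$ via Lemma~\ref{lem:diagemb} and using that $\pit$ is a $*$-homomorphism (Theorem~\ref{isomorphism thm}), applying $\pit$ to this sequence of identities yields $\FauxExp_{\Dc'}(S)\,q=q\,\FauxExp_{\Dc'}(S)\,q$.

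The heart of the argument is condition \eqref{hdefc}, which I would derive from the identity
$$
\Exp_{(q\Dc_m)'}(Sq)=q\,\Exp_{\Dc_m'}(S)\,q=\Exp_{\Dc_m'}(S)\,q
$$
inside $q\Mcal q$. This is a direct computation, once one observes that the minimal projections of $q\Dc_m$ are the nonzero $qp_k^{(m)}$. Before invoking Theorem~\ref{brown expectation thm} in $(q\Mcal q,\tau(q)^{-1}\tau|_{q\Mcal q})$ with subalgebra $q\Dc$ and filtration $\{q\Dc_m\}$, I must verify that Hypothesis~\ref{hyp:Sinv} is inherited: writing $P=p_1^{(m)}+\cdots+p_k^{(m)}$, the relations $qP=Pq$, $SP=PSP$ and $Sq=qSq$ quickly give $SqP=PSPq=qPSqP$, so $qP$ is $Sq$-invariant in $q\Mcal q$. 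Applying $\pit$ to the displayed identity then identifies $\FauxExp_{\Dc'}(S)\,q$, regarded as an element of $q\Mcal_\omega q$, with $\FauxExp_{(q\Dc)'}(Sq)$; Theorem~\ref{brown expectation thm} applied in the compression yields
$$
\nu_{\FauxExp_{\Dc'}(S)\,q}=\nu_{Sq}=\frac{1}{\nu_S(\Bc)}\nu_S|_\Bc.
$$
The parallel argument carried out inside $(1-q)\Mcal(1-q)$ with the abelian subalgebra $(1-q)\Dc$ and the operator $(1-q)S$ (after the analogous check that $(1-q)P$ is $(1-q)S$-invariant) delivers the corresponding Brown measure identity for $(1-q)\FauxExp_{\Dc'}(S)$.

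The only real technical obstacle is bookkeeping: tracking that Hypothesis~\ref{hyp:Sinv} transfers to the two compressions so Theorem~\ref{brown expectation thm} may legitimately be applied there, and that $\pit$ respects the compression operations. All of this rests ultimately on the single elementary fact that $q$ commutes with every $p_k^{(m)}$.
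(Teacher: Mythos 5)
Your proposal is correct and follows essentially the same route as the paper: show $q$ commutes with each $p_k^{(m)}$ so that the finite-stage expectations compress correctly, verify that Hypothesis~\ref{hyp:Sinv} passes to the corners $q\Mcal q$ and $(1-q)\Mcal(1-q)$, identify $\FauxExp_{\Dc'}(S)q$ with the Faux Expectation of $Sq$ computed in the compression, and conclude via the Brown-measure preservation of Faux Expectations. The only cosmetic difference is that the paper runs the determinant identity of Lemma~\ref{det expectation lemma} on $Sq-\lambda q$ and takes the Laplacian by hand, whereas you invoke Theorem~\ref{brown expectation thm} in the compressed algebra, which packages exactly that computation.
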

\begin{proof}
The projection $q$ is invariant under $\Exp_{\Dc_m'}(S)$ for all $m$.
This implies that $q$ is invariant under $\FauxExp_{\Dc'}(S)$ as well.

We consider the abelian subalgebra $\Dc q$ and the increasing sequence $\Dc_mq$ of finite dimensional subalgebras, whose
union is dense in $\Dc q$.
For each $m$ and for each $\lambda\in\Cpx$,
the ordered list of projections obtained from $p_1^{(m)}q,\ldots,p^{(m)}_{n(m)}q$ by dropping those that are equal to zero
is a list of the minimal projections in $\Dc_mq$ and for each $k$, the projection
$p_1^{(m)}q+\cdots+p_k^{(m)}q$ is $(Sq-\lambda q)$-invariant.
Thus, we find that Hypothesis~\ref{hyp:Sinv} is satisfied for $Sq-\lambda q$ and the lists of minimal projections described above.
We let
$$
\FauxExp_{\Dc'q}(Sq-\lambda q)\in\Lc_{\log}(q\Mcal_\omega q,\tau(q)^{-1}\tau_\omega\restrict_{q\Mcal_\omega q})
$$
be the operator resulting from these minimal projections
by the construction in Definition~\ref{def:FauxExp}.
We easily see
$$
\Exp_{\Dc_m'q}(Sq-\lambda q)=\Exp_{\Dc_m'}(S)q-\lambda q
$$
for every $m$ and, thus, we get
\begin{equation}\label{eq:FESq}
\FauxExp_{\Dc'q}(Sq-\lambda q)=\FauxExp_{\Dc'}(S)q-\lambda q.
\end{equation}

Let $\Delta_{(q)}$ denote the Fuglede-Kadison determinant on $q(\Mcal_\omega)q$ associated to the trace
$\frac1{\tau(q)}\tau_\omega\restrict_{q\Mcal_\omega q}$.
Using~\eqref{eq:FESq} and
invoking Lemma~\ref{det expectation lemma}, we have
$$
\Delta_{(q)}(\FauxExp_{\Dc'}(S)q-\lambda q)
=\Delta_{(q)}(\FauxExp_{\Dc'q}(Sq-\lambda q))
=\Delta_{(q)}(Sq-\lambda q)
$$
Taking the Laplacian, we get that the Brown measure of $\FauxExp_{\Dc'}(S)q-\lambda q$ agrees with the Brown measure of
$Sq-\lambda q$.
In a similar manner, we get that the Brown measure of
$(1-q)\FauxExp_{\Dc'}(S)-\lambda(1-q)$ agrees with the Brown measure of
$(1-q)S-\lambda(1-q)$.
\end{proof}

\section{Proof of the main result}
\label{sec:pfMain}

In this section, we prove Theorem \ref{decomposition theorem}.
We will draw on reduction theory for von Neumann algebras and results from~\cite{DNSZ}.

Suppose $(\Mcal,\tau)$ is a tracial von Neumann algebra
where $\Mcal$ has separable predual and is, therefore, countably generated.
Let $T\in\Lc_{\log}(\Mcal,\tau)$.
Let $\rho:[0,\infty)\to\Cpx$ be a surjective, continuous function.
By Theorem~\ref{construction theorem}, there is 
an increasing family $(q_t)_{t\ge0}$ of Haagerup-Schultz projection $q_t=P(T,\rho([0,t)))$ in $(\Mcal_\omega,\tau_\omega)$
for the operator $T$ and the Borel set $\rho([0,t))\subseteq\Cpx$.
Let $\Dc$ be the von Neumann subalgebra of $\Mcal_\omega$ generated by $\{q_t\mid t\ge0\}$.
Then $\Dc$ is the WOT closure of the image of the $*$-homomorphism $\phi:C_0([0,\infty))\to\Mcal_\omega$ given by
$$
\phi(f)=\int_{[0,\infty)}f\,dq_t.
$$
Thus, $\Dc$ is countably generated.
Let $\Mcal_1$ be the von Neumann algebra generated by
$\Dc\cup\Mcal$.
Then $\Mcal_1$ is also countably generated, so has separable predual --- see the discussion in \S\ref{subsec:II1}.

Choose an increasing family $\Dc_1\subseteq\Dc_2\subseteq\cdots$ of finite dimensional $*$-subalgebras of $\Dc$
whose union is weak-operator-topology dense in $\Dc$ and where, for each $m$, $\Dc_m$ is 
the linear span of finitely many of the projections from the set $\{q_t\mid t\ge0\}$.
Then taking differences of these projections, we find minimal projections $p^{(m)}_1,\ldots,p^{(m)}_{n(m)}$
for $\Dc_m$ that span $\Dc_m$ and that satisfy Hypotheses~\ref{hyp:Sinv} with $S$ replaced by $T$ and, of course, $\Mcal$ replaced by $\Mcal_1$.

Now let
\begin{equation}\label{eq:FauxET}
\FauxExp_{\Dc'}(T)\in(\Mcal_1)_\omega
\end{equation}
be the operator constructed as described in Definition~\ref{def:FauxExp}
for this choice of projections $p_k^{(m)}$.
Let $\Mcal_2$ be the von Neumann algebra generated by $\Mcal_1\cup\{\FauxExp_{\Dc'}(T)\}$
and let $\tau_2$ be the restriction of the trace $(\tau_1)_\omega$ to $\Mcal_2$.

Then $\Mcal_2$ is countably generated and, thus, has separable predual.
Taking the standard representation, we have $\Mcal_2\subseteq B(\HEu)$ where $\HEu=L^2(\Mcal_2,\tau_2)$ is a separable Hilbert space.
Since $\HEu$ is separable, there exists (by, for example, Proposition 9.5.3 of~\cite{KR1}) a Borel probability measure $\varsigma$ on $[0,\infty)$
and a $*$-isomorphism $\phibar$ from $L^\infty(\varsigma)$ onto $\Dc$ that extends $\phi$ and is continuous from the weak$^*$-topology on $L^\infty(\varsigma)$
to the weak operator topology on $\Dc$.
Thus, we have $q_t=\phibar(1_{[0,t)})$.

We will use the reduction theory (i.e., the theory of direct integrals) for Hilbert spaces, for von Neumann algebras and for unbounded
operators affiliated to von Neumann algebras.
This theory was developed by many mathematicians over the years;
see Dixmier's book~\cite{Dix81} for the basic theory and Nussbaum~\cite{Nu64} for some aspects of the theory for unbounded operators,
as well as~\cite{DNSZ} for further aspects.

In particular, the Hilbert space $\HEu$ may be written as a direct integral
\begin{equation}\label{eq:Hdirectint}
\HEu=\int^\oplus_{[0,\infty)}\HEu(t)\,d\varsigma(t)
\end{equation}
of Hilbert spaces $\HEu(t)$,
such that $\Dc$ is the set of diagonal operators.
Moreover, for every bounded Borel function $f:[0,\infty)\to\Cpx$, we have
$$
\phibar(f)=\int_{[0,\infty)}f(t)I_{\HEu(t)}\,d\varsigma(t),
$$
where $I_{\HEu(t)}$ denotes the identity operator on $\HEu(t)$.

Consider the diagonal operator
\begin{equation}\label{eq:ExpT}
\Exp_\Dc(T):=\int^\oplus_{[0,\infty)}\rho(t)I_{\HEu(t)}\,d\varsigma(t).
\end{equation}
We use this suggestive notation out of analogy with the situation for bounded operators,
even though (see Appendix~\ref{sec:NoCondExp}), there is little hope of there being an actual conditional expectation
mapping
that is applicable to all operators in $\Lc_{\log}(\Mcal,\tau)$.

Let $\Nc=\Mcal_2\cap\Dc'$ be the relative commutant of $\Dc$ in $\Mcal_2$.
Then
(see~\cite{Dix81} or the section of preliminaries of~\cite{DNSZ})
$\Nc$ is the direct integral
$$
\Nc=\int^\oplus_{[0,\infty)}\Nc(t)\,d\varsigma(t)
$$
of von Neumann algebras $\Nc(t)\subseteq B(\HEu(t))$ and the trace $\tau\restrict_\Nc$
is the direct integral
$$
\tau\restrict_\Nc=\int^\oplus_{[0,\infty)}\tau_t\,d\varsigma(t)
$$
of (for $\varsigma$-almost every $t$) normal, faithful, tracial states $\tau_t$ on $\Nc(t)$.
By Corollary~4 of~\cite{Nu64}, and Proposition 4.4 and Lemma~5.4 of~\cite{DNSZ}, the following holds:
\begin{prop}\label{prop:Sdirectint}
For $S\in\Sc(\Mcal_2,\tau)$, the following are equivalent:
\begin{enumerate}[{\rm (i)}]
\item\label{it:ScomD} $S$ commutes with every element of $\Dc$,
\item $S$ is affiliated to $\Nc$
\item $S$ is decomposable in the sense of Nussbaum~\cite{Nu64} with respect to the direct integral decomposition~\eqref{eq:Hdirectint}
\item\label{it:Sdirectint} $S=\int^\oplus_{[0,\infty)}S(t)\,d\varsigma(t)$,
where $S(t)\in\Sc(\Nc(t),\tau_t)$ for $\varsigma$-almost every $t\in[0,\infty)$.
\end{enumerate}
\end{prop}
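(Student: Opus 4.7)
The plan is to chain together the four conditions along the route (i)$\Leftrightarrow$(ii)$\Leftrightarrow$(iii)$\Leftrightarrow$(iv), where only the first equivalence requires direct argument and the remaining two amount to citations of Nussbaum's reduction theory for unbounded operators and of the general direct integral machinery developed in \cite{DNSZ}.

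First I would establish (i)$\Leftrightarrow$(ii). By Lemma \ref{lem:commutant}, $\Nc=\Mcal_2\cap\Dc'$ is a von Neumann subalgebra of $\Mcal_2$. Writing $S=U|S|$ for the polar decomposition, the operator $S\in\Sc(\Mcal_2,\tau)$ is affiliated to $\Nc$ precisely when $U$ and every spectral projection of $|S|$ lie in $\Nc$. Since $\Dc$ is abelian and generated by its projections, $S$ commutes with every element of $\Dc$ if and only if each such projection commutes with $U$, with $U^*$, and with every spectral projection of $|S|$; equivalently, these polar parts all lie in $\Mcal_2\cap\Dc'=\Nc$. This yields (i)$\Leftrightarrow$(ii).

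For (ii)$\Leftrightarrow$(iii), Corollary~4 of \cite{Nu64} asserts that a closed densely defined operator on $\HEu$ is decomposable relative to the direct integral decomposition \eqref{eq:Hdirectint} if and only if it is affiliated to the algebra of decomposable operators; under the identification of diagonal operators with $\Dc$ (see \cite{Dix81}), this algebra is exactly $\Nc$. For (iii)$\Leftrightarrow$(iv) I would invoke Proposition~4.4 and Lemma~5.4 of \cite{DNSZ}: the former supplies, for a decomposable $S$, the direct integral representation $S=\int^\oplus_{[0,\infty)}S(t)\,d\varsigma(t)$ with each fiber $S(t)$ affiliated to $\Nc(t)$ and hence (since each $(\Nc(t),\tau_t)$ is tracial) automatically lying in $\Sc(\Nc(t),\tau_t)$ for $\varsigma$-almost every $t$; the latter provides the reverse implication, that a measurable field of $\tau_t$-measurable fiber operators assembles into a $\tau$-measurable, Nussbaum-decomposable operator on $\HEu$.

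The step requiring the most care, and the one where the heavy lifting has already been carried out in the cited literature, is the measurability bookkeeping: one must check that fiber-wise polar decompositions and spectral projections are jointly measurable in $t$, and that the fiber traces $\tau_t$ assemble correctly into the restriction $\tau\restrict_\Nc$. Both points are treated in \cite{Nu64} and \cite{DNSZ}, so the proof ultimately reduces to verifying that the hypotheses of those results are met, which is immediate from the separability of $\HEu=L^2(\Mcal_2,\tau_2)$ and the construction of $\Nc$ as the relative commutant of the diagonal algebra.
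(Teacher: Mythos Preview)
Your proposal is correct and matches the paper's approach exactly: the paper offers no separate proof but simply states the proposition as a direct consequence of Corollary~4 of \cite{Nu64} together with Proposition~4.4 and Lemma~5.4 of \cite{DNSZ}, and your expansion assigns each citation to the equivalence it handles. One small correction to your (ii)$\Leftrightarrow$(iii) step: the algebra of bounded decomposable operators is the full commutant $\Dc'$ in $B(\HEu)$, not $\Nc=\Mcal_2\cap\Dc'$; your conclusion is unaffected because the standing hypothesis $S\in\Sc(\Mcal_2,\tau)$ already places the polar parts of $S$ in $\Mcal_2$, so affiliation to $\Dc'$ and affiliation to $\Nc$ coincide for such $S$.
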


Theorem 5.6 of~\cite{DNSZ} yields the following theorem.
\begin{thm}\label{thm:diBrown}
If $S\in\Lc_{\log}(\Mcal_2,\tau)$
and if the conditions of Proposition~\ref{prop:Sdirectint} hold,
then we have $S(t)\in\Lc_{\log}(\Nc(t),\tau_t)$
for $\varsigma$-almost every $t\in[0,\infty)$
and the Brown measure $\nu_S$ of $S$ is given by
$$
\nu_S(B)=\int_{[0,\infty)}\nu_{S(t)}(B)\,d\varsigma(t)
$$
for every Borel subset $B\subseteq\Cpx$,
where $\nu_{S(t)}$ is the Brown measure of $S(t)$ taken with respect to the trace $\tau(t)$ of $\Nc(t)$.
\end{thm}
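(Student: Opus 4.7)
The plan is to reduce the identity on Brown measures to the corresponding identity on Fuglede--Kadison determinants, exploiting the fact that the Brown measure is uniquely determined by its logarithmic potential $\lambda\mapsto\log\Delta(S-\lambda)$.

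First I would verify that $S(t)\in\Lc_{\log}(\Nc(t),\tau_t)$ for $\varsigma$-almost every $t$. Since $|S|=\int^\oplus|S(t)|\,d\varsigma(t)$ and bounded Borel functional calculus commutes with direct integral decomposition, $\log(1+|S|)=\int^\oplus\log(1+|S(t)|)\,d\varsigma(t)$, whence
\[
\|S\|_{\log}=\tau(\log(1+|S|))=\int_{[0,\infty)}\tau_t(\log(1+|S(t)|))\,d\varsigma(t).
\]
The hypothesis $S\in\Lc_{\log}(\Mcal_2,\tau)$ gives finiteness of the left-hand side, and so $\|S(t)\|_{\log}<\infty$ almost everywhere.

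Next I would establish the fibrewise determinant formula
\[
\log\Delta(S-\lambda)=\int_{[0,\infty)}\log\Delta(S(t)-\lambda)\,d\varsigma(t),\qquad\lambda\in\Cpx.
\]
Replacing $S$ by $S-\lambda$, it suffices to do this for $\lambda=0$. For $\eps>0$, the operator $|S|^2+\eps$ is bounded below, its logarithm is bounded in $\Nc$, so the identity $\log\Delta(|S|^2+\eps)=\int\log\Delta(|S(t)|^2+\eps)\,d\varsigma(t)$ follows directly from the decomposition of the trace over $\Nc$. Sending $\eps\to 0$ and using~\eqref{eq:Deltaeps} on both sides, together with the Fack--Kosaki monotone convergence theorem (Theorem~3.5(ii) of~\cite{FackKosaki}) both in $(\Mcal_2,\tau)$ and fibrewise in $(\Nc(t),\tau_t)$, yields the claimed formula.

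Combining this with the defining relation~\eqref{br def} for Brown measures, for every $\lambda\in\Cpx$,
\[
\int_{\Cpx}\log|z-\lambda|\,d\nu_S(z)=\int_{[0,\infty)}\int_{\Cpx}\log|z-\lambda|\,d\nu_{S(t)}(z)\,d\varsigma(t)=\int_{\Cpx}\log|z-\lambda|\,d\mu(z),
\]
where $\mu$ is the probability measure $\mu(B)=\int_{[0,\infty)}\nu_{S(t)}(B)\,d\varsigma(t)$ and Fubini's theorem is legitimized by Lemma~\ref{lem:elemlog+} applied fibrewise (controlling the $\log^+$ part) and the Brown measure formula for $\log\Delta(|S|)$ (controlling the $\log^-$ part). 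Since the logarithmic potentials of the probability measures $\nu_S$ and $\mu$ coincide and are not identically $-\infty$, taking the distributional Laplacian yields $\nu_S=\mu$, which is the desired identity.

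The main obstacle is the interchange of $\lim_{\eps\to 0}$ with the integral $\int_{[0,\infty)}\,d\varsigma(t)$ in the second paragraph: the integrand $\log\Delta(|S(t)|^2+\eps)$ decreases to $2\log\Delta(S(t))$ but may tend to $-\infty$ on a set of positive $\varsigma$-measure (precisely where $\Delta(S(t))=0$). Monotone convergence of the integral with values in $[-\infty,+\infty)$ handles this case; nonetheless one must ensure the decomposable version of~\eqref{eq:Deltaeps} holds $\varsigma$-almost everywhere, which requires the measurability of $t\mapsto\log\Delta(S(t)-\lambda)$ and is the principal technical ingredient underlying Theorem~5.6 of~\cite{DNSZ}.
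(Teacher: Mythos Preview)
The paper does not actually prove this theorem here: it simply records that ``Theorem~5.6 of~\cite{DNSZ} yields the following theorem.'' Your sketch is precisely the argument one would expect to find in that reference, and you correctly identify at the end that the measurability of $t\mapsto\log\Delta(S(t)-\lambda)$ is the genuine technical content being deferred to~\cite{DNSZ}. So in substance there is nothing to compare: the paper outsources the proof, and you have reconstructed its outline.

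One inaccuracy to flag: in your second paragraph you write that for $\eps>0$ ``the operator $|S|^2+\eps$ is bounded below, its logarithm is bounded in~$\Nc$.'' The logarithm is bounded \emph{below} by $\log\eps$, but it is not bounded above, since $S$ itself is unbounded. What you need instead is that $\log(|S|^2+\eps)$ has a well-defined trace: its negative part is dominated by $|\log\eps|$, and its positive part satisfies $\log^+(|S|^2+\eps)\le 2\log(1+|S|)+\log(1+\eps)$, hence lies in $\Lc_1$ because $S\in\Lc_{\log}$. With this correction the fibrewise trace identity $\tau(\log(|S|^2+\eps))=\int\tau_t(\log(|S(t)|^2+\eps))\,d\varsigma(t)$ holds by decomposing the positive and negative parts separately and using monotone convergence on the positive part; then your monotone-limit argument as $\eps\to 0$ goes through as written.
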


We can now prove our main result.
\begin{proof}[Proof of Theorem \ref{decomposition theorem}]
For convenience, let us write $R=\FauxExp_{\Dc'}(T)$.
By Lemma \ref{lem:FauxExp commutes}, we have
\begin{equation}\label{eq:RinD'}
R\in\Lc_{\log}(\Mcal_2,\tau_2)\cap\Dc'.
\end{equation}
We will show:
\begin{enumerate}[{\rm (a)}]
\item\label{it:TR} $T$ and $R$ have the same Brown measure,
\item\label{it:ExpexpL1} $\Exp_\Dc(T)\in\Lc_{\log}(\Mcal_2,\tau_2)$,
\item\label{it:RN} $R$ and $\Exp_\Dc(T)$ have the same Brown measure,
\item\label{it:R-N0} the Brown measure of $R-\Exp_\Dc(T)$ is $\delta_0$.
\item\label{it:TR-N} $T-\Exp_\Dc(T)$ and $R-\Exp_\Dc(T)$ have the same Brown measure
\end{enumerate}
Then, taking $N=\Exp_\Dc(T)$ and $Q=T-N$, these facts provide proof of the theorem.

The truth of~\eqref{it:TR} follows directly from Theorem~\ref{brown expectation thm}.

From~\eqref{eq:RinD'} and Proposition~\ref{prop:Sdirectint}, we get a direct integral decomposition
\begin{equation}\label{eq:Rdi}
R=\int^\oplus_{[0,\infty)}R(t)\,d\varsigma(t),
\end{equation}
where $R(t)\in\Lc_{\log}(\Nc(t),\tau_t)$ for $\varsigma$-almost every $t$.
We will now show
\begin{equation}\label{eq:nuRt}
\nu_{R(t)}=\delta_{\rho(t)}\text{ for $\varsigma$-almost every }t\in[0,\infty).
\end{equation}
By Proposition~\ref{lem:FESq}, for every $t\in[0,\infty)$, $q_t$ is a Haagerup-Schultz projection for
$R$ and the set $\rho([0,t])$.
Suppose $0\le t(1)<t(2)<\infty$ and $q_{t(1)}\ne q_{t(2)}$.
Then the Brown measure of $Rq_{t(2)}$ is the renormalized restriction of $\nu_R$ to $\rho([0,t(2)))$.
Since $q_{t(1)}$ is $Rq_{t(2)}$-invariant and either $q_{t(1)}=0$ or the Brown measure of $Rq_{t(1)}$ is
the renormalized restriction of $\nu_R$ to $\rho([0,t(1)))$, it follows from Theorem~\ref{det nu matrix}
that the Brown measure of $(q_{t(2)}-q_{t(1)})R(q_{t(2)}-q_{t(1)})$ is the renormalized
restriction of $\nu_R$ to $\rho([0,t(2)))\backslash\rho([0,t(1)))\subseteq\rho([t(1),t(2)))$.
However the direct integral decomposition of $R$ restricts to give the direct integral decomposition
$$
(q_{t(2)}-q_{t(1)})R(q_{t(2)}-q_{t(1)})=\int^\oplus_{[t(1),t(2))}R(t)\,d\varsigma(t)
$$
and using Theorem~\ref{thm:diBrown}, we have
$$
\nu_{(q_{t(2)}-q_{t(1)})R(q_{t(2)}-q_{t(1)})}(B)=\frac1{\varsigma((t(1),t(2)])}\int^\oplus_{[t(1),t(2))}\nu_{R(t)}(B)\,d\varsigma(t)
$$
for every Borel set $B$.
In particular, the measure $\nu_{R(t)}$ is concentrated in $\rho([t(1),t(2)))$ for almost every $t\in[t(1),t(2))$.

Thus, we find a $\varsigma$-null set $N\subseteq(0,\infty)$ so that 
for all $t\in N^c$ and for all rational $t(1)$ and $t(2)$ with $0\le t(1)\le t<t(2)$,
the Brown measure $\nu_{R(t)}$ is concentrated in $\rho([t(1),t(2)))$.
By continuity of $\rho$, we have $\nu_{R(t)}=\delta_{\rho(t)}$ for all $t\in N^c$.
This proves~\eqref{eq:nuRt}.

From~\eqref{eq:nuRt}, we get~\eqref{it:ExpexpL1}, because we have
\begin{multline}\label{eq:taulogRbd}
\tau(\log^+\big(|\Exp_\Dc(T)|\big))=\int_{[0,\infty)}\log^+(|\rho(t)|)\,d\varsigma(t) \\
=\int_{[0,\infty)}\int\log^+(|z|)\,d\nu_{R(t)}(z)\,d\varsigma(t)=\int\log^+(|z|)\,d\nu_R(z)\le\tau(\log^+(|R|)),
\end{multline}
Indeed, the first equality in~\eqref{eq:taulogRbd} is from Theorem~\ref{thm:diBrown} and the direct integral decomposition~\eqref{eq:ExpT},
the last equality in~\eqref{eq:taulogRbd} is from Theorem~\ref{thm:diBrown} applied to~\eqref{eq:Rdi}
and approximation of the function $z\mapsto\log^+(|z|)$ from below by simple functions, together with the Monotone Convergence Theorem;
the inequality in~\eqref{eq:taulogRbd} is from Lemma 2.20 of~\cite{HS1}.

By Theorem~\ref{thm:diBrown} applied to $R$ and $\Exp_\Dc(T)$ and using~\eqref{eq:nuRt},
we will have, for every Borel set $\Bc\subseteq\Cpx$,
$$
\nu_R(\Bc)=\int_{[0,\infty)}\nu_{R(t)}(\Bc)\,d\varsigma(t)=\int_{[0,\infty)}\delta_{\rho(t)}(\Bc)\,d\varsigma(t)
=\nu_{\Exp_\Dc(T)}(\Bc).
$$
This proves~\eqref{it:RN}.

Similarly, from~\eqref{eq:nuRt} we have $\nu_{R(t)-\rho(t)I_\HEu(t)}=\delta_0$ for $\varsigma$-almost every $t$, and,
by invoking Theorem~\ref{thm:diBrown} again,
since
$$
R-\Exp_\Dc(T)=\int^\oplus_{[0,\infty)}\big(R(t)-\rho(t)I_{\HEu(t)}\big)\,d\varsigma(t),
$$
we get
$$
\nu_{R-\Exp_\Dc(T)}(B)=\int_{[0,\infty)}\nu_{R(t)-\rho(t)I_{\HEu(t)}}(B)\,d\varsigma(t)=\delta_0(B)
$$
for every Borel set $B\subseteq\Cpx$.
This proves~\eqref{it:R-N0}

To show~\eqref{it:TR-N}, note that, since $\Exp_\Dc(T)$ commutes with $q_t$, the projection $q_t$ is $(T-\Exp_\Dc(T))$-invariant for all $t$.
This implies that the projections $p_k^{(m)}$ described above Equation~\eqref{eq:FauxET} satisfy Hypothesis~\ref{hyp:Sinv}
when $S$ is replaced by $T-\Exp_\Dc(T)$.
Therefore, by Theorem~\ref{brown expectation thm}, the Brown measures
of $T-\Exp_\Dc(T)$ and $\FauxExp(T-\Exp_\Dc(T))$ are the same.
But we clearly have
$$
\Exp_{\Dc_m'}\big(T-\Exp_\Dc(T)\big)=\Exp_{\Dc_m'}(T)-\Exp_\Dc(T),
$$
where $\Exp_{\Dc_m'}$ is the conditional expectation described at~\eqref{eq:ExpDm'} at the start of Section~\ref{sec:FauxExp}.
Applying $\pit$ shows
$$
\FauxExp(T-\Exp_\Dc(T))=R-\Exp_\Dc(T),
$$
This finishes the proof of~\eqref{it:TR-N}.
\end{proof}

\section{Spectrality of traces}
\label{sec:spectralityTr}

The goal of this section is to prove Theorem~\ref{spectral trace thm}.
See the last paragraph of Section~\ref{sec:strategy} for a description of the strategy.

For every $T\in \Lc_{\log}(\Mcal,\tau),$ we set
$$
\Phi(s,T)=\int_{|z|\leq s}zd\nu_T(z),\quad s>0.
$$

\begin{lem}\label{dk hardest estimate}
If $Q\in\Lc_{\log}(\Mcal,\tau)$ is such that $\nu_Q=\delta_0,$ then, for every $s>0,$ we have
$$
\Big|\int_0^{2\pi}\Re(\Phi(s,Q+e^{i\theta}Q^*))d\theta\Big|\leq 
400\pi s\,\tau(\log^+(\frac{2e|Q|}{s}))
$$
\end{lem}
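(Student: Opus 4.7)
The plan is to reduce this estimate to a self-adjoint spectral trace bound. Writing $H_\theta:=e^{-i\theta/2}Q+e^{i\theta/2}Q^*$, one has the factorization $T_\theta:=Q+e^{i\theta}Q^* = e^{i\theta/2}H_\theta$ with $H_\theta$ self-adjoint, so $\nu_{T_\theta}$ is the push-forward of the spectral distribution $\nu_{H_\theta}$ under $x\mapsto e^{i\theta/2}x$ and is supported on the line $e^{i\theta/2}\Reals$. Setting $P_\theta:=E_{H_\theta}([-s,s])$, this gives
\[
\Re\,\Phi(s,T_\theta)=\cos(\theta/2)\,\tau(H_\theta P_\theta),
\]
where $H_\theta P_\theta\in\Mcal$ is bounded self-adjoint with $\|H_\theta P_\theta\|\le s$, and in particular $\tau(H_\theta P_\theta)$ is a real number in $[-s,s]$.

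Since $\theta\mapsto\cos(\theta/2)H_\theta P_\theta$ is $\Mcal$-valued with operator norm uniformly bounded by $s$, normality of $\tau$ interchanges the $\theta$-integral with $\tau$:
\[
\int_0^{2\pi}\Re\,\Phi(s,T_\theta)\,d\theta=\tau(B_s),\qquad B_s:=\int_0^{2\pi}\cos(\theta/2)H_\theta P_\theta\,d\theta\in\Mcal.
\]
Using $\cos(\theta/2)e^{\pm i\theta/2}=\tfrac12(1+e^{\pm i\theta})$, the integrand of $B_s$ rewrites formally as $\tfrac12(1+e^{-i\theta})QP_\theta+\tfrac12(1+e^{i\theta})Q^*P_\theta$; the two summands are a priori unbounded but their sum is bounded of norm at most $2s|\cos(\theta/2)|$. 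Introducing
\[
A_s:=\tfrac12\int_0^{2\pi}(1+e^{-i\theta})P_\theta\,d\theta\in\Mcal,\qquad \|A_s\|\le \tfrac12\int_0^{2\pi}|1+e^{-i\theta}|d\theta=4,
\]
the problem reduces, after justifying the identity $\tau(B_s)=2\Re\,\tau(QA_s)$, to proving $|\tau(QA_s)|\le 200\pi s\,\tau(\log^+(2e|Q|/s))$.

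The hypothesis $\nu_Q=\delta_0$ enters through the Haagerup--Schultz structure: $P_\theta$ is the Haagerup--Schultz projection of $T_\theta$ onto the disk $B(0,s)$, so $T_\theta P_\theta=P_\theta T_\theta P_\theta$ has operator norm at most $s$, and the invariance relation $(1-P_\theta)QP_\theta=-e^{i\theta}(1-P_\theta)Q^*P_\theta$ expresses the off-diagonal part of $QP_\theta$ in terms of the bounded operator $T_\theta P_\theta$. The bound is then obtained by decomposing $Q=Q^{(\le)}+Q^{(>)}$ with $Q^{(\le)}:=QE_{|Q|}[0,s/(2e)]$ bounded of norm at most $s/(2e)$ and $Q^{(>)}:=QE_{|Q|}(s/(2e),\infty)$ the tail. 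The tail part $Q^{(>)}$ is controlled via $\|A_s\|\le 4$ combined with direct singular-value estimates that dominate the tail mass by $\tau(\log^+(2e|Q|/s))$, while the bounded part $Q^{(\le)}$ is estimated using the cancellation in the angular averaging that defines $A_s$---this is the step where $\nu_Q=\delta_0$ is genuinely used, since the Brown measure of the truncation $Q^{(\le)}$ need not be $\delta_0$, and one must combine with the perturbation estimate relating $Q$ to $Q^{(\le)}$.

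\emph{The main obstacle} is to rigorously justify the identity $\tau(B_s)=2\Re\,\tau(QA_s)$ when $Q\notin\Lc_1(\Mcal,\tau)$ in general, and to separate the bounded and tail contributions cleanly. The approach is to truncate $Q$ by the bounded operators $Q_n:=QE_{|Q|}[0,n]$, which converge to $Q$ in the measure topology of \S\ref{subsec:II1}, to establish the identity and the desired inequality for each bounded $Q_n$ (where the trace interchange is elementary and both sides depend continuously on $Q_n$ via the resolvent formalism of \S\ref{subsec:Z}), and then to pass to the limit $n\to\infty$ using convergence of $H_\theta^{(n)}$ to $H_\theta$ and of the corresponding spectral projections in an appropriate functional sense.
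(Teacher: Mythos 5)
Your reduction to the self-adjoint operators $H_\theta=e^{-i\theta/2}Q+e^{i\theta/2}Q^*$ and the identity $\Re\,\Phi(s,Q+e^{i\theta}Q^*)=\cos(\theta/2)\,\tau(H_\theta E_{H_\theta}([-s,s]))$ is correct, but the proof has a genuine gap at the decisive point: you never supply the mechanism by which the hypothesis $\nu_Q=\delta_0$ produces the bound. The phrase ``the cancellation in the angular averaging that defines $A_s$ --- this is the step where $\nu_Q=\delta_0$ is genuinely used'' is a placeholder for the entire content of the lemma. What is needed is a quantitative link between the Brown measure of $Q$ and the angular average of the spectral data of the normal operators $Q+e^{i\theta}Q^*$; in the paper this is supplied by Kalton's subharmonic function $u$ together with the sub-mean-value inequality of Lemma~6.4 of \cite{DK-fourier}, which gives $0=\Psi_s(Q)\le\frac1{2\pi}\int_0^{2\pi}\Psi_s(Q+e^{i\theta}Q^*)\,d\theta$ for $\Psi_s(T)=\int(\Re(z)+su(z/s))\,d\nu_T(z)$; the error term $\Theta_s$ is then controlled pointwise by $100\log^+(e|z|/s)$ and hence by $\tau(\log^+)$ via Lemma 2.20 of \cite{HS1} and Lemma~\ref{lem:elemlog+}. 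Your proposal contains no analogue of this subharmonicity/Weyl-type step, and without it the claimed estimate on $|\tau(QA_s)|$ has no source.

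There is also a secondary but real problem with the reduction itself. The identity $\tau(B_s)=2\Re\,\tau(QA_s)$ requires pulling $Q$ through the $\theta$-integral, i.e., it implicitly uses $\tau(QA_s)=\frac12\int_0^{2\pi}(1+e^{-i\theta})\tau(QP_\theta)\,d\theta$; but $QP_\theta$ need not be trace class (only the combination $H_\theta P_\theta$ is bounded), so the individual terms $\tau(QP_\theta)$ are undefined. Your proposed fix by truncating $Q_n=QE_{|Q|}[0,n]$ does not repair this: the spectral projections $E_{H_\theta^{(n)}}([-s,s])$ are not stable under measure-topology perturbations of $H_\theta$ (spectral projections of self-adjoint operators are discontinuous at spectral values of positive mass and, more seriously here, small perturbations can move mass across the cut at $\pm s$), so the limit passage ``in an appropriate functional sense'' is not available. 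The paper's route avoids both difficulties by working entirely with integrals against Brown measures rather than with operator-valued integrals of spectral projections.
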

\begin{proof} Let $u\in C^2(\mathbb{C})$ be a subharmonic function
constructed by Kalton \cite{Kalton} (see Lemma 5.5.2 in \cite{LSZ} for details)
that satisfies
\begin{align}
|u(z)+\Re(z)1_{(1,\infty)}(|z|)|\leq 100\log^+(e|z|),\quad&(z\in\mathbb{C}) \label{dkh0} \\
u(z)=0,\quad&(|z|<1) \notag \\
u(z)=c_1+c_2\log(|z|)-\Re(z),\quad&(|z|>e). \notag
\end{align}
Thus,  $u$ is harmonic outside of the annulus with radii $1$ and $e$.

Fix $s>0$.
The function $z\mapsto\Re(z)+su(\frac{z}{s}),$ $z\in\mathbb{C}$ satisfies the 
conditions of Lemma 6.4 in \cite{DK-fourier},
except that it does not vanish in a neighborhood of $0$;
however, this is inessential because it is harmonic in a neighborhood of $0$.
For every $T\in \Lc_{\log}(\Mcal,\tau),$ we set
$$\Psi_s(T)=\int_{\mathbb{C}}\Big(\Re(z)+su(\frac{z}{s})\Big)d\nu_T(z),\quad \Theta_s(T)=\int_{\mathbb{C}}\Big(\Re(z)1_{(s,\infty)}(|z|)+su(\frac{z}{s})\Big)d\nu_T(z),$$
so that $\Psi_s(T)-\Theta_s(T)=\Re\Phi(s,T)$.
The integrals are convergent for every $T\in \Lc_{\log}(\Mcal,\tau)$.
Indeed, using~\eqref{dkh0}
and Lemma 2.20 in \cite{HS1}, we infer that
\begin{multline}\label{dkh4}
|\Theta_s(T)|\leq100s\int_{\mathbb{C}}\log^+(\frac{e|z|}{s})\,d\nu_T(z)
=100s\int_\Cpx\log^+(|w|)\,d\nu_{\frac{eT}s}(w) \\
\leq 100s\,\tau(\log^+(\frac{e|T|}{s})).
\end{multline}

Taking into account that $\nu_Q=\delta_0$ and using Lemma 6.4 in \cite{DK-fourier}, we infer that
$$0=\Psi_s(Q)\leq\frac1{2\pi}\int_0^{2\pi}\Psi_s(Q+e^{i\theta}Q^*)d\theta.$$
Thus,
\begin{equation}\label{dkh1}
0\leq \int_0^{2\pi}\Re(\Phi(s,Q+e^{i\theta}Q^*))d\theta+\int_0^{2\pi}\Theta_s(Q+e^{i\theta}Q^*)d\theta.
\end{equation}
By~\eqref{dkh4} and Lemma \ref{lem:elemlog+}, we have
\begin{equation}\label{dkh2}
|\Theta_s(Q+e^{i\theta}Q^*)|\leq 100s\,\tau(\log^+(\frac{e|Q+e^{i\theta}Q^*|}{s}))
\le 200s\,\tau(\log^+(\frac{2e|Q|}{s})).
\end{equation}
Combining \eqref{dkh1} and \eqref{dkh2}, we infer that
\begin{equation}\label{dkh3}
\int_0^{2\pi}\Re(\Phi(s,Q+e^{i\theta}Q^*))d\theta\geq -400\pi s\,\tau(\log^+(\frac{2e|Q|}{s})).
\end{equation}
Substituting $-Q$ instead of $Q$ in \eqref{dkh3} and combining with \eqref{dkh3}, we finish the proof.
\end{proof}

\begin{lem}\label{dk improved}
If $Q\in\Lc_{\log}(\Mcal,\tau)$ is such that $\nu_Q=\delta_0,$ then, for every $s>0,$ we have
$$|\tau(AE_{|A|}[0,s])|,\,|\tau(BE_{|B|}[0,s])|\leq300s\,\tau(\log^+\left(\frac{2e|Q|}s\right)),$$
where $A=\Re(Q)$ and $B=\Im(Q)$.
\end{lem}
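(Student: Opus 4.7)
As a preliminary reduction, the $B$ estimate follows from the $A$ estimate applied to the quasinilpotent operator $-iQ$: since $\nu_{-iQ}=\delta_0$, $|{-iQ}|=|Q|$, and $\Re(-iQ)=B$, the $A$ bound for $-iQ$ is exactly the $B$ bound for $Q$. Hence it suffices to prove the $A$ estimate.

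Since $A=\Re Q$ is self-adjoint with $\nu_A$ supported on $\mathbb{R}$, and $2A=Q+Q^*$, the push-forward relation $\nu_{2A}=(z\mapsto 2z)_*\nu_A$ yields the identity
$$
\tau\bigl(AE_{|A|}[0,s]\bigr)=\tfrac12\Phi(2s,\,Q+Q^*)=\tfrac12\Re\Phi(2s,\,Q+Q^*),
$$
where the second equality uses self-adjointness of $Q+Q^*$. Following the setup of the proof of Lemma~\ref{dk hardest estimate}, I would decompose
$$
\Re\Phi(2s,\,Q+Q^*)=\Psi_{2s}(Q+Q^*)-\Theta_{2s}(Q+Q^*)
$$
via Kalton's subharmonic function $u$, and control the error term $\Theta_{2s}(Q+Q^*)$ by combining the pointwise estimate $|\Theta_{2s}(T)|\le 200s\,\tau(\log^+(e|T|/(2s)))$ with Lemma~\ref{lem:elemlog+} applied to $T=Q+Q^*$. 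Together these yield $|\Theta_{2s}(Q+Q^*)|\le 400s\,\tau(\log^+(2e|Q|/s))$.

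The main obstacle is to control $|\Psi_{2s}(Q+Q^*)|$ \emph{pointwise} at $\lambda=1$, whereas the subharmonicity of $\lambda\mapsto\Psi_{2s}(Q+\lambda Q^*)$ underlying Lemma~\ref{dk hardest estimate} furnishes only an integrated bound over $|\lambda|=1$. I expect to upgrade this to a pointwise estimate by applying Lemma~\ref{dk hardest estimate} simultaneously to $Q$ and to $-Q$ (both quasinilpotent with the same $|{\cdot}|$) and exploiting the $\theta\mapsto -\theta$ symmetry coming from the self-adjointness of $Q+Q^*$ to separate the two integrated inequalities into components that can be independently controlled. Together with a direct analysis of $f(z)=\Re z+2s\,u(z/(2s))$ restricted to the real axis (the relevant integrand since $\nu_{Q+Q^*}\subset\mathbb{R}$), this should produce $|\Psi_{2s}(Q+Q^*)|\le 200s\,\tau(\log^+(2e|Q|/s))$. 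Combining with the bound on $\Theta_{2s}$ gives $|\Phi(2s,Q+Q^*)|\le 600s\,\tau(\log^+(2e|Q|/s))$, whence $|\tau(AE_{|A|}[0,s])|\le 300s\,\tau(\log^+(2e|Q|/s))$ as claimed. The hardest step will be the symmetry-based extraction of a pointwise bound on $\Psi_{2s}$ at $\lambda=1$ from the integrated bound of Lemma~\ref{dk hardest estimate}.
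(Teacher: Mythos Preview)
Your reduction to the $A$ estimate and the identity $\tau(AE_{|A|}[0,s])=\tfrac12\Phi(2s,Q+Q^*)$ are fine, and the $\Theta$ bound is routine. The gap is exactly where you flag it: extracting a \emph{pointwise} bound on $\Psi_{2s}(Q+Q^*)$ from the subharmonicity inequality. Subharmonicity only gives $\Psi_{2s}(Q)\le\frac{1}{2\pi}\int_0^{2\pi}\Psi_{2s}(Q+e^{i\theta}Q^*)\,d\theta$, i.e.\ a lower bound on the circle average; it says nothing about the value at the single point $\theta=0$. Your proposed fix does not help: applying the argument to $-Q$ yields $\Re\Phi(s,-Q-e^{i\theta}Q^*)=-\Re\Phi(s,Q+e^{i\theta}Q^*)$, so integrating gives back the same inequality, and the $\theta\mapsto-\theta$ substitution does not separate out $\theta=0$ either. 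Restricting $f$ to the real axis tells you the shape of the integrand but does not produce a bound on $\int f\,d\nu_{2A}$ without independent control of the tail, which is precisely what is missing.

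The paper avoids pointwise evaluation altogether. It keeps the full family $Q+e^{i\theta}Q^*$ and uses the approximate linearity of $\Phi$ from Proposition~6.2 of \cite{DK-fourier}: first $\Re\Phi(s,Q+e^{i\theta}Q^*)\approx\Phi(s,(1+\cos\theta)A+(\sin\theta)B)$ since $Q+e^{i\theta}Q^*$ is normal with real part $(1+\cos\theta)A+(\sin\theta)B$; then $\Phi(s,(1+\cos\theta)A+(\sin\theta)B)\approx(1+\cos\theta)\Phi(s,A)+(\sin\theta)\Phi(s,B)$, with all errors controlled by $s\,\tau(\log^+(2e|Q|/s))$. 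Now integrate in $\theta$: the $\sin\theta$ term drops out, the $(1+\cos\theta)$ term yields $2\pi\,\Phi(s,A)$, and the left side is bounded by Lemma~\ref{dk hardest estimate}. Thus $\Phi(s,A)$ is recovered as a Fourier coefficient of a function whose integral is already controlled --- no pointwise estimate is needed.
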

\begin{proof} A direct verification shows that $Q+e^{i\theta}Q^*$ is normal and that
$$\Re(Q+e^{i\theta}Q^*)=(1+\cos(\theta))A+\sin(\theta)B.$$
Setting $T=Q+e^{i\theta}Q^*$ in Proposition 6.2 (3) in \cite{DK-fourier} and letting $r\to0$, we infer
\begin{multline*}
|\Re(\Phi(s,Q+e^{i\theta}Q^*))-\Phi(s,(1+\cos(\theta))A+\sin(\theta)B)| \\
\leq s\,\tau(E_{|Q+e^{i\theta}Q^*|}((s,\infty)))
\le s\,\tau(\log^+\left(\frac{e|Q+e^{i\theta}Q^*|}s\right))\le 2s\,\tau(\log^+\left(\frac{2e|Q|}s\right)).
\end{multline*}
where for the penultimate inequality above, we have used $1_{(s,\infty)}(t)\le\log^+(\frac{et}s)$ and for the last inequality we have again used  Lemma~\ref{lem:elemlog+}.

From Proposition 6.2(1) of \cite{DK-fourier} and $\Phi(s,-T)=-\Phi(s,T)$, we get
\begin{align*}
\big|\Phi&(s,(1+\cos\theta)A+(\sin\theta)B)-\Phi(s,A)-\Phi(s,(\cos\theta)A)-\Phi(s,(\sin\theta)B)\big| \\
&\le8s\tau\big(E_{|(1+\cos\theta)A+(\sin\theta)B|}(s,\infty)+E_{|A|}(s,\infty)
\begin{aligned}[t]&+E_{|(\cos\theta)A|}(s,\infty) \\
&\quad+E_{|(\sin\theta)B|}(s,\infty)\big)\end{aligned} \\
&\le8s\tau\big(E_{|(1+\cos\theta)A+(\sin\theta)B|}(s,\infty)+2E_{|A|}(s,\infty)+E_{|B|}(s,\infty)\big)
\end{align*}
while  from Proposition 6.2(2) of \cite{DK-fourier} we have
\begin{align*}
\big|\Phi(s,(\cos\theta)A)-(\cos\theta)\Phi(s,A)\big|&\le s\tau(E_{|A|}(s,\infty)) \\
\big|\Phi(s,(\sin\theta)B)-(\sin\theta)\Phi(s,B)\big|&\le s\tau(E_{|B|}(s,\infty)).
\end{align*}
So we get
\begin{align*}
|\Phi&(s,(1+\cos(\theta))A+\sin(\theta)B)-(1+\cos(\theta))\cdot\Phi(s,A)-\sin(\theta)\cdot\Phi(s,B)| \\
&\le s\tau\big(8E_{|(1+\cos\theta)A+(\sin\theta)B|}(s,\infty)+17E_{|A|}(s,\infty)+9E_{|B|}(s,\infty)\big) \\
&\le s\tau\big(8\log^+\left(e\left|\frac{(1+e^{-i\theta})Q+(1+e^{i\theta})Q^*}{2s}\right|\right)
\begin{aligned}[t]&+17\log^+\left(e\left|\frac{Q+Q^*}{2s}\right|\right) \\
&\quad+9\log^+\left(e\left|\frac{Q-Q^*}{2s}\right|\right)\big)\end{aligned} \\
&\le88s\tau(\log^+\left(\frac{2e|Q|}s\right)),
\end{align*}
where for the last inequality we used Lemma~\ref{lem:elemlog+}.

Combining these inequalities, we infer
$$|\Re(\Phi(s,Q+e^{i\theta}Q^*))-(1+\cos(\theta))\cdot\Phi(s,A)-\sin(\theta)\cdot\Phi(s,B)|
\leq90s\,\tau(\log^+\left(\frac{2e|Q|}s\right)).$$
From this and Lemma \ref{dk hardest estimate} we get
$$\Big|\int_0^{2\pi}\Big((1+\cos(\theta))\cdot\Phi(s,A)+\sin(\theta)\cdot\Phi(s,B)\Big)d\theta\Big|
\leq600\pi s\,\tau(\log^+\left(\frac{2e|Q|}s\right)).$$
Computing the integral and taking into account that the Brown measure of a self adjoint operator is the trace of its spectral measure, we conclude
$$|\tau(AE_{|A|}[0,s])|=|\Phi(s,A)|\leq300s\,\tau(\log^+\left(\frac{2e|Q|}s\right)).$$
The other inequality follows by replacing $Q$ with $iQ.$
\end{proof}

The following (non-linear) operator is a close analogue of the operator defined in formula (9) of \cite{SZ-AiM}.
Given $T\in\Sc(\Mcal,\tau)$ we consider the function $\mathbf{S}(T)$ on $(0,1)$ defined by
\begin{equation}\label{bolds def}
\mathbf{S}(t,T)=\mu(t,T)\Big(1+\frac1t\tau(\log^+(\frac{|T|}{\mu(t,T)}))\Big),\quad t\in(0,1).
\end{equation}

For a decreasing sequence
 $a=(a(0),a(1),\ldots,a(p-1))$, we let $\sigma_2a$ be its
dilation by a factor of $2$,
namely, the sequence $\{a(\lfloor\frac k2\rfloor)\}_{k=0}^{p-1}$.
For a decreasing function $x:(0,1]\to[0,\infty)$, we let $\sigma_2x$ be the function $\sigma_2x(t)=x(t/2)$, $t\in(0,1)$.

\begin{lem}\label{sz similar} For every $T\in\Lc_{\log}(\Mcal,\tau),$ we have $\mathbf{S}(T)\prec\prec_{\log}4\sigma_2\mu(T).$
\end{lem}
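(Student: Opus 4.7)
The plan is to unfold the logarithmic submajorization into a scalar integral inequality, rewrite $\mathbf{S}(t,T)$ explicitly in terms of the singular value function $\mu(T)$, and then estimate the resulting integral via a dyadic recursion.

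\textbf{Reformulation.} Using the standard identity $\tau(\log^+(|T|/r)) = \int_0^1\log^+(\mu(s,T)/r)\,ds$ together with the fact that $\mu(s,T)\ge \mu(t,T)$ if and only if $s\le t$, one rewrites
$$
\mathbf{S}(t,T) = \mu(t,T)\bigl(1+G(t)\bigr),\qquad G(t) := \frac{1}{t}\int_0^t(g(s)-g(t))\,ds \ge 0,
$$
where $g(s):=\log\mu(s,T)$ is decreasing. Setting $a(t):=g(t/2)-g(t)\ge 0$ and subtracting $\int_0^u g(t)\,dt$ from both sides of the desired submajorization inequality, the claim $\mathbf{S}(T)\prec\prec_{\log}4\sigma_2\mu(T)$ becomes equivalent to
$$
\int_0^u \log(1+G(t))\,dt \le u\log 4 + \int_0^u a(t)\,dt \qquad\text{for every } u\in(0,1).
$$

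\textbf{Dyadic recursion.} Splitting the defining integral of $G(t)$ at $t/2$ and using monotonicity of $g$ yields the recursion
$$
G(t) \le \tfrac{1}{2}G(t/2) + a(t),\qquad\text{equivalently,}\qquad G(t) \le \max\bigl(G(t/2),\,2a(t)\bigr).
$$
Iterating---and using that $tG(t)=\tau(\log^+(|T|/\mu(t,T)))\to 0$ as $t\to 0^+$ by dominated convergence on the spectral measure of $|T|$, valid since $T\in\Lc_{\log}(\Mcal,\tau)$---one obtains the pointwise majorization $G(t)\le\sum_{k=0}^\infty 2^{-k}a(t/2^k)$. Combined with $\log(1+x)\le\log 2+\log^+ x$, the target reduces to
$$
\int_0^u\log^+G(t)\,dt \le u\log 2 + \int_0^u a(t)\,dt,
$$
which one proves by a dyadic stopping-time decomposition of $(0,u)$: at each $t$ one stops the recursion $G(t)\le\max(G(t/2),2a(t))$ at the first scale $k(t)$ where the $2a$ branch is taken, and then controls the accumulated error using the telescoping identity $\sum_{k=0}^{n-1}a(t/2^k)=g(t/2^n)-g(t)$, in the spirit of the analyses in~\cite{Kalton, DK-fourier, LSZ, SZ-AiM}.

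\textbf{Main obstacle.} The delicate step is the passage from the pointwise recursion to the integrated estimate. A na\"\i ve iteration of $G(t)\le\tfrac12 G(t/2)+a(t)$ under $\log^+$ produces coefficients $2^n$ in front of the ``boundary'' term $\int_0^{u/2^n}\log^+G$ that do not cancel, so one must argue via the $\max$-form of the recursion together with a dyadic stopping-time / maximal-function estimate. The constant $4=2^2$ is not sharp; it reflects one factor of $2$ from linearizing $\log(1+G)$ via $\log(1+x)\le\log 2+\log^+ x$, and one from the doubling factor $\sigma_2$.
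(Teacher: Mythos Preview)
Your reformulation is correct, the recursion $G(t)\le\tfrac12 G(t/2)+a(t)$ is valid, and the iterated pointwise bound $G(t)\le\sum_{k\ge0}2^{-k}a(t/2^k)$ does follow (the boundary term $2^{-n}G(t/2^n)$ vanishes for the reason you give). But you do not actually prove the integrated inequality
\[
\int_0^u\log^+G(t)\,dt\;\le\;u\log 2+\int_0^u a(t)\,dt,
\]
and you yourself label this the ``main obstacle.'' The stopping-time sketch does not close as stated: stopping at the first scale $k(t)$ where the $2a$-branch is taken yields $\log^+G(t)\le\log 2+\log^+a(t/2^{k(t)})$, and there is no evident way to bound $\int_0^u\log^+a(t/2^{k(t)})\,dt$ by $\int_0^u a(t)\,dt$ --- you are comparing $\log^+a$ against $a$ after an uncontrolled change of scale $t\mapsto t/2^{k(t)}$, and the telescoping identity you cite controls partial sums $\sum_j a(t/2^j)$, not a single term $\log^+a(t/2^{k(t)})$. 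So this is a genuine gap rather than a routine step you have chosen to suppress.

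The paper does not attempt a direct continuous argument. Instead it discretizes: set $a_n(k)=\mu(\tfrac{k+1}{2^n},T)$ and $b_n(k)=a_n(k)\bigl(1+\tfrac1{k+1}\sum_{i\le k}\log(a_n(i)/a_n(k))\bigr)$, invoke the already-proved discrete inequality $b_n\prec\prec_{\log}4\sigma_2 a_n$ (Theorem~24 of~\cite{SZ-AiM}), convert to an integral inequality for the associated step functions, and pass to the limit via Fatou. The hard combinatorial estimate is thus outsourced to~\cite{SZ-AiM} rather than re-derived. If you want a self-contained continuous proof you must actually carry out the maximal-type estimate; otherwise the efficient fix is to discretize and cite~\cite{SZ-AiM} as the paper does. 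One further small point: your reformulation silently uses that $\mathbf{S}(T)$ is nonincreasing (so that $\mu(\cdot,\mathbf{S}(T))=\mathbf{S}(\cdot,T)$); this is true but needs to be checked, either by differentiating $\log\mathbf{S}(t,T)$ directly or, as the paper does, by inheriting it from the discrete case via Lemma~21 of~\cite{SZ-AiM}.
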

\begin{proof} Fix $n\in\mathbb{N}$ and consider sequences $a_n,b_n$ of length $2^n$ defined by the setting
$$a_n(k)=\mu(\frac{k+1}{2^n},T),\quad b_n(k)=a_n(k)(1+\frac{1}{k+1}\sum_{i=0}^k\log(\frac{a_n(i)}{a_n(k)})),\quad 0\leq k<2^n.$$
Clearly, for each $n$ and $k>0$, $a_n(k)\le a_n(k-1)$, i.e., $a_n$ is decreasing.
Thus, $b_n(k)\ge0$ for all $k$.
It follows from Lemma 21 in \cite{SZ-AiM} that also $b_n$ is decreasing.
Define decreasing functions $x_n$ and $y_n$ on $(0,1]$ by setting
$$x_n(t)=a_n(k),\quad y_n(t)=b_n(k),\quad t\in\left(\frac{k}{2^n},\frac{k+1}{2^n}\right],\quad 0\leq k<2^n.$$
Clearly, $x_n\le\mu(T)$.
It is clear that $\lim_{n\to\infty}y_n=\mathbf{S}(T)$ almost everywhere.
Thus, $\mathbf{S}(T)$ is a decreasing function.
Since
$b_n(k)\leq\mathbf{S}(\frac{k+1}{2^n},T)$,
it follows that $0\le y_n\leq\mathbf{S}(T).$

We claim that
\begin{equation}\label{sz similar main}
\int_0^t\log(y_n(s))ds\leq 8\int_0^{t/2}\log(x_n(s)))ds,\quad t>0.
\end{equation}
Since both sides of the inequality~\eqref{sz similar main} are piecewise linear in $t$,
it suffices to verify \eqref{sz similar main} for $t=\frac{k+1}{2^n},$ $0\leq k<2^n$.
It follows from Theorem 24 in \cite{SZ-AiM} that $b_n\prec\prec_{\log}4\sigma_2a_n$.
Hence, we have
$$\int_0^t\log(y_n(s))ds=2^{-n}\sum_{i=0}^k\log(b_n(i))\leq 2^{2-n}\sum_{i=0}^k\log((\sigma_2a_n)(i)).$$
If $k=2m+1,$ then
$$\sum_{i=0}^k\log((\sigma_2a_n)(i))=2\sum_{i=0}^m\log(a_n(i))=2^{n+1}\int_0^{t/2}\log(x_n(s))ds.$$
If $k=2m+2,$ then
$$\sum_{i=0}^k\log((\sigma_2a_n)(i))=\sum_{i=0}^m\log(a_n(i))+\sum_{i=0}^{m+1}\log(a_n(i))=2^{n+1}\int_0^{t/2}\log(x_n(s))ds.$$
This proves the claim~\eqref{sz similar main}.

From the claim and the fact that $x_n\le\mu(T)$, we have
$$\int_0^t\log(y_n(s))ds\leq 8\int_0^{t/2}\log(\mu(s,T))ds,\quad t>0.$$
Invoking Fatou's Lemma we get
$$
\int_0^t\log(\mu(s,\mathbf{S}(T)))ds\le\liminf_{n\to\infty}\int_0^t\log(y_n(s))ds
\leq 8\int_0^{t/2}\log(\mu(s,T))ds,
$$
which proves the lemma.
\end{proof}

By the {\em distribution} of a normal operator, we mean
the trace composed with spectral measure of the operator,
(which, of course, coincides with its Brown measure).
\begin{lem}\label{normal phi lemma}
Let $\Mcal$ be a finite factor, let $\Bfr(\Mcal,\tau)\subseteq\Sc(\Mcal,\tau)$
be an $\Mcal$-bimodule and let $\varphi:\Bfr\to\Cpx$ be a trace.
If $N\in\mathfrak{B}(\Mcal,\tau)$ is a normal operator, then $\varphi(N)$ depends only on the distribution of $N$.
\end{lem}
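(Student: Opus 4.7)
The plan is to show that any two normal operators $N_1, N_2 \in \mathfrak{B}(\Mcal,\tau)$ with the same distribution are unitarily conjugate by some $U \in \Mcal$, and then to apply the defining trace property of $\varphi$ to conclude that $\varphi(N_1) = \varphi(N_2)$. The trace property forces $\varphi$ to be invariant under conjugation by unitaries of $\Mcal$ when applied to elements of $\mathfrak{B}$, which is exactly what is needed.

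For the unitary conjugacy step, I rely on the classical fact that two normal operators affiliated to a finite factor are unitarily equivalent in that factor if and only if they have the same distribution. When $\Mcal$ is a matrix factor this is just the spectral theorem for normal matrices. When $\Mcal$ is a II$_1$-factor, one can approximate each $N_i$ by a simple normal operator $\sum_k z_k^{(n)} E_{N_i}(B_k^{(n)})$ along a refining sequence of countable Borel partitions $\{B_k^{(n)}\}_k$ of $\Cpx$, build unitaries $U_n \in \Mcal$ matching the corresponding spectral projections (using that in a finite factor two projections of equal trace are Murray--von Neumann equivalent), and pass to a suitable strong-operator limit. The unbounded case reduces to the bounded one via the bijective Borel map $z \mapsto z/(1+|z|)$ of $\Cpx$ onto the open unit disk: the transformed operators $f(N_i) \in \Mcal$ are bounded normals with the same pushforward distribution, and a unitary conjugacy for $f(N_1), f(N_2)$ converts back to one for $N_1, N_2$ via Borel functional calculus.

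With such $U$ in hand, the conclusion is immediate. Since $\mathfrak{B}(\Mcal,\tau)$ is an $\Mcal$-bimodule we have $N_1 U^* \in \mathfrak{B}(\Mcal,\tau)$, so the trace property applied with $X = U \in \Mcal$ and $Y = N_1 U^* \in \mathfrak{B}(\Mcal,\tau)$ yields
$$\varphi(N_2) = \varphi\bigl(U \cdot (N_1 U^*)\bigr) = \varphi\bigl((N_1 U^*) \cdot U\bigr) = \varphi(N_1 U^* U) = \varphi(N_1).$$

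The main obstacle is really the limit extraction in the II$_1$-factor case: one must ensure that the approximating unitaries $U_n$ matching projections within the partition $\{B_k^{(n)}\}$ can be chosen compatibly along the refinement, so that their limit intertwines the full spectral resolutions of $N_1$ and $N_2$ rather than merely matching projections within each finite partition. This is the standard folk argument for unitary equivalence of normal operators in a II$_1$-factor; alternatively, it can be bypassed by appealing directly to structural results on singly-generated abelian subalgebras of finite factors, according to which a trace-preserving $*$-isomorphism between two such subalgebras is implemented by an inner unitary.
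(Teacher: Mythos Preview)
Your approach is correct and genuinely different from the paper's.  You argue that any two normal operators $N_1,N_2\in\Sc(\Mcal,\tau)$ with the same distribution are unitarily conjugate by some $U\in\Mcal$, and then apply the trace identity $\varphi(U\cdot N_1U^*)=\varphi(N_1U^*\cdot U)$.  The unitary equivalence claim is indeed a true and well-known structural fact about finite factors, and your reduction to the bounded case via $z\mapsto z/(1+|z|)$ is a clean way to handle unboundedness.  The limit-extraction step you flag is real but standard; one convenient way to organize it is to first match the atomic parts of the spectral measures (where unitary equivalence is immediate from equality of traces of eigenprojections), and on the diffuse part realize each $W^*(N_i)$ as $L^\infty[0,1]$ with Lebesgue trace, so that both $N_i$ become the same Borel function of a self-adjoint element with Lebesgue distribution.

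The paper takes a more elementary route that avoids proving full unitary equivalence.  It splits $N_i=f(N_i)+g(N_i)$ where $f(\lambda)=\lfloor\Re\lambda\rfloor+i\lfloor\Im\lambda\rfloor$ and $g(\lambda)=\{\Re\lambda\}+i\{\Im\lambda\}$.  The operators $f(N_i)$ have purely discrete spectrum, so their unitary equivalence is trivial (just match eigenprojections of equal trace).  The operators $g(N_i)$ are bounded, and the paper invokes the Fack--de la Harpe theorem that any trace on a II$_1$-factor restricts to a scalar multiple of $\tau$ on $\Mcal$, so $\varphi(g(N_i))=c_\varphi\tau(g(N_i))$ depends only on the distribution.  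This decomposition trades the analytic limit argument you need for a single external citation; your approach trades that citation for a slightly heavier (but self-contained in spirit) spectral-theoretic fact.
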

\begin{proof} By Theorem 2.3 in \cite{FH}, we have
$\varphi|_{\Mcal}=c_{\varphi}\tau|_{\Mcal}$ for a constant $c_\varphi$.

Suppose $N_1,N_2\in\mathfrak{B}(\Mcal,\tau)$ are normal operators and have the same distribution.
Let $\lfloor\cdot\rfloor$ be the integer part and let $\{\cdot\}$ be a fractional part.
We set $f(\lambda)=\lfloor\Re(\lambda)\rfloor+i\lfloor\Im(\lambda)\rfloor$
and $g(\lambda)=\{\Re(\lambda)\}+i\{\Im(\lambda)\},$ $\lambda\in\mathbb{C}$.
Clearly, $N_1=f(N_1)+g(N_1)$ and $N_2=f(N_2)+g(N_2)$.
By the preceding paragraph,
$$\varphi(N_1)=\varphi(f(N_1))+c_{\varphi}\tau(g(N_1)),\quad \varphi(N_2)=\varphi(f(N_2))+c_{\varphi}\tau(g(N_2)).$$
Normal operators $f(N_1)$ and $f(N_2)$ have only discrete spectrum and have the same distribution. Since $\Mcal$ is a factor, these operators are unitarily equivalent.
Thus, $\varphi(f(N_1))=\varphi(f(N_2))$.
Since $g(N_1)$ and $g(N_2)$ have the same distribution, we immediately see $\tau(g(N_1))=\tau(g(N_2))$.
This concludes the proof.
\end{proof}

\begin{proof}[Proof of Theorem \ref{spectral trace thm}]
Let $T=N+Q$ be the decomposition realized in Theorem \ref{decomposition theorem}
in the larger tracial von Neumann algebra $(\Mcal_2,\tau_2)$.
Thus, $N$ is normal with $\nu_N=\nu_T$ and $\nu_Q=\delta_0$.
By enlarging $\Mcal_2$ if necessary
(e.g., by taking the free product of $\Mcal_2$ with a diffuse tracial von Neumann algebra~\cite{D94})
we may assume $\Mcal_2$ is a II$_1$-factor.
We consider the extension of $\varphi$ to $\Bfr(\Mcal_2,\tau_2)$ as described in Theorem~\ref{ks extension thm}.

Since $\nu_N=\nu_T,$ it follows from Lemma 2.20 in \cite{HS1} that
$$\tau(\log^+(\frac{|N|}{t}))\leq\tau(\log^+(\frac{|T|}{t})),\quad t>0.$$
For a given $u\in(0,1),$ set $t=\mu(u,T).$ We have
\begin{multline*}
\int_0^u\log(\frac{\mu(s,N)}{t})ds
\le\int_0^u\log^+(\frac{\mu(s,N)}{t})ds
\leq\tau(\log^+(\frac{|N|}{t})) \\
\leq\tau(\log^+(\frac{|T|}{t}))=\int_0^u\log(\frac{\mu(s,T)}{t})ds,
\end{multline*}
where the equality above is becuase $\mu(u,T)=t$.
Hence, $N\prec\prec_{\log}T.$
Thus, since $T\in\mathfrak{B}(\Mcal,\tau)$, we have $N\in\mathfrak{B}(\Mcal_2,\tau_2)$,
because the bimodule is assumed to be closed with respect to logarithmic submajorization.
Hence, $Q=T-N\in\mathfrak{B}(\Mcal_2,\tau_2).$

We claim that $\nu_Q=\delta_0$ implies $\varphi(Q)=0$.
Set $A=\Re(Q)$ and $B=\Im(Q)$.
Note that
$$\mu(t,A)=\mu(t,\frac{Q+Q^*}{2})\leq\frac12\mu(\frac{t}{2},Q)+\frac12\mu(\frac{t}{2},Q^*)=\mu(\frac{t}{2},Q)\leq\mu(\frac{t}{2},2eQ).$$
Setting $s=\mu(\frac{t}{2},2eQ)$ in Lemma \ref{dk improved}, we arrive at
$$
|\tau(AE_{|A|}[0,s)|\leq600s\tau(\log^+(\frac{2e|Q|}{s}))
\stackrel{\eqref{bolds def}}{\leq}300t\mathbf{S}(\frac{t}{2},2eQ).$$
Thus,
\begin{multline*}
|\tau(AE_{|A|}[0,\mu(t,A)])|\leq|\tau(AE_{|A|}[0,\mu(\frac{t}{2},2eQ)])|+|\tau(AE_{|A|}(\mu(t,A),\mu(\frac{t}{2},2eQ)])| \\
\leq 300t\mathbf{S}(\frac{t}{2},2eQ)+\mu(\frac{t}{2},2eQ)\tau(E_{|A|}(\mu(t,A),\mu(\frac{t}{2},2eQ)]) \\
\leq 300t\mathbf{S}(\frac{t}{2},2eQ)+t\mu(\frac{t}{2},2eQ)\leq301t\mathbf{S}(\frac{t}{2},2eQ),
\end{multline*}
where the last inequality follows from the definition of the operator $\mathbf{S}$.
Taking into account that $Q\in\mathfrak{B}(\Mcal_2,\tau_2)$ and that bimodule $\mathfrak{B}(\Mcal_2,\tau_2)$ is closed with respect to logarithmic submajorization
we infer from Lemma \ref{sz similar} that $\mathbf{S}(2eQ)\in \mathfrak{B}(\Mcal_2,\tau_2)$
(or, rather, that this is the singular number function of an element of $\mathfrak{B}(\Mcal_2,\tau_2)$).
It follows now from Theorem 4.6 in \cite{DK-fourier} that $\Re(Q)=A\in[\mathfrak{B}(\Mcal_2,\tau_2),\Mcal_2].$ Thus, $\varphi(\Re(Q))=0.$ Similarly, we have $\varphi(\Im(Q))=0.$
Thus, $\varphi(Q)=0$.

It follows from above that $\varphi(T)=\varphi(N)$.
By Lemma \ref{normal phi lemma}, $\varphi(N)$ depends on the spectral measure of $N,$ that is, on the Brown measure of $T$.
\end{proof}

We prove the following result in order to distinguish Theorem~\ref{spectral trace thm} from the results of~\cite{DK-fourier}.

\begin{lem} There are plenty of bimodules which are closed with respect to logarithmic submajorization and which fail to be geometrically stable in the sense of \cite{DK-fourier}.
\end{lem}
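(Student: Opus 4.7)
The strategy is to exhibit $\Lc_{\log}(\Mcal,\tau)$ itself as a first example, and then produce plenty more via the noncommutative $L_p$-subspaces.

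First, I would verify that $\mathfrak{B}(\Mcal,\tau) := \Lc_{\log}(\Mcal,\tau)$ is closed under logarithmic submajorization. If $g \prec\prec_{\log} f$ and $f = \mu(T)$ with $T \in \Lc_{\log}(\Mcal,\tau)$, let $t_0 = \sup\{s \in (0,1) : g(s) > 1\}$. Since $g$ and $f$ are decreasing, one has
$$\int_0^1 \log^+(g(s))\,ds = \int_0^{t_0}\log(g(s))\,ds \le \int_0^{t_0}\log(f(s))\,ds \le \int_0^1 \log^+(f(s))\,ds < \infty,$$
so every admissible $g$ is the singular-value function of an operator in $\Lc_{\log}(\Mcal,\tau)$.

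Second, I would verify that $\Lc_{\log}(\Mcal,\tau)$ is not geometrically stable in the sense of \cite{DK-fourier}, for which the relevant feature is that geometric stability entails closure under a rescaling $\mu(t) \mapsto \mu(t^\alpha)$ on singular-value functions. Taking $A$ self-adjoint in a diffuse subalgebra of the II$_1$-factor $\Mcal$ with $\mu(t,A) = \exp(t^{-1/2})$, realized via continuous functional calculus, one has $A \in \Lc_{\log}$ because $\int_0^1 \log(1+\exp(t^{-1/2}))\,dt \sim \int_0^1 t^{-1/2}\,dt < \infty$; but an operator with singular-value function $\mu(t^2,A) = \exp(t^{-1})$ satisfies $\int_0^1 \log(1+\exp(t^{-1}))\,dt = \infty$ and so escapes $\Lc_{\log}$, violating the claimed stability.

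For plenty of further examples, I would consider the noncommutative $L_p$-spaces $L_p(\Mcal,\tau)$ for $p > 0$. Each is an $\Mcal$-bimodule, each is contained in $\Lc_{\log}(\Mcal,\tau)$, and each is closed under Hardy--Littlewood submajorization (hence under the stronger logarithmic submajorization). None, however, is geometrically stable: for every $p$ one finds $A \in L_p$ with $\mu(t,A) = t^{-1/(2p)}$ but the $\mu \mapsto \mu(\cdot^2)$ geometric rescaling produces singular values $t^{-1/p} \notin L_p$. The main obstacle is pinning down the precise definition of geometric stability in \cite{DK-fourier} and verifying that the geometric dilation $\mu(t) \mapsto \mu(t^\alpha)$ is indeed the relevant operation; once this is confirmed the arguments above yield the required examples.
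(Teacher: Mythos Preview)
Your proposal has a genuine gap: you have guessed the wrong notion of geometric stability. In \cite{DK-fourier}, a bimodule is \emph{geometrically stable} if it is closed under the multiplicative Cesaro (geometric mean) operation
\[
\mu(t,A)\;\longmapsto\;\exp\Big(\frac{1}{t}\int_0^t\log\mu(s,A)\,ds\Big),
\]
not under the power-type dilation $\mu(t)\mapsto\mu(t^\alpha)$. With the correct definition, your $L_p$ examples collapse entirely: the continuous Carleman--Knopp inequality
\[
\int_0^1 \exp\Big(\frac{1}{t}\int_0^t\log f(s)\,ds\Big)\,dt\;\le\;e\int_0^1 f(t)\,dt
\]
shows that the geometric-mean operator is bounded on $L_1$, and the $p$-th power trick then gives boundedness on every $L_p$, $p>0$. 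So every $L_p$ \emph{is} geometrically stable, and you have produced no family of examples.

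Your first example $\Lc_{\log}(\Mcal,\tau)$ is salvageable in principle---it really is not geometrically stable---but your witness $\mu(t,A)=\exp(t^{-1/2})$ fails even there: under the correct operation one gets $\log\mu(t,B)=\frac{1}{t}\int_0^t s^{-1/2}\,ds=2t^{-1/2}$, which is still integrable, so $B\in\Lc_{\log}$. A working witness requires $\log\mu(A)$ to lie in $L_1(0,1)$ but outside the range of the Cesaro operator on $L_1$, e.g.\ $\log\mu(t,A)=\frac{1}{t(\log(e/t))^2}$, which yields $\log\mu(t,B)=\frac{1}{t\log(e/t)}\notin L_1$.

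The paper's proof takes a different and more systematic route: it starts from any bimodule $\mathfrak{B}_0$ that is closed under Hardy--Littlewood submajorization but not Cesaro-invariant (certain Lorentz spaces $\mathfrak{M}_\psi$ furnish plenty of these), and then sets $\mathfrak{B}=\{A:\log^+|A|\in\mathfrak{B}_0\}$. The point is that closure of $\mathfrak{B}$ under $\prec\prec_{\log}$ corresponds exactly to closure of $\mathfrak{B}_0$ under $\prec\prec$, while geometric stability of $\mathfrak{B}$ corresponds to Cesaro-invariance of $\mathfrak{B}_0$; so the construction converts the abundant supply of non-Cesaro-invariant, Hardy--Littlewood-closed spaces into the desired examples.
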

\begin{proof} Consider bimodule $\mathfrak{B}_0$ which is closed with respect to the Hardy-Littlewood submajorization and which fails to be Cesaro-invariant. There is a large class of such bimodules.

An easy example can be found among Lorentz spaces.
$$\mathfrak{M}_{\psi}(\mathcal{M},\tau)=\Big\{A\in\mathcal{S}(\mathcal{M},\tau)\;\Big|\; \sup_{t\in(0,1)}\frac{1}{\psi(t)}\int_0^t\mu(s,A)ds<\infty\Big\}.$$
It is immediate that $\mathfrak{M}_{\psi}$ is closed with respect to the Hardy-Littlewood submajorization. However, the condition
$$\liminf_{t\to0}\frac{t\psi'(t)}{\psi(t)}=0$$
guarantees the failure of condition (6.24) in Chapter II in \cite{KPS}. By Theorem II.6.6 in \cite{KPS}, we have that $\mathfrak{M}_{\psi}$ is not Cesaro invariant.

We now set
$$\mathfrak{B}(\mathcal{M},\tau)=\{A\in\mathcal{S}(\mathcal{M},\tau):\ \log^+(|A|)\in\mathfrak{B}_0(\mathcal{M},\tau)\}.$$
Since
$$\mu(A+B)\leq\sigma_2\mu(A)+\sigma_2\mu(B)\leq 2\max\{\sigma_2\mu(A),\sigma_2\mu(B)\},$$
it follows that
$$\log_+(\mu(A+B))\leq 2\log_+(\sigma_2\mu(A))+2\log_+(\sigma_2\mu(B)).$$
This proves that $\mathfrak{B}(\mathcal{M},\tau)$ is a linear subspace in $\mathcal{S}(\mathcal{M},\tau).$ It is now immediate that $\mathfrak{B}(\mathcal{M},\tau)$ is a bimodule over $\mathcal{M}.$

We claim that $\mathfrak{B}(\mathcal{M},\tau)$ is closed with respect to logarithmic submajorization. Indeed, if $A\in \mathfrak{B}(\mathcal{M},\tau)$ and $B\in \mathcal{S}(\mathcal{M},\tau)$ are such that $B\prec\prec_{\log}A,$ then $\log^+(|B|)\prec\prec\log^+(|A|).$ Since $\mathfrak{B}_0(\mathcal{M},\tau)$ is closed with respect to the Hardy-Littlewood submajorization and since $\log^+(|A|)\in\mathfrak{B}_0(\mathcal{M},\tau),$ it follows that also $\log^+(|B|)\in\mathfrak{B}_0(\mathcal{M},\tau).$ This proves the claim.

Now, take $x=\mu(x)\in\mathfrak{B}_0$ such that $Cx\notin \mathfrak{B}_0.$ Taking $A\in\mathfrak{B}(\mathcal{M},\tau)$ such that $\mu(A)=\exp(x)$ and setting
$$\mu(t,B)=\exp(\frac1t\int_0^t\log(\mu(s,A))ds),\quad t\in(0,1),$$
we conclude that $\log_+(|B|)=\log(|B|)\notin\mathfrak{B}_0(\mathcal{M},\tau).$ Thus, $\mathfrak{B}(\mathcal{M},\tau)$ is not geometrically stable.
\end{proof}

\appendix

\section{Hyperinvariant subspaces of unbounded operators}
\label{app:hyperinv}

The Haagerup-Schultz projections of a {\em bounded} operator $T\in\Mcal\subseteq B(\HEu)$, as found by Haagerup and Schultz in~\cite{HS2},
are, as far as we know,
better than those found in Section~\ref{hs construct} for $T\in\Lc_{\log}(\Mcal,\tau)$.
Indeed, Haagerup and Schultz's operators lie in $\Mcal$ itself, they are unique and they are $T$-hyperinvariant, namely,
they are invariant under all operators in $B(\HEu)$ that commute with $T$.
A key aspect of Haagerup and Schultz's proof of their result was an alternative characterization of their Haagerup--Schultz projections
corresponding to closed balls in the complex plane, whereby it is clear that the projections are $T$-hyperinvariant.
In this appendix, we explore the notion of hyperinvariant subspaces for unbounded operators on Hilbert space,
in order to point out in what way we are not (yet) able to extend the full results of Haagerup an Schultz to unbounded operators.
We have more questions than results of note.

Let $T$ be a closed, possibly unbounded operator on Hilbert space $\HEu$.
Let $\Dc(T)$ denote the domain of $T$ and let $T=V|T|$ be the polar decomposition of $T$.
The {\em von Neumann algebra generated by $T$} is the von Neumann algebra generated by the set consisting of $V$ together with
all the spectral projections of $|T|$;
we denote this von Neumann algebra by $W^*(T)$.
We say that a bounded operator $S$ on $\HEu$ {\em permutes} with $T$ (this seems to be standard, though old-fashioned, notation)
if $ST\subseteq TS$, namely,
if $S(\Dc(T))\subseteq\Dc(T)$ and $TSx=STx$ for all $x\in\Dc(T)$.
For future use, we make an easy (and well known) observation:
\begin{lem}\label{lem:permutes}
If a bounded operator $S$ permutes with $T$, then $S$ permutes with $T^n$ for every $n\in\Nats$.
\end{lem}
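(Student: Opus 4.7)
The plan is to proceed by induction on $n$, taking care to track domains at each stage, since the inclusion $ST^n\subseteq T^nS$ encodes both a domain containment $S(\mathcal{D}(T^n))\subseteq\mathcal{D}(T^n)$ and a pointwise identity $T^nSx=ST^nx$ for $x\in\mathcal{D}(T^n)$. The base case $n=1$ is exactly the hypothesis that $S$ permutes with $T$.

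For the inductive step, assume $ST^n\subseteq T^nS$ and fix $x\in\mathcal{D}(T^{n+1})$. By definition this means $x\in\mathcal{D}(T)$ and $Tx\in\mathcal{D}(T^n)$. Since $x\in\mathcal{D}(T)$, the hypothesis $ST\subseteq TS$ gives $Sx\in\mathcal{D}(T)$ and $TSx=STx$. Since $Tx\in\mathcal{D}(T^n)$, the inductive hypothesis yields $S(Tx)\in\mathcal{D}(T^n)$ and $T^nSTx=ST^nTx=ST^{n+1}x$. Combining, $TSx=STx\in\mathcal{D}(T^n)$, which is exactly the statement $Sx\in\mathcal{D}(T^{n+1})$, and then
\[
T^{n+1}Sx \;=\; T^n(TSx) \;=\; T^n(STx) \;=\; ST^{n+1}x,
\]
completing the inductive step.

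There is no real obstacle here; the only care required is bookkeeping of domains, which is routine once the definition $\mathcal{D}(T^{n+1})=\{y\in\mathcal{D}(T):Ty\in\mathcal{D}(T^n)\}$ is unpacked. The result is recorded here for later use because it underlies the standard fact that any operator permuting with $T$ also permutes with the spectral projections of $|T|$, which is what links the notion of a permuting operator to membership in the commutant of $W^*(T)$ and hence to the discussion of hyperinvariance that motivates this appendix.
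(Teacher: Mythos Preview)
Your proof is correct and follows essentially the same inductive argument as the paper's; the only cosmetic difference is that you factor $T^{n+1}=T^n\cdot T$ (applying the base hypothesis first, then the inductive hypothesis) whereas the paper factors $T^n=T\cdot T^{n-1}$ (inductive hypothesis first, then base), but the domain bookkeeping is identical. Your closing editorial remark is slightly off---the paper actually invokes this lemma directly in showing that the subspaces $E(T,r)$ and $F(T,r)$ are $T$-hyperinvariant, not via spectral projections---but this does not affect the proof itself.
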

\begin{proof}
We use induction on $n$.
The case $n=1$ is a tautology.
Assume $n\ge2$.
If $x\in\Dc(T^n)$, then $x\in\Dc(T^{n-1})$ and by the induction hypothesis $Sx\in\Dc(T^{n-1})$ and $T^{n-1}Sx=ST^{n-1}x$.
But $T^{n-1}x\in\Dc(T)$, so $ST^{n-1}x\in\Dc(T)$ and
$$
T(T^{n-1}Sx)=TST^{n-1}x=ST(T^{n-1}x).
$$
Thus, $Sx\in\Dc(T^n)$ and $T^nSx=ST^nx$.
\end{proof}

\begin{prop}\label{prop:W*(T)'}
If $S\in B(\HEu)$ belongs to the commutant of $W^*(T)$, then $S$ permutes with $T$.
\end{prop}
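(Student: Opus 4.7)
The plan is to reduce the unbounded statement to a commutation identity between $S$ and the positive self-adjoint part $|T|$, and then use that $V \in W^*(T)$ commutes with $S$ to conclude.

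First I would note that since $S$ lies in $W^*(T)'$, $S$ commutes with every bounded Borel functional calculus of $|T|$ (since all spectral projections of $|T|$ are in $W^*(T)$), and with $V$. In particular, letting $E_n = 1_{[0,n]}(|T|)$ and writing $f_n(t)=t\,1_{[0,n]}(t)$, both $E_n$ and $|T|E_n = f_n(|T|)$ are bounded elements of $W^*(T)$, so they commute with $S$.

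Next, take $x \in \Dc(T) = \Dc(|T|)$. By standard spectral theory, $E_n x \to x$ and $|T|E_n x \to |T|x$ as $n\to\infty$. Applying $S$ to the first convergence and using $SE_n = E_n S$ yields $E_n(Sx)\to Sx$, while
\[
|T|E_n(Sx) = S(|T|E_n x) \longrightarrow S|T|x.
\]
Since $|T|$ is closed, it follows that $Sx \in \Dc(|T|)=\Dc(T)$ and $|T|Sx = S|T|x$. Then, using that $V$ commutes with $S$,
\[
TSx = V|T|Sx = VS|T|x = SV|T|x = STx,
\]
which is exactly the assertion that $S$ permutes with $T$.

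There is no serious obstacle here; the only point that requires care is the justification that $|T|E_n$ lies in $W^*(T)$ (bounded Borel functional calculus of a self-adjoint operator stays inside the von Neumann algebra it generates) and the appeal to closedness of $|T|$ to upgrade the bounded commutation $S(|T|E_n) = (|T|E_n)S$ to the unbounded commutation $S|T| \subseteq |T|S$. The argument does not require $T$ to be densely defined with nice spectrum beyond what a polar decomposition provides, nor does it use any traciality, so it is purely Hilbert-space theoretic.
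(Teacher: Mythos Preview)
Your proof is correct and follows essentially the same approach as the paper's own proof: both use that $S$ commutes with the bounded truncations $|T|E_{|T|}([0,r])$ to deduce $Sx\in\Dc(|T|)$ and $|T|Sx=S|T|x$, and then finish via commutation with $V$. The only cosmetic difference is that the paper phrases the domain membership via the characterization ``$x\in\Dc(|T|)$ iff $\lim_{r\to\infty}|T|E_{|T|}([0,r])x$ converges,'' whereas you invoke closedness of $|T|$ directly---these are equivalent.
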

\begin{proof}
A vector
$x\in\HEu$ belongs to $\Dc(T)$ if and only if $\lim_{r\to\infty}|T|E_{|T|}([0,r])x$ converges in $\HEu$, where $E_{|T|}([0,r])$ denotes
the spectral projection of $|T|$ for the interval $[0,r]$, and then the value of the limit is $|T|x$.
Since $S$ commutes with each $|T|E_{|T|}([0,r])$, we have, for $x\in\Dc(T)$,
$|T|E_{|T|}([0,r])Sx=S|T|E_{|T|}([0,r])x$, and, thus $Sx\in\Dc(T)$ and $|T|Sx=S|T|x$.
Since $S$ also commutes with $V$, we have $TSx=STx$.
\end{proof}

\begin{rem}
Let $\Gfr_T$ denote the graph of $T$:
$$
\Gfr_T=\{(x,Tx)\mid x\in\Dc(T)\}\subseteq\HEu\oplus\HEu.
$$
Let $S\in B(\HEu)$.
Then $T$ permutes with $S$ iff $S\oplus S$ leaves $\Gfr_T$ invariant.
\end{rem}

\begin{defi}
Let $\Vc$ be a closed subspace of $\HEu$.
We say that $\Vc$ is {\em $T$-hyperinvariant} if $\Vc$ is invariant under every bounded operator $S$ that permutes with $T$.
\end{defi}

\begin{prop}
Let $\Vc$ be a $T$-hyperinvariant subspace and let $P$ be the orthogonal projection from $\HEu$ onto $\Vc$.
Then $P\in W^*(T)$.
\end{prop}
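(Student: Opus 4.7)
The plan is to reduce the statement to the double commutant theorem: since $W^*(T)$ is a von Neumann algebra, $W^*(T) = W^*(T)''$, so it suffices to show that $P$ commutes with every element of the commutant $W^*(T)'$.

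First I would note that $W^*(T)'$, being a commutant, is itself a von Neumann algebra and hence is $*$-closed. Therefore, for any $S \in W^*(T)'$, also $S^* \in W^*(T)'$. By Proposition \ref{prop:W*(T)'}, both $S$ and $S^*$ permute with $T$. Since $\Vc$ is $T$-hyperinvariant by hypothesis, both $S$ and $S^*$ leave $\Vc$ invariant. The invariance $S\Vc \subseteq \Vc$ translates to $SP = PSP$, while $S^*\Vc \subseteq \Vc$ yields $S^*P = PS^*P$; taking adjoints of the latter gives $PS = PSP$. Combining these two identities produces $SP = PS$.

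As $S \in W^*(T)'$ was arbitrary, $P$ commutes with every element of $W^*(T)'$, so $P \in W^*(T)'' = W^*(T)$. There is no real obstacle here: the substance is entirely in Proposition \ref{prop:W*(T)'}, which supplies the bridge between the algebraic commutation condition and the more geometric permutation relation for the unbounded operator $T$. Once that proposition is available, the present statement is a one-line consequence of the $*$-closedness of $W^*(T)'$ combined with the double commutant theorem.
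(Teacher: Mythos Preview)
Your proof is correct and follows essentially the same argument as the paper's own proof: use Proposition~\ref{prop:W*(T)'} to get that each $S\in W^*(T)'$ (and its adjoint) permutes with $T$, invoke $T$-hyperinvariance to obtain $SP=PSP$ and $PS=PSP$, conclude $SP=PS$, and apply the double commutant theorem.
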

\begin{proof}
By Proposition~\ref{prop:W*(T)'}, for every $S\in W^*(T)'$, we have $SP=PSP$.
Taking adjoints, we have $PS^*=PS^*P$.
Since $W^*(T)$ is closed under taking adjoints, we conclude $SP=PS$ for every $S\in W^*(T)'$.
Thus, $P\in W^*(T)''=W^*(T)$.
\end{proof}

In~\cite{AV03}, Albrecht and Vasilescu defined a closed subspace $\Lc$ of $\HEu$ to be {\em invariant} under $T$
(and we will also say $T$-invariant for this) if 
$$
\Dc_0(T,\Lc):=\Dc(T)\cap\Lc
$$
is dense in $\Lc$ and $T(\Dc_0(T,\Lc))\subseteq\Lc$.
They observed that in this case, the restriction of $T$ to $\Dc_0(T,\Lc)$ yields a closed operator on $\Lc$.
They also defined $\Lc$ to be {\em quasi-invariant} under $T$ if
$$
\Dc(T;\Lc):=\{x\in\Dc_0(T,\Lc)\mid Tx\in\Lc\}
$$
is dense in $\Lc$.
They observed that in this case, the restriction of $T$ to $\Dc(T,\Lc)$ yields a closed operator on $\Lc$.

Clearly, if $\Lc$ is invariant under $T$, then it is quasi-invariant, and the reverse implication holds
if $T$ is bounded.
Albrecht and Vasilescu found an easy example of an unbounded $T$ and a $T$-quasi-invariant subspace that is not $T$-invariant.

\begin{ques}
If $P$ is $T$-hyperinvariant, must the subspace $P\HEu$ be $T$-invariant (or $T$-quasi-invariant)?
What if $W^*(T)$ is a finite von Neumann algebra?
\end{ques}

\begin{prop}
Let $\Mcal\subseteq B(\HEu)$ be a finite von Neumann algebra.
If $T$ is an unbounded operator, if $T$ is affiliated to $\Mcal$ and if $p\in\Mcal$ is a projection, then $p(\HEu)\cap\Dc(T)$ is dense in $p(\HEu)$.
\end{prop}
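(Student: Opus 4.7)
The plan is to approximate vectors in $p\HEu$ by vectors lying in $q\HEu$ for certain projections $q\in\Mcal$ with $q\le p$ and $q\HEu\subseteq\Dc(T)$.

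First, I would use that $T$ is affiliated to $\Mcal$ to obtain the spectral projections $E_n:=E_{|T|}([0,n])$ of $|T|$ inside $\Mcal$, satisfying $E_n\nearrow 1$ strongly. On each subspace $E_n\HEu$ the operator $|T|$ is bounded (by $n$), hence $E_n\HEu\subseteq\Dc(|T|)=\Dc(T)$. Next, I would form the meets $q_n:=p\wedge E_n\in\Mcal$. Because $q_n\le p$ and $q_n\le E_n$, we have $q_n\HEu=p\HEu\cap E_n\HEu\subseteq p\HEu\cap\Dc(T)$.

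The main step is to prove $q_n\nearrow p$ strongly. The sequence $(q_n)$ is increasing and bounded above by $p$, so it converges strongly to some projection $q\le p$; what needs to be shown is $q=p$. This is where the finiteness of $\Mcal$ enters. By the Kaplansky (parallelogram) formula, $p-q_n\sim p\vee E_n-E_n\le 1-E_n$, so applying the faithful normal center-valued trace $\tau_Z$ of $\Mcal$ and using $\tau_Z(1-E_n)\searrow 0$ by normality, we get $\tau_Z(p-q_n)\to 0$. Passing to the limit via normality yields $\tau_Z(p-q)=0$, and faithfulness of $\tau_Z$ gives $q=p$. Consequently $\bigcup_n q_n\HEu$ is dense in $p\HEu$, and this union is contained in $p\HEu\cap\Dc(T)$, completing the proof.

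The only real obstacle is the strong convergence $q_n\nearrow p$; without the finiteness hypothesis on $\Mcal$ this step can fail (consider e.g.\ a type III factor), which is precisely why one must invoke the center-valued trace. In the tracial setting used throughout the rest of the paper one could of course replace $\tau_Z$ by the given faithful normal tracial state, making the argument slightly more elementary.
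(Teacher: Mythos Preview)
Your proof is correct and follows essentially the same route as the paper's: take spectral projections of $|T|$ to get projections with range inside $\Dc(T)$ and trace close to $1$, intersect with $p$, and use the Kaplansky formula together with the trace to see that these meets increase to $p$. Your use of the center-valued trace in place of a tracial state is a slight improvement, since the statement only assumes $\Mcal$ is finite (not necessarily countably decomposable), whereas the paper's proof opens by choosing a normal faithful tracial state.
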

\begin{proof}
Let $\tau$ be a normal, faithful tracial state on $\Mcal$ and let $k\in\Nats$.
There is a projection $q_k\in W^*(T)\subseteq\Mcal$ such that $q_k(\HEu)\subseteq\Dc(T)$ and $\tau(q_k)>1-\frac1k$.
Then $\tau(p\wedge q_k)>\tau(p)-\frac1k$.
This implies that $\bigcup_{k=1}^\infty (p\wedge q_k)(\HEu)$ is a subspace of $p(\HEu)\cap\Dc(T)$ and is dense in $p(\HEu)$.
\end{proof}

We now look at some subspaces that are similar to those described in Section~3 of~\cite{HS2}.

\begin{defi}
Let $\Mcal\subseteq B(\HEu)$ be a finite von Neumann algebra and let $T$ be an unbounded operator that is affiliated to $\Mcal$ and
let $r>0$.
\begin{enumerate}[{\rm (i)}]
\item Let $E(T,r)$ be the set of all $\xi\in\HEu$ such that there exists a sequence $(\xi_n)_{n=1}^\infty$ in $\HEu$ satisfying
\begin{equation}\label{eq:xin}
\xi_n\in\Dc(T^n),\qquad\lim_{n\to\infty}\|\xi_n-\xi\|=0,\qquad\limsup_{n\to\infty}\|T^n\xi_n\|^{1/n}\le r.
\end{equation}
\item Let $F(T,r)$ be the set of all $\eta\in\HEu$ such that there exists a sequence $(\eta_n)_{n=1}^\infty$ in $\HEu$ satisfying
$$
\eta_n\in\Dc(T^n),\qquad\lim_{n\to\infty}\|T^n\eta_n-\eta\|=0,\qquad\limsup_{n\to\infty}\|\eta_n\|^{1/n}\le \frac1r.
$$
\end{enumerate}
\end{defi}

\begin{prop}
The subspaces $E(T,r)$ and $F(T,r)$ are $T$-hyperinvariant.
\end{prop}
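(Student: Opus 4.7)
The plan has three ingredients: (i) verify that $E(T,r)$ and $F(T,r)$ are linear subspaces, (ii) show they are closed in $\HEu$ by a diagonal extraction, and then (iii) deduce hyperinvariance by applying any bounded operator $S$ permuting with $T$ to the approximating sequences, using Lemma~\ref{lem:permutes} to transfer permutation to every power $T^n$.

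For linearity, given $\xi,\xi'\in E(T,r)$ with sequences $(\xi_n)$ and $(\xi_n')$ as in~\eqref{eq:xin}, the sum $\xi_n+\xi_n'$ lies in $\Dc(T^n)$ (which is a linear subspace of $\HEu$), converges to $\xi+\xi'$, and satisfies
$$\|T^n(\xi_n+\xi_n')\|^{1/n}\le 2^{1/n}\max(\|T^n\xi_n\|^{1/n},\|T^n\xi_n'\|^{1/n}),$$
whose $\limsup$ is $\le r$. Scalar multiplication is similar, using $|\lambda|^{1/n}\to 1$. The argument for $F(T,r)$ is entirely analogous.

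For closedness of $E(T,r)$, suppose $\xi^{(k)}\to\xi$ with each $\xi^{(k)}\in E(T,r)$. For each $k$, choose an integer $m(k)$, with $m(k)<m(k+1)$, and vectors $\xi_n^{(k)}\in\Dc(T^n)$ for $n\ge m(k)$, such that $\|\xi_n^{(k)}-\xi^{(k)}\|<1/k$ and $\|T^n\xi_n^{(k)}\|^{1/n}\le r+1/k$; this is possible because the second condition in \eqref{eq:xin} involves a $\limsup$. Let $k(n)$ be the largest $k$ with $m(k)\le n$, so $k(n)\to\infty$, and set $\tilde\xi_n:=\xi_n^{(k(n))}$. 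Then $\tilde\xi_n\to\xi$ and $\limsup_n\|T^n\tilde\xi_n\|^{1/n}\le r$, so $\xi\in E(T,r)$. The same diagonal argument shows $F(T,r)$ is closed.

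For hyperinvariance, let $S\in B(\HEu)$ permute with $T$. By Lemma~\ref{lem:permutes}, $S$ permutes with $T^n$ for every $n$; so for any $\xi\in E(T,r)$ with approximating sequence $(\xi_n)$, we have $S\xi_n\in\Dc(T^n)$, $S\xi_n\to S\xi$, and
$$\|T^nS\xi_n\|^{1/n}=\|ST^n\xi_n\|^{1/n}\le\|S\|^{1/n}\|T^n\xi_n\|^{1/n},$$
which has $\limsup\le r$ since $\|S\|^{1/n}\to 1$ (the degenerate case $S=0$ is trivial). Thus $S\xi\in E(T,r)$. For $F(T,r)$, given $\eta$ with sequence $(\eta_n)$, the vectors $S\eta_n\in\Dc(T^n)$ satisfy $T^nS\eta_n=ST^n\eta_n\to S\eta$ and $\|S\eta_n\|^{1/n}\le\|S\|^{1/n}\|\eta_n\|^{1/n}$, whose $\limsup$ is $\le 1/r$; hence $S\eta\in F(T,r)$.

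The only subtle step is the diagonal extraction for closedness; the hyperinvariance itself is a routine consequence of Lemma~\ref{lem:permutes} once one exploits that $\|S\|^{1/n}\to 1$.
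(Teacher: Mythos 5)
Your argument for hyperinvariance is exactly the paper's: apply $S$ to the approximating sequence, use Lemma~\ref{lem:permutes} to commute $S$ past $T^n$, and absorb $\|S\|^{1/n}\to1$ into the $\limsup$. The paper dismisses closedness as "not difficult to see," whereas you supply the diagonal extraction explicitly (correctly); otherwise the two proofs coincide.
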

\begin{proof}
It is not difficult to see that the subspaces are closed.
Suppose $S$ is bounded and permutes with $T$.
Let $\xi\in E(T,r)$ and let $\xi_n\to\xi$ be as in~\eqref{eq:xin}.
Then, by Lemma~\ref{lem:permutes}, $S\xi_n\in\Dc(T^n)$.
We see
$\lim_{n\to\infty}\|S\xi_n-S\xi\|=0$ and
\begin{multline*}
\limsup_{n\to\infty}\|T^nS\xi_n\|^{1/n}=\limsup_{n\to\infty}\|ST^n\xi_n\|^{1/n}\le\limsup_{n\to\infty}\|S\|^{1/n}\,\|T^n\xi_n\|^{1/n} \\
=\limsup_{n\to\infty}\|T^n\xi_n\|^{1/n}\le r.
\end{multline*}
Thus $S\xi\in E(T,r)$.
This shows that $E(T,r)$ is $T$-hyperinvariant.
The case of $F(T,r)$ follows similarly easily.
\end{proof}

\begin{ques}
Are the subspaces $E(T,r)$ and $F(T,r)$ always $T$-invariant?
What if $W^*(T)$ is a finite von Neumann algebra?
\end{ques}

A positive answer to the above question could be useful in attempts to prove better properties of the Haagerup--Schultz projections
for elements $\Lc_{\log}(\Mcal,\tau)$, when $(\Mcal,\tau)$ is a finite von Neumann algebra.

\section{Lack of conditional expectations}
\label{sec:NoCondExp}

The purpose of this appendix is to show that the extraordinary length gone through
in Section~\ref{sec:FauxExp} to construct something like the conditional expectation of a very special unbounded operator
onto the relative commutant of an abelian algebra,
is in some sense justified.
We do this by showing that there cannot in general be a conditional expectation from $\Lc_{\log}(\Mcal,\tau)$
onto $\Lc_{\log}(\Mcal,\tau)\cap\Dc'$ for a particular case of an abelian algebra $\Dc\subseteq\Mcal$.
(In fact, we show more than this).

Suppose $\mathcal{M}$ is the hyperfinite II$_1-$factor, which is the appropriate closure
of the tensor product $\bigotimes_1^\infty M_2(\Cpx)$.
Let $\tau$ denote the unique tracial state on $\Mcal$.
Let $\mathcal{D}$ be the closure of the tensor product of the diagonal subalgebras.
This is maximal abelian self-adjoint algebra (see~\cite{SS08}) which implies $\Mcal\cap\Dc'=\Dc$.
There is a unique $\tau$-preserving projection of norm $1$, $E:\Mcal\to\Dc$,
which, then, automatically satisfies the conditional expectation property:
$$
E(D_1AD_2)=D_1E(A)D_2,\qquad(A\in\Mcal,\,D_1,D_2\in\Dc).
$$

\begin{prop}
Suppose $\Bc\subseteq\Sc(\Mcal,\tau)$ is an $\Mcal$-bimodule that is not contained in $\Lc_1(\Mcal,\tau)$.
Then there is no linear map $\Et:\Bc\to\Sc(\Mcal,\tau)$ that is positive and whose restriction to $\Mcal$ is $E$.
\end{prop}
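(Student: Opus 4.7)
The plan is to exhibit $C\in\Bc^+$ such that every bounded spectral truncation $C_n\in\Mcal^+$ of $C$ satisfies $E(C_n)=\tau(C_n)\cdot I$ with $\tau(C_n)\to\infty$. Since $0\le C_n\le C$, positivity and linearity of $\Et$ would then force
$$
\Et(C)\ge\Et(C_n)=E(C_n)=\tau(C_n)\cdot I
$$
for every $n$, yielding $\mu(t,\Et(C))\ge\tau(C_n)\to\infty$ for every $t\in(0,1)$, contradicting $\Et(C)\in\Sc(\Mcal,\tau)$.

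To produce $C$, first note that since $\Bc\not\subseteq\Lc_1(\Mcal,\tau)$ and $\Bc$ is closed under $B\mapsto|B|$ (because $\Mcal$ is a factor and $\mu(|B|)=\mu(B)$; see \S2.7), there is $B\in\Bc^+$ with $\tau(B)=\infty$. Next, in the presentation of $\Mcal$ as the weak closure of $\bigotimes_{k=1}^\infty M_2(\Cpx)$, with $\Dc$ the weak closure of $\bigotimes_{k=1}^\infty\operatorname{diag}(M_2)$, I let $\Dc^*\subseteq\Mcal$ be the weak closure of $\bigotimes_{k=1}^\infty\{aI+b\sigma_x\mid a,b\in\Cpx\}$. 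In each $M_2$-factor, $\{aI+b\sigma_x\}$ is abelian and satisfies $E(aI+b\sigma_x)=aI=\tau(aI+b\sigma_x)\cdot I$ because $\sigma_x$ has vanishing diagonal; passing to elementary tensors and then to the closure by normality of $E$ yields $E(D)=\tau(D)\cdot I$ for every $D\in\Dc^*$. The algebra $\Dc^*$ is a diffuse abelian subalgebra of $\Mcal$, isomorphic to $L^\infty([0,1])$ as a tracial algebra.

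Now I realize a positive $C\in\Sc(\Dc^*,\tau|_{\Dc^*})\subseteq\Sc(\Mcal,\tau)$ with $\mu(\cdot,C)=\mu(\cdot,B)$; the factor-bimodule principle of \S2.7 places $C$ in $\Bc$. Each $C_n:=C\chi_{[0,n]}(C)$ lies in $\Dc^*\cap\Mcal$, and by the key property of $\Dc^*$ we have $E(C_n)=\tau(C_n)\cdot I$; moreover $\tau(C_n)\nearrow\tau(C)=\tau(B)=\infty$ by monotone convergence, completing the argument.

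The step requiring the most care is the verification that the identity $E(D)=\tau(D)\cdot I$ extends from elementary tensor monomials to the whole of $\Dc^*$; this is handled by normality of $E$ together with the weak-operator density of the finite tensor subalgebras in $\Dc^*$.
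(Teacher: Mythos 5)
Your proof is correct and follows essentially the same route as the paper's: both extract a positive non-integrable element of $\Bc$, transport it onto the ``anti-diagonal'' part of the tensor product where $E$ collapses to $\tau(\cdot)I$ (you via the bimodule's closure under equimeasurability into the masa $\Dc^*$, the paper via a unitary conjugation aligning spectral projections with explicit projections $p_n=p^{\otimes(n-1)}\otimes(1-p)\otimes I\otimes\cdots$), and then let positivity of $\Et$ on bounded truncations force $\Et(C)\ge cI$ for arbitrarily large $c$, contradicting $\tau$-measurability. Since the paper's $p$ and $1-p$ are precisely the spectral projections of your $\sigma_x$-generator, your algebra $\Dc^*$ is just a cleaner packaging of the identical mechanism.
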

\begin{proof}
Let $T\in\Bc$ be such that $T\ge0$ and $T\notin\Lc_1(\Mcal,\tau)$.
Replacing $T$ with some $T'$ satisfying $0\le T'\le T$, if necessary, we may without loss of generality assume
$T=\sum_{n=1}^\infty\alpha_nq_n$ (with convergence in measure in $\Sc(\Mcal,\tau)$)
for some $0\le\alpha_1\le\alpha_2\le\cdots$ and some pairwise orthogonal projections $(q_n)_{n=1}^\infty$ satisfying $\tau(q_n)=2^{-n}$.
In particular, we have
\begin{equation}\label{eq:suminfty}
\sum_{n=1}^\infty\alpha_n2^{-n}=\infty.
\end{equation}

Let $p=\frac12\left(\begin{smallmatrix}1&1\\1&1\end{smallmatrix}\right)\in M_2(\Cpx)$ and for $n\in\Nats$, let
$$
p_n=p^{\otimes(n-1)}\otimes(1-p)\otimes I_2\otimes I_2\otimes\cdots\in\bigotimes_1^\infty M_2(\Cpx).
$$
Then the projections in the sequence $(p_n)_{n=1}^\infty$ are pairwise orthogonal and $\tau(p_n)=2^{-n}$.
By conjugating $T$ with a unitary in $\Mcal$, we may without loss of generality assume $q_n=p_n$.

If a positive map $\Et:\Bc\to\Sc(\Mcal,\tau)$ extending $E$ were to exist, then
for each $N\in\Nats$, we would need
$$
\Et(T)\ge\Et(\sum_{n=1}^N\alpha_np_n)=\sum_{n=1}^N\alpha_nE(p_n)=\left(\sum_{n=1}^N\alpha_n2^{-n}\right)I,
$$
where $I$ denotes the identity element of $\Mcal$.
Letting $N\to\infty$ and using~\eqref{eq:suminfty}, this contradicts $\Et(T)\in\Sc(\Mcal,\tau)$.
\end{proof}

\end{document}